\documentclass{article}



\newif\ifshownavigationpage
\newif\ifshownotationindex
\newif\ifshowtheoremlinks
\newif\ifshowtheoremtree
\newif\ifshowtheoremlist
\newif\ifshowequationlist

\newif\ifshowcomments
\newif\ifhighlight 
\newif\ifelaborate

\newif\ifshowaddressedcomments
\newif\ifshowrvin
\newif\ifshowrvout

\showtheoremtreetrue



\usepackage{amsthm, amsmath, amssymb}
\usepackage{thmtools}
\usepackage{soul}
\usepackage{authblk}
\usepackage{framed}
\usepackage{mathrsfs}
\usepackage{hyperref}
\hypersetup{
    colorlinks,
    linkcolor={red!50!black},
    citecolor={blue!50!black},
    urlcolor={blue!80!black}
}

\usepackage[dvipsnames]{xcolor}
\usepackage{graphicx, multicol}
\usepackage{marvosym, wasysym}
\usepackage{fancyhdr}
\usepackage{stackengine}
\usepackage{algorithm}
\usepackage{bm}
\usepackage[noend]{algpseudocode}
\usepackage{bbm}
\usepackage{verbatim}
\usepackage{mdframed}
\usepackage{needspace}
\makeatletter
\renewcommand{\ALG@beginalgorithmic}{\scriptsize}
\makeatother
\usepackage{xcolor}
\allowdisplaybreaks

\DeclareFontFamily{U}{mathx}{}
\DeclareFontShape{U}{mathx}{m}{n}{ <-> mathx10 }{}
\DeclareSymbolFont{mathx}{U}{mathx}{m}{n}
\DeclareFontSubstitution{U}{mathx}{m}{n}

\newcommand{\bcdot}{\boldsymbol{\cdot}}

\renewcommand{\dj}[1]{\bm{d}_{J_1}^{_{#1}}}

\newcommand{\dm}[1]{\bm{d}_{M_1}^{_{#1}}}

\newcommand{\dmp}[1]{\bm{d}_{ \mathcal P }^{_{#1}} }

\newcommand{\subInfty}{ _{[0,\infty)} }

\newcommand{\subZeroT}[1]{ _{[0,#1]} }

\DeclareMathOperator*{\argmin}{arg\,min}

\ifhighlight
\else
    
\fi

\newcommand{\rvoutopacity}{20}
\ifshowrvin
    
\else
    
\fi

\ifshowrvout
    \newcommand{\rvout}[1]{{\color{red!\rvoutopacity}{#1}} }
    \newcommand{\rvoutm}[1]{{\color{black!\rvoutopacity}{\ifmmode\text{\sout{\ensuremath{\displaystyle#1}}}\else\sout{#1}\fi}} } 
\else
    \newcommand{\rvout}[1]{}
\fi

\ifshowcomments
    \newcommand{\summ}[1]{{\color{blue}[summary: #1]} } 
    \newcommand{\bz}[1]{{\color{PineGreen}[BZ: #1]} } 
    \newcommand{\rl}[1]{{\color{Periwinkle}[RL: #1]} } 
    \newcommand{\jb}[1]{{\color{Tan}[JB: #1]} } 
    \newcommand{\xw}[1]{{\color{RoyalBlue}[XW: #1]} } 
    \ifshowaddressedcomments
        \newcommand{\bza}[1]{{\color{PineGreen}\sout{[BZ: #1]}} } 
        \newcommand{\rla}[1]{{\color{Periwinkle}\sout{[RL: #1]}} } 
        \newcommand{\jba}[1]{{\color{Tan}\sout{[JB: #1]}} } 
        \newcommand{\xwa}[1]{{\color{RoyalBlue}\sout{[XW: #1]}} } 
    \else
        \newcommand{\bza}[1]{} 
        \newcommand{\rla}[1]{} 
        \newcommand{\jba}[1]{} 
        \newcommand{\xwa}[1]{} 
    \fi
\else
    \newcommand{\summ}[1]{} 
    \newcommand{\bz}[1]{} 
    \newcommand{\rl}[1]{} 
    \newcommand{\jb}[1]{} 
    \newcommand{\bza}[1]{} 
    \newcommand{\rla}[1]{} 
    \newcommand{\jba}[1]{} 
    \newcommand{\xw}[1]{} 
    \newcommand{\xwa}[1]{} 
\fi


\usepackage[shortlabels]{enumitem}

\newlist{thmdependence}{itemize}{10}
\setlist[thmdependence]{nosep,label=-}

\newcommand{\thmtreenode}[4]{\item[#1] {#2}~\ref{#3} {#4}}

\ifshowtheoremlinks
    \newcommand{\linksinthm}[1]{\emph{\linkdest{location, #1}\linktopf{#1} \linktothmtree{location, thm tree #1} }}
    \newcommand{\linksinthmwopf}[1]{\emph{\linkdest{location, #1} \linktothmtree{location, thm tree #1} }}
    \newcommand{\linksinpf}[1]{\linkdest{location, proof of #1}\linktothm{#1} \linktothmtree{location, thm tree #1} }
\else
    \newcommand{\linksinthm}[1]{}
    \newcommand{\linksinthmwopf}[1]{}
    \newcommand{\linksinpf}[1]{}
\fi

\ifshownotationindex
    \newcommand{\notationdef}[2]{\linkdest{location, notation definition of #1}\hyperlink{location, notation index of #1}{#2}}
    
\else
    \newcommand{\notationdef}[2]{#2}
\fi
    
\newcommand{\linktopf}[1]{\hyperlink{location, proof of #1}{\pflinksymbol}}
\newcommand{\linktothm}[1]{\hyperlink{location, #1}{\thmlinksymbol}}
\newcommand{\linktothmtree}[1]{\hyperlink{#1}{\thmtreelinksymbol}}
\newcommand{\thmlinksymbol}{{\tiny [Theorem]}}
\newcommand{\pflinksymbol}{{\tiny [Proof]}}
\newcommand{\thmtreelinksymbol}{{\tiny [ThmTree]}}

\makeatletter
\newcommand{\linkdest}[1]{\Hy@raisedlink{\hypertarget{#1}{}}}
\makeatother

\newcommand{\elaborateopacity}{50}
\newcommand{\elaboratecolor}{RawSienna}
\ifelaborate
    \newcommand{\elaborate}[1]{{\color{\elaboratecolor!\elaborateopacity}{
    \begin{framed}
    \noindent {\footnotesize[Elaboration]}
    #1 
    \end{framed}
    }}\noindent}
\else
    \newcommand{\elaborate}[1]{}
\fi

\newcommand{\stleq}{\underset{ \text{s.t.} }{\leq}}

\newcommand{\lo}{\mathit{o}}
\newcommand{\bo}{\mathcal{O}}

\usepackage[margin=1.2in]{geometry}


\newtheorem{theorem}{Theorem}
\newtheorem{lemma}[theorem]{Lemma}

\newtheorem{proposition}[theorem]{Proposition}

\newtheorem{definition}[theorem]{Definition}

\newtheorem{assumption}{Assumption}

\newtheorem{remark}{Remark}

\theoremstyle{remark}
\newtheorem{example}{Example}

\newtheorem*{theorem-nonumber}{Theorem}
\newtheorem*{condition-nonumber}{Condition}
\newtheorem*{proposition-nonumber}{Proposition}


\usepackage{mathtools}
\DeclarePairedDelimiter{\ceil}{\lceil}{\rceil}
\DeclarePairedDelimiter\floor{\lfloor}{\rfloor}

\newcommand{\D}{\mathbb D}

\newcommand{\cmt}[1]{#1} 
\renewcommand{\cmt}[1]{} 
\renewcommand{\P}{\mathbf{P}}

\newcommand{\E}{\mathbf{E}}
\newcommand{\RV}{\mathcal{RV}}
\newcommand{\MRV}{\mathcal{MHRV}}
\newcommand{\R}{\mathbb{R}}
\newcommand{\Z}{\mathbb{Z}}
\renewcommand{\S}{\mathbb{S}}
\newcommand{\C}{\mathbb{C}}
\newcommand{\M}{\mathbb{M}}

\renewcommand{\complement}{c}

\newcommand{\powersetTilde}[1]{ {\mathcal P}_{#1} }
\newcommand{\barDxepskT}[4]{ \breve{\D}^{ #2  }_{#3; #1}#4  }
\newcommand{\shortbarDepsk}[2]{ \breve{\D}^{#1}_{#2} }

\usepackage[normalem]{ulem}

\usepackage{multirow}
\usepackage{longtable}

\usepackage{array}
\def\delequal{\mathrel{\ensurestackMath{\stackon[1pt]{=}{\scriptscriptstyle\text{def}}}}}
\def\distequal{\mathrel{\ensurestackMath{\stackon[1pt]{=}{\scriptstyle\mathcal{D}}}}}
\newcommand{\norm}[1]{\left\lVert#1\right\rVert}
\usepackage{mathtools}




\algrenewcommand\algorithmicrequire{\textbf{Require:}}
\algrenewcommand\algorithmicensure{\textbf{Postcondition:}}

\usepackage{subfigure}
\usepackage{float}
\usepackage{subfiles}
\title{Sample Path Large Deviations for Multivariate Heavy-Tailed Hawkes Processes and Related L\'evy Processes}

\DeclareMathAccent{\widecheck}{0}{mathx}{"71}

\author[1]{Jose Blanchet} 
\author[2]{Roger J.~A. Laeven}
\author[2]{Xingyu Wang}
\author[3]{Bert Zwart}
\affil[1]{Department of Management Science and Engineering, Stanford University}
\affil[2]{Department of Quantitative Economics, University of Amsterdam}
\affil[3]{Centrum Wiskunde \& Informatica (CWI)}

\begin{document}
\maketitle

\begin{abstract}
\noindent
In this paper, we develop sample path large deviations for multivariate Hawkes processes with heavy-tailed mutual excitation rates. 
Our results address a broad class of rare events in Hawkes processes at the sample path level and, 
via the cluster representation of Hawkes processes and a recent result on the tail asymptotics of the cluster sizes,
unravel the most likely configurations of (multiple) large clusters that could trigger the target events.
Our proof hinges on establishing the asymptotic equivalence, in terms of M-convergence, between a suitably scaled multivariate Hawkes process and a coupled L\'evy process with multivariate hidden regular variation.
Hence, along the way, we derive a sample path large deviations principle for a class of L\'evy processes with multivariate hidden regular variation, which not only plays an auxiliary role in our analysis but is also of independent interest. 
\end{abstract}



\counterwithin{equation}{section}
\counterwithin{lemma}{section}
\counterwithin{corollary}{section}
\counterwithin{theorem}{section}
\counterwithin{definition}{section}
\counterwithin{proposition}{section}
\counterwithin{figure}{section}
\counterwithin{table}{section}

\tableofcontents
\bigskip

\section{Introduction}
\label{sec: intro}

The growing interconnectedness and complexity of modern ecological, economic, engineered, and social systems 
have led to risks and uncertainties that can amplify and cascade across populations, markets, and networks 
through temporal and spatial feedback. 
As a result, understanding and managing the amplification and cascade of risks poses significant challenges across science, engineering, and business.
Multivariate Hawkes processes 
offer a formalized approach to model such dependencies and mutual excitation effects in risks,
and
 have been widely applied in 
finance (\cite{AITSAHALIA2015585,BACRY20132475,Hawkes01022018}),
neuroscience and biology (\cite{Xu2005,LAMBERT20189,10.1214/10-AOS806}), 
seismology (\cite{Ogata01031988,Ikefuji02012022}), 
epidemiology (\cite{chiang2022hawkes}), 
social science (\cite{9378017,doi:10.1073/pnas.0803685105,10.1145/2808797.2814178,rizoiu2017tutorialhawkesprocessesevents}),
queueing systems (\cite{doi:10.1287/stsy.2021.0070,doi:10.1287/stsy.2018.0014,selvamuthu2022infinite}),
and cyber security (\cite{baldwin2017contagion,Bessy-Roland_Boumezoued_Hillairet_2021}).

Specifically, 
this paper focuses on
$\bm N(t) = (N_1(t),N_2(t),\ldots,N_d(t))^\top$,
a $d$-dimensional càdlàg point process 
with initial value $\bm N(0) = \bm 0$,
and its conditional intensities $h^{\bm N}_i$ along each dimension are specified as follows
(here, we write $[d] = \{1,2,\ldots,d\}$).

\begin{definition}[Multivariate Hawkes Processes]
\label{def: hawkes process, conditional intensity}
The  point process $\bm N(t) = (N_i(t))_{i \in [d]}$ satisfies (for each $i \in [d]$ and $t \geq 0$)
\begin{align*}
    \P\big( N_i(t+\Delta) - N_i(t) = 0\ \big|\ \mathcal F_t  \big)
    & = 1 - h^{\bm N}_i(t)\Delta + \lo(\Delta),
    \\ 
    \P\big( N_i(t+\Delta) - N_i(t) = 1\ \big|\ \mathcal F_t  \big)
    & = h^{\bm N}_i(t)\Delta + \lo(\Delta),
    \\
    \P\big( N_i(t+\Delta) - N_i(t) > 1\ \big|\ \mathcal F_t  \big)
    & = \lo(\Delta),
\end{align*}
as $\Delta \downarrow 0$,
with
$\mathcal F_t \delequal \sigma\big\{ \bm N(s):\ s \in [0,t] \big\}$,
and the conditional intensities take the form
\begin{align}
    h^{\bm N}_i(t) = c^{\bm N}_{i}
    +
    \sum_{j \in [d]} \int_{0}^t {\tilde B_{i \leftarrow j}(s)}f^{\bm N}_{i \leftarrow j}(t-s)d N_j(s),
    \qquad\forall i \in [d].
    \label{def: conditional intensity, hawkes process}
\end{align}
Here, 
the $c^{\bm N}_i$'s are positive constants.
For each pair $(i,j) \in [d]^2$,
$f^{\bm N}_{i \leftarrow j}(\cdot)$ is a deterministic, non-negative, and integrable function,
and $\big(\tilde B_{i \leftarrow j}(s)\big)_{s >0}$ are independent copies of some non-negative random variables $\tilde B_{i \leftarrow j}$ (mutually independent across $i,j \in [d]$).
\end{definition}

Intuitively speaking, the constant $c^{\bm N}_i$ represents the base rate at which type-$i$ immigrant events arrive,
and the random  function $\tilde B_{i \leftarrow j}f^{\bm N}_{i \leftarrow j}(\cdot)$
dictates the rate at which any type-$j$ event induces (i.e., gives birth to) type-$i$ events in the future, thus capturing the mutual excitation mechanism of risks.
In Definition~\ref{def: hawkes process, conditional intensity}, the $\tilde B_{i \leftarrow j}$'s are typically referred to as excitation rates, and the $f^{\bm N}_{i \leftarrow j}(\cdot)$'s are called decay functions.
See also \cite{dj1972summary,209288c5-6a29-3263-8490-c192a9031603}, and Chapter 7 of \cite{daley2003introduction} for a detailed treatment of the conditional intensity approach to Hawkes processes.

In this paper, we establish a sample path large deviations principle (LDP) for the multivariate Hawkes process $\bm N(t)$, in the presence of power-law heavy tails in the distribution of the mutual excitation rates $\tilde B_{i\leftarrow j}$.
In particular, our work resolves significant gaps in the existing literature on large deviations of heavy-tailed Hawkes processes.
\begin{itemize}

    \item 
        Existing results are manifestations of \emph{the principle of a single big jump}
        (e.g., \cite{karim2021exact,baeriswyl2023tail,Asmussen_Foss_2018}),
        addressing only a limited class of rare events that are driven by 
        a single point inducing a disproportionately large number of offspring in one generation.
        By contrast, our results capture a much broader class of events caused by multiple big jumps across different components.

    \item 
        A \emph{sample-path level} characterization of large deviations is absent, even in the univariate setting;
        see, for instance, \cite{baeriswyl2023tail}.
        To the best of our knowledge, our work is the first to establish sample path LDP for multivariate heavy-tailed Hawkes processes.
        It is also worth noting that Hawkes processes can be viewed as continuous-time analogs of branching processes with immigration.
        For heavy-tailed branching processes with immigration,
        large deviations in the stationary distribution and partial sums have been studied (e.g., \cite{Basrak02102013,guo2025precise});
        however, sample path large deviations have still not been characterized, even in the univariate case.
        
\end{itemize}

To further explain these gaps and our methodology,
we briefly review the \emph{cluster approach} (also known as the Galton-Watson approach),
which is standard in large deviations analysis of Hawkes processes and exploits the underlying branching structure in $\bm N(t)$.
More precisely, a type-$j$ immigrant gives birth to children along different dimensions and hence induces its own family tree (i.e., cluster).
The cluster size vectors $\bm S_j$ solve the distributional fixed-point equations 
\begin{align}
    \bm S_j \distequal
    \bm e_j + \sum_{i \in [d]}\sum_{ m = 1 }^{ B_{i \leftarrow j}  }\bm S_i^{(m)},
    \qquad j \in [d],
     \label{def: fixed point equation for cluster S i}
\end{align}
where 
$\bm e_j$ is the $j^\text{th}$ unit vector in $\R^d$ (i.e., the vector with its $j^\text{th}$ entry equal to 1 and all other entries equal to 0),
each $\bm S_i^{(m)}$ is an independent copy of $\bm S_i$,
and $B_{i \leftarrow j} \distequal \text{Poisson}\big( \tilde B_{i \leftarrow j} \norm{f^{\bm N}_{i \leftarrow j} }_1  \big)$
with $\tilde B_{i\leftarrow j}$ and $f^{\bm N}_{i \leftarrow j}$ being the excitation rates and decay functions in Definition~\ref{def: hawkes process, conditional intensity}
and $\norm{f}_1 \delequal \int |f(x)|dx$.
For more details about the cluster representation of Hawkes processes,
we refer the reader to \cite{209288c5-6a29-3263-8490-c192a9031603,20.500.11850/151886,daley2003introduction}
(see also Definition~\ref{def: offspring cluster process} in the Appendix).
Here, we note that the canonical representation of $\bm S_j$ in \eqref{def: fixed point equation for cluster S i} is the total progeny of
a multi-type branching process (see, e.g., \cite{JOFFE1967409,haccou2005branching}) across the $d$ dimensions, where $B_{i \leftarrow j}$ is the count of type-$i$ children in one generation from a type-$j$ parent.

Specializing to the large deviations analysis for Hawkes processes,
the {cluster approach} proceeds by first characterizing the behavior of clusters, and then connecting the large deviations of the Hawkes process to those of a compound Poisson
process the increments of which admit the law of the cluster size vectors $\bm S_j$.
In the light-tailed case,
this approach is streamlined by the classical LDP framework (\cite{MR2571413, MR997938, MR2260560, MR758258})
and has been broadly successful in large deviations analyses of Hawkes processes and several extensions, including marked Hawkes processes and compound Hawkes processes;
see, e.g., \cite{Bordenave07112007,stabile2010risk,Karabash03072015,karim2023compound}.
We also refer the reader to \cite{10.1214/14-AAP1003,AIHPB_2014__50_3_845_0}
for large deviations of the non-linear generalization of Hawkes processes (\cite{1e405749-c2fa-3fe8-ae93-09b947fe302a}), where the cluster representation fails.

By contrast,
large deviations analyses of heavy-tailed Hawkes processes are relatively scarce,
with the two aforementioned gaps persisting.
To resolve the associated technical challenges, 
we implement the cluster approach in the multivariate heavy-tailed setting through three steps.
In the first step, we apply the recent progress in \cite{blanchet2025tailasymptoticsclustersizes} to characterize the tail behavior of the cluster size vector $\bm S_i$.
To be more specific,
given some non-empty index set $\bm j \subseteq [d]$, let
$
\R^d(\bm j) \delequal
\{ \sum_{i \in \bm j}w_i\E[\bm S_i]:\ w_i \geq 0\ \forall i \in \bm j  \}$ be the convex cone generated by the vectors $(\E [\bm S_i] )_{i \in \bm j}$.
We show that, over each cone $\R^d(\bm j)$,
Theorem~3.2 of \cite{blanchet2025tailasymptoticsclustersizes} entails that
the measure $\P\big(n^{-1}\bm S_i \in \ \cdot\ \cap \R^d(\bm j)\big)$---characterizing the tail behavior of $\bm S_i$ when restricted over the cone $ \R^d(\bm j)$---is roughly decaying at a power-law rate  $n^{- \alpha(\bm j)}$ as $n \to \infty$.
That is, \emph{hidden regular variation} (e.g., \cite{resnick2002hidden,10.1214/14-PS231}) arises endogenously in the cluster size vector $\bm{S}_i$, which exhibits varying degrees of heavy-tailedness across different directions in Euclidean space.
We review in Section~\ref{subsec: tail of cluster S, statement of results} the precise definitions of the tail indices $\alpha(\bm j)$ and the rigorous statements for tail asymptotics of $\bm S_i$.
Compared to prior results (e.g., \cite{Asmussen_Foss_2018,karim2021exact}),
such characterizations are able to address a much more general class of sets $A$ for the asymptotic analysis of rare events $\{n^{-1}\bm S_i \in A\}$,
revealing that 
\emph{the extremal behaviors of Hawkes process clusters are driven by multiple big jumps}
(i.e., multiple points in $\bm N(t)$ giving birth to a disproportionately large number of offspring),
whose contributions
align with vectors $\big(\E[\bm S_l]\big)_{l \in [d]}$.

In light of the cluster size tail asymptotics,
our second step is to develop a general framework for \emph{sample path large deviations} under increments exhibiting \emph{multivariate hidden regular variation}.
Sample path level characterizations for the principle of a single big jump (e.g.,  \cite{hult2005functional,borovkov_borovkov_2008,denisov2008large, embrechts2013modelling, foss2011introduction})
and the more general multiple big jump principle 
(e.g.,  \cite{borovkov2011large,10.1214/18-AOP1319,bernhard2020heavy})
are available for random walks and L\'evy processes with regularly varying increments.
See also \cite{10.1214/21-AAP1675} for the characterization of the multiple big jump principle in point processes,
which could imply sample path LDPs for heavy-tailed random walks or L\'evy processes through continuous mapping arguments.
However,
studies on sample path large deviations under increments with \emph{multivariate hidden regular variation} are lacking.
For instance, Result~2 in \cite{chen2019efficient} addresses L\'evy processes with independent jumps of different heavy-tailedness along the standard orthogonal basis of $\R^d$.
However, this is not suitable for our study of Hawkes processes, where the cluster size vectors exhibit strong cross-coordinate correlations and can align with arbitrary directions in $\R^d$.  
Meanwhile, Section~5 of \cite{das2023aggregatingheavytailedrandomvectors} explores the tail asymptotics of L\'evy processes with multivariate hidden regular variation but does not provide sample path level results.
 
To overcome this technical challenge, we establish in Theorem~\ref{theorem: LD for levy, MRV} the sample path LDP for L\'evy processes under increments that exhibit a suitable, general form of multivariate hidden regular variation.
The results are stated w.r.t.\ the Skorokhod $J_1$ topology of the càdlàg space $\D[0,T]$.
This development significantly extends existing results:
compared to \cite{10.1214/18-AOP1319,bernhard2020heavy,10.1214/21-AAP1675}, 
our results address multivariate cases with hidden regular variation;
unlike  Theorem~3.3 of \cite{10.1214/18-AOP1319} or  Result~2 in \cite{chen2019efficient}, which assume independent increments along an orthogonal basis, we allow for more general dependence structures of multivariate hidden regular variation in L\'evy processes;
and in contrast to \cite{das2023aggregatingheavytailedrandomvectors}, we provide results at the sample path level.
We present formal statements in Section~\ref{subsec: LD for levy process, MRV} and emphasize the following aspects: 
(i) the proof refines the notion of asymptotic equivalence in $\M$-convergence theory (\cite{10.1214/14-PS231}), distinguishing large jumps by their directions to capture hidden regular variation in the increments of L\'evy processes; 
(ii) the multivariate hidden regular variation is modeled via the $\MRV$ formalism introduced in \cite{blanchet2025tailasymptoticsclustersizes}, which is particularly well-suited to branching processes and Hawkes processes; 
(iii) beyond our application to Hawkes processes, Theorem~\ref{theorem: LD for levy, MRV} is also of independent interest due to the relevance of Lévy models and hidden regular variation in risk management and mathematical finance (e.g., \cite{4fc30f5f-894d-3c6c-9b15-01f8f8d74820, https://doi.org/10.1111/1467-9965.00020, risks10080148, das2023aggregatingheavytailedrandomvectors}).

The third step concerns applying the framework in Theorem~\ref{theorem: LD for levy, MRV} to Hawkes processes. 
Specifically, we show that for large deviations analysis, $\bm N(t)$ is asymptotically equivalent to a Lévy process with cluster size vectors $\bm S_j$ as increments (see \eqref{def: fixed point equation for cluster S i}).
That is, in terms of extremal behaviors, the sample path of the heavy-tailed Hawkes process remains essentially unchanged even if all arrivals in the same cluster are merged into a single jump 
(i.e., if all offspring arrive simultaneously with their immigrant ancestor).
To achieve this, we
(i)
refine the approach in \cite{10.36045/bbms/1170347811} 
to obtain tight bounds on the cluster lifetime (i.e., the time gap between an immigrant and its last offspring),
and 
(ii)
apply again the notion of asymptotic equivalence in terms of $\mathbb M$-convergence.
We outline our proof strategy in Section~\ref{subsec: appendix, proof of main result, Hawkes} and collect technical tools for $\mathbb M$-convergence theory in Section~\ref{subsec: proof, M convergence and asymptotic equivalence}.

Equipped with the tools above,
we
characterize
sample path large deviations for multivariate heavy-tailed Hawkes processes.
More precisely,
{under the presence of regular variation} in the mutual excitation rates of $\bm N(t)$ and under proper tail conditions for the decay functions $f^{\bm N}_{i \leftarrow j}(\cdot)$,
Theorem~\ref{theorem: LD for Hawkes} establishes asymptotics of the form
\begin{align}
    {\mathbf C}^{ \subInfty }_{ \bm k(A) }(A^\circ)
    \leq 
    \liminf_{n \to \infty}
    \frac{
        \P\big(\bar{\bm N}^{_{[0,\infty)}}_n \in A\big)
        }{
            \breve\lambda_{\bm k(A)}(n)
        }
    \leq 
    \limsup_{n \to \infty}
    \frac{
        \P\big(\bar{\bm N}^{_{[0,\infty)}}_n \in A\big)
        }{
            \breve\lambda_{\bm k(A)}(n)
        }
    \leq 
    {\mathbf C}^{ \subInfty }_{ \bm k(A) }(A^-)
    \label{intro, LD for Hawkes}
\end{align}
for general sets $A \subseteq \D[0,\infty)$.
Here, 
$A^\circ$ and $A^-$ are the interior and closure of $A$  w.r.t.\ the product $M_1$ topology of the càdlàg  space $\D[0,\infty)$;
$\bar{\bm N}^{_{[0,\infty)}}_n \delequal \{ \bm N(nt)/n:\ t \geq 0  \}$
is the scaled sample path of $\bm N(t)$ embedded in $\D[0,\infty)$,
the limiting measures $\big({\mathbf C}^{\subInfty}_{ \bm k}(\cdot)\big)_{ \bm k \in \Z^d_+  }$ are supported on $\D[0,\infty)$,
and the rates of decay $\big(\breve \lambda_{\bm k}(n)\big)_{\bm k \in \Z^d_+}$ are regularly varying functions with indices determined by the $\alpha(\bm j)$'s---the power-law indices for tail asymptotics of Hawkes cluster sizes.
In particular, the rate function $\bm k(A) \in \Z^d_+$ solves a discrete optimization problem identifying the \emph{most likely configuration of large clusters} that can drive the Hawkes process $\bar{\bm N}^{_{[0,\infty)}}_n$ into the rare-event set $A$.
Intuitively speaking, 
the nominal behavior of the (scaled) Hawkes process $\bar{\bm N}^{_{[0,\infty)}}_n$ is a linear path with slope $\bm\mu_{\bm N}$ (its expected increments under stationarity),
and recall that, by the tail asymptotics of Hawkes process clusters, the probability of observing a large cluster with size vector lying in the cone $\R^d(\bm j)$ is roughly of order $n^{-\alpha(\bm j)}$.
The optimization problem behind $\bm k(A)$ (see Remark~\ref{remark: interpretation, LD for Hawkes}) then identifies, among all combinations of large clusters that can push the $\bm\mu_{\bm N}$-linear path into the set $A$, 
the configuration that is most likely to occur.
We provide the rigorous statement of Theorem~\ref{theorem: LD for Hawkes} and the precise definitions of the notions involved in Section~\ref{subsec: LD for Hawkes process}.
Here, we highlight:
(i)
the limiting measures ${\mathbf C}^{ \subInfty }_{ \bm k }(\cdot)$ 
can be efficiently computed via Monte Carlo simulation (see Remark~\ref{remark: evaluation of limiting measures, LD for Hawkes});
(ii) 
in the univariate case, Theorem~\ref{theorem: LD for Hawkes} can be uplifted to the $M_1$ topology of $\D[0,T]$
(see Section~\ref{subsec: univariate results, LD for Hawkes, M1 of D0T} of the Appendix);
(iii)
in the multivariate case,
it is generally not possible to strengthen
the characterizations in \eqref{intro, LD for Hawkes} w.r.t.\ the product $M_1$ topology of $\D[0,\infty)$ to those on $\D[0,T]$ or other stronger Skorokhod non-uniform topologies (see Remark~\ref{remark: conditions in LD for Hawkes}).

Regarding the contributions of this work, we emphasize:
(i) The study of large deviations for heavy-tailed Hawkes processes aligns with the importance of power-law heavy tails in fields like epidemiology (\cite{doi:10.1073/pnas.2209234119}), queueing systems (\cite{Asmussen_Foss_2018,ernst2018stability}), and mathematical finance (\cite{Bacry02082016,hardiman2013critical,10.1214/15-AAP1164,horst2024convergenceheavytailedhawkesprocesses});
(ii) This paper addresses major gaps in prior work by developing sample path LDPs for heavy-tailed Hawkes processes that go well beyond the single big jump regime.
For instance, in the context of (unmarked) Hawkes processes,
Proposition~3 of \cite{karim2021exact} and Proposition~8.2 of \cite{baeriswyl2023tail}
(see also \cite{guo2025precise} for closely related results regarding branching processes with immigration)
are comparable to a special case of \eqref{intro, LD for Hawkes} with $A = \{ \xi \in \D[0,\infty):\ \bm c^\top \xi(t) > 1  \}$ with some $\bm c \in \R^d_+$ and $t >0$.
That is, they address only a limited class of rare events, which are characterized by an extreme value of the process at a specific time point rather than by the extremal behavior of the entire sample path, and are driven by a single big jump in the Hawkes process. 
(We also note that Proposition~8.1 of  \cite{baeriswyl2023tail} studies large deviations of the maximum functional regarding the clusters of marked Hawkes processes, and is another manifestation of the principle of a single big jump.)
In contrast, our Theorem~\ref{theorem: LD for Hawkes} covers a much broader class of sample-path-level rare events and reveals the most likely configuration of (multiple) big clusters governing such rare events.
In many applications across finance, machine learning, and operations research, impactful rare events are determined by the entire sample path of the stochastic dynamics rather than just the running maximum or endpoint value,
and are driven by multiple large jumps in the system (see, e.g.,\ \cite{Albrecher_Chen_Vatamidou_Zwart_2020,tankov2003financial,doi:10.1287/moor.1120.0539,wang2022eliminating}).
Our results thus lay the foundation of
precise theoretical insights and efficient rare event simulation for practical systems under clustering and mutually exciting risks.

This paper is structured as follows.
Section~\ref{sec: definitions and notations} reviews the tail asymptotics of Hawkes process clusters. 
Section~\ref{sec: sample path large deviation} presents the main results of this paper.
In the Appendix,
Section~\ref{subsec: proof, M convergence and asymptotic equivalence}
collects useful technical tools for the $\mathbb M$-convergence theory,
Section~\ref{sec: appendix, tail asymptotics of Hawkes process clusters} collects additional details for tail asymptotics of Hawkes process clusters,
Section~\ref{sec: counter examples} provides the details of the counterexamples in Remarks~\ref{remark: conditions in LD for Hawkes} and \ref{remark, tail condition on decay functions},
Sections~\ref{subsubsec: proof of LD with MRV} and \ref{subsec: proof, theorem: LD for Hawkes} contain the proofs for Section~\ref{sec: sample path large deviation},
Section~\ref{sec: appendix, LD for Levy, General Case} provides sample path large deviations for L\'evy Processes with $\MRV$ increments under relaxed conditions, and  
Section~\ref{sec: appendix, theorem tree} provides theorem trees to aid readability of the proofs.

\section{Preliminaries}
\label{sec: definitions and notations}

This section reviews key definitions and results for our subsequent analysis and is structured as follows.
Section~\ref{subsec: MRV} reviews $\MRV$, a notion of multivariate hidden regular variation introduced in \cite{blanchet2025tailasymptoticsclustersizes}.
Section~\ref{subsec: tail of cluster S, statement of results} adapts Theorem~3.2 of \cite{blanchet2025tailasymptoticsclustersizes}
to our setting and characterizes the hidden regular variation in Hawkes process clusters in terms of $\MRV$.
These results are pivotal for the subsequent large-deviation analysis of multivariate Hawkes processes in this paper.

We begin by setting frequently used notations.
Let $\notationdef{notation-R-d-+}{\R^d_+} = [0,\infty)^d$,
 $\mathbb Z$ be the set of integers,
 $\notationdef{notation-non-negative-numbers-and-zero}{\mathbb{Z}_+} = \{0,1,2,\cdots\}$ 
be the set of non-negative integers,
and
 $\notationdef{notation-non-negative-numbers}{\mathbb{N}} = \{1,2,\cdots\}$ be the set of positive integers.
 Given a set $X$ and a countable set $A$, we adopt notations  $\bm x \in X^A$
for vectors of the form $\bm x = (x_i)_{i \in A}$ that are of length $|A|$, with each coordinate $x_i \in X$ indexed by elements of $A$.
 For any $n \in \mathbb Z_+$,
let $\notationdef{set-for-integers-below-n}{[n]} \delequal \{1,2,\cdots,n\}$,
with the convention that $[0] = \emptyset$.
For any $m \in \mathbb N$, 
let $\notationdef{notation-power-set-for-[d]-without-empty-set}{\powersetTilde{m}}$ be the collection of all \emph{non-empty} subsets of $[m]$, i.e., the power set of $\{1,2,\ldots,m\}$ excluding $\emptyset$.
Given some metric space $(\mathbb S,\bm d)$, a set $E \subseteq \mathbb S$, and $r > 0$,
let the closed set
$\notationdef{notation-epsilon-enlargement-of-set-E}{E^r} \delequal 
\{ y \in \mathbb{S}:\ \bm{d}(E,y)\leq r\}$ be the $r$-enlargement of the set $E$,
and the open set
$
\notationdef{notation-epsilon-shrinkage-of-set-E}{E_{r}} \delequal
((E^c)^r)^\complement
=
\{
y \in \mathbb S:\ \bm d(E^c,y) > r
\}
$
be the $r$-shrinkage
of $E$;
besides,
let
$\notationdef{notation-interior-of-set-E}{E^\circ} \delequal \bigcup_{r > 0}E_r$ and $\notationdef{notation-closure-of-set-E}{E^-}\delequal \bigcap_{r > 0}E^r$ be the interior and closure of $E$, respectively.
Throughout this paper,
we consider the $L_1$ norm $\notationdef{notation-L1-norm}{\norm{\bm x}} = \sum_{i \in [d]}|x_i|$ over Euclidean spaces.
For any random element $X$ and Borel measurable set $A$,
we use $\notationdef{notation-law-of-X}{\mathscr{L}(X)}$ to denote the law of $X$, and $\notationdef{notation-law-of-X-cond-on-A}{\mathscr{L}(X|A)}$ for the conditional law of $X$ given the event $A$.
Given real numbers $x$ and $y$,
let $x \wedge y \delequal \min\{x,y\}$, $x\vee y = \max\{x,y\}$ be the minimum and maximum operators, respectively,
and let
$\notationdef{floor-operator}{\floor{x}}\delequal \max\{n \in \mathbb{Z}:\ n \leq x\}$,
$\notationdef{ceil-operator}{\ceil{x}} \delequal \min\{n \in \Z:\ n \geq x\}$
be the floor and ceiling operators, respectively.
Given sequences of non-negative real numbers $(x_n)_{n \geq 1}$ and $(y_n)_{n \geq 1}$, 
we say that $x_n = \bo(y_n)$ (as $n \to \infty$) if there exists some $C \in [0,\infty)$ such that $x_n \leq C y_n\ \forall n\geq 1$, and that $x_n = \lo(y_n)$ if $\lim_{n \rightarrow \infty} x_n/y_n = 0$.

\subsection{Multivariate Hidden Regular Variation}
\label{subsec: MRV}

Recall that a measurable function $\phi:(0,\infty) \to (0,\infty)$ is regularly varying as $x \rightarrow\infty$ with index $\beta \in \R$, denoted as $\phi(x) \in \notationdef{notation-regular-variation}{\RV_\beta}(x)$ as $x \to \infty$, if $\lim_{x \rightarrow \infty}\phi(tx)/\phi(x) = t^\beta$ holds for any $t>0$. 
See, e.g., \cite{bingham1989regular,resnick2007heavy,foss2011introduction} for standard treatments of regular variation.
In this subsection, we review the $\MRV$ formalism, a notion of multivariate hidden regular variation proposed in \cite{blanchet2025tailasymptoticsclustersizes}.
As shown in Section~\ref{subsec: tail of cluster S, statement of results}, $\MRV$
provides the appropriate framework to describe the tail asymptotics of Hawkes process clusters.
In particular, $\MRV$
is characterized through the following key elements.
\begin{itemize}
    \item 
        \emph{Basis}: $\bar{\textbf S} = \{\bar{\bm s}_j \in [0,\infty)^d: j \in [k]\}$ is a collection of $k$ linearly independent vectors;

    \item 
        \emph{Tail indices}: 
        $
        \bm \alpha = \big\{  \alpha(\bm j) \in [0,\infty):\ \bm j \subseteq [k] \big\}
        $
       are strictly monotone w.r.t.\ $\bm j$: that is,
       \begin{align}
           \alpha(\bm j) < \alpha(\bm j^\prime),\qquad
           \forall \bm j \subsetneq \bm j^\prime \subseteq [k],
           \label{def: monoton sequence of tail indices in MHRV}
       \end{align}
       with the convention that $\alpha(\emptyset) = 0$;

    \item 
        \emph{Rate functions}: $\lambda_{\bm j}(n) \in \RV_{-\alpha(\bm j)}(n)$ for each $\bm j \in \powersetTilde{k}$;

    \item 
        \emph{Limiting measures}: $\big(\mathbf C_{\bm j}(\cdot)\big)_{\bm j \in \powersetTilde{k}}$ is a collection of Borel measures over $\R^d_+$.
\end{itemize}
The $\MRV$ formalism describes how the power-law tail behavior of a Borel measure $\nu$ varies across different directions of the Euclidean space.
More precisely, 
let 
$\notationdef{notation-R-d-+-unit-sphere}{\mathfrak N^d_+} \delequal \{\bm x \in \R_+^d:\ \norm{\bm x} = 1\}$ be the unit sphere under the $L_1$ norm, restricted to the positive quadrant,
and let (for each $\bm j \in \powersetTilde{d}$)
\begin{align}
     \notationdef{notation-convex-cone-R-d-i-cluster-size}{\R^d(\bm j;\bar{\textbf S})}
    & \delequal 
    \Bigg\{ \sum_{i \in \bm j}  w_i\bm{\bar s}_i:\ w_i \geq 0\ \forall i \in \bm j \Bigg\};
    \label{def: cone R d index i}
    \\
    \notationdef{notation-enlarged-convex-cone-bar-R-d-i-epsilon}{ \bar \R^d(\bm j, \epsilon;\bar{\textbf S}) }
    & \delequal
    \bigg\{
        w\bm s:\ w \geq 0,\ \bm s \in \mathfrak N^d_+,\ 
            \inf_{  \bm x\in \R^d(\bm j;\bar{\textbf S}) \cap \mathfrak N^d_+   }\norm{ \bm s - \bm x  } \leq \epsilon
    \bigg\},\quad \epsilon > 0;
    \label{def: enlarged cone R d index i epsilon}
    \\ 
     \notationdef{notation-cone-R-d-leq-i-basis-S-index-alpha}{\bar\R^{d}_\leqslant(\bm j,\epsilon; \bar{\textbf S},\bm \alpha)}
            & \delequal
            \bigcup_{
                \bm j^\prime \subseteq [k]:\ 
                \bm j^\prime \neq \bm j,\ \alpha(\bm j^\prime) \leq \alpha(\bm j)
            } \bar\R^d(\bm j^\prime,\epsilon;\bar{\textbf S}),
            \quad \epsilon > 0.
            \label{def: cone R d i basis S index alpha}
\end{align}
When there is no ambiguity about the choice of $\bar{\textbf S}$ and $\bm \alpha$,
we simplify the notations by writing
$
\notationdef{notation-convex-cone-R-d-i-cluster-size-short}{\R^d(\bm j)} \delequal {\R^d(\bm j;\bar{\textbf S})},
$
$
\notationdef{notation-enlarged-convex-cone-bar-R-d-i-epsilon-short}{ \bar \R^d(\bm j, \epsilon) } \delequal \bar \R^d(\bm j, \epsilon;\bar{\textbf S}),
$
and
$
\notationdef{notation-cone-R-d-leq-i-basis-S-index-alpha-short}{\bar\R^{d}_\leqslant(\bm j,\epsilon)} \delequal
{\bar\R^{d}_\leqslant(\bm j,\epsilon; \bar{\textbf S},\bm \alpha)}.
$
We also adopt the convention that $\bar\R^d(\emptyset,\epsilon) = \{\bm 0\}$.
Note that $\R^d(\bm j)$ is the convex cone generated by $\{\bar{\bm s}_i:\ i \in\bm j \}$,
and
$\bar\R^d(\bm j,\epsilon)$ is an enlarged version of $\R^d(\bm j)$ by applying an $\epsilon$-perturbation to the angle of each element under the polar transform.
As shown in Definition~\ref{def: MRV}, the $\MRV$ formalism characterizes the heavy tails in a Borel measure $\nu(\cdot)$ over each cone $\R^d(\bm j)$ by establishing that, under the $\lambda_{\bm j}(n)$-scaling,
the tail behavior of $\nu\big(\ \cdot\ \cap \R^d(\bm j) \big)$ converges to the limiting measure $\mathbf C_{\bm j}$.
In other words, the measure $\nu$ exhibits different degrees of hidden regular variation over each cone $\R^d(\bm j)$,
characterized by the power-law tail index $\alpha(\bm j)$, the rate function $\lambda_{\bm j}(\cdot)$, and the limiting measure $\mathbf C_{\bm j}(\cdot)$. 
Here,
we say that $A \subseteq \R^d_+$ is bounded away from $B \subseteq \R^d_+$ if $\inf_{\bm x \in A,\ \bm y \in B}\norm{\bm x - \bm y} > 0$.

\begin{definition}[$\MRV$]
\label{def: MRV}
Let $\nu(\cdot)$ be a Borel measure on $\R^d_+$ and $\nu_n(\cdot) \delequal \nu( n\ \cdot\ )$ (i.e., $\nu_n(A) = \nu(nA) = \nu\big\{ n\bm x:\ \bm x \in A \big\}$).
The measure
$\nu(\cdot)$ is said to be \textbf{multivariate regularly varying}
with basis $\bar{\textbf S} = \{ \bar s_j:\ j \in [k] \}$,
tail indices $\bm \alpha = \big\{  \alpha(\bm j) \in [0,\infty):\ \bm j \subseteq [k] \big\}$,
rate functions $\lambda_{\bm j}(\cdot)$,
and limiting measures $\mathbf C_{\bm j}(\cdot)$,
denoted as
$
\nu \in \notationdef{notation-MRV}{\MRV\Big(\bar{\textbf S},\bm \alpha, (\lambda_{\bm j})_{ \bm j \in \powersetTilde{k}}, (\mathbf C_{\bm j})_{  \bm j \in \powersetTilde{k} }  \Big)},
$
if the asymptotics
    \begin{align}
            \mathbf C_{\bm j}(A^\circ)
            & \leq 
            \liminf_{n \to \infty}\frac{
                \nu_n(A)
            }{
                \lambda_{\bm j}(n)
            }
            \leq 
            \limsup_{n \to \infty}\frac{
                \nu_n(A)
            }{
                \lambda_{\bm j}(n)
            }
            \leq 
            \mathbf C_{\bm j}(A^-) < \infty
            \label{cond, finite index, def: MRV}
        \end{align}
    hold for any $ \bm j \in \powersetTilde{k}$ and any Borel set $A \subseteq \R^d_+$ that is
        bounded away from $\bar \R^{d}_\leqslant(\bm j,\epsilon)$ under some $\epsilon > 0$.
Furthermore, suppose that for any Borel set $A \subseteq \R^d_+$
        bounded away from $\bar \R^{d}([k],\epsilon)$ under some $\epsilon > 0$,
we also have
\begin{align}
    \nu_n(A) = \lo( n^{-\gamma})\ \text{ as }n \to \infty,
    \qquad \forall \gamma > 0.
    \label{cond, outside of the full cone, finite index, def: MRV}
\end{align}
Then we write 
$
\nu \in \notationdef{notation-MRV}{\MRV^*\Big(\bar{\textbf S},\bm \alpha, (\lambda_{\bm j})_{ \bm j \in \powersetTilde{k}}, (\mathbf C_{\bm j})_{  \bm j \in \powersetTilde{k} }  \Big)}.
$



\end{definition}

We conclude this subsection by briefly noting that: 
(i) Conditions \eqref{cond, finite index, def: MRV} and \eqref{cond, outside of the full cone, finite index, def: MRV} are equivalent to characterizations of heavy tails in terms of the $\mathbb M(\mathbb S\setminus\C)$-convergence (\cite{10.1214/14-PS231}) of polar coordinates;
and
(ii) $\MRV$ provides an appropriate framework for describing heavy tails in contexts such as branching processes and Hawkes processes; in particular, the tail asymptotics stated in Section~\ref{subsec: tail of cluster S, statement of results} would fail under other formalisms of multivariate hidden regular variation (see Remark~7 of \cite{blanchet2025tailasymptoticsclustersizes}).

\subsection{Tail Asymptotics of Hawkes Process Clusters}
\label{subsec: tail of cluster S, statement of results}

The cluster representation of Hawkes processes 
(see, e.g., \cite{209288c5-6a29-3263-8490-c192a9031603,20.500.11850/151886,daley2003introduction}; 
see also Definition~\ref{def: offspring cluster process} in the Appendix)
reveals that the size of the cluster induced by 
a type-$j$ immigrant in the Hawkes process $\bm N(t)$ admits the law of the random vector $\bm S_j$ that solves the distributional fixed-point equation in \eqref{def: fixed point equation for cluster S i}.
The canonical representation of $\bm S_j$ is the total progeny (across the $d$ dimensions) of a multi-type branching process,
where the offspring distributions admit the law
(by Poisson$(X)$ for a non-negative variable $X$, we mean the law of
$\P\big(\text{Poisson}(X) > y\big) = \int_0^\infty \P\big(\text{Poisson}(x) > y\big)\P(X \in dx)$)
\begin{align}
    \notationdef{notation-vector-B-i-number-of-offspring}{(B_{i \leftarrow j})_{i \in [d]}} \distequal \Big(\text{Poisson}(\tilde B_{1 \leftarrow j}\mu^{\bm N}_{1\leftarrow j}),\ldots,\text{Poisson}(\tilde B_{d \leftarrow j}\mu^{\bm N}_{d \leftarrow j})\Big),
    \label{def: vector B i, offspring of type i individual}
\end{align}
where $\big(\tilde B_{i \leftarrow j})_{i \in [d]}$ is introduced in Definition~\ref{def: hawkes process, conditional intensity},
and
\begin{align}
    \notationdef{notation-mu-i-j-in-conditional-intensity}{\mu^{\bm N}_{i \leftarrow j}} \delequal \norm{f^{\bm N}_{i \leftarrow j} }_1=\int_0^\infty f^{\bm N}_{i \leftarrow j}(t)dt < \infty,
    \qquad\forall i,j \in [d],
    \label{def mu i j for fertility function}
\end{align}
with $f^{\bm N}_{i \leftarrow j}(\cdot)$ being the decay functions in Definition~\ref{def: hawkes process, conditional intensity}.

In this subsection,
we specialize the assumptions and results in
\cite{blanchet2025tailasymptoticsclustersizes} to our context of Hawkes processes clusters,
which are pivotal to 
our large-deviation analysis for multivariate heavy-tailed Hawkes processes.
Specifically,
let
\begin{align}
    \notationdef{notation-b-i-j-cluster-size}{\bar b_{i\leftarrow j}} \delequal \E B_{i\leftarrow j}
    =
    \E\big[\tilde B_{i \leftarrow j}\big] \cdot \mu^{\bm N}_{i \leftarrow j},
    \label{def: bar b i j, mean of B i j}
\end{align}
be the expectation of offspring distributions,
where the equality follows from \eqref{def: vector B i, offspring of type i individual}.
Under Assumption~\ref{assumption: subcriticality}, Proposition 1 of \cite{Asmussen_Foss_2018} establishes the existence and uniqueness of the solutions $\bm S_j$ to \eqref{def: fixed point equation for cluster S i} and that $\E \norm{\bm S_{j}} < \infty$ for all $j \in [d]$.

\begin{assumption}[Sub-Criticality]
\label{assumption: subcriticality}
     The spectral radius of the mean offspring matrix $\notationdef{notation-bar-B-matrix-cluster-size}{\bar{\textbf B}} = (\bar b_{i\leftarrow j})_{j,i \in [d]}$
    \xwa{have $\bar{\bm B}$ transposed?}%
    is strictly less than $1$.
\end{assumption}

Assumption~\ref{assumption: heavy tails in B i j} specifies the regularly varying heavy tails in the offspring distribution.
Note that, under the law of the offspring distributions stated in \eqref{def: vector B i, offspring of type i individual},
$B_{j \leftarrow i}$ and $\tilde B_{j \leftarrow i}$ share the same regular variation index $-\alpha_{j \leftarrow i}$.

\begin{assumption}[Heavy Tails in Mutual Excitation Rates]
\label{assumption: heavy tails in B i j}
For any $(i,j) \in [d]^2$,
there exists $\notationdef{notation-alpha-i-j-cluster-size}{\alpha_{j\leftarrow i}} \in (1,\infty)$ such that
\begin{align*}
    \P(\tilde B_{j\leftarrow i} > x) \in \RV_{-{\alpha_{j\leftarrow i}} }(x),\qquad \text{as }x \to \infty.
\end{align*}
\end{assumption}

We also impose the following two regularity conditions.

\begin{assumption}[Full Connectivity]
\label{assumption: regularity condition 1, cluster size, July 2024}
For any $(i,j) \in [d]^2$, $\E S_{j \leftarrow i} > 0$.
\end{assumption}

\begin{assumption}[Distinct Tail Indices]
\label{assumption: regularity condition 2, cluster size, July 2024}
In Assumption~\ref{assumption: heavy tails in B i j},
$\alpha_{j\leftarrow i} \neq \alpha_{j^\prime \leftarrow i^\prime }$ for any $(i,j),(i^\prime,j^\prime) \in [d]^2$ with $(i,j) \neq (i^\prime,j^\prime)$.
\end{assumption}

Theorem~\ref{theorem: main result, cluster size} allows us to characterize
the tail asymptotics of the cluster size vector $\bm S_j$ in terms of $\MRV$ introduced in Definition~\ref{def: MRV}.
To state the results,
we specify the basis, tail indices, rate functions, and limiting measures.
In particular, let the \emph{basis}
$\bar{\textbf S} = \{ \bar{\bm s}_i:\ i \in [d] \}$ be
\begin{align}
    \notationdef{notation-bar-s-i-j-cluster-size}{\bar s_{j \leftarrow i}} \delequal \E S_{j \leftarrow i},
    \qquad
    \notationdef{notation-vector-bar-s-i-cluster-size}{\bm{\bar s}_i} \delequal \E \bm S_i = (\bar s_{1 \leftarrow i},\ \bar s_{2 \leftarrow i},\ldots,\bar s_{d \leftarrow i})^\top.
    \label{def: bar s i, ray, expectation of cluster size}
\end{align}
Assumption~\ref{assumption: subcriticality} ensures that the $\bar{\bm s}_i$'s are linearly independent.
Next, let
\begin{align}
    \notationdef{notation-alpha-*-j-cluster-size}{\alpha^*(j)} \delequal 
        \min_{l \in [d]}\alpha_{j\leftarrow l},
    \qquad 
    \notationdef{notation-l-*-j-cluster-size}{l^*(j)}  \delequal 
        \arg\min_{l \in [d]}\alpha_{j\leftarrow l}.
    \label{def: cluster size, alpha * l * j}
\end{align}
By Assumptions~\ref{assumption: heavy tails in B i j} and \ref{assumption: regularity condition 2, cluster size, July 2024},
for each $j \in [d]$
the argument minimum  $l^*(j)$ uniquely exists  and $\alpha^*(j) > 1$.
Recall that $\powersetTilde{d}$
is the collection of all {non-empty} subsets of $[d]$.
The \emph{tail indices} are defined by
\begin{align}
    \notationdef{notation-cost-function-bm-j-cluster-size}{\alpha(\bm j)} \delequal 1 + \sum_{i \in \bm j} \big(\alpha^*(i) - 1\big),
    \qquad 
        \forall \bm j \in \powersetTilde{d}.
    \label{def: cost function, cone, cluster}
\end{align}
As in Section~\ref{subsec: MRV},
we adopt the convention $\alpha(\emptyset) = 0$.
The \emph{rate functions}
are defined by
\begin{align}
    \notationdef{notation-lambda-j-n-cluster-size}{\lambda_{\bm j}(n)} \delequal
    n^{-1}\prod_{ i \in \bm j  }n\P(B_{i \leftarrow l^*(i)} > n),
    \qquad
    \forall n \geq 1,\ \bm j \in \powersetTilde{d}.
    \label{def: rate function lambda j n, cluster size}
\end{align}
Note that $\lambda_{\bm j}(n) \in \RV_{ -\alpha(\bm j)  }(n)$.
Lastly, for each $i \in [d]$ and $\bm j \in \powersetTilde{d}$,
the limiting measure $\mathbf C^{\bm j}_i(\cdot)$ is supported on the cone $\R^d(\bm j)$ defined in \eqref{def: cone R d index i},
and takes the form
\begin{align}
    \mathbf C^{\bm j}_i(\cdot)
    & \delequal
    \int_{ w_j \geq 0\ \forall j \in \bm j  }
    \mathbbm{I}
    \Bigg\{
       \sum_{ j \in \bm j  }w_{j}\bar{\bm s}_j \in \ \cdot\ 
    \Bigg\}
    \cdot 
    g^{\bm j}_i(\bm w)
    \bigtimes_{ j \in \bm j }\frac{dw_j}{ (w_j)^{  \alpha^*(j) + 1  }   },
    \label{def: measure C i I, cluster, main paper}
\end{align}
where we write $\bm w = (w_j)_{j \in \bm j}$.
The explicit forms of the functions $g^{\bm j}_i(\bm w)$ and the limiting measures $\mathbf C^{\bm j}_i(\cdot)$ are not needed for stating the main results in Section~\ref{sec: sample path large deviation}, so we defer the details to Section~\ref{sec: appendix, tail asymptotics of Hawkes process clusters} in the Appendix. We now state the tail asymptotics of the Hawkes cluster sizes $\bm S_i$ in terms of $\MRV$.

\begin{theorem}[Theorem 3.2 of \cite{blanchet2025tailasymptoticsclustersizes}]
\label{theorem: main result, cluster size}
\linksinthm{theorem: main result, cluster size}
Under Assumptions~\ref{assumption: subcriticality}--\ref{assumption: regularity condition 2, cluster size, July 2024},
it holds for any $i \in [d]$ that
\begin{align}
    \P(\bm S_i \in \ \cdot\ )
    \in \MRV^*
    \Bigg(
        (\bar{\bm s}_j)_{j \in [d]},\ 
        \big(\alpha(\bm j)\big)_{ \bm j \subseteq [d] },\ 
        (\lambda_{\bm j})_{\bm j \in \powersetTilde{d}},\ 
        \big(  \mathbf C^{\bm j}_i \big)_{\bm j \in \powersetTilde{d}}
    \Bigg).
    \nonumber
\end{align}
That is,
given $i \in [d]$ and  $\bm j \subseteq [d]$ with $\bm j \neq \emptyset$, if a Borel measurable set $A \subseteq \R^d_+$ is bounded away from  $\bar{\R}^d_\leqslant(\bm j,\epsilon)$ under some $\epsilon > 0$,
then
 \begin{equation}\label{claim, theorem: main result, cluster size}
     \begin{aligned}
        \mathbf C^{\bm j}_i(A^\circ) 
    & \leq 
        \liminf_{n \to \infty}
        \frac{
        \P(n^{-1}\bm S_i \in A)
        }{
            \lambda_{\bm j}(n)
        }
    \leq 
        \limsup_{n \to \infty}
        \frac{
        \P(n^{-1}\bm S_i \in A)
        }{
            \lambda_{\bm j}(n)
        }
    \leq 
    \mathbf C^{\bm j}_i(A^-) < \infty.
     \end{aligned}
 \end{equation}
Here, $\bar{\R}^d_\leqslant(\bm j,\epsilon)$ is defined in \eqref{def: enlarged cone R d index i epsilon},
the rate functions $\lambda_{\bm j}(\cdot)$ are defined in \eqref{def: rate function lambda j n, cluster size},
and the measures $\mathbf C_i^{\bm j}(\cdot)$ are defined in \eqref{def: measure C i I, cluster, main paper}.
Furthermore, for any Borel measurable set $A \subseteq \R^d_+$  that is bounded away from  $\bar{\R}^d(\{1,2,\ldots,d\},\epsilon)$ under some  $\epsilon > 0$,
\begin{align}
    \lim_{n \to \infty}n^{\gamma}\cdot\P(n^{-1}\bm S_i \in A) = 0,\qquad\forall \gamma > 0.
    \label{claim, 2, theorem: main result, cluster size}
\end{align}
\end{theorem}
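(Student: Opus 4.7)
The plan is to exploit the cluster fixed-point equation \eqref{def: fixed point equation for cluster S i} together with the branching structure behind $\bm S_i$, and to induct on the cardinality $|\bm j|$ of the cone index. Iterating \eqref{def: fixed point equation for cluster S i} realizes $\bm S_i$ as the total progeny of a multi-type Galton--Watson tree $\mathcal T_i$ rooted at a type-$i$ ancestor, where each type-$j$ vertex independently spawns $(B_{l \leftarrow j})_{l \in [d]}$ children. Sub-criticality (Assumption~\ref{assumption: subcriticality}) provides a law of large numbers for sub-clusters: a ``chunk'' of $m$ type-$l$ offspring generates a sub-cluster of size approximately $m\bar{\bm s}_l$. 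Heavy tails therefore propagate into $\bm S_i$ only through vertices whose offspring count $B_{l \leftarrow j}$ is anomalously large, and Assumption~\ref{assumption: regularity condition 2, cluster size, July 2024} identifies $l^*(j)$ as the uniquely dominant type of offspring to over-produce from a type-$j$ vertex.

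For the lower bound in \eqref{claim, theorem: main result, cluster size}, I would consider configurations of $\mathcal T_i$ in which, for each $j \in \bm j$, exactly one type-$j$ vertex produces $\Theta(nw_j)$ type-$l^*(j)$ children with some $w_j > 0$, while every other vertex behaves typically. Such configurations place $n^{-1}\bm S_i \approx \sum_{j \in \bm j}w_j\bar{\bm s}_j$, and their probability factorizes as $\prod_{j \in \bm j}\P\big(B_{l^*(j) \leftarrow j} > nw_j\big)$ up to combinatorial factors, matching the scaling $\lambda_{\bm j}(n)$. Integrating over admissible $\bm w$ and incorporating the expected number and type distribution of potential host vertices produces the density $g^{\bm j}_i(\bm w)$ and hence the limiting measure $\mathbf C^{\bm j}_i$. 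For the matching upper bound, a union bound over the choice of ``big-jump'' vertices combined with Breiman-type and Potter's-inequality arguments shows that any realization with $n^{-1}\bm S_i \in A$ (where $A$ is bounded away from $\bar{\R}^d_\leqslant(\bm j,\epsilon)$) must feature at least $|\bm j|$ big jumps of the specified types and sizes; fewer big jumps would confine $\bm S_i$ to a lower-complexity cone, and alternative type patterns (over-producing $l \neq l^*(j)$ from a type-$j$ vertex) are sub-dominant thanks to Assumption~\ref{assumption: regularity condition 2, cluster size, July 2024}.

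The hard part is establishing the super-polynomial decay \eqref{claim, 2, theorem: main result, cluster size} on sets bounded away from the full cone $\bar{\R}^d([d],\epsilon)$. The strategy is to truncate each $\tilde B_{l \leftarrow j}$ at a slowly growing threshold $n^\delta$ for small $\delta > 0$: the probability that any of these variables exceeds $n^\delta$ is $\bo(n^{-\alpha \delta})$ with $\alpha \delequal \min_{i,j}\alpha_{j \leftarrow i} > 1$, and can be driven below $n^{-\gamma}$ for any prescribed $\gamma > 0$ by taking $\delta$ large enough. Conditionally on no truncation event, all offspring counts are bounded, so a Chernoff / sub-Gaussian concentration argument propagated inductively along generations of $\mathcal T_i$ yields exponential decay of the truncated cluster-size vector around its mean $\bar{\bm s}_i$. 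The principal subtlety here is ensuring that the inductive concentration bound does not blow up in the number of generations, which requires using sub-criticality quantitatively to obtain a geometric decay in the typical depth of $\mathcal T_i$. Combining the truncation estimate with the concentration bound yields the required super-polynomial decay and upgrades the $\MRV$ conclusion to $\MRV^*$.
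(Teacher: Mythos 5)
The paper does not prove Theorem~\ref{theorem: main result, cluster size}; it is imported verbatim as Theorem~3.2 of \cite{blanchet2025tailasymptoticsclustersizes} and only restated here in the Hawkes-cluster setting, so there is no in-paper argument against which to compare your sketch. Evaluating it on its own terms: the picture you give for the $\MRV$ part — realize $\bm S_i$ as the progeny of a multi-type Galton--Watson tree by iterating \eqref{def: fixed point equation for cluster S i}, use sub-criticality as an LLN to replace typical sub-clusters by $m\bar{\bm s}_l$, attribute the heavy tail to one anomalously prolific vertex of type $l^*(j)$ per $j\in\bm j$ with cost $\prod_{j\in\bm j}\P(B_{l^*(j)\leftarrow j}>nw_j)$ matching $\lambda_{\bm j}(n)$ — is the expected mechanism, though the upper bound you gesture at (``union bound over big-jump vertices plus Breiman/Potter'') needs substantial uniformity estimates to deliver the $\mathbb M$-convergence form in \eqref{claim, theorem: main result, cluster size}.

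The $\MRV^*$ claim \eqref{claim, 2, theorem: main result, cluster size} is where your sketch has a genuine gap. You truncate each $\tilde B_{l\leftarrow j}$ at $n^\delta$ and propagate a Chernoff bound on the truncation complement, but these two steps impose incompatible constraints on $\delta$. Driving the truncation probability $\bo(n^{-\alpha\delta})$ below $n^{-\gamma}$ for arbitrary $\gamma>0$ requires $\delta>\gamma/\alpha$, which is unbounded as $\gamma\to\infty$; yet the concentration step requires $\delta<1$, since once $\delta\ge1$ a single allowed jump already reaches scale $n$ and the bound $\exp(-cn^{1-\delta})$ loses all force. For $\gamma>\alpha$ no choice of $\delta$ works, and your own wording (``slowly growing threshold $n^\delta$ for small $\delta>0$'' immediately followed by ``taking $\delta$ large enough'') betrays this tension. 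The super-polynomial decay in \eqref{claim, 2, theorem: main result, cluster size} is not a one-shot truncation phenomenon: it reflects the fact that big jumps push $\bm S_i$ \emph{into} the cone $\bar{\R}^d([d],\epsilon)$ (each contributes along some $\bar{\bm s}_l$), so a set $A$ bounded away from that cone cannot be reached by any finite configuration of big jumps, and each additional anomalous vertex costs a fresh power-law factor. The argument therefore has to iterate the big-jump decomposition — peeling off $K$ anomalous vertices, concentrating the residual, and letting $K$ grow with $\gamma$ — which is really a continuation of the induction on $|\bm j|$ that you invoke for the $\MRV$ part, not the separate single-truncation scheme you describe.
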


To conclude, we note that the rare events $\{n^{-1}\bm S_i \in A\}$ are driven by multiple big jumps (i.e., multiple nodes giving birth to a disproportionately large number of children) that exhibit sophisticated spatiotemporal structures in the underlying branching processes.
In particular, the contributions of these big jumps align with the vectors $(\bar{\bm s}_l)_{l \in [d]}$,
and, given a sufficiently general set $A \subseteq \R^d_+$,
the exact asymptotics of $\P(n^{-1}\bm S_i \in A)$ are dictated by the most likely combination of big jumps that can push $n^{-1}\bm S_i$ into the set $A$, which corresponds to the index set $\bm j \subseteq [d]$ leading to non-trivial bounds in \eqref{claim, theorem: main result, cluster size}. 
For further details, see also Remark~3 of \cite{blanchet2025tailasymptoticsclustersizes}.

\section{Sample Path Large Deviations}
\label{sec: sample path large deviation}

This section presents the main results of this paper and is structured as follows.
Section~\ref{subsec: LD for levy process, MRV} establishes sample path LDPs for L\'evy processes with $\MRV$ increments.
Building upon this framework, Section~\ref{subsec: LD for Hawkes process} then 
establishes sample path LDPs for the multivariate Hawkes process 
$
\bm N(t) = (N_1(t),N_2(t),\ldots,N_d(t))^\top
$
under the presence of power-law heavy tails in the mutual excitation rates.
Section~\ref{subsec: appendix, proof of main result, Hawkes} outlines the proof for sample path LDPs of heavy-tailed Hawkes processes.
We defer the underlying proofs to the Appendix.

\subsection{L\'evy Processes with $\MRV$ Increments}
\label{subsec: LD for levy process, MRV}

We begin by briefly reviewing the law of a $d$-dimensional L\'evy process $\notationdef{notation-levy-process}{\bm{L}} = \{ \bm L(t):\ t \geq 0\}$,
which
is fully characterized by its generating triplet $(\bm c_{\bm L},\bm \Sigma_{\bm L},\nu)$. Here,
\begin{itemize}
    \item 
        $\bm c_{\bm L} \in \mathbb{R}^d$ is the constant drift of the process,

    \item 
        the positive semi-definite matrix $\bm \Sigma_{\bm L} \in \R^{d \times d}$ represents the covariance of the Brownian motion component in $\bm L(t)$, 

    \item 
        and the L\'evy measure $\notationdef{notaiton-levy-measure-nu}{\nu}$ is a Borel measure supported on $\R^d\setminus\{\bm 0\}$ that characterizes the intensity of jumps in $\bm L(t)$, and satisfies $\int (\norm{\bm x}^2\wedge 1) \nu(d\bm x) < \infty$.
\end{itemize}
More precisely, the L\'evy process $\bm L(t)$ admits the L\'evy–Itô decomposition
\begin{align}
    \bm L(t) \distequal \bm c_{\bm L}t + \bm \Sigma_{\bm L}^{1/2} 
    \bm B(t)+ 
    \int_{\norm{\bm x} \leq 1}\bm x\big[ \text{PRM}_{\nu}([0,t]\times d\bm x) - t\nu(d\bm x) \big] 
    + 
    \int_{\norm{\bm x} > 1}\bm x\text{PRM}_{\nu}( [0,t]\times d\bm x ),
    \label{prelim: levy ito decomp}
\end{align}
where $\bm B$ is a standard Brownian motion in $\R^d$, 
and $\text{PRM}_{\nu}$ is a Poisson random measure with intensity measure $\mathcal L_{(0,\infty)}\times \nu$ and is independent of $\bm B$.
Here, $\notationdef{notation-lebesuge-measure-on-R}{\mathcal L_I}$ is the Lebesgue measure restricted on the interval $I \subseteq \R$.
We refer the reader to, e.g., \cite{sato1999levy} for a standard treatment of L\'evy processes.

The goal of this subsection is to establish Theorem~\ref{theorem: LD for levy, MRV} and characterize the sample path LDPs for L\'evy processes exhibiting multivariate hidden regular variation in the increments.
Beyond its application for sample path LDPs of Hawkes processes that will be explored in Section~\ref{subsec: LD for Hawkes process},
 the theoretical framework developed in Theorem~\ref{theorem: LD for levy, MRV} is also of independent interest due to the broad relevance of L\'evy processes and hidden regular variation.
Specifically, we 
focus on the case where the L\'evy measure $\nu$ is supported on $\R^d_+$,
and 
characterize the hidden regular variation in $\nu(\cdot)$ via the notion of $\MRV$ (see Definition~\ref{def: MRV}) by imposing the following assumption.

\begin{assumption}[$\MRV^*$ increments]
\label{assumption: ARMV in levy process}
The L\'evy measure $\nu$ of $\bm L(t)$ is supported on $\R^d_+ = [0,\infty)^d$ and satisfies 
$
\nu \in
    \MRV^*
    \Big(
(\bar{\bm s}_i)_{i \in [d]},
\big(\alpha(\bm j)\big)_{ \bm j \subseteq [d] },
(\lambda_{\bm j})_{ \bm j \in \powersetTilde{d} }, (\mathbf C_{\bm j})_{ \bm j \in \powersetTilde{d} }
\Big)
$
under
\begin{itemize}
    \item a collection of vectors $(\bar{\bm s}_i)_{i \in [d]}$ in $\R^d_+$ that are linearly independent,
    \item a collection of real numbers $(\alpha(\bm j))_{\bm j \subseteq [d]}$ that are strictly monotone w.r.t.\ $\bm j$ (see \eqref{def: monoton sequence of tail indices in MHRV}) with $\alpha(\emptyset) = 0$,
    $\alpha(\bm j) >1\ \forall \bm j \in \powersetTilde{d}$, and $\alpha(\bm j) \neq \alpha(\bm j^\prime)\ \forall \bm j,\bm j^\prime \in \powersetTilde{d}$ with $\bm j\neq \bm j^\prime$,
    \item regularly varying functions $\lambda_{\bm j}(n) \in \RV_{ -\alpha(\bm j) }(n)$ (for each $\bm j \in \powersetTilde{d}$),
    \item Borel measures $(\mathbf C_{\bm j})_{\bm j \subseteq [d]}$ where, for each $\bm j \in \powersetTilde{d}$,
    the measure $\mathbf C_{\bm j}$ is
    supported on $\R^d(\bm j)$,
        such that 
        $
        \mathbf C_{\bm j}(A) < \infty
        $
        holds for any $\epsilon > 0$ and Borel set $A \subseteq \R^d_+$ bounded away from $\bar\R^d_\leqslant(\bm j,\epsilon)$ (see \eqref{def: cone R d i basis S index alpha}).
\end{itemize}
Furthermore,
\begin{align}
    n\lambda_{\bm j}(n) = \prod_{ i \in \bm j }n\lambda_{ \{i\} }(n),
    \qquad
    \forall \bm j \in \powersetTilde{d}.
    \label{cond, additivity of rate functions for MRV, Ld for Levy}
\end{align}
\end{assumption}

We stress that both Assumption~\ref{assumption: ARMV in levy process} and Theorem~\ref{theorem: LD for levy, MRV} are \emph{tailored for the application to heavy-tailed Hawkes processes}.
For instance, the condition~\eqref{cond, additivity of rate functions for MRV, Ld for Levy} on $\lambda_{\bm j}(\cdot)$
is met by \eqref{def: rate function lambda j n, cluster size}---the rate functions for the $\MRV$ tail of Hawkes process clusters.
Similarly,
an implication of \eqref{cond, additivity of rate functions for MRV, Ld for Levy} is that 
\begin{align}
    \alpha(\bm j) - 1 = \sum_{ i \in \bm j  }\big(\alpha(\{i\}) - 1\big),
    \qquad
    \forall \bm j \in \powersetTilde{d},
    \label{implication on tail indices, cond, additivity of rate functions for MRV, Ld for Levy}
\end{align}
due to $\lambda_{\bm j}(n) \in \RV_{ -\alpha(\bm j) }(n)$ for each $\bm j \in \powersetTilde{d}$ in $\MRV$.
Again, this matches \eqref{def: cost function, cone, cluster}---the tail indices for the $\MRV$ tail of Hawkes process clusters.
Moreover, instead of $\MRV$, Assumption~\ref{assumption: ARMV in levy process} imposes the stronger $\MRV^*$ condition (see condition~\eqref{cond, outside of the full cone, finite index, def: MRV} in Definition~\ref{def: MRV}), which agrees with the statements in Theorem~\ref{theorem: main result, cluster size} for the tail asymptotics of Hawkes process clusters.
For L\'evy processes in general contexts,
a relaxed form of Assumption~\ref{assumption: ARMV in levy process} may be considered, which yields a correspondingly weaker version of Theorem~\ref{theorem: LD for levy, MRV}. 
A detailed discussion of this relaxation is provided in Remark~\ref{remark: relaxation of assumption, LD for Levy}.

Throughout the rest of this paper, we use $\notationdef{notation-cadlag-space-D}{\D[0,T]} = \D\big([0,T],\R^d)$
to denote the space of $\R^d$-valued càdlàg functions with compact domain $[0,T]$,
and
$\notationdef{notation-cadlag-space-D-infty}{\D[0,\infty)} = \D\big([0,\infty),\R^d)$ for the space of $\R^d$-valued càdlàg functions with unbounded domain $[0,\infty)$.
For the L\'evy process $\bm L(t)$, let
\begin{align}
        \notationdef{notation-mu-L-expectation-for-L}{\bm \mu_{\bm L}} \delequal \E \bm L(1) =  \bm c_{\bm L} + \int_{\bm x \in \R^d:\ \norm{\bm x} > 1 } \bm x \nu(d \bm x),
        \label{def: expectation of levy process}
    \end{align}
    where $\bm c_{\bm L}\in \R^d$ is the constant drift in the generating triplet of $\bm L(t)$.
Let
\begin{align}
    \bar{\bm L}_n(t) \delequal\bm L(nt)/n,\qquad
    \notationdef{notation-bar-L-n-scaled-levy-process}{\bar{\bm L}^{ _{[0,T]} }_n} \delequal 
\big\{  \bar{\bm L}_n(t):\ t \in [0,T]  \big\}.
\label{def: scale levy process bar L t}
\end{align}
The random element $\bar{\bm L}^{ _{[0,T]} }_n$
is the scaled version of the sample path of $\bm L(t)$ embedded in $\D[0,T]$.
Besides,
recall that we adopt notations $\bm k = (k_{i})_{ i \in \mathcal I } \in A^{ \mathcal I }$
for vectors of length $|\mathcal I|$ with each coordinate taking values in $A$ and indexed by elements in $\mathcal I$.
Informally speaking,
Theorem~\ref{theorem: LD for levy, MRV} establishes LDP of the form (as $n \to \infty$)
\begin{align}
    \P\bigg( \bar{\bm L}_n^{ \subZeroT{T} } \in \ \cdot \ \cap \breve \D^\epsilon_{ \bm{\mathcal{K}}; \bm \mu_{\bm L}  }[0,T]  \bigg)
    \sim 
    \breve{\mathbf C}^{[0,T]}_{ \bm{\mathcal{K}}; \bm \mu_{\bm L}  }(\cdot)\times 
    \prod_{\bm j \in \powersetTilde{d}} 
    \Big( n \lambda_{\bm j}(n) \Big)^{ \mathcal K_{\bm j}   },
    \qquad\forall 
    \bm{\mathcal{K}} = \big(\mathcal K_{\bm j})_{\bm j \in \powersetTilde{d}} \in \Z_+^{\powersetTilde{d}}.
    \label{informal statement, LD for Levy}
\end{align}
Here, the $\lambda_{\bm j}(\cdot)$'s are the rate functions for the $\MRV^*$ condition in Assumption~\ref{assumption: ARMV in levy process},
the vector $\bm{\mathcal{K}} = \big(\mathcal K_{\bm j})_{\bm j \in \powersetTilde{d}}$ corresponds to a \emph{configuration of big jumps} in the L\'evy process,
and the set $\breve \D^\epsilon_{ \bm{\mathcal{K}}; \bm \mu_{\bm L}  }[0,T] $ contains all paths representing \emph{the typical behavior of $\bar{\bm L}_n^{\subZeroT{T}}$ perturbed by big jumps of the $\bm{\mathcal K}$-configuration}.
Therefore, by \eqref{informal statement, LD for Levy}, the probability of observing a rare event in $\bar{\bm L}_n^{ \subZeroT{T} }$ driven by large jumps of the $\bm{\mathcal K}$-configuration is of order 
$
\prod_{\bm j \in \powersetTilde{d}} 
    \big( n \lambda_{\bm j}(n) \big)^{ \mathcal K_{\bm j}   };
$
furthermore, under this scaling, the leading coefficient of the rare-event probabilities is captured by the limiting measure $\breve{\mathbf C}^{[0,T]}_{ \bm{\mathcal{K}}; \bm \mu_{\bm L}  }(\cdot)$.

To formally present Theorem~\ref{theorem: LD for levy, MRV},
we introduce the rigorous definitions of the notions involved in \eqref{informal statement, LD for Levy}.
Given 
$\bm x \in \R^d$, $\epsilon \geq 0$, $\bm{\mathcal K} = (\mathcal K_{\bm j})_{\bm j \in \powersetTilde{d}} \in \mathbb Z_+^{ \powersetTilde{d}}$, and an interval $I$ that is either of the form $I = [0,T]$ or $I = [0,\infty)$,
let  $\bm 1 = \{ \bm 1(t) = t:\ t \in I  \}$ be the linear function with slope $1$,  and define
\begin{equation}    \label{def: j jump path set with drft c, LD for Levy, MRV}
    \begin{aligned}
        \notationdef{notation-LD-cadlag-space-D-j-c}{\barDxepskT{\bm x}{\epsilon}{\bm{\mathcal K} }{(I)}}
    & \delequal 
    \Bigg\{
        \bm x\bm 1 + \sum_{ \bm j \in \powersetTilde{d} }\sum_{k = 1}^{ \mathcal K_{\bm j} }\bm w_{\bm j,k} \mathbbm{I}_{ [t_{\bm j, k},\infty) \cap I }:
        \\
    &
        t_{\bm j,k} \in I \setminus \{0\}\text{ and }\bm w_{\bm j,k} \in \bar\R^d(\bm j, \epsilon)\ 
        \forall \bm j \in \powersetTilde{d},\ k \in [\mathcal K_{\bm j}];
        \ t_{\bm j,k} \neq t_{\bm j^\prime,k^\prime}\ \forall (\bm j,k)\neq (\bm j^\prime,k^\prime)
    \Bigg\}.
    \end{aligned}
\end{equation}
In other words, ${\barDxepskT{\bm x}{\epsilon}{\bm{\mathcal K}}{(I)}}$
is the collection of piece-wise linear functions $\xi$ of the form
\begin{align}
    \xi(t) = \bm x t + \sum_{ \bm j \in \powersetTilde{d} }\sum_{ k = 1 }^{ \mathcal K_{\bm j} }\bm w_{\bm j, k} \mathbbm{I}_{ [t_{\bm j, k},\infty) \cap I }(t),\qquad\forall t \in I,
    \label{expression for xi in breve D set}
\end{align}
where
each $\bm w_{\bm j,k}$ belongs to the cone $\bar{\R}^d(\bm j,\epsilon)$ (see \eqref{def: enlarged cone R d index i epsilon}),
and the jump times $(t_{\bm j,k})_{ \bm j \in \powersetTilde{d},\ k \in [\mathcal K_{\bm j}]  }$ \emph{do not coincide} with one another.
We note that in \eqref{expression for xi in breve D set},
the path $\xi$ vanishes at the origin (i.e., $\xi(0) = \bm 0$) and, for each $\bm j \in \powersetTilde{d}$, the path $\xi$ has at most $\mathcal K_{\bm j}$ jumps (i.e., discontinuities) whose directions lie in the cone $\bar{\R}^d(\bm j,\epsilon)$.
Besides, under  $\bm{\mathcal K} = (0,0,\ldots,0)$, note that ${\barDxepskT{\bm x}{\epsilon}{\bm 0}{(I)}}$ contains only the path $\xi(t) = t \bm x$.
Meanwhile, since we only consider $I = [0,T]$ or $I = [0,\infty)$ in \eqref{def: j jump path set with drft c, LD for Levy, MRV} and \eqref{expression for xi in breve D set} throughout this paper,
we have either  $\mathbbm{I}_{ [t_{\bm j, k},\infty) \cap I } = \mathbbm{I}_{ [t_{\bm j, k},T]}$ or $\mathbbm{I}_{ [t_{\bm j, k},\infty) \cap I } = \mathbbm{I}_{ [t_{\bm j, k},\infty)}$.
Given the tail indices $\big(\alpha(\bm j)\big)_{ \bm j \subseteq [d] }$ in Assumption~\ref{assumption: ARMV in levy process},
let
\begin{align}
    \notationdef{notation-cost-function-LD-for-Levy}{\breve{c}(\bm{\mathcal K})} \delequal 
    \sum_{ \bm j \in \powersetTilde{d} }\mathcal K_{\bm j} \cdot \Big( \alpha(\bm j) - 1 \Big),
    \qquad \forall 
    \bm{\mathcal K} = (\mathcal K_{\bm j})_{ \bm j \in \powersetTilde{d} } \in \mathbb Z_+^{ \powersetTilde{d} }.
    \label{def: cost alpha j, LD for Levy MRV}
\end{align}
Due to $\alpha(\bm j) > 1\ \forall \bm j \in \powersetTilde{d}$ (see Assumption~\ref{assumption: ARMV in levy process}),
we have $\breve c(\bm{\mathcal K}) > 0$ whenever $\bm{\mathcal K} \neq \bm 0$.
Meanwhile, note that in \eqref{informal statement, LD for Levy}, we have
$
\prod_{\bm j \in \powersetTilde{d}} 
    \big( n \lambda_{\bm j}(n) \big)^{ \mathcal K_{\bm j}   } \in \RV_{ -\breve c(\bm{\mathcal K}) }(n).
$
Next, given $\bm x\in \R^d$, $\bm{\mathcal K} = (\mathcal K_{\bm j})_{ \bm j \in \powersetTilde{d} } \in \mathbb Z_+^{\powersetTilde{d} } \setminus \{\bm 0\}$, and an interval $I = [0,T]$ or $I = [0,\infty)$, we define a Borel measure on $\D(I)$ by 
\begin{align}
     \notationdef{notation-measure-C-j-L-LD-for-Levy}{\breve{\mathbf C}^{ I }_{\bm{\mathcal K};\bm x}(\ \cdot\ )}
    & \delequal
    \frac{1}{\prod_{ \bm j \in \powersetTilde{d} } \mathcal K_{\bm j}!} 
    \label{def: measure C type j L, LD for Levy, MRV}        
    \\
    & \quad\cdot
    \int 
    \mathbbm{I}\Bigg\{
        \bm x \bm 1  + \sum_{ \bm j \in \powersetTilde{d} } \sum_{ k \in [\mathcal K_{\bm j}] } \bm w_{\bm j,k}\mathbbm{I}_{ [t_{\bm j, k},\infty)\cap I } \in\ \cdot\ 
    \Bigg\}
    \bigtimes_{\bm j \in \powersetTilde{d}}
    \bigtimes_{ k \in [\mathcal K_{\bm j}] }\bigg( (\mathbf C_{ \bm j } \times  \mathcal L_{I})\Big(d (\bm w_{\bm j, k}, t_{ \bm j,k })\Big)\bigg),
    \nonumber
\end{align}
where the $\mathbf C_{\bm j}$'s are the limiting measures
in the $\MRV^*$ condition of Assumption~\ref{assumption: ARMV in levy process},
$\mathcal L_I$ is the Lebesgue measure restricted on the interval $I$,
and we use $\nu_1 \times \nu_2$ to denote the product measure of $\nu_1$ and $\nu_2$.
Note that, by definitions in \eqref{def: j jump path set with drft c, LD for Levy, MRV}
and the fact that  $\mathbf C_{\bm j}$ is supported on $\R^d(\bm j)$ for each $\bm j \in \powersetTilde{d}$,
the measure
$
{\breve{\mathbf C}^{ I }_{\bm{\mathcal K};\bm x}}
$
is supported on ${\barDxepskT{\bm x}{0}{\bm{\mathcal K}}{(I)}}$.

As will be shown in Theorem~\ref{theorem: LD for levy, MRV},
scenarios corresponding to different $\bm{\mathcal K}$ in \eqref{informal statement, LD for Levy} can be merged together if the rates of decay 
$
\prod_{\bm j \in \powersetTilde{d}} 
    \big( n \lambda_{\bm j}(n) \big)^{ \mathcal K_{\bm j}   } 
$
coincide.
This occurs frequently under the condition~\eqref{cond, additivity of rate functions for MRV, Ld for Levy} in Assumption~\ref{assumption: ARMV in levy process}.
Therefore, this phenomenon is particularly relevant to our analysis of Hawkes processes, as condition~\eqref{cond, additivity of rate functions for MRV, Ld for Levy} is tailored to the $\MRV^*$ tails of Hawkes process clusters (see  \eqref{def: rate function lambda j n, cluster size}).
Specifically,
let
\begin{align}
    c(\bm k) & \delequal \sum_{i \in [d]}k_i \cdot \big(\alpha(\{i\}) - 1\big),
    \qquad\forall \bm k \in \mathbb Z_+^{d},
    \label{def, cost function, k jump set, LD for Levy}
    \\
    \notationdef{notation-scale-function-for-Levy-LD}{\breve \lambda_{\bm k}(n)}
    & \delequal 
    \prod_{ i \in [d] } \Big( n\lambda_{ \{i\} }(n)\Big)^{ k_{i}},
    \qquad\forall \bm k \in \mathbb Z_+^{ d },
    \label{def: scale function for Levy LD MRV}
\end{align}
where $\big(\alpha(\bm j)\big)_{ \bm j \in \powersetTilde{d} }$ and $\big(\lambda_{\bm j}(\cdot)\big)_{\bm j \in \powersetTilde{d}}$ are the tail indices and rate functions for the $\MRV^*$ condition in Assumption~\ref{assumption: ARMV in levy process}.
Note that for each $\bm k \in \mathbb Z_+^{ d }$, we have $\breve \lambda_{\bm k}(n) \in \RV_{-c(\bm k)}(n)$.
Moreover, Definition~\ref{def: assignment, jump configuration k} captures the correspondence between 
$\breve \lambda_{\bm k}(n)$ in \eqref{def: scale function for Levy LD MRV} and
the rates of decay 
$
\prod_{\bm j \in \powersetTilde{d}} 
    \big( n \lambda_{\bm j}(n) \big)^{ \mathcal K_{\bm j}   } 
$
in \eqref{informal statement, LD for Levy}.
Here,
for each non-empty index set $\bm j \in \powersetTilde{d}$, 
let $\bm e(\bm j) = \big(e_{1}(\bm j),\ldots,e_{d}(\bm j)\big)^\top$,
where $e_{l}(\bm j) = \mathbbm{I}\{ l \in \bm j \}$.
That is, in the vector $\bm e({\bm j})$, each coordinate $e_l(\bm j)$ indicates whether $l$ belongs to the index set $\bm j$ or not.

\begin{definition}[Allocation]
\label{def: assignment, jump configuration k}
    Given $\bm k = (k_1,\ldots,k_d)^\top \in \mathbb Z^d_+$,
the vector $\bm{\mathcal K} = (\mathcal K_{\bm j})_{ \bm j \in \powersetTilde{d} } \in \mathbb Z_+^{\powersetTilde{d}}$ is said to be \textbf{an allocation of }$\bm k$ if 
\begin{align}
    \sum_{\bm j \in \powersetTilde{d}}\mathcal K_{\bm j} \bm e({\bm j}) = \bm k.
    \label{def, assignment of k jump set}
\end{align} 
We use $\mathbb A(\bm k)$ to denote {the set of all allocations of $\bm k$}. 
\end{definition}

\begin{remark}
\label{remark: implications, def of allocation}
We note a few important implications of Definition~\ref{def: assignment, jump configuration k}.
First,
for each $\bm k \in \mathbb Z_+^d$, there exist only finitely many allocations (i.e., $|\mathbb A(\bm k)|<\infty$).
Second, by \eqref{implication on tail indices, cond, additivity of rate functions for MRV, Ld for Levy},
\begin{align}
    \breve c(\bm{\mathcal K}) = c(\bm k),
    \qquad\forall \bm k \in \mathbb Z_+^d,\ \bm{\mathcal K} \in \mathbb A(\bm k),
    \label{property, cost c under allocation}
\end{align}
where $c(\cdot)$ and $\breve c(\cdot)$ are defined in \eqref{def, cost function, k jump set, LD for Levy} and \eqref{def: cost alpha j, LD for Levy MRV}, respectively.
Similarly, by \eqref{cond, additivity of rate functions for MRV, Ld for Levy},
\begin{align}
    \breve \lambda_{\bm k}(n)
    =
    \prod_{ \bm j \in \powersetTilde{d} }
    \Big( n\lambda_{ \bm j }(n)\Big)^{ \mathcal K_{\bm j}},
    \qquad\forall \bm k \in \mathbb Z_+^d \setminus\{\bm 0\} ,\ 
    \bm{\mathcal K} = (\mathcal K_{\bm j})_{ \bm j \in \powersetTilde{d} }\in \mathbb A(\bm k).
    \label{property, rate function for assignment mathcal K, LD for Levy}
\end{align}
Furthermore, 
given $\bm x \in \R^d$, $\bm k \in \mathbb Z^d_+$, and an interval $I = [0,T]$ or $I = [0,\infty)$, let 
\begin{align}
    \notationdef{notation-D-k-jump-sets-wrt-basis}{\D_{\bm k;\bm x}(I)}
    & \delequal 
    \Bigg\{
        \bm x \bm 1 + \sum_{j \in [d]}\sum_{ k = 1 }^{k_j}w_{j,k} \bar{\bm s}_j \mathbbm{I}_{ [t_{k,j},\infty)\cap I  }:
        \label{def: k jump path set with drft c, wrt basis, LD for Levy, MRV}
        \\
        &\qquad\qquad\qquad\qquad\qquad
        t_{j,k} \in I\setminus\{0\}\text{ and }w_{j,k} \geq 0\ \forall j \in [d],\ k \in [k_j]
    \Bigg\}
    \nonumber
\end{align}
be the collection of all piece-wise linear functions that have slope $\bm x$ and make jumps along the vectors $(\bar{\bm s}_j)_{j \in [d]}$.
Note that in \eqref{def: k jump path set with drft c, wrt basis, LD for Levy, MRV}, we allow the $t_{j,k}$'s---the arrival times of jumps---to coincide,
which further
allows the paths in $\D_{\bm k;\bm x}(I)$ to exhibit discontinuities that lie in the cones $\big(\R^d(\bm j)\big)_{ \bm j \in \powersetTilde{d}  }$ (see \eqref{def: cone R d index i}).
Therefore,
by definitions in \eqref{def: k jump path set with drft c, wrt basis, LD for Levy, MRV} and \eqref{def: j jump path set with drft c, LD for Levy, MRV},
we have 
$
{\barDxepskT{\bm x}{0}{\bm{\mathcal K} }{(I)}}
\subseteq \D_{ \bm k;\bm x }(I)
$
for any $\bm k \in \mathbb Z_+^d$ and $\bm{\mathcal K} \in \mathbb A(\bm k)$.
As a result, given $\bm k \in \mathbb Z_+^d$ and $\bm{\mathcal K} \in \mathbb A(\bm k)$, the support of 
$
{\breve{\mathbf C}^{ I }_{\bm{\mathcal K};\bm x}}
$
defined in \eqref{def: measure C type j L, LD for Levy, MRV}        
is a subset of $\D_{ \bm k;\bm x }(I)$.
\end{remark}


Now, we are ready to state Theorem~\ref{theorem: LD for levy, MRV}.
Given $\bm x \in \R^d$, $\epsilon \geq 0$, $\bm k \in \mathbb Z_+^{ d } \setminus \{\bm 0\}$, and interval $I$ of the form $I = [0,T]$ or $I = [0,\infty)$,
we define
\begin{align}
    \notationdef{notation-jump-sets-with-smaller-cost-LD-for-levy}{\barDxepskT{\bm x}{\epsilon}{\leqslant \bm k}{(I)}}
    \delequal 
    \bigcup_{
        \substack{
            \bm{\mathcal K}\in  \mathbb Z_+^{ \powersetTilde{d} }:
            \\
            \bm{\mathcal K} \notin \mathbb A(\bm k),\ \breve c(\bm{\mathcal K}) \leq c(\bm k)
        }
    }
    \barDxepskT{\bm x}{\epsilon}{\bm{\mathcal K} }{(I)}.
    \label{def: path with costs less than j jump set, drift x, LD for Levy MRV}
\end{align}
For the càdlàg space $\D[0,T]$, recall the definition of the Skorokhod $J_1$ metric
\begin{align}
    \notationdef{notation-J1-metric}{\dj{[0,T]}(x,y)} \delequal 
    \inf_{\lambda \in \Lambda[0,T] } \sup_{t \in [0,T]}
    |\lambda(t) - t|
\vee \norm{ x(\lambda(t)) - y(t) },
\qquad
\forall x,y \in \D[0,T],
    \label{def: J1 metric on [0,T]}
\end{align}
where
$
\Lambda[0,T]
$
is the set of all homeomorphisms on $[0,T]$.
Theorem~\ref{theorem: LD for levy, MRV} establishes sample path LDPs for L\'evy processes with $\MRV$ increments 
w.r.t.\ the Skorokhod $J_1$ topology of $\D[0,T]$.

\begin{theorem}\label{theorem: LD for levy, MRV}
\linksinthm{theorem: LD for levy, MRV}
    Let Assumption~\ref{assumption: ARMV in levy process} hold.
    Let $T \in (0,\infty)$, $\bm k \in \mathbb Z_+^d \setminus \{\bm 0\}$,
    and let $B$ be a Borel set of $\D[0,T]$ equipped with the Skorokhod $J_1$ topology.
    Suppose that $B$
   is bounded away from $\barDxepskT{\bm{\mu}_{\bm L}}{\epsilon}{\leqslant \bm k}{[0,T]}$ under $\dj{[0,T]}$ for some $\epsilon > 0$.
   Then,
    \begin{equation}\label{claim, finite index, theorem: LD for levy, MRV}
        \begin{aligned}
            \sum_{\bm{\mathcal K} \in \mathbb A(\bm k)  }\breve{\mathbf C}_{\bm{\mathcal K};\bm \mu_{\bm L}}^{ _{[0,T]} }(B^\circ)
        & \leq 
        \liminf_{n \to \infty}
        \frac{
        \P(\bar{\bm L}^{ _{[0,T]} }_n \in B)
        }{
            \breve \lambda_{\bm k}(n)
        }
        \leq 
        \limsup_{n \to \infty}
        \frac{
        \P(\bar{\bm L}^{ _{[0,T]} }_n \in B)
        }{
            \breve \lambda_{\bm k}(n)
        }
        \leq 
         \sum_{ \bm{\mathcal K} \in \mathbb A(\bm k)  }\breve{\mathbf C}_{ \bm{\mathcal K};\bm \mu_{\bm L}}^{ _{[0,T]} }(B^-) < \infty,
        \end{aligned}
    \end{equation}
     where $\mathbb A(\bm k)$ is the set containing all allocations of $\bm k$ (see Definition~\ref{def: assignment, jump configuration k}).
\end{theorem}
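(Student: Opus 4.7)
My plan is to implement a Lévy--Itô truncation argument combined with the $\mathbb{M}$-convergence machinery referenced in Section~\ref{subsec: proof, M convergence and asymptotic equivalence}. For a small parameter $\delta > 0$ and a small angular tolerance $\epsilon' > 0$, I would decompose $\bm{L}(t) = \bm{L}^{\leq \delta n}(t) + \bm{L}^{> \delta n}(t)$ where $\bm{L}^{> \delta n}$ collects only jumps of norm exceeding $\delta n$, and further stratify these big jumps by the cones $\bar{\R}^d(\bm j, \epsilon')$ (removing overlaps with $\bar{\R}^d_{\leqslant}(\bm j, \epsilon')$) to obtain, for each $\bm j \in \powersetTilde{d}$, a compound Poisson component $\bm{L}^{\bm j}_n$ counting jumps aligned with type $\bm j$. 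Using the $\MRV^*$ assumption, the intensity of type-$\bm j$ big jumps on $[0, nT]$ is asymptotically $T \cdot n \lambda_{\bm j}(n) \cdot \mathbf{C}_{\bm j}(A_{\delta,\epsilon'})$ for an appropriate set $A_{\delta,\epsilon'}$, while the small-jump-plus-Brownian residual $\bm{L}^{\leq \delta n}(n\cdot)/n - \bm{\mu}_{\bm L} \bm{1}$ concentrates uniformly at rate $o(1)$ on $[0,T]$ by a Doob/maximal inequality for centered Lévy processes with bounded jumps.

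Second, I would introduce the simplified approximant $\tilde{\bm{L}}_n(t) \delequal \bm{\mu}_{\bm L} t + \sum_{\bm j} \bm{L}^{\bm j}_n(t)/n$ and verify an asymptotic equivalence in the $\mathbb{M}(\D[0,T] \setminus \barDxepskT{\bm{\mu}_{\bm L}}{\epsilon}{\leqslant \bm k}{[0,T]})$-sense at rate $\breve{\lambda}_{\bm k}(n)$. The key bound is that under $\dj{[0,T]}$, the event $\{\dj{[0,T]}(\bar{\bm L}_n, \tilde{\bm{L}}_n) > \epsilon/2\}$ has probability $o(\breve{\lambda}_{\bm k}(n))$: the contribution from the centered small-jump part is controlled by exponential moment bounds (letting $\delta \downarrow 0$ last), while the contribution from jumps lying near cone boundaries vanishes by continuity of $\mathbf{C}_{\bm j}$. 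This reduces the theorem to a sample-path LDP for $\tilde{\bm{L}}_n$.

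Third, for $\tilde{\bm{L}}_n$, I would perform a direct combinatorial calculation: condition on the total configuration $\bm{\mathcal{K}} = (\mathcal{K}_{\bm j})_{\bm j \in \powersetTilde{d}}$ of big jumps. By independence of the Poisson random measures across cones and by \eqref{property, rate function for assignment mathcal K, LD for Levy}, the probability that exactly this configuration occurs is asymptotically $\prod_{\bm j} \frac{(n T \lambda_{\bm j}(n) \mathbf{C}_{\bm j}(A_{\delta,\epsilon'}))^{\mathcal{K}_{\bm j}}}{\mathcal{K}_{\bm j}!}$, and conditional on this event the joint law of jump times and (normalized) sizes converges to the product form that defines $\breve{\mathbf{C}}^{[0,T]}_{\bm{\mathcal{K}};\bm \mu_{\bm L}}$ in \eqref{def: measure C type j L, LD for Levy, MRV}. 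Summing over $\bm{\mathcal K} \in \mathbb A(\bm k)$ (these all yield the same polynomial rate $\breve{\lambda}_{\bm k}(n)$ by \eqref{property, cost c under allocation}) produces the summed measure on the right-hand side of \eqref{claim, finite index, theorem: LD for levy, MRV}, while configurations $\bm{\mathcal{K}}$ with $\breve{c}(\bm{\mathcal{K}}) > c(\bm k)$ are negligible by \eqref{cond, outside of the full cone, finite index, def: MRV}, and those with $\breve{c}(\bm{\mathcal{K}}) < c(\bm k)$ concentrate on $\barDxepskT{\bm{\mu}_{\bm L}}{\epsilon}{\leqslant \bm k}{[0,T]}$ and hence do not intersect $B$ under the hypothesis. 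Applying a portmanteau argument for $\mathbb{M}$-convergence (upper bound on closed sets, lower bound on open sets) then yields both sides of \eqref{claim, finite index, theorem: LD for levy, MRV}.

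The main obstacle I anticipate is the asymptotic equivalence at the $J_1$ level: unlike the $M_1$ topology, $J_1$ does not tolerate merging of nearby jumps, so I must show that (i) no two large jumps of $\bm L$ cluster within a $\dj{[0,T]}$-distance of $\epsilon/2$ (which fails with probability $O(\breve{\lambda}_{\bm k + \bm e_i}(n)) = o(\breve{\lambda}_{\bm k}(n))$ for any $i$), and (ii) the continuous residual stays uniformly small while the sub-$\delta n$ jump noise does not, by coincidence, create a spurious jump at scale $\epsilon$. Both require interchanging the limits $n \to \infty$ and $\delta, \epsilon' \downarrow 0$, which I would justify by a standard diagonal argument after establishing the requisite uniform tail bounds on the centered truncated Lévy process.
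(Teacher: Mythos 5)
Your proposal is essentially the paper's strategy: decompose into small/big jumps, stratify the big jumps by the cones $\bar{\R}^d(\bm j,\cdot)$, establish asymptotic equivalence (in the $\mathbb M$-convergence sense at rate $\breve\lambda_{\bm k}(n)$) between $\bar{\bm L}_n$ and a big-jump compound-Poisson approximant, and then compute the limiting law of the big-jump configuration via a conditional combinatorial calculation plus a portmanteau argument. This matches the paper's route through Propositions~\ref{proposition: asymptotic equivalence, LD for Levy MRV} and \ref{proposition: weak convergence, LD for Levy MRV} invoked inside Lemma~\ref{lemma: asymptotic equivalence when bounded away, equivalence of M convergence}.

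Two small imprecisions are worth flagging. First, the negligibility of configurations with $\breve c(\bm{\mathcal K}) > c(\bm k)$ is not a consequence of condition~\eqref{cond, outside of the full cone, finite index, def: MRV}; that clause only handles jumps falling \emph{outside} the full cone $\bar\R^d([d],\delta)$. The paper treats these two failure modes with separate events and separate estimates (compare $B_1$ and $B_2$ in \eqref{def: events B1 and B2; proposition: asymptotic equivalence, LD for Levy MRV}): $B_1$ uses \eqref{cond, outside of the full cone, finite index, def: MRV}, while $B_2$ uses a Poisson tail bound together with $\theta_n(\bm j,\delta) = \bo(\lambda_{\bm j}(n))$ and regular variation, which is a genuinely different argument. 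Second, the concern you raise about clustered large jumps is a non-issue for the particular approximant you construct: the paper's $\hat{\bm L}^{>\delta}_n$ retains the \emph{same} jump times and sizes as the large jumps of $\bar{\bm L}_n$, so their $J_1$ distance reduces to a supremum-norm bound on the centered small-jump residual between successive large jumps (Lemma~\ref{lemma: large jump approximation, LD for Levy MRV}); the boundary/ties question is deferred to the limit measure, where it is handled by noting that $\mathbf C_{\bm j}(\partial \bar\R^{>\delta}(\bm j))=0$ for all but countably many $\delta$ (Lemma~\ref{lemma: zero mass for boundary sets, LD for Levy}) rather than by any ``continuity'' of $\mathbf C_{\bm j}$. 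Your claim about a diagonal argument interchanging $n\to\infty$ and $\delta\downarrow 0$ is made rigorous in the paper precisely by taking the limit in Lemma~\ref{lemma: asymptotic equivalence when bounded away, equivalence of M convergence}, which requires Condition (ii) only for all but countably many $\delta$.
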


We provide the detailed proof of Theorem~\ref{theorem: LD for levy, MRV} in Section~\ref{subsubsec: proof of LD with MRV} of the Appendix,
and briefly note that the limiting measures of the form
$
\breve{\mathbf C}_{ \bm{\mathcal K};\bm x}^{ I }(\cdot)
$
can be efficiently evaluated via Monte Carlo simulation in the context of Hawkes processes
(see Remark~\ref{remark: evaluation of limiting measures, LD for Hawkes} of Section~\ref{subsec: LD for Hawkes process}).
To conclude this subsection, we add a few remarks about 
the interpretation of the LDP~\eqref{claim, finite index, theorem: LD for levy, MRV},
its proof strategy,
and relaxations of assumptions in Theorem~\ref{theorem: LD for levy, MRV}.

\begin{remark}[Interpretations of Asymptotics~\eqref{claim, finite index, theorem: LD for levy, MRV}]
\label{remark: inerpretation, LD for Levy}
Theorem~\ref{theorem: LD for levy, MRV} shows that, given the rare event set $B$, 
the asymptotics of the form \eqref{claim, finite index, theorem: LD for levy, MRV} hold for any $\bm k \in \mathbb Z^d_+ \setminus \{\bm 0\}$
such that $B$ is bounded away from $\barDxepskT{\bm{\mu}_{\bm L}}{\epsilon}{\leqslant \bm k}{[0,T]}$ under $\dj{[0,T]}$ for some $\epsilon > 0$.
However, the choice of $\bm k$ that attains non-trivial bounds in \eqref{claim, finite index, theorem: LD for levy, MRV} is determined by
(with $\D_{ \bm k; \bm \mu_{\bm L} }[0,T]$ defined in \eqref{def: k jump path set with drft c, wrt basis, LD for Levy, MRV})
\begin{align}
    \bm k(B) \delequal 
    \argmin_{
        \substack{
            \bm k \in \mathbb Z^d_+\setminus\{\bm 0\}:\ 
            B \cap \D_{ \bm k; \bm \mu_{\bm L} }[0,T] \neq \emptyset 
        }   
    }
        c(\bm k).
    \label{def: rate function, LD for Levy}
\end{align}
Indeed, as noted in Remark~\ref{remark: implications, def of allocation},
the measure
$
{\breve{\mathbf C}^{ \subZeroT{T} }_{\bm{\mathcal K};\bm \mu_{\bm L}}}(\cdot)
$
is supported on 
$\D_{ \bm k;\bm \mu_{\bm L} }[0,T]$
with $\bm{\mathcal K} \in \mathbb A(\bm k)$.
To attain a strictly positive upper bound in \eqref{claim, finite index, theorem: LD for levy, MRV},
we must have
$
B \cap \D_{ \bm k;\bm \mu_{\bm L} }[0,T] \neq \emptyset.
$
Therefore, 
given any Borel set $B \subset \D[0,T]$ that does not contain the linear path with slope $\bm \mu_{\bm L}$,
the LDP~\eqref{claim, finite index, theorem: LD for levy, MRV} implies
\begin{align*}
    {\mathbf C}_{ \bm k(B) }^{ \subZeroT{T}  }(B^\circ)
    \leq
    \liminf_{n \to \infty}
        \frac{
        \P(\bar{\bm L}^{ _{[0,T]} }_n \in B)
        }{
            \breve \lambda_{\bm k(B)}(n)
        }
    \leq 
    \limsup_{n \to \infty}
        \frac{
        \P(\bar{\bm L}^{ _{[0,T]} }_n \in B)
        }{
            \breve \lambda_{\bm k(B)}(n)
        }
    \leq {\mathbf C}_{ \bm k(B) }^{ \subZeroT{T}  }(B^-)
\end{align*}
with limiting measure
$
{\mathbf C}_{ \bm k }^{ \subZeroT{T}  } = \sum_{ \bm{\mathcal K} \in \mathbb A(\bm k)  }\breve{\mathbf C}_{ \bm{\mathcal K};\bm \mu_{\bm L}}^{ _{[0,T]} },
$
provided that $\bm k(B)$ uniquely exists and $B$ is bounded away from $\barDxepskT{\bm{\mu}_{\bm L}}{\epsilon}{\leqslant \bm k(B)}{[0,T]}$ under $\dj{[0,T]}$ for some $\epsilon > 0$.
From this perspective, it is worth noting that: (i) $\bm k(B)$ plays a role analogous to that of rate functions in the classical LDP; and (ii) 
the power-law rates of decay of rare-event probabilities $\P(\bar{\bm L}^{\subZeroT{T}}_n \in B)$, as well as limiting behavior of $\bar{\bm L}^{\subZeroT{T}}_n$ conditioned on such rare events,
are determined by the discrete optimization problem in \eqref{def: rate function, LD for Levy} regarding \emph{the likelihood of entering} the set $B$.
In particular, note that
the nominal behavior of the scaled path $\bar{\bm L}^{\subZeroT{T}}_n$ is the linear function with slope $\bm \mu_{\bm L}$.
Meanwhile,
by 
Definition~\ref{def: MRV} and
the $\MRV$ condition in Assumption~\ref{assumption: ARMV in levy process},
the probability of observing a big jump in $\bar{\bm L}^{\subZeroT{T}}_n$ that is aligned with the cone $\R^d(\bm j)$
(equivalently, observing a jump in $\bm L(t)$ that is of the size $\bo(n)$ and lies in the cone $\R^d(\bm j)$, over the time horizon $t \in [0,Tn]$)
is of order $T \cdot n\lambda_{\bm j}(n) \in \RV_{ 1-\alpha(\bm j)  }(n)$.
Therefore,
the function $\breve c(\cdot)$ in \eqref{def: cost alpha j, LD for Levy MRV} (and hence $c(\cdot)$ in \eqref{def, cost function, k jump set, LD for Levy} by the equality \eqref{property, cost c under allocation})
corresponds to the ``{rareness}'' of observing a certain configuration of big jumps in the L\'evy process,
and
the solution $\bm k(B)$ in \eqref{def: rate function, LD for Levy}
corresponds to \emph{the most likely configuration of big jumps} that can push the linear path $\bm \mu_{\bm L}\mathbf 1$---the nominal path of $\bar{\bm L}^{\subZeroT{T}}_n$---into the rare event set $B$.
\end{remark}


\begin{remark}[Proof Sketch for Theorem~\ref{theorem: LD for levy, MRV}]
\label{remark: proof sketch, LD for Levy}
Our proof of Theorem~\ref{theorem: LD for levy, MRV} 
relies on the notion of asymptotic equivalence in terms of $\mathbb M$-convergence (\cite{10.1214/14-PS231}).
First,
Proposition~\ref{proposition: asymptotic equivalence, LD for Levy MRV} shows that,
for the purpose of proving \eqref{claim, finite index, theorem: LD for levy, MRV},
it suffices to
study the large-jump approximation $\hat{\bm L}^{>\delta}_n$, which removes any discontinuity in $\bar{\bm L}^{\subZeroT{T}}_n$ with norm less than $\delta$.
The key tool is Lemma~\ref{lemma: concentration of small jump process, Ld for Levy MRV}, which develops concentration inequalities for the L\'evy process $\bm L(t)$ stopped right before the arrival of the first large jump.
Next, Proposition~\ref{proposition: weak convergence, LD for Levy MRV} provides asymptotic analyses of $\hat{\bm L}^{>\delta}_n$.
In particular, 
Lemma~\ref{lemma: asymptotic law for large jumps, LD for Levy MRV} establishes the asymptotic law of the arrival times and sizes of large jumps in $\bm L(t)$.
We achieve this by differentiating the large jumps based on their directions and associating them with one of the cones $\big(\R^d(\bm j)\big)_{\bm j \in \powersetTilde{d}}$,
which enables the application of the $\MRV^*$ tail condition in Assumption~\ref{assumption: ARMV in levy process}.
Then, we obtain Proposition~\ref{proposition: weak convergence, LD for Levy MRV} through a continuous mapping argument.
\end{remark}

\begin{remark}[Relaxation of Assumptions]
\label{remark: relaxation of assumption, LD for Levy}
As noted earlier,
the condition \eqref{cond, additivity of rate functions for MRV, Ld for Levy} on the rate functions and 
the
$\MRV^*$ tail condition (instead of $\MRV$) in Assumption~\ref{assumption: ARMV in levy process} 
are specifically designed for the subsequent analysis of Hawkes processes. In more general settings, one may adopt a more flexible—though less detailed—description of multivariate hidden regular variation in the L\'evy measure $\nu(\cdot)$.
In Section~\ref{sec: appendix, LD for Levy, General Case} of the Appendix, we present an analog of Theorem~\ref{theorem: LD for levy, MRV} for such settings, where condition~\eqref{cond, additivity of rate functions for MRV, Ld for Levy} is dropped, a weaker $\MRV$ condition is used, and the number of vectors in the basis is not fixed at $d$.
Furthermore, our proof strategy for Theorem~\ref{theorem: LD for levy, MRV} could be extended to L\'evy measures supported on $\R^d$ rather than just $\R^d_+$, or to settings where the tail behavior is described using alternative formalisms such as Adapted-MRV (see \cite{das2023aggregatingheavytailedrandomvectors}).
We do not pursue these extensions here, as they are not closely related to our focus on Hawkes processes in this paper.
\end{remark}



\subsection{Multivariate Heavy-Tailed Hawkes Processes}
\label{subsec: LD for Hawkes process}

In this subsection, we 
study sample path LDPs for multivariate heavy-tailed Hawkes processes w.r.t.\ the product $M_1$ topology of $\D[0,\infty) = \D\big([0,\infty),\R^d\big)$.
Here,
the product $M_1$ topology is induced by projecting any path $\xi = (\xi_1,\ldots,\xi_d)^\top \in \D[0,\infty)$
onto the $d$-fold product space of $\D\big([0,\infty),\R\big)$,
and then taking the maximum of the Skorokhod $M_1$ metric along the $d$ dimensions over this product space.

To be more specific, we first review the Skorokhod $M_1$ metric of the càdlàg space with codomain $\R$. 
For any path $\xi \in \D\big([0,T],\R)$, let 
\begin{align*}
    \Gamma_{\xi}
    \delequal 
    \Big\{
        (z,t) \in \R \times [0,T]:\ z \in [\xi(t-) \wedge \xi(t) ,\xi(t-) \vee \xi(t) ]
    \Big\}
\end{align*}
be the connected graph of $\xi$, where we take $\xi(0-) = \xi(0)$.
We define an order over the connected graph $\Gamma_{\xi}$ by saying $(z_1,t_1) \leq (z_2,t_2)$ if 
(i) $t_1 < t_2$ or 
(ii) $t_1 = t_2$ and $|z_1 - \xi(t_1-)| \leq |z_2 - \xi(t_2-)|$.
A mapping $(u(\cdot),s(\cdot))$ is said to be a parametric representation of $\xi$ if 
$t \mapsto (u(t),s(t))$ is a continuous and non-decreasing (over $\Gamma_\xi$) mapping
such that $\{ (u(t),s(t)):\ t \in [0,1]  \} = \Gamma_\xi$.
The Skorokhod $M_1$ metric on $\D\big([0,T],\R\big)$ is defined by (for any $\xi^{(1)},\xi^{(2)} \in \D\big([0,T],\R\big)$)
\begin{align}
    \notationdef{notation-M1-norm-in-R1}{\dm{[0,T]}(\xi^{(1)},\xi^{(2)})}
    \delequal 
    \inf_{ \substack{ (u_i,s_i) \in \Pi(\xi^{(i)})\\ i =1,2 }  }\ 
    \sup_{t \in [0,1]}
    | u_1(t) - u_2(t)| \vee |s_1(t) - s_2(t)|,
    \label{def: M1 metric 0 T in R 1}
\end{align}
where $\Pi(\xi)$ is the set of all parametric representations of $\xi$.
In the multivariate setting,
the product $M_1$ metric of $\D[0,T] = \D\big( [0,T], \R^d \big)$ is defined by
\begin{align}
    \notationdef{notation-M1-norm-product}{\dmp{[0,T]}(\xi^{(1)},\xi^{(2)})}
    \delequal 
    \max_{ j \in [d] }\dm{[0,T]}(\xi^{(1)}_j,\xi^{(2)}_j),
    \qquad
    \forall \xi^{(1)},\xi^{(2)} \in \D[0,T],
    \label{def: product M1 metric}
\end{align}
where we write $\xi^{(i)} = (\xi^{(i)}_1,\xi^{(i)}_2,\ldots,\xi^{(i)}_d)^\top$,
and the subscript $\mathcal P$ indicates the uniform metric on the product space.
Given $t \in (0,\infty)$, we define the projection mapping $\notationdef{notation-projection-mapping-to-D0T}{\phi_t}: \D[0,\infty) \to \D[0,t]$ by 
\begin{align}
    \phi_t(\xi)(s) \delequal \xi(s),\qquad\forall s\in [0,t].
    \label{def: projection mapping from D infty to D T}
\end{align}
This allows us to define the product $M_1$ metric on $\D[0,\infty)$:
\begin{align}
    \notationdef{notation-M1-metric-for-D-infty}{\dmp{[0,\infty)}(\xi^{(1)},\xi^{(2)})}
    \delequal
    \int_0^\infty 
        e^{-t} \cdot \Big[ \dmp{[0,t]}\Big( \phi_t\big(\xi^{(1)}),\ \phi_t\big(\xi^{(2)}\big)\Big)  \wedge 1 \Big]dt,
    \qquad
    \forall \xi^{(1)},\xi^{(2)} \in \D[0,\infty).
    \label{def, projection mapping phi t}
\end{align}
We note that 
the topology induced by the product metric $\dmp{[0,T]}$ agrees with the 
weak $M_1$ topology of the càdlàg space
(see, e.g., Chapter 12 of \cite{whitt2002stochastic}).

Moving on to the large deviations analysis for multivariate Hawkes processes, 
we work with the heavy-tailed setting in Section~\ref{subsec: tail of cluster S, statement of results} 
and impose 
 Assumptions~\ref{assumption: subcriticality}--\ref{assumption: regularity condition 2, cluster size, July 2024}.
Besides, 
we adopt the definitions of $(\bar{\bm s}_{j})_{j \in [d]}$, $\alpha(\cdot)$, $\lambda_{\bm j}(\cdot)$, and 
${\mathbf C_i^{\bm j}(\cdot)}$ in 
\eqref{def: bar s i, ray, expectation of cluster size}--\eqref{def: measure C i I, cluster, main paper},
which specify the basis, tail indices, rate functions, and limiting measures for the $\MRV^*$ tail of Hawkes process clusters established in Theorem~\ref{theorem: main result, cluster size}.
Throughout this subsection, we set
\begin{align}
            \mathbf C_{\bm j}(\cdot) \delequal 
            \sum_{i \in [d]}c^{\bm N}_{i} \cdot \mathbf C^{\bm j}_i(\cdot),
            \qquad
             \forall \bm j \in \powersetTilde{d},
            \label{def: measure C indices j, sample path LD for Hawkes}
\end{align}
where the constants
        $c^{\bm N}_{i}$ are the immigration rates of the Hawkes process (see \eqref{def: conditional intensity, hawkes process}).
Next,
 let
 $\D_{\bm k;\bm x}(I)$,
 $c(\bm k)$, $\breve \lambda_{\bm k}(\cdot)$, 
 $\breve c(\bm{\mathcal K})$, 
 $
 {\breve{\mathbf C}^{ I }_{\bm{\mathcal K};\bm x}(\ \cdot\ )},
 $
 and
 ${\barDxepskT{\bm x}{\epsilon}{\leqslant \bm k}{(I)}}$
be defined as in \eqref{def: j jump path set with drft c, LD for Levy, MRV}--\eqref{def: path with costs less than j jump set, drift x, LD for Levy MRV} of Section~\ref{subsec: LD for levy process, MRV},
with $\mathbf C_{\bm j}$ specified as in \eqref{def: measure C indices j, sample path LD for Hawkes}.
We are ready to state Theorem~\ref{theorem: LD for Hawkes}, the main result of this section.
For each $n \geq 1$, let
\begin{align}
     \bar{\bm N}_n(t) \delequal \bm N(nt)/n,\qquad
    \notationdef{notation-scaled-Hawkess-bar-N-n}{\bar{\bm N}^{_{[0,\infty)}}_n}
    \delequal 
    \big\{
        \bar{\bm N}_n(t):\ t \in [0,\infty)
    \big\},
    \label{def: scaled Hawkes process, LD for Hawkes}
\end{align}
with $\bar{\bm N}^{_{[0,\infty)}}_n$
being the scaled version of the sample path of $\bm N(t)$ embedded in $\D[0,\infty)$.
Besides, let 
    \begin{align}
    \notationdef{notation-mu-N-Hawkess-process}{\bm\mu_{\bm N}} \delequal 
    \sum_{i \in [d]} c^{\bm N}_{i}\bar{\bm s}_i,
    \label{def: approximation, mean of N(t)}
\end{align}
where the $c^{\bm N}_{i}$'s are the immigration rates of the Hawkes process in  \eqref{def: conditional intensity, hawkes process}.
Under the presence of regularly varying tails in the offspring distributions,
Theorem~\ref{theorem: LD for Hawkes} develops sample path LDPs for $\bm N(t)$  w.r.t.\ the product $M_1$ topology of $\D[0,\infty)$.

\begin{theorem}\label{theorem: LD for Hawkes}
\linksinthm{theorem: LD for Hawkes}
Let Assumptions~\ref{assumption: subcriticality}--\ref{assumption: regularity condition 2, cluster size, July 2024} hold.
Let
    $\bm k = (k_{j})_{ j \in [d] } \in \mathbb Z_+^{ d } \setminus \{\bm 0\}$
    and let $B$ be a Borel set in $\D[0,\infty)$ equipped with the product $M_1$ topology.
    Suppose that $B$ is bounded away from $\barDxepskT{\bm\mu_{\bm N}}{\epsilon}{\leqslant \bm k}{[0,\infty)}$ under $\dmp{[0,\infty)}$ for some $\epsilon > 0$, and
\begin{align}
    \int_{x/\log x}^\infty f^{\bm N}_{p \leftarrow q}(t)dt =  \lo\big( \breve \lambda_{\bm k}(x)\big/x \big)
    \ 
    \text{ as }x \to \infty,
    \qquad \forall (p,q)\in [d]^2,
    \label{condtion: tail of fertility function hij}
\end{align}
where the $f^{\bm N}_{p \leftarrow q}$'s are the decay functions in \eqref{def: conditional intensity, hawkes process},
and the $\breve \lambda_{\bm j}(n)$'s are defined in \eqref{def: scale function for Levy LD MRV}.
Besides, suppose that $\alpha(\bm j) \neq \alpha(\bm j^\prime)$ for any $\bm j,\bm j^\prime \in \powersetTilde{d}$ with $\bm j \neq \bm j^\prime$,
where $\alpha(\cdot)$ is defined in \eqref{def: cost function, cone, cluster}.
Then,
\begin{equation} \label{claim, theorem: LD for Hawkes}
    \begin{aligned}
        \sum_{ \bm{\mathcal K} \in \mathbb A(\bm k) }\breve{\mathbf C}^{\subInfty }_{\bm{\mathcal K};\bm \mu_{\bm N}}(B^\circ)
        & \leq 
        \liminf_{n \to \infty}
        \frac{
        \P\big(\bar{\bm N}^{_{[0,\infty)}}_n \in B\big)
        }{
            \breve\lambda_{\bm k}(n)
        }
        \leq 
        \limsup_{n \to \infty}
        \frac{
        \P\big(\bar{\bm N}^{_{[0,\infty)}}_n \in B\big)
        }{
            \breve\lambda_{\bm k}(n)
        }
        \leq 
         \sum_{ \bm{\mathcal K} \in \mathbb A(\bm k) }\breve{\mathbf C}^{ \subInfty  }_{\bm{\mathcal K};\bm\mu_{\bm N}}(B^-) < \infty,
    \end{aligned}
\end{equation}
    where $\mathbb A(\bm k)$ is the set containing all allocations of $\bm k$ (see Definition~\ref{def: assignment, jump configuration k}).
    

\end{theorem}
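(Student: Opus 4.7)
The plan is to leverage the cluster (Galton--Watson) representation of $\bm N$, couple it with an auxiliary L\'evy process for which Theorem~\ref{theorem: LD for levy, MRV} directly applies, and transfer the resulting LDP to $\bar{\bm N}_n^{_{[0,\infty)}}$ through an asymptotic equivalence argument in the framework of $\mathbb M$-convergence. \textbf{First}, I would construct a L\'evy process $\bm L(t)$ coupled to $\bm N(t)$: the two processes share the same immigrants, but in $\bm L$ each type-$j$ immigrant at time $\tau$ triggers a single instantaneous jump of size $\bm S_j$ (its total cluster size) at time $\tau$. Then $\bm L$ is compound Poisson with $\E \bm L(1) = \bm \mu_{\bm N}$ and L\'evy measure $\nu(\cdot) = \sum_{i \in [d]} c^{\bm N}_i \, \mathscr{L}(\bm S_i)(\cdot)$. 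Theorem~\ref{theorem: main result, cluster size} yields $\nu \in \MRV^*$ with basis $(\bar{\bm s}_j)_{j\in[d]}$, tail indices $\alpha(\bm j)$, rate functions $\lambda_{\bm j}$, and limiting measures $\mathbf C_{\bm j}$ as in \eqref{def: measure C indices j, sample path LD for Hawkes}; the critical multiplicativity~\eqref{cond, additivity of rate functions for MRV, Ld for Levy} is built into the definition~\eqref{def: rate function lambda j n, cluster size} of $\lambda_{\bm j}$, and the strict monotonicity, distinctness, and $\alpha(\bm j) > 1$ conditions on the tail indices follow from \eqref{def: cost function, cone, cluster} together with the hypothesis. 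Thus Assumption~\ref{assumption: ARMV in levy process} holds, and Theorem~\ref{theorem: LD for levy, MRV} produces the desired LDP for $\bar{\bm L}_n^{_{[0,T]}}$ on $\D[0,T]$ under the Skorokhod $J_1$ topology.

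\textbf{Second}, I would establish the asymptotic equivalence of $\bar{\bm N}_n$ and $\bar{\bm L}_n$ at the scale $\breve\lambda_{\bm k}(n)$ in the product $M_1$ metric: for every $T, \eta > 0$,
\[
\P\bigl( \dmp{[0,T]}\bigl(\bar{\bm N}_n^{_{[0,T]}}, \bar{\bm L}_n^{_{[0,T]}}\bigr) > \eta \bigr) = \lo\bigl(\breve\lambda_{\bm k}(n)\bigr).
\]
Since the two processes differ only in that $\bm N$ spreads each cluster over its finite lifetime whereas $\bm L$ collapses the cluster to a single jump at the immigrant's arrival time, the coordinate-wise $M_1$ contribution of each cluster is bounded (after rescaling) by its lifetime divided by $n$. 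Refining the approach of \cite{10.36045/bbms/1170347811} and invoking the decay-function tail hypothesis~\eqref{condtion: tail of fertility function hij}, I would show that with probability $1 - \lo(\breve\lambda_{\bm k}(n))$, no cluster born in $[0,Tn]$ has lifetime exceeding $n\eta$, even on the rare event that several cluster sizes reach order $n$. It is essential that the product $M_1$ metric---and not the Skorokhod $J_1$ metric---is used here, since a cluster of mixed types produces discontinuities in different coordinates at slightly different times, which $J_1$ cannot accommodate but $M_1$ can.

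\textbf{Third}, the $\mathbb M$-convergence tools of Section~\ref{subsec: proof, M convergence and asymptotic equivalence} transfer the LDP from $\bar{\bm L}_n^{_{[0,T]}}$ to $\bar{\bm N}_n^{_{[0,T]}}$ in the product $M_1$ topology; no mass is lost, since the limiting measures $\breve{\mathbf C}^{[0,T]}_{\bm{\mathcal K};\bm\mu_{\bm N}}$ are supported on piecewise linear paths along which $J_1$ and $M_1$ neighborhoods of the form $\barDxepskT{\bm\mu_{\bm N}}{\epsilon}{\leqslant \bm k}{[0,T]}$ agree up to the same $\epsilon$. To lift from $\D[0,T]$ to $\D[0,\infty)$, I would exploit the weighted-integral definition~\eqref{def, projection mapping phi t}: any set $B \subseteq \D[0,\infty)$ bounded away from $\barDxepskT{\bm\mu_{\bm N}}{\epsilon}{\leqslant \bm k}{[0,\infty)}$ under $\dmp{[0,\infty)}$ projects via $\phi_T$, for all sufficiently large $T$, onto a set bounded away from $\barDxepskT{\bm\mu_{\bm N}}{\epsilon'}{\leqslant \bm k}{[0,T]}$ under $\dmp{[0,T]}$, while the tail contribution beyond time $T$ vanishes exponentially in $T$.

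\textbf{The main obstacle} lies in the second step: producing cluster-lifetime bounds that remain sharp \emph{under the rare-event regime} in which one or more cluster sizes are of order $n$. Large clusters are generated by atypically fertile nodes giving birth to many offspring, which may then sustain many further generations, so controlling the total temporal spread through the full branching hierarchy demands careful inductive estimates calibrated to the scale $\breve\lambda_{\bm k}(n)/n$---precisely the quantity appearing in condition~\eqref{condtion: tail of fertility function hij}. Everything else is a matter of combining Theorems~\ref{theorem: main result, cluster size} and~\ref{theorem: LD for levy, MRV} with the $\mathbb M$-convergence machinery.
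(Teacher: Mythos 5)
Your plan matches the paper's proof step-for-step: construct the cluster-collapsed compound Poisson process $\bm L$ with $\MRV^*$ increments (this is Proposition~\ref{proposition, law of the levy process approximation, LD for Hawkes}), prove asymptotic equivalence of $\bar{\bm N}_n^{_{[0,\infty)}}$ and $\bar{\bm L}_n^{_{[0,\infty)}}$ in the product $M_1$ metric at scale $\breve\lambda_{\bm k}(n)$ via cluster-lifetime tail bounds derived from \eqref{condtion: tail of fertility function hij} (this is Proposition~\ref{proposition: asymptotic equivalence between Hawkes and Levy, LD for Hawkes}, which refines \cite{10.36045/bbms/1170347811} exactly as you anticipate), and transfer through Lemma~\ref{lemma: asymptotic equivalence when bounded away, equivalence of M convergence} together with the $\D[0,\infty)$ version of the L\'evy LDP (Theorem~\ref{corollary: LD for Levy with MRV}). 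The one inaccuracy is your justification of the topological transfer: $J_1$ and $M_1$ neighborhoods of $\barDxepskT{\bm\mu_{\bm N}}{\epsilon}{\leqslant \bm k}{[0,T]}$ do \emph{not} agree (a path that approximates a piecewise-linear path in $M_1$ by replacing a jump with a steep ramp need not be close in $J_1$); what the paper actually uses is the simpler one-sided hierarchy $\dmp{[0,\infty)} \le \dj{[0,\infty)}$, which makes the $M_1$-bounded-away hypothesis on $B$ imply the $J_1$-bounded-away hypothesis required for Theorem~\ref{corollary: LD for Levy with MRV}, and makes the $J_1$ bounds strong enough to feed into Condition~(ii) of Lemma~\ref{lemma: asymptotic equivalence when bounded away, equivalence of M convergence} with $\bm d = \dmp{[0,\infty)}$.
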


The intuition of the sample path LDP \eqref{claim, theorem: LD for Hawkes} is that, 
given any Borel set $B \subseteq \D[0,\infty)$ that is bounded away from $\barDxepskT{\bm\mu_{\bm N}}{\epsilon}{\leqslant \bm k}{[0,\infty)}$,
the rare-event probabilities (roughly) admit the asymptotics
$
\P(\bar{\bm N}^{_{[0,\infty)}}_n \in B)
\sim \breve \lambda_{\bm k}(n) \cdot 
\sum_{ \bm{\mathcal K} \in \mathbb A(\bm k) }\breve{\mathbf C}^{ \subInfty  }_{\bm{\mathcal K};\bm\mu_{\bm N}}(B)
$
as $n \to \infty$,
thus exhibiting a power-law rate of decay $\breve \lambda_{\bm k}(n) \in \RV_{ - c(\bm k)  }(n)$
with leading coefficient $\sum_{ \bm{\mathcal K} \in \mathbb A(\bm k) }\breve{\mathbf C}^{ \subInfty  }_{\bm{\mathcal K};\bm\mu_{\bm N}}(B)$.
We elaborate in Remark~\ref{remark: interpretation, LD for Hawkes}
how the rate functions for $\P(\bar{\bm N}^{_{[0,\infty)}}_n \in B)$ are determined via a discrete optimization problem regarding the geometry of $B$ and the tail asymptotics of Hawkes process clusters.

We provide the proof of Theorem~\ref{theorem: LD for Hawkes} in Section~\ref{subsec: appendix, proof of main result, Hawkes}.
To conclude, we explain in Remarks~\ref{remark: evaluation of limiting measures, LD for Hawkes}--\ref{remark, tail condition on decay functions}
the evaluation of the limiting measures in \eqref{claim, theorem: LD for Hawkes} via Monte Carlo simulation,
the topological choices in Theorem~\ref{theorem: LD for Hawkes},
as well as the necessity of condition \eqref{condtion: tail of fertility function hij}.

\begin{remark}[Interpretations of Asymptotics \eqref{claim, theorem: LD for Hawkes}]
\label{remark: interpretation, LD for Hawkes}
Analogous to Remark~\ref{remark: inerpretation, LD for Levy},
the LDP
\eqref{claim, theorem: LD for Hawkes} implies
\begin{align*}
        {\mathbf C}_{ \bm k(B) }^{ \subInfty  }(B^\circ)
    \leq
    \liminf_{n \to \infty}
        \frac{
        \P(\bar{\bm N}^{ \subInfty }_n \in B)
        }{
            \breve \lambda_{\bm k(B)}(n)
        }
    \leq 
    \limsup_{n \to \infty}
        \frac{
        \P(\bar{\bm N}^{ \subInfty }_n \in B)
        }{
            \breve \lambda_{\bm k(B)}(n)
        }
    \leq {\mathbf C}_{ \bm k(B) }^{ \subInfty }(B^-)
\end{align*}
for sufficiently general Borel sets $B$ of $\D[0,\infty)$ equipped with the product $M_1$ topology,
with
$$
\bm k(B) = \argmin_{
        \substack{
            \bm k \in \mathbb Z^d_+\setminus\{\bm 0\}:\ 
            B \cap \D_{ \bm k; \bm \mu_{\bm N} }[0,\infty) \neq \emptyset 
        }   
    }
    c(\bm k),
$$
the function $c(\cdot)$ defined in \eqref{def, cost function, k jump set, LD for Levy},
the set $\D_{ \bm k; \bm \mu_{\bm N} }[0,T]$ defined in \eqref{def: k jump path set with drft c, wrt basis, LD for Levy, MRV},
and 
$
{\mathbf C}_{ \bm k }^{ \subInfty } = \sum_{ \bm{\mathcal K} \in \mathbb A(\bm k)  }\breve{\mathbf C}_{ \bm{\mathcal K};\bm \mu_{\bm N}}^{ \subInfty}.
$
This is made precise by Theorem~\ref{theorem: LD for Hawkes} whenever
$\bm k(B)$ uniquely exists and $B$ is bounded away from $\barDxepskT{\bm{\mu}_{\bm N}}{\epsilon}{\leqslant \bm k(B)}{[0,\infty)}$ under $\dmp{\subInfty}$ for some $\epsilon > 0$.
In other words, 
the power-law rates of decay for $\P( \bar{\bm N}^{\subInfty}_n \in B )$, as well as limiting behavior of $\bar{\bm N}^{\subInfty}_n$ conditioned on such rare events,
are dictated by the discrete optimization problem in $\bm k(B)$.
In particular, by the tail asymptotics of Hawkes process clusters stated in Theorem~\ref{theorem: main result, cluster size},
the probability of observing a large cluster (i.e., of size $\bo(n)$) whose size vector aligns with the cone $\bar \R^d(\bm j)$ is roughly of order $n^{ -\alpha(\bm j) }$ with $\alpha(\cdot)$ defined in \eqref{def: cost function, cone, cluster}.
From this perspective, the function $\breve c(\cdot)$ in \eqref{def: cost alpha j, LD for Levy MRV}
(and hence the function $c(\cdot)$ in \eqref{def, cost function, k jump set, LD for Levy} by the equality \eqref{property, cost c under allocation})
characterizes the ``rareness'' of any configuration of large clusters 
and $\bm k(B)$ identifies the \emph{most likely configuration of large clusters} that can push 
$\bm \mu_{\bm N}\mathbf 1$---the nominal path of the (scaled) Hawkes process $\bar{\bm N}^{\subInfty}_n$---into the rare event set $B$.
\end{remark}

\begin{remark}[Evaluation of Limiting Measures through Monte Carlo Simulation]
\label{remark: evaluation of limiting measures, LD for Hawkes}
We stress that the limiting measures 
$
\breve{\mathbf C}^{ \subInfty  }_{\bm{\mathcal K};\bm\mu_{\bm N}}(\cdot)
$
in \eqref{claim, theorem: LD for Hawkes} can be evaluated efficiently via Monte Carlo simulation.
Specifically, given a Borel set $B \subseteq \D[0,\infty)$ satisfying the conditions in Theorem~\ref{theorem: LD for Hawkes},
we verify in
Lemma~\ref{lemma: B bounded away under J1 infinity} the following claims
under any $\bar\delta >0$ small enough and $T > 0$ large enough:
for any $\bm k \in \Z^d_+ \setminus \{\bm 0\}$,
$\bm{\mathcal K} \in \mathbb A(\bm k)$, and 
$\xi \in B \cap \breve\D^\epsilon_{ \bm{ \mathcal K };\bm \mu_{\bm N}  }[0,\infty)$,
in the expression~\eqref{expression for xi in breve D set} for $\xi$ we have
$
 t_{\bm j, k} \leq T,
$
$
\norm{\bm w_{\bm j, k}} > \bar\delta,
$
and
$
 \bm w_{\bm j,k}\notin \bar{\R}^d_\leqslant(\bm j,\bar\delta)
$
for each $\bm j \in \powersetTilde{d},\ k \in [\mathcal K_{\bm j}]$.
Therefore, as long as we can sample from probability measures
(where the $\mathbf C_{\bm j}$'s are defined in \eqref{def: measure C indices j, sample path LD for Hawkes})
\begin{align}
   \bar{\mathbf C}_{\bm j, \bar\delta}(\cdot) \delequal
    \mathbf C_{\bm j}\Big( \ \cdot\ \cap \bar \R^{d}_>(\bm j,\bar\delta) \Big)
    \Big/
    \mathbf C_{\bm j}\Big( \bar \R^{d}_>(\bm j,\bar\delta) \Big),
    \quad 
    \text{with }
    \bar \R^{d}_>(\bm j,\bar\delta) \delequal 
    \{ \bm x \in \R^d_+ \setminus \bar\R^d_\leqslant(\bm j,\bar\delta) :\ \norm{\bm x} > \bar\delta   \},
    \label{def: measure bar C j bar delta, for monte carlo simulation of limiting measures}
\end{align}
and evaluate the normalization constants $\bar c_{\bm j, \bar\delta} = \mathbf C_{\bm j}\Big( \bar \R^{d}_>(\bm j,\bar\delta) \Big)$,
by \eqref{def: measure C type j L, LD for Levy, MRV} we can
run Monte Carlo simulation to estimate
(under $\bar\delta > 0$ sufficiently small and $T > 0$ sufficiently large) 
\begin{align}
    \breve{\mathbf C}^{ \subInfty  }_{\bm{\mathcal K};\bm\mu_{\bm N}}(B)
    = 
        \frac{
        \prod_{ \bm j \in \powersetTilde{d} } (T \cdot \bar c_{\bm j,\bar\delta}) ^{|\mathcal K_{\bm j}|}
    }{\prod_{ \bm j \in \powersetTilde{d} } \mathcal K_{\bm j}!} 
    \cdot 
    \E 
    \Bigg[
    \mathbbm{I}
    \bigg\{
        \bm \mu_{\bm N}\mathbf{1}
         + \sum_{ \bm j \in \powersetTilde{d} } \sum_{ k \in [\mathcal K_{\bm j}] } 
         \bm Z^{(\bm j,k)}
         \mathbbm{I}_{ [T \cdot  U^{(\bm j, k)},\infty)} \in B
    \bigg\}
    \Bigg],
    \label{details in remark: monte carlo simulation for limiting measure of LD Hawkes}
\end{align}
where the $U^{(\bm j,k)}$'s are i.i.d.\ copies of Unif$(0,1)$, and each $\bm Z^{(\bm j,k)}$ is an independent copy under the law $\bar{\mathbf C}_{\bm j,\bar\delta}$.
Furthermore, the evaluation of constants $\bar c_{\bm j,\bar\delta}$ are detailed in Remark~4 of \cite{blanchet2025tailasymptoticsclustersizes},
and rejection sampling addresses the generation of 
$ \bm Z^{(\bm j,k)} \sim \bar{\mathbf C}_{\bm j, \bar\delta}(\cdot)$,
thanks to the specific form of the limiting measures $\mathbf C^{\bm j}_i(\cdot)$ in \eqref{def: measure C i I, cluster, main paper} for Theorem~\ref{theorem: main result, cluster size}.
We elaborate on the rejection sampling for $\bar{\mathbf C}_{\bm j, \bar\delta}(\cdot)$ in Section~\ref{sec: appendix, tail asymptotics of Hawkes process clusters} of the Appendix, 
where we also provide the explicit expressions for the limiting measures $\mathbf C^{\bm j}_i(\cdot)$ in \eqref{def: measure C i I, cluster, main paper}.

\end{remark}

\begin{remark}[Topological Choices in Theorem~\ref{theorem: LD for Hawkes}]
\label{remark: conditions in LD for Hawkes}
Concerning the choice of Skorokhod non-uniform topologies for càdlàg spaces,
we note that the characterization of asymptotics~\eqref{claim, theorem: LD for Hawkes}
under the product $M_1$ topology of $\D[0,\infty)$ in Theorem~\ref{theorem: LD for Hawkes} is, in general, the tightest one can hope for.
\begin{itemize}
    \item 
        Compared to the $J_1$ topology, the $M_1$ topology would be the right choice for the large deviations analysis of Hawkes processes,  as it allows merging jumps that correspond to distinct but relatively close arrival times of all descendants within the same cluster.

    \item
        Suppose that the size vector of a cluster is $\bm s$.
        At any moment $t$, the current size of the cluster (i.e., containing all descendants born by time $t$) stays within the hypercube $\{ \bm x \in \mathbb N^d:\ \bm x \leq \bm s \}$
        but would generally deviate from the ray $\{ w\bm s:\ w \geq 0 \}$ due to the randomness in the birth times.
        Such arbitrariness and non-linearity in the growth of cluster sizes prevent the LDP~\eqref{claim, theorem: LD for Hawkes} to hold under the strong $M_1$ topology and promotes the use of the product (i.e., weak) $M_1$ topology.

    \item 
        Note that this issue regarding the arbitrary shape of truncated clusters can be safely dismissed for univariate Hawkes processes, whose cluster sizes are non-negative scalar variables.
        In fact, as detailed in Section~\ref{subsec: univariate results, LD for Hawkes, M1 of D0T} of the Appendix,
        Theorem~\ref{theorem: LD for Hawkes} \emph{can be strengthened to the $M_1$ topology of $\D\big([0,T],\R\big)$---the càdlàg space with compact domain---in the univariate setting}.
        Compared to the statements in Theorem~\ref{theorem: LD for Hawkes} w.r.t.\ $\big(\D\big([0,\infty),\R\big),\dm{\subInfty}\big)$ (when applied to univariate cases),
        such extensions to $\big(\D\big([0,T],\R\big),\dm{\subZeroT{T}}\big)$
        allow us to address
        a broader class of events and functionals for heavy-tailed Hawkes processes in the univariate setting.

    \item 
        In comparison,
        as we demonstrate in
        Example~\ref{example: LD for Hawkes, cadlag space with compact domain} 
        in Section~\ref{sec: counter examples} of the Appendix,
        the characterization of LDPs in Theorem~\ref{theorem: LD for Hawkes}  w.r.t.\ \emph{the product $M_1$ topology of $\D[0,\infty)$
        is generally the tightest one can hope for in the multivariate setting}.
        The intuition of the counterexample is that, for any immigrant that arrives almost at the end of the time interval $[0,T]$, 
        its cluster size vector could be truncated arbitrarily at time $T$ due to the randomness in the birth times of the descendants, which prevents \eqref{claim, theorem: LD for Hawkes} to hold w.r.t.\ the product $M_1$ topology of $\D[0,T]$ for rare events set $B$ involving the value of the process at time $T$.
        In particular, we note that Example~\ref{example: LD for Hawkes, cadlag space with compact domain} is valid even if the decay functions $f^{\bm N}_{i \leftarrow j}(\cdot)$ in \eqref{def: conditional intensity, hawkes process} have bounded support or exponentially decaying tails.
        In other words,
         such pathological cases would still arise in  $( \D[0,T],\dmp{\subZeroT{T}})$ even if we strengthen the condition~\eqref{condtion: tail of fertility function hij} or impose exponentially decaying bounds on the tail of birth time distributions of the offspring.
\end{itemize}
\end{remark}

\begin{remark}[Tail Conditions on Decay Functions in Theorem~\ref{theorem: LD for Hawkes}]
\label{remark, tail condition on decay functions}
Even in the univariate setting, 
power-law bounds (like \eqref{condtion: tail of fertility function hij}) on the tail behavior of decay functions are required for Theorem~\ref{theorem: LD for Hawkes} to hold.
In particular, Example~\ref{example: necessity of the tail bound on decay functions} in Section~\ref{sec: counter examples} of the Appendix demonstrates that, as long as the decay function does have a power-law tail, 
one can always identify a simple class of counterexamples regardless of the actual values of the power-law tail indices in the excitation rates and the decay functions of the Hawkes process.
The intuition is that, without sufficiently tight tail bounds on the decay functions, 
some rare events are driven by a large cluster with a long lifetime (i.e., a single big jump spread too widely over the entire timeline), rather than by multiple large clusters.
\end{remark}

\subsection{Proof of Theorem~\ref{theorem: LD for Hawkes}}
\label{subsec: appendix, proof of main result, Hawkes}

Our proof strategy is to establish the asymptotic equivalence between 
$\bar{\bm N}^{ \subInfty }_n$, the scaled sample path of the Hawkes processes defined in \eqref{def: scaled Hawkes process, LD for Hawkes}, and some L\'evy process with $\MRV$ increments.
Specifically,
consider the multivariate compound Poisson process
\begin{align}
    \bm L(t) & \delequal 
    \sum_{ i \in [d] }\sum_{k \geq 0}
    \bm S^{(k)}_i\mathbbm{I}_{ [T^\mathcal{C}_{i;k},\infty) }(t),
    \qquad\forall t \geq 0.
    \label{def: Levy process, LD for Hawkes}
\end{align}
Here, independently
for each $i \in [d]$,
we use
$
0 < T^{\mathcal C}_{i;1} < T^{\mathcal C}_{i;2} < \ldots 
$
to denote a sequence generated by a Poisson process with constant rate $c^{\bm N}_i$ introduced in \eqref{def: conditional intensity, hawkes process}.
Besides, independent from the sequences $(T^{\mathcal C}_{ i;k })_{i\in[d],k \geq 1}$,
each $\bm S^{(k)}_j$ is an i.i.d.\ copy of $\bm S_j$ that solves distributional fixed-point equation \eqref{def: fixed point equation for cluster S i}.
The compound Poisson process $\bm L(t)$ in \eqref{def: Levy process, LD for Hawkes} is intimately related to $\bm N(t)$ through the cluster representation of Hawkes processes,
which we detail in Definition~\ref{def: offspring cluster process} of the Appendix
(see also \cite{209288c5-6a29-3263-8490-c192a9031603,20.500.11850/151886,daley2003introduction}).
In particular, we consider
a coupling between $\bm N(t)$ and $\bm L(t)$
such that $T^{\mathcal C}_{i;k}$ represents the arrival time of the $k^\text{th}$ type-$i$ immigrant in the Hawkes process $\bm N(t)$, and
each $\bm S^{(k)}_i$ is size vector of the cluster induced by the  $k^\text{th}$ type-$i$ immigrant.
From this perspective, the process  $\bm L(t)$ in \eqref{def: Levy process, LD for Hawkes} is obtained by collapsing any cluster in $\bm N(t)$ into a single jump, as if there is no gap between the birth times of the immigrant inducing the cluster and any offspring in this cluster.

More precisely,
our proof of Theorem~\ref{theorem: LD for Hawkes} relies on the following two propositions,
whose detailed proofs are provided in Sections~\ref{subsec: proof for propositions, LD for Hawkes, appendix}--\ref{subsec: proof for propositions, 2, LD for Hawkes, appendix} of the Appendix.
First, by exploiting the $\MRV^*$ tail asymptotics established in Theorem~\ref{theorem: main result, cluster size} for the cluster size vectors $\bm S_i$,
Proposition~\ref{proposition, law of the levy process approximation, LD for Hawkes} shows that 
$\bm L(t)$ is a L\'evy process with $\MRV^*$ increments.

\begin{proposition}\label{proposition, law of the levy process approximation, LD for Hawkes}
\linksinthm{proposition, law of the levy process approximation, LD for Hawkes}
Let Assumptions~\ref{assumption: subcriticality}--\ref{assumption: regularity condition 2, cluster size, July 2024} hold.
The process $\bm L(t)$ defined in \eqref{def: Levy process, LD for Hawkes}
is a L\'evy process with generating triplet $(\bm 0, \textbf 0, \nu)$ (i.e., with no linear drift and no Brownian motion component) and the L\'evy measure $\nu$ takes the form
\begin{align}
    \nu(\cdot) = \sum_{i \in [d]}c^{\bm N}_{i}\cdot\P(\bm S_i \in \ \cdot\ ),
    \label{claim, 1, proposition, law of the levy process approximation, LD for Hawkes}
\end{align}
which implies $\E \bm L(1) = \sum_{i \in [d]}c^{\bm N}_{i} \cdot \E \bm S_i = \bm \mu_{\bm N}$ (see \eqref{def: approximation, mean of N(t)}).
Furthermore,
\begin{align}
    \nu(\cdot) \in \MRV^*\Bigg(
       (\bar{\bm s}_i)_{i \in [d]},\ 
        \big(\alpha(\bm j)\big)_{ \bm j \subseteq [d] },\ 
        (\lambda_{\bm j})_{\bm j \in \powersetTilde{d}},\ 
        ( \mathbf C_{\bm j} )_{ \bm j \in \powersetTilde{d}  }
    \Bigg),
    \label{claim, 2, proposition, law of the levy process approximation, LD for Hawkes}
\end{align}
where $\bar{\bm s}_i =\E\bm S_i$, 
and
$\alpha(\cdot)$,
$\lambda_{\bm j}(\cdot)$,
$\mathbf C_{\bm j}(\cdot)$
are defined in 
\eqref{def: cost function, cone, cluster},
\eqref{def: rate function lambda j n, cluster size},
and \eqref{def: measure C indices j, sample path LD for Hawkes}, respectively.
\end{proposition}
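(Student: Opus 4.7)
The plan is to decompose $\bm L(t)$ into $d$ independent compound Poisson components indexed by immigrant type, read off its Lévy measure directly from this decomposition, and then transfer the $\MRV^*$ tail of each $\bm S_i$ to $\nu$ by linearity.

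First, I would observe that, by \eqref{def: Levy process, LD for Hawkes}, $\bm L(t) = \sum_{i \in [d]} \bm L_i(t)$, where $\bm L_i(t) = \sum_{k \geq 1} \bm S^{(k)}_i \mathbbm{I}_{[T^{\mathcal C}_{i;k}, \infty)}(t)$. Since $(T^{\mathcal C}_{i;k})_{k\geq 1}$ is a rate-$c^{\bm N}_i$ Poisson process, $(\bm S^{(k)}_i)_{k\geq 1}$ is i.i.d.\ with law $\mathscr{L}(\bm S_i)$, and the $d$ components are mutually independent, each $\bm L_i$ is a compound Poisson process with Lévy measure $c^{\bm N}_i \P(\bm S_i \in \cdot\,)$ and no Brownian part. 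Summing independent Lévy processes adds their Lévy measures, which yields \eqref{claim, 1, proposition, law of the levy process approximation, LD for Hawkes}; since the cluster sizes $\bm S_j$ take values in $\{\bm x \in \Z_+^d : \bm x \geq \bm e_j\}$, the Lévy measure $\nu$ is supported on $\{\norm{\bm x} \geq 1\}$ and there are no small jumps to compensate, so the generating triplet reduces to $(\bm 0, \mathbf{0}, \nu)$. The expectation identity $\E \bm L(1) = \sum_i c^{\bm N}_i \E \bm S_i = \sum_i c^{\bm N}_i \bar{\bm s}_i = \bm\mu_{\bm N}$ then follows immediately from Wald's identity and \eqref{def: approximation, mean of N(t)}.

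For \eqref{claim, 2, proposition, law of the levy process approximation, LD for Hawkes}, the key observation is that Theorem~\ref{theorem: main result, cluster size} already establishes $\MRV^*$ for every individual cluster distribution $\P(\bm S_i \in \cdot\,)$ with \emph{common} basis $(\bar{\bm s}_i)_{i \in [d]}$, \emph{common} tail indices $\alpha(\bm j)$ from \eqref{def: cost function, cone, cluster}, and \emph{common} rate functions $\lambda_{\bm j}$ from \eqref{def: rate function lambda j n, cluster size}; only the limiting measures $\mathbf C^{\bm j}_i$ differ across $i$. Fix $\bm j \in \powersetTilde{d}$ and a Borel set $A \subseteq \R^d_+$ bounded away from $\bar\R^{d}_\leqslant(\bm j,\epsilon)$ for some $\epsilon > 0$. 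Writing $\nu_n(A) = \sum_i c^{\bm N}_i \P(n^{-1}\bm S_i \in A)$, dividing by $\lambda_{\bm j}(n)$, and taking $\liminf$ and $\limsup$ termwise (permissible because the sum is finite), I match \eqref{cond, finite index, def: MRV} with the limiting measure $\mathbf C_{\bm j} = \sum_i c^{\bm N}_i \mathbf C^{\bm j}_i$, which is precisely the definition in \eqref{def: measure C indices j, sample path LD for Hawkes}. The tail decay condition \eqref{cond, outside of the full cone, finite index, def: MRV} for sets bounded away from $\bar\R^{d}([d],\epsilon)$ follows by the same linear-combination argument applied to \eqref{claim, 2, theorem: main result, cluster size}.

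Because the tails of $\nu$ are determined entirely by those of the $\bm S_i$ via a \emph{positive finite linear combination}, no substantive technical obstacle arises. The only care required is to verify that the geometric notion of being bounded away from $\bar\R^{d}_\leqslant(\bm j,\epsilon)$ is identical across all summands (which it is, thanks to the shared basis and tail indices), so that the common rate function $\lambda_{\bm j}$ and common asymptotic regime apply uniformly to every term in the decomposition, and to verify finiteness $\mathbf C_{\bm j}(A) < \infty$, which is inherited as a finite sum of the finite quantities $\mathbf C^{\bm j}_i(A)$ provided by Theorem~\ref{theorem: main result, cluster size}.
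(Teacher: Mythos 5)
Your proof is correct and takes essentially the same route as the paper: decompose $\bm L$ into the $d$ independent compound Poisson components indexed by immigrant type, read off $\nu = \sum_i c^{\bm N}_i \P(\bm S_i \in \cdot\,)$, and transfer the $\MRV^*$ property of each $\P(\bm S_i \in \cdot\,)$ from Theorem~\ref{theorem: main result, cluster size} to $\nu$ by superadditivity of $\liminf$ and subadditivity of $\limsup$ over the finite positive linear combination. The paper's own proof is terser but the decomposition and the application of Theorem~\ref{theorem: main result, cluster size} are identical.
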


Proposition~\ref{proposition, law of the levy process approximation, LD for Hawkes} allows us to apply Theorem~\ref{theorem: LD for levy, MRV} and obtain sample path LDPs of the form \eqref{claim, theorem: LD for Hawkes} for the scaled paths of $\bm L(t)$, i.e., (under $\bar{\bm L}_n(t) \delequal \bm L(nt)/n$),
\begin{align}
    \bar{\bm L}_n^{ \subZeroT{T}} \delequal
    \big\{
        \bar{\bm L}_n(t):\ t \in [0,T]
    \big\},
    \quad 
    \bar{\bm L}_n^{ \subInfty} \delequal
    \big\{
        \bar{\bm L}_n(t):\ t \in [0,\infty)
    \big\}.
     \label{def: scaled Levy process, LD for Hawkes}
\end{align}
Next, to close the loop for the proof of Theorem~\ref{theorem: LD for Hawkes},
we 
show that
$\bar{\bm N}^{\subInfty}_n$ and $\bar{\bm L}^{\subInfty}_n$ (almost always) stay close under the metric $\dmp{\subInfty}$.
In particular, by refining the approach in \cite{10.36045/bbms/1170347811},
 under condition~\eqref{condtion: tail of fertility function hij} we establish
 power-law tail bounds for the lifetime of Hawkes process clusters, i.e., the gap in the birth times between an immigrant and its last offspring.
 This leads to Proposition~\ref{proposition: asymptotic equivalence between Hawkes and Levy, LD for Hawkes} and verifies that, when collapsing each cluster in $\bar{\bm N}^{\subInfty}_n$ into a single jump and obtaining $\bar{\bm L}^{\subInfty}_n$, the perturbation under $\dmp{\subInfty}$ is inconsequential for the purpose of establishing the LDP~\eqref{claim, theorem: LD for Hawkes}.

\begin{proposition}\label{proposition: asymptotic equivalence between Hawkes and Levy, LD for Hawkes}
\linksinthm{proposition: asymptotic equivalence between Hawkes and Levy, LD for Hawkes}
Let Assumptions~\ref{assumption: subcriticality}--\ref{assumption: regularity condition 2, cluster size, July 2024} hold.
Let
$\bm k \in \mathbb Z_+^{ d } \setminus \{\bm 0\}$.
Under condition \eqref{condtion: tail of fertility function hij},
it holds for any $\Delta > 0$ that 
$$
\lim_{n \to \infty}
        \P\Big(
            \dmp{ \subInfty }\big(\bar{\bm N}^{\subInfty}_n, \bar{\bm L}^{\subInfty}_n\big)
            > \Delta
        \Big)
        \Big/
        \breve \lambda_{\bm k}(n)
        = 0,
$$
where $\breve \lambda_{\bm k}(\cdot)$ is defined in \eqref{def: scale function for Levy LD MRV}.
\end{proposition}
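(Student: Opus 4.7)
The plan is to exploit the canonical coupling between $\bm N$ and $\bm L$ built into the cluster representation~\eqref{def: Levy process, LD for Hawkes}: under this coupling, the $k$-th type-$i$ immigrant arrives at the common instant $T^{\mathcal C}_{i;k}$ in both processes, but in $\bm L$ its offspring are delivered as an instantaneous jump of magnitude $\bm S^{(k)}_i$, whereas in $\bm N$ they arrive spread out over $[T^{\mathcal C}_{i;k},\, T^{\mathcal C}_{i;k} + L^{(k)}_i]$, where $L^{(k)}_i$ denotes the cluster lifetime (the time from the immigrant to its last descendant). Thus every discrepancy between $\bar{\bm N}^{\subInfty}_n$ and $\bar{\bm L}^{\subInfty}_n$ is driven by the rescaled lifetimes $L^{(k)}_i/n$, and the proof reduces to a tail bound on the cluster lifetime.

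Because $\dmp{\subInfty}$ uses the exponentially decaying weight $e^{-t}$, for any $\Delta > 0$ there exist $T = T(\Delta) < \infty$ and $\Delta' > 0$ such that
$$\big\{\dmp{\subInfty}(\bar{\bm N}^{\subInfty}_n,\bar{\bm L}^{\subInfty}_n) > \Delta\big\} \subseteq \big\{\dmp{[0,T]}\big(\phi_T \bar{\bm N}^{\subInfty}_n,\, \phi_T \bar{\bm L}^{\subInfty}_n\big) > \Delta'\big\}.$$
Each coordinate of $\bm N$ and of $\bm L$ is monotone non-decreasing and, under the coupling, matches in total height as soon as each cluster completes. A standard characterization of the scalar Skorokhod $M_1$ distance between monotone non-decreasing càdlàg functions (as a Hausdorff distance between completed graphs) bounds the $j$-th coordinate $M_1$ distance on $[0,T]$ by a constant multiple of the maximum rescaled lifetime $L^{(k)}_i/n$ over clusters initiated by time $nT$. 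Combining this with Campbell's formula for the Poisson point process of immigrant arrivals, the proposition reduces to proving
\begin{equation}
n T \cdot \sum_{i \in [d]} c^{\bm N}_i \cdot \P(L_i > n\Delta'') = \lo\big(\breve\lambda_{\bm k}(n)\big) \qquad \text{as } n\to\infty,
\label{eq: ppsed reduction lifetime tail}
\end{equation}
for some $\Delta'' > 0$, where $L_i$ denotes a generic cluster lifetime for a type-$i$ immigrant.

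The main obstacle is establishing \eqref{eq: ppsed reduction lifetime tail}, equivalently $\P(L_i > x) = \lo(\breve\lambda_{\bm k}(x)/x)$. I would refine the tree-decomposition argument of \cite{10.36045/bbms/1170347811}. Conditional on the cluster tree having total progeny $M = \norm{\bm S_i}$ and maximum generation depth $D$, the event $\{L_i > x\}$ requires at least one parent-to-child birth-time gap to exceed $x/D$; a union bound over the (at most $M$) edges and the $d^2$ offspring types controls this conditional probability by a constant multiple of $M \cdot \max_{p,q} \int_{x/D}^\infty f^{\bm N}_{p\leftarrow q}(t)\,dt \big/ \mu^{\bm N}_{p\leftarrow q}$. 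Standard multi-type branching estimates yield $D \le C \log M$ with overwhelming probability uniformly in $M$, so the effective cutoff in the tail integral is $x / \log M$, matching the $x/\log x$ threshold in \eqref{condtion: tail of fertility function hij} once one truncates $M \le x^\kappa$ for a suitably small $\kappa > 0$. The truncated contribution is handled via \eqref{condtion: tail of fertility function hij} together with moment estimates on $\norm{\bm S_i}$ available from Theorem~\ref{theorem: main result, cluster size}; the tail contribution $\P(\norm{\bm S_i} > x^\kappa)$ is absorbed using the $\MRV^*$ tail bound \eqref{claim, 2, theorem: main result, cluster size}. The delicate point is choosing $\kappa$ to simultaneously balance the heavy-tailed cluster size against the tail of the decay functions, and verifying that the depth-versus-size estimate is uniform in $M$.
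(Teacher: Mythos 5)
Your reduction to the lifetime tail bound~\eqref{eq: ppsed reduction lifetime tail} has a genuine gap at the $M_1$-distance step. You claim that the scalar $M_1$ distance on $[0,T]$ between the coupled paths is bounded by a constant multiple of the maximum rescaled lifetime, appealing to a Hausdorff-distance picture for monotone functions. This is false: if an immigrant inducing a cluster of size $\Theta(n)$ arrives shortly before time $nT$, then at the right endpoint $T$ the L\'evy-side path has already booked the entire cluster while the Hawkes-side path has booked only a random, partial fraction, so $\bar N_{n,j}(T)$ and $\bar L_{n,j}(T)$ differ by $\Theta(1)$; no reparametrization can remove a disagreement at the fixed endpoint, so $\dm{\subZeroT{T}}$ is $\Theta(1)$ on that event. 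The probability of that event is of order $n\lambda_{\bm j}(n)$, which is comparable to (not little-$o$ of) $\breve\lambda_{\bm k}(n)$ whenever $\bm k$ has more than one unit, so the claimed bound would fail. This is precisely the obstruction exhibited in Example~\ref{example: LD for Hawkes, cadlag space with compact domain}, and it is the reason the proposition is stated for $\dmp{\subInfty}$ rather than $\dmp{\subZeroT{T}}$. The paper circumvents it by first separating the large-cluster and small-cluster contributions (Lemmas~\ref{lemma: asymptotic equivalence between N and large jump, LD for Hawkes}, \ref{lemma: asymptotic equivalence between L and large jump, LD for Hawkes}, \ref{lemma: asymptotic equivalence between large jump N and L, LD for Hawkes}), restricting to the event that only $K = O(1)$ large clusters arrive, and then using the specific structure of $\dmp{\subInfty}$: for $t$ away from the (at most $K$) endpoints of large jumps, Lemma~\ref{lemma: step functions under M1 metric, LD for Hawkes} gives $\dmp{\subZeroT{t}} \le \epsilon$, and the bad set of $t$'s has Lebesgue measure at most $K\epsilon$, so the exponentially-weighted integral defining $\dmp{\subInfty}$ is at most $(T+K)\epsilon$. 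Your argument skips all of this.

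There are also two issues in your proposed lifetime-tail argument. First, the truncation at $M = \norm{\bm S_i} \le x^\kappa$ followed by balancing is both more delicate and unnecessary: the paper's Lemma~\ref{lemma: lifetime of a cluster, LD for Hawkes} exploits the independence between the cluster \emph{size} and the cluster \emph{birth-time gaps} to obtain the stochastic domination $T^{(n);j}_i(r) \stleq n \cdot \max_{p,q}\max_{m\le\norm{\bm S_j}} X^{(m)}_{q\leftarrow p}$, and then the exact identity $\P(\max_{m\le\norm{\bm S_j}}X^{(m)}_{p\leftarrow q}>z)\le \E\norm{\bm S_j}\cdot\P(X_{p\leftarrow q}>z)$ closes the argument using only the first moment of the cluster size, with no truncation parameter to tune. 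Second, your proposed use of the $\MRV^*$ bound~\eqref{claim, 2, theorem: main result, cluster size} to absorb $\P(\norm{\bm S_i} > x^\kappa)$ is inapplicable: that bound requires the target set to be bounded away from the full cone $\bar\R^d([d],\epsilon)$, and the set $\{\bm s : \norm{\bm s}>x^\kappa\}$ is not; the correct polynomial rate for that tail is given by~\eqref{claim, theorem: main result, cluster size} with $\bm j = \{j^*\}$, which is only $\RV_{-\alpha^*(j^*)}(x^\kappa)$ and would need to be weighed against the decay-function tail. The paper's moment-based argument sidesteps this balancing entirely.
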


We add two remarks regarding the technical tools involved.
First, for the proof of Theorem~\ref{theorem: LD for Hawkes},
we need to adapt Theorem~\ref{theorem: LD for levy, MRV} to $\D[0,\infty)$.
The results are stated in Theorem~\ref{corollary: LD for Levy with MRV},
and
we detail the steps in Section~\ref{subsec: corollary, LD for Levy, unbounded domain} of the Appendix.
Second, the asymptotic equivalence between 
$\bar{\bm N}^{\subInfty}_n$ and $\bar{\bm L}^{\subInfty}_n$ 
is stated in terms of the $\mathbb M$-convergence (\cite{10.1214/14-PS231}) over the metric space $\big( \D[0,\infty), \dmp{[0,\infty)} \big)$,
and is made precise by our Lemma~\ref{lemma: asymptotic equivalence when bounded away, equivalence of M convergence} in the Appendix.
In particular,
under the choice of 
\begin{align*}
    (\mathbb S,\bm d) = \big( \D[0,\infty), \dmp{\subInfty} \big),\quad 
    \mathbb C = \barDxepskT{\bm \mu_{\bm N}}{\epsilon}{\leqslant \bm k}{[0,\infty)},\quad
    X_n = \bar{\bm N}_n^{\subInfty},\quad 
    Y_n^\delta = \bar{\bm L}_n^{\subInfty}\ \forall \delta > 0,
\end{align*}
and with a dummy marker $V^\delta_n \equiv 1,\ \mathcal V = \{1\}$ in
Lemma~\ref{lemma: asymptotic equivalence when bounded away, equivalence of M convergence},
to prove 
Claim~\eqref{claim, theorem: LD for Hawkes}
it suffices to verify the following two conditions
(given our choice of $Y^\delta_n$, we also write $Y_n = Y^\delta_n = \bar{\bm L}_n^{\subInfty}$):
\begin{enumerate}[(i)]
    \item
        The claim
                $
        \lim_{ n \rightarrow \infty }  {\P\Big( \bm{d}\big(X_n, Y_n\big)
   > \Delta \Big) }\Big/\breve \lambda_{\bm k}(n)= 0
        $
        holds for each $\Delta > 0$ and Borel set $B \subseteq \mathbb S$ that is bounded away from $\mathbb{C}$;


    \item 
         For each Borel set $B \subseteq \mathbb S$ that is bounded away from $\mathbb{C}$,
        we have
        $
         \limsup_{n \to \infty}\P(Y_n \in B)/\breve \lambda_{\bm k}(n) \leq \mu(B^-) < \infty
        $
        and
        $
        \liminf_{n \to \infty}\P(Y_n \in B)/\breve \lambda_{\bm k}(n) \geq \mu(B^\circ),
        $
    where $\mu(\cdot) =  \sum_{ \bm{\mathcal K} \in \mathbb A(\bm k) }\breve{\mathbf C}^{\subInfty }_{\bm{\mathcal K};\bm \mu_{\bm N}}(\cdot)$.
\end{enumerate}

\noindent
To conclude, we provide the---now succinct---proof of Theorem~\ref{theorem: LD for Hawkes}, and refer to Section~\ref{sec: appendix, theorem tree} for the full theorem tree.

\begin{proof}[Proof of Theorem~\ref{theorem: LD for Hawkes}]
\linksinpf{theorem: LD for Hawkes}
As noted above, by Lemma~\ref{lemma: asymptotic equivalence when bounded away, equivalence of M convergence}, it suffices to verify Conditions (i) and (ii).
Proposition~\ref{proposition: asymptotic equivalence between Hawkes and Levy, LD for Hawkes} verifies Condition (i), and for Condition (ii) it is equivalent to  verifying the following claim:
let
    $\bm k = (k_{j})_{ j \in [d] } \in \mathbb Z_+^{ d  } \setminus \{\bm 0\}$,
    and $B$ be a Borel set of $\D[0,\infty)$ equipped with the product $M_1$ topology;
    if $B$ is bounded away from $\barDxepskT{\bm\mu_{\bm N}}{\epsilon}{\leqslant \bm k}{[0,\infty)}$ under $\dmp{[0,\infty)}$
    for some $\epsilon > 0$,
    then,
\begin{align}
    \sum_{ \bm{\mathcal K} \in \mathbb A(\bm k) }\breve{\mathbf C}^{\subInfty }_{\bm{\mathcal K};\bm \mu_{\bm N}}(B^\circ)
        \leq 
        \liminf_{n \to \infty}
        \frac{
        \P\big(\bar{\bm L}^{_{[0,\infty)}}_n \in B\big)
        }{
            \breve\lambda_{\bm k}(n)
        }
        \leq 
        \limsup_{n \to \infty}
        \frac{
        \P\big(\bar{\bm L}^{_{[0,\infty)}}_n \in B\big)
        }{
            \breve\lambda_{\bm k}(n)
        }
        \leq 
        \sum_{ \bm{\mathcal K} \in \mathbb A(\bm k) }\breve{\mathbf C}^{\subInfty }_{\bm{\mathcal K};\bm \mu_{\bm N}}(B^-) < \infty.
    \label{proof, goal, theorem: LD for Hawkes}
\end{align}
This is almost an immediate consequence of Theorem~\ref{theorem: LD for levy, MRV},
except that we need to adapt it to $\D[0,\infty)$---the space of $\R^d$-valued càdlàg functions with unbounded domain.
To this end, we apply
Theorem~\ref{corollary: LD for Levy with MRV}---the $\big( \D[0,\infty), \dj{[0,\infty)} \big)$ counterpart of Theorem~\ref{theorem: LD for levy, MRV}---to obtain sample path LDPs of the form \eqref{proof, goal, theorem: LD for Hawkes} for $\bar{\bm L}^{\subInfty}_n$.
Here, the metric is defined by 
\begin{align*}
     {\dj{[0,\infty)}(\xi^{(1)},\xi^{(2)})}
    \delequal
    \int_0^\infty 
        e^{-t} \cdot \Big[ \dj{[0,t]}\Big( \phi_t\big(\xi^{(1)}),\ \phi_t\big(\xi^{(2)}\big)\Big)  \wedge 1 \Big]dt,
    \qquad
    \forall \xi^{(1)},\xi^{(2)} \in \D[0,\infty),
\end{align*}
with $\phi_t(\xi)(s) = \xi(s)$ being the projection mapping from $\D[0,\infty)$ to $\D[0,t]$.
More specifically, the finite upper bound in \eqref{proof, goal, theorem: LD for Hawkes} is verified by Theorem~\ref{corollary: LD for Levy with MRV}.
Besides,
by Theorem~\ref{corollary: LD for Levy with MRV} 
and Proposition~\ref{proposition, law of the levy process approximation, LD for Hawkes},
Claim~\eqref{proof, goal, theorem: LD for Hawkes} holds if $B$ is also bounded away from $\barDxepskT{\bm \mu_{\bm N}}{\epsilon}{\leqslant \bm k}{[0,\infty)}$ under $\dj{\subInfty}$.
However, due to $\dmp{ \subZeroT{T} }(\xi,\xi^\prime) \leq \dj{\subZeroT{T}}(\xi,\xi^\prime)$ for each $T \in (0,\infty)$ and $\xi,\xi^\prime \in \D[0,T]$ (i.e., the hierarchy of Skorokhod non-uniform topologies),
we have
$
\dmp{\subInfty}(\xi,\xi^\prime) \leq \dj{\subInfty}(\xi,\xi^\prime)
$
for any $\xi,\xi^\prime \in \D[0,\infty)$,
and hence
\begin{align*}
    \dj{\subInfty}\Big( B,\ \barDxepskT{\bm \mu_{\bm N}}{\epsilon}{\leqslant \bm k}{[0,\infty)} \Big)
    \geq
    \dmp{\subInfty}\Big( B,\  \barDxepskT{\bm \mu_{\bm N}}{\epsilon}{\leqslant \bm k}{[0,\infty)} \Big) > 0.
\end{align*}
Here, the strictly positive lower bound holds since, by our running assumption, we take $B$ that is bounded away from $\barDxepskT{\bm \mu_{\bm N}}{\epsilon}{\leqslant \bm k}{[0,\infty)}$ under $\dmp{\subInfty}$.
This concludes the proof.
\end{proof}





\bibliographystyle{abbrv} 
\bibliography{main} 

\newpage
\appendix

\section{$\mathbb M(\mathbb S\setminus\mathbb C)$-Convergence and Asymptotic Equivalence}
\label{subsec: proof, M convergence and asymptotic equivalence}

In this appendix, we first review the notion of $\mathbb M(\mathbb S \setminus \mathbb C)$-convergence in \cite{10.1214/14-PS231},
which has been a key tool in large deviations analyses of heavy-tailed stochastic systems \cite{10.1214/14-PS231,10.1214/18-AOP1319,10.1214/24-EJP1115,blanchet2025tailasymptoticsclustersizes}.
Next, we establish an asymptotic equivalence result.
Let $(\mathbb{S},\bm{d})$ be a complete and separable metric space.
Given Borel measurable sets $A,B \subseteq \mathbb S$,
$A$ is said to be bounded away from $B$ (under $\bm d$) if 
$
\bm d(A,B) \delequal \inf_{ x \in A, y \in B }\bm d(x,y) > 0.
$
Let $\mathscr{S}$ be the $\sigma$-algebra of $\mathbb S$.
Given a Borel measurable subset $\mathbb{C} \subseteq \mathbb{S}$,
let $\mathbb{S}\setminus \mathbb{C}$
be the metric subspace of $\mathbb{S}$ in the relative topology with
$\sigma$-algebra
$
\mathscr{S}_{ \mathbb{S}\setminus \mathbb{C}}
\delequal 
\{ A \in \mathscr{S}_{\mathbb{S}}:\ A \subseteq \mathbb{S}\setminus \mathbb{C}\}.
$
The space of measures
\begin{align*}
    \notationdef{notation-M-S-exclude-C}{\mathbb{M}(\S\setminus \mathbb{C})}
    \delequal 
    \{
    \nu(\cdot)\text{ is a Borel measure on }\mathbb{S}\setminus \mathbb{C} :\ \nu(\mathbb{S}\setminus \mathbb{C}^r) < \infty\ \forall r > 0
    \}.
\end{align*}
can be topologized by the sub-basis constructed using sets of the form
$
\{
\nu \in \mathbb{M}(\mathbb{S}\setminus \mathbb{C}):\ \nu(f) \in G
\},
$
where $G \subseteq [0,\infty)$ is open, $f \in \mathcal{C}({ \mathbb{S}\setminus \mathbb{C} })$,
and
$
\notationdef{notation-mathcal-C-S-exclude-C}{\mathcal{C}({ \mathbb{S}\setminus \mathbb{C} })}
$
is the set of all real-valued, non-negative, bounded and continuous functions with support bounded away from $\mathbb{C}$ (i.e., there exists $r >0$ such that $f(x) = 0\ \forall x \in \mathbb{C}^r$).
Now, we recall the concept of $\mathbb M(\mathbb S \setminus \mathbb C)$-convergence.

\begin{definition}[$\mathbb M(\mathbb S \setminus \mathbb C)$-convergence]
\label{def: M convergence}
Given a sequence $\mu_n \in \M(\S\setminus\C)$ and some $\mu \in \M(\S\setminus\C)$,
we say that \textbf{$\mu_n$ converges to $\mu$ in $\M(\S\setminus\C)$} as $n \to \infty$
if
\begin{align*}
    \lim_{n \to \infty}|\mu_n(f) - \mu(f)| = 0,
    \qquad
    \forall f \in \mathcal C(\mathbb S \setminus \mathbb C).
\end{align*}
\end{definition}
When there is no ambiguity about the choice of $\mathbb S$ and $\mathbb C$, we simply refer to the convergence mode in Definition~\ref{def: M convergence} as $\mathbb M$-convergence.
Next, we review the Portmanteau Theorem for $\mathbb M$-convergence.

\begin{theorem}[Theorem 2.1 of \cite{10.1214/14-PS231}]
\label{portmanteau theorem M convergence}
 Let  $\mu_n, \mu \in \M(\S\setminus\C)$.
 The following statements are equivalent.
\begin{enumerate}[(i)]
    \item
        $\mu_n \to \mu$ in $\M(\S\setminus\C)$ as $n \to \infty$.

    \item
        $\int f d\mu_n \rightarrow \int f d\mu$ for any $f \in \mathcal C(\mathbb S\setminus\mathbb C)$ that is also uniformly continuous on $\mathbb S$.

    \item
        For any closed set $F$ and open set $G$ that are bounded away from $\mathbb C$,
        \begin{align*}
    \limsup_{n \to \infty}\mu_n(F) \leq \mu(F),
    \qquad
    \liminf_{n \to \infty}\mu_n(G) \geq \mu(G).
\end{align*}
\end{enumerate}
\end{theorem}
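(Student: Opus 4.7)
The plan is to establish the three implications cyclically $(i) \Rightarrow (ii) \Rightarrow (iii) \Rightarrow (i)$, adapting the classical Portmanteau theorem for weak convergence to the requirement that test functions and sets be bounded away from $\C$. The implication $(i) \Rightarrow (ii)$ is immediate, since every uniformly continuous element of $\mathcal{C}(\S\setminus\C)$ is itself a member of $\mathcal{C}(\S\setminus\C)$.

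For $(ii) \Rightarrow (iii)$, fix a closed set $F$ with $\bm{d}(F,\C) > 2r$ for some $r > 0$, and construct the explicit Lipschitz cutoffs $f_k(x) \delequal \big(1 - k\,\bm{d}(x,F)\big)^+$ for $k > 1/r$. Each $f_k$ is uniformly continuous, satisfies $\mathbbm{1}_F \leq f_k \leq \mathbbm{1}_{F^{1/k}}$, and has support contained in $F^{1/k} \subseteq \S \setminus \C^r$, hence lies in $\mathcal{C}(\S\setminus\C)$. By $(ii)$, $\int f_k\, d\mu_n \to \int f_k\, d\mu$ for each fixed $k$, so $\limsup_n \mu_n(F) \leq \int f_k\, d\mu$; letting $k \to \infty$, dominated convergence (with envelope $\mathbbm{1}_{\S\setminus\C^r}$, integrable thanks to $\mu(\S\setminus\C^r) < \infty$) yields $\int f_k\, d\mu \downarrow \mu(F)$. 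The lower-bound half follows symmetrically: for an open set $G$ with $\bm{d}(G,\C) > 2r$, the approximation $g_k(x) \delequal 1 \wedge \big(k\,\bm{d}(x,G^\complement)\big)$ increases to $\mathbbm{1}_G$ with support inside $\overline{G}$ (still bounded away from $\C$), so $(ii)$ combined with monotone convergence delivers $\liminf_n \mu_n(G) \geq \mu(G)$.

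For $(iii) \Rightarrow (i)$, take any $f \in \mathcal{C}(\S\setminus\C)$ whose support $K$ is contained in $\S \setminus \C^r$ for some $r > 0$. The layer-cake representation
\begin{align*}
    \int f\, d\nu \;=\; \int_0^{\|f\|_\infty} \nu(\{f > t\})\, dt \;=\; \int_0^{\|f\|_\infty} \nu(\{f \geq t\})\, dt
\end{align*}
recasts the integral in terms of the superlevel sets, which for $t > 0$ lie inside $K$ and are hence bounded away from $\C$. Invoking $(iii)$ at each $t$ gives $\limsup_n \mu_n(\{f \geq t\}) \leq \mu(\{f \geq t\})$ and $\liminf_n \mu_n(\{f > t\}) \geq \mu(\{f > t\})$. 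Fatou's lemma and its reverse form, with the uniform envelope $C \delequal \sup_n \mu_n(K) < \infty$ guaranteed by applying the upper bound of $(iii)$ once to the closed set $K$, transfer these inequalities through the $t$-integral and yield $\lim_n \int f\, d\mu_n = \int f\, d\mu$.

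The central obstacle throughout is managing the fact that $\mu$ may have infinite total mass on $\S\setminus\C$: unlike in the classical Portmanteau setting of probability measures, every approximating function, every superlevel set, and every invocation of $(iii)$ must be verified to remain bounded away from $\C$, so that the defining local-finiteness property $\mu(\S\setminus\C^r) < \infty$ of $\M(\S\setminus\C)$ supplies the integrability envelope at each step. The explicit cutoff radii and separation margins $1/k < r$ in the constructions above are precisely designed to enforce this containment.
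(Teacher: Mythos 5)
Your proof is correct. The paper itself does not prove this statement---it is cited directly as Theorem~2.1 of Lindskog, Resnick, and Roy (Probab.\ Surv.\ 2014)---so there is no internal proof to compare against; your argument follows the same standard three-step cycle (Lipschitz cutoffs for (ii)$\Rightarrow$(iii), layer cake with bounded-away superlevel sets for (iii)$\Rightarrow$(i)) that the cited reference employs, correctly identifying where the local-finiteness condition $\mu(\mathbb{S}\setminus\mathbb{C}^r)<\infty$ replaces the total-mass bound one would use in the classical Portmanteau theorem. One very minor point worth making explicit: in the layer-cake step for the upper bound, the set $\{f\geq t\}$ at $t=0$ equals $\mathbb{S}$ and so is not bounded away from $\mathbb{C}$; this causes no harm since $\{t=0\}$ has Lebesgue measure zero, but it would be cleaner to state that (iii) is invoked only for $t>0$ and to note that the integrand is then dominated by $\sup_n\mu_n(K)<\infty$ on all of $(0,\|f\|_\infty]$, which is exactly what licenses the reverse Fatou step.
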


The verification of $\mathbb M$-convergence
is often facilitated by
the asymptotic equivalence between two families of random objects. 
In the proof of this paper, we work with the version of  asymptotic equivalence in Lemma~\ref{lemma: asymptotic equivalence when bounded away, equivalence of M convergence}, which is in the same spirit as 
Lemma~4.2 of \cite{blanchet2025tailasymptoticsclustersizes} with only two key differences.
First, 
Lemma~\ref{lemma: asymptotic equivalence when bounded away, equivalence of M convergence} addresses abstract metric spaces, whereas Lemma~4.2 of \cite{blanchet2025tailasymptoticsclustersizes} specifically applies to the space of polar coordinates.
Second, we require condition (ii) to hold only for \emph{almost} all $\delta > 0$ sufficiently close to $0$.

\begin{lemma}
\label{lemma: asymptotic equivalence when bounded away, equivalence of M convergence}
\linksinthm{lemma: asymptotic equivalence when bounded away, equivalence of M convergence}
Let $X_n$ and $Y^\delta_n$ be random elements taking values in a complete and separable metric space $(\mathbb{S},\bm{d})$,
and $V_n^\delta$ be random elements taking values in a countable set $\mathbb V$.
Furthermore, 
given $n \geq 1$, $X_n$, $Y_n^\delta$, and $V^\delta_n$ (for any $\delta > 0$) are supported on the same probability space.
Let $\epsilon_n$ be a sequence of positive real numbers with $\lim_{n \to \infty}\epsilon_n = 0$.
Let $\mathbb C \subseteq \mathbb S$,
let
$\mathcal V \subset \mathbb V$ be a set containing only finitely many elements,
and  let $\mu_v \in \mathbb M(\mathbb S \setminus \mathbb C)$ for each $v \in \mathcal V$.
Suppose that 
\begin{enumerate}[(i)]
    \item
        (\textbf{Asymptotic equivalence})
        Given $\Delta > 0$ and $B \in \mathscr{S}_\mathbb{S}$ that is bounded away from $\mathbb{C}$,
        \begin{align*}
    \lim_{\delta \downarrow 0}\lim_{ n \rightarrow \infty } \epsilon^{-1}_n {\P\Big( \bm{d}\big(X_n, Y^\delta_n\big)\mathbbm{I}\big( X_n \in B\text{ or }Y^\delta_n \in B \big) > \Delta \Big) }= 0;
\end{align*}
        

    \item
        (\textbf{$\mathbb M(\mathbb S\setminus\mathbb C)$-Convergence})
        Given $B \in \mathscr{S}_\mathbb{S}$ that is bounded away from $\mathbb{C}$
        and $\Delta \in \big(0,\bm d(B,\mathbb C)\big)$,
        there exists some $\delta_0 = \delta_0(B,\Delta) > 0$
        such that for all but countably many $\delta \in (0,\delta_0)$,
    \begin{align*}
       \limsup_{n \to \infty}\epsilon^{-1}_n\P(Y^\delta_n \in B, V^\delta_n = v) & \leq \mu_v(B^{\Delta}),
        \ \ 
        \liminf_{n \to \infty}\epsilon^{-1}_n\P(Y^\delta_n \in B, V^\delta_n = v) \geq \mu_v(B_\Delta),
        \quad\forall v \in \mathcal V,
        \\
       & \lim_{n \to \infty}\epsilon^{-1}_n\P(Y^\delta_n \in B,\ V^\delta_n \notin \mathcal V) = 0.
    \end{align*}
\end{enumerate}
Then
$\epsilon^{-1}_n \P(X_n \in\ \cdot\ ) \rightarrow \sum_{v \in \mathcal V}\mu_v(\cdot)$ in $\mathbb{M}(\mathbb{S}\setminus \mathbb{C})$.
\end{lemma}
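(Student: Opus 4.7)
The plan is to invoke the Portmanteau characterization in Theorem~\ref{portmanteau theorem M convergence}: it suffices to prove, for every closed set $F$ and open set $G$ that are bounded away from $\mathbb{C}$, the estimates
$
\limsup_{n\to\infty} \epsilon_n^{-1}\P(X_n \in F) \leq \sum_{v\in\mathcal V}\mu_v(F)
$
and
$
\liminf_{n\to\infty}\epsilon_n^{-1}\P(X_n \in G) \geq \sum_{v\in\mathcal V}\mu_v(G).
$
The core idea is a standard sandwich: the asymptotic equivalence in condition~(i) lets us transfer probabilities between $X_n$ and $Y_n^\delta$ up to an error controlled by $\{\bm d(X_n,Y_n^\delta) > \Delta\}$, while condition~(ii) provides the $\mathbb M$-type bounds for $Y_n^\delta$ along a well-chosen sequence of $\delta$'s.

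For the upper bound, fix a closed set $F$ bounded away from $\mathbb{C}$ and choose any $\Delta \in (0,\bm d(F,\mathbb C)/2)$. Using the inclusion $\{X_n\in F\} \subseteq \{Y_n^\delta \in F^\Delta\} \cup \{\bm d(X_n,Y_n^\delta) > \Delta,\ X_n\in F\}$ and decomposing $\{Y_n^\delta \in F^\Delta\}$ according to whether $V_n^\delta \in \mathcal V$ or not, condition~(ii) applied to $F^\Delta$ (which is still bounded away from $\mathbb C$ since $\bm d(F^\Delta,\mathbb C) \geq \bm d(F,\mathbb C) - \Delta > 0$) yields, for all but countably many small $\delta$,
\[
\limsup_{n\to\infty}\epsilon_n^{-1}\P(X_n \in F) \leq \sum_{v\in\mathcal V}\mu_v\bigl((F^\Delta)^{\Delta'}\bigr) + \limsup_{n\to\infty}\epsilon_n^{-1}\P\bigl(\bm d(X_n,Y_n^\delta)>\Delta,\ X_n\in F\bigr),
\]
for some auxiliary $\Delta' \in (0,\bm d(F^\Delta,\mathbb C))$. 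Sending $\delta\downarrow 0$ first kills the error term by condition~(i), and then sending $\Delta,\Delta'\downarrow 0$ along continuity points of the finite measures $\mu_v(\cdot \cap (\mathbb S\setminus \mathbb C^r))$ gives $\mu_v((F^\Delta)^{\Delta'}) \to \mu_v(F)$ since $F$ is closed and $\bigcap_{r>0} F^r = F$, finishing the upper bound.

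For the lower bound, fix an open $G$ bounded away from $\mathbb C$ and $\Delta > 0$ small. Then $\{Y_n^\delta \in G_\Delta\} \setminus \{\bm d(X_n,Y_n^\delta)>\Delta\} \subseteq \{X_n\in G\}$, so condition~(ii) applied to the set $G_\Delta$ (which is open and bounded away from $\mathbb C$) gives, for all but countably many small $\delta$,
\[
\liminf_{n\to\infty}\epsilon_n^{-1}\P(X_n\in G) \geq \sum_{v\in\mathcal V}\mu_v\bigl((G_\Delta)_{\Delta'}\bigr) - \limsup_{n\to\infty}\epsilon_n^{-1}\P\bigl(\bm d(X_n,Y_n^\delta)>\Delta,\ Y_n^\delta\in G_\Delta\bigr).
\]
By condition~(i) (applied with $B=G_\Delta$), the error vanishes in the iterated limit $\lim_\delta\lim_n$; and since $G$ is open we have $\bigcup_{\Delta>0}(G_\Delta)_{\Delta'} \uparrow G$, so monotone convergence over the finite measures $\mu_v$ on sets bounded away from $\mathbb C$ delivers the desired bound $\sum_v \mu_v(G)$.

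The main obstacles I expect are bookkeeping, not conceptual. One must (a) choose $\Delta,\Delta'$ small enough that both $F^\Delta$ and its $\Delta'$-enlargement are still bounded away from $\mathbb C$, so that condition~(ii) is even applicable and the $\mu_v$-masses are finite; (b) perform the passage to the limit through \emph{continuity points} of each $\mu_v$ relative to the filtration $\{F^\Delta\}_\Delta$ (respectively $\{G_\Delta\}_\Delta$), which is possible because each of the finitely many $\mu_v$ has at most countably many discontinuity radii; and (c) align this choice with the ``all but countably many $\delta$'' caveat of condition~(ii), which is handled by intersecting countable exceptional sets. The order of limits $\lim_{\Delta\downarrow 0}\lim_{\delta\downarrow 0}\lim_{n\to\infty}$ is critical: taking $n\to\infty$ first controls $\P(Y_n^\delta\in\cdot)$, then $\delta\downarrow 0$ kills the coupling error via condition~(i), and only then can $\Delta\downarrow 0$ retrieve the sharp bounds on $F$ and $G$.
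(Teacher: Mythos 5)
Your proposal is correct and follows essentially the same sandwich argument as the paper's proof: transfer $\P(X_n\in\cdot)$ to $\P(Y_n^\delta\in\cdot)$ via condition (i) with an error controlled by the coupling distance, invoke condition (ii) on an enlarged/shrunk set, then take iterated limits $\lim_{\Delta\downarrow 0}\lim_{\delta\downarrow 0}\lim_{n\to\infty}$ and close with the Portmanteau theorem. The only cosmetic differences are that the paper works with an arbitrary Borel set $B$ and uses a single $\Delta$ (yielding $B^{2\Delta}$ and $B_{2\Delta}$) rather than two separate parameters, and it concludes by the plain continuity of measures from above/below rather than by passing through ``continuity points,'' which is unnecessary here.
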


\begin{proof}[Proof of Lemma~\ref{lemma: asymptotic equivalence when bounded away, equivalence of M convergence}]
\linksinpf{lemma: asymptotic equivalence when bounded away, equivalence of M convergence}
Take any Borel measurable $B \subseteq \mathbb S$ that is bounded away from $\mathbb C$.
W.l.o.g., in this proof we only consider $\Delta > 0$ small enough
that $\bm d(B,\mathbb C) > 2\Delta$, and hence $B^{2\Delta}$ is still bounded away from $\mathbb C$.

First, for any $n \geq 1$ and $\delta > 0$,
\begin{align*}
    & \P(X_n \in B)
    \\
    & = 
    \P\big(
        X_n \in B;\ \bm d( X_n, Y^\delta_n ) \leq \Delta
    \big)
    +
    \P\big(
        X_n \in B;\ \bm d( X_n, Y^\delta_n ) > \Delta
    \big)
    \\ 
    & \leq 
    \P\big(
        Y^\delta_n \in B^\Delta
    \big)
    +
    \P\big(
        X_n\in B\text{ or }Y^\delta_n \in B;\ \bm d( X_n, Y^\delta_n ) > \Delta
    \big)
    \\ 
    & = 
    \sum_{v \in \mathcal V}\P\big(
        Y^\delta_n \in B^\Delta,\ V^\delta_n = v
    \big)
    +
    \P\big(
        Y^\delta_n \in B^\Delta,\ V^\delta_n \notin \mathcal V
    \big)
    +
    \P\big(
        X_n\in B\text{ or }Y^\delta_n \in B;\ \bm d( X_n, Y^\delta_n ) > \Delta
    \big).
\end{align*}
Then, by Condition (ii),
there exists some $\delta_0 > 0$ such that for all but countably many $\delta \in (0,\delta_0)$,
\begin{align}
     & \limsup_{n \to \infty}
    \epsilon^{-1}_n
    \P(X_n \in B)
    \leq
    \sum_{v \in \mathcal V}
    \mu_v(B^{2\Delta}) + 
    \limsup_{n \to \infty} \epsilon^{-1}_n
    \P\Big( \bm{d}\big(X_n, Y^\delta_n\big)\mathbbm{I}\big( X_n \in B\text{ or }Y^\delta_n \in B \big) > \Delta \Big).
    \nonumber
\end{align}
This allows us to pick a sequence $\delta_k \downarrow 0$ such that 
Condition (ii) of Lemma~\ref{lemma: asymptotic equivalence when bounded away, equivalence of M convergence} holds under each $\delta = \delta_k$, which implies
\begin{align}
    & \limsup_{n \to \infty}
    \epsilon^{-1}_n
    \P(X_n \in B)
    \nonumber
    \\ 
    & \leq 
    \sum_{v \in \mathcal V}
    \mu_v(B^{2\Delta}) + 
    \limsup_{k \to \infty}
    \limsup_{n \to \infty} \epsilon^{-1}_n
    \P\Big( \bm{d}\big(X_n, Y^{\delta_k}_n\big)\mathbbm{I}\big( X_n \in B\text{ or }Y^{\delta_k}_n \in B \big) > \Delta \Big)
    \nonumber
    \\ 
    & = 
    \sum_{v \in \mathcal V}
    \mu_v(B^{2\Delta}) 
    \qquad
    \text{ by Condition (i).}
    \label{proof: upper bound, lemma: asymptotic equivalence when bounded away, equivalence of M convergence}
\end{align}
Analogously, observe the lower bound (for each $n \geq 1$ and $\delta > 0$)
\begin{align*}
    \P(X_n \in B)
    & \geq 
    \P\big(X_n \in B;\ \bm d( X_n, Y^\delta_n ) \leq \Delta\big)
    \\ 
    & \geq
    \P\big( Y^\delta_n \in B_\Delta; \ \bm d( X_n, Y^\delta_n ) \leq \Delta\big)
    \\ 
    & \geq 
    \P( Y^\delta_n \in B_\Delta)
    -
    \P\big( Y^\delta_n \in B_\Delta; \ \bm d( X_n, Y^\delta_n ) > \Delta\big)
    \\ 
    & \geq 
    \P( Y^\delta_n \in B_\Delta)
    -
    \P\big( Y^\delta_n \in B\text{ or }X_n \in B; \ \bm d( X_n, Y^\delta_n ) > \Delta\big)
    \\ 
    & \geq
    \sum_{v \in \mathcal V}\P( Y^\delta_n \in B_\Delta,\ V^\delta_n = v)
    -
    \P\big( Y^\delta_n \in B\text{ or }X_n \in B; \ \bm d( X_n, Y^\delta_n ) > \Delta\big).
\end{align*}
Again, by Condition (ii),
one can pick a sequence $\delta_k \downarrow 0$ such that 
\begin{align}
    & \liminf_{n \to \infty}
    \epsilon^{-1}_n
    \P(X_n \in B)
    \nonumber
    \\ 
    & \geq 
    \liminf_{k \to \infty}\liminf_{n \to \infty}
    \sum_{v \in \mathcal V}
    \epsilon^{-1}_n
    \P( Y^{\delta_k}_n \in B_\Delta,\ V^{\delta_k}_n = v)
    \nonumber
    \\ 
    &\qquad
    -
    \limsup_{k \to \infty}\limsup_{n \to \infty} \epsilon^{-1}_n
    \P\Big( \bm{d}\big(X_n, Y^{\delta_k}_n\big)\mathbbm{I}\big( X_n \in B\text{ or }Y^{\delta_k}_n \in B \big) > \Delta \Big)
    \nonumber
    \\ 
    & \geq 
    \sum_{v \in \mathcal V}\mu_v(B_{2\Delta})
     \qquad
    \text{ by Conditions (i) and (ii)}.
    \label{proof: lower bound, lemma: asymptotic equivalence when bounded away, equivalence of M convergence}
\end{align}
Since 
$|\mathcal V| < \infty$,
$\mu_v \in \mathbb M(\mathbb S\setminus\mathbb C)$ for each $v \in \mathcal V$,
and
$B^{2\Delta}$ is bounded away from $\mathbb C$,
we have $\sum_{v \in \mathcal V}\mu_v(B^{2\Delta}) < \infty$.
By sending $\Delta \downarrow 0$ in \eqref{proof: upper bound, lemma: asymptotic equivalence when bounded away, equivalence of M convergence} and \eqref{proof: lower bound, lemma: asymptotic equivalence when bounded away, equivalence of M convergence},
it then follows from the continuity of measures that 
\begin{align*}
    \sum_{v \in \mathcal V}\mu_v(B^\circ) \leq 
    \liminf_{n \to \infty}\epsilon_n^{-1}\P(X_n \in B)
    \leq 
    \limsup_{n \to \infty}\epsilon_n^{-1}\P(X_n \in B)
    \leq \sum_{v \in \mathcal V}\mu_v(B^-).
\end{align*}
By the arbitrariness of our choice of $B$,
we conclude the proof using Theorem~\ref{portmanteau theorem M convergence}.
\end{proof}

\section{Details for Tail Asymptotics of Hawkes Process Clusters}
\label{sec: appendix, tail asymptotics of Hawkes process clusters}

To precisely define the limiting measures $\mathbf C^{\bm j}_i$ in \eqref{def: measure C i I, cluster, main paper},
 we review a few definitions in \cite{blanchet2025tailasymptoticsclustersizes}.
\begin{definition}[Type]
\label{def: type of clusters}
We say that ${\bm I} = (I_{k,j})_{k \geq 1,\ j \in [d]}$ is a \textbf{type} if 
\begin{itemize}

    \item $I_{k,j}\in\{0,1\}$ for each $k \geq 1$ and $j \in [d]$;
 
    \item there exists $\notationdef{notation-K-I-for-type-I-cluster-size}{\mathcal K^{\bm I}} \in \mathbb Z_+$ such that $\sum_{j \in [d]}I_{k,j} = 0\ \forall k > \mathcal K^{\bm I}$ and $\sum_{j \in [d]}I_{k,j} \geq 1 \ \forall 1 \leq k \leq \mathcal K^{\bm I}$;

    \item for each $j \in [d]$, we have $\sum_{k \geq 1}I_{k,j} \leq 1$;
    
      \item for $k = 1$, the set $\{ j \in [d]:\ I_{1,j} = 1 \}$ is either empty or contains exactly one element.
\end{itemize}
Let $\mathscr I$ be the collection of all types.
For each $\bm I \in \mathscr I$, let
\begin{align}
    \notationdef{notation-active-indices-of-type-I-cluster-size}{\bm j^{\bm I}} \delequal 
    \big\{ j \in [d]:\ I_{k,j} = 1\text{ for some }k \geq 1  \big\},
    \qquad
    \notationdef{notation-active-indices-at-depth-k-of-type-I-cluster-size}{\bm j^{\bm I}_k} \delequal 
    \big\{
        j \in [d]:\ I_{k,j} = 1
    \big\}
    \ \ 
    \forall k \geq 1.
\end{align}
\end{definition}

Intuitively speaking, 
$\mathcal K^{\bm I}$ is the \emph{depth} of $\bm I$,
$\bm j^{\bm I}$ is the set of \emph{active indices} in $\bm I$, and $\bm j^{\bm I}_k$ is the set of active indices of $\bm I$ at depth $k$.
For any $\bm I \in \mathscr I$, note that:
(i) $\bm j^{\bm I} = \emptyset$ (and hence $\mathcal K^{\bm I} = 0$) if and only if $I_{k,j}\equiv 0$ for any $k,j$;
and 
(ii) when $\mathcal K^{\bm I} \geq 1$, there uniquely exists some $j^{\bm I}_1 \in [d]$ such that $\bm j^{\bm I}_1 = \{j^{\bm I}_1\}$.
Next,
given $\beta > 0$, we define a Borel measure on $(0,\infty)$ by
\begin{align}
    \notationdef{notation-power-law-measure-nu-beta}{\nu_\beta(dw)} \delequal \frac{ \beta dw}{ w^{\beta + 1} }\mathbbm{I}\{w > 0\}.
    \label{def, power law measure nu beta}
\end{align}
Given non-empty sets $\mathcal I \subseteq [d]$ and $\mathcal J \subseteq [d]$,
we say that 
$
\{\mathcal J(i):\ i \in \mathcal I\}
$
is an \notationdef{notation-assignment-from-J-to-I}{\textbf{assignment of $\mathcal J$ to $\mathcal I$}} if 
\begin{align}
    \mathcal J(i) \subseteq \mathcal J\quad \forall i \in \mathcal I;
    \qquad 
    \bigcup_{i \in \mathcal I}\mathcal{J}(i) = \mathcal J;
    \qquad 
    \mathcal J(i) \cap \mathcal J(i^\prime) = \emptyset\quad \forall i \neq i^\prime.
    \label{def: assignment from mathcal J to mathcal I}
\end{align}
We use $\notationdef{notation-assignment-from-mathcal-J-to-mathcal-I}{\mathbb T_{ \mathcal I \leftarrow \mathcal J }}$ to denote the set containing  all assignments of $\mathcal{J}$ to $\mathcal I$.
Given non-empty $\mathcal I \subseteq [d]$ and $\mathcal J \subseteq [d]$,
define the mapping
\begin{align}
    \notationdef{notation-function-g-mathcal-I-mathcal-J-cluster-size}{g_{\mathcal I \leftarrow \mathcal J}(\bm w)}
    \delequal 
    \sum_{
        \{\mathcal J(i):\ i \in \mathcal I\} \in \mathbb T_{ \mathcal I \leftarrow \mathcal J }
    }
    \prod_{i \in\mathcal I}
        \prod_{j \in \mathcal J(i)}
        w_i\bar s_{l^*(j) \leftarrow i},
    \quad
    \forall \bm w = (w_i)_{i \in \mathcal I} \in [0,\infty)^{|\mathcal I|}.
    \label{def: function g mathcal I mathcal J, for measure C i bm I, cluster size}
\end{align}
Here, recall that $\bar{\bm s}_j = \E \bm S_j$, $\bar s_{j \leftarrow i} = \E S_{j \leftarrow i}$ (see \eqref{def: bar s i, ray, expectation of cluster size}),
and $l^*(j)$ is defined in \eqref{def: cluster size, alpha * l * j}.

Given  a type $\bm I \in \mathscr I$ whose active index set $\bm j^{\bm I}$ is non-empty,
recall that when $\bm j^{\bm I} \neq \emptyset$,
there uniquely exists some $j^{\bm I}_1 \in [d]$ such that $\bm j^{\bm I}_1 = \{j^{\bm I}_1\}$.
Let
\begin{align}
    \notationdef{notation-measure-nu-for-type-I-cluster-size}{\nu^{\bm I}(d \bm w)}
    & \delequal
    \bigtimes_{k = 1}^{ \mathcal K^{\bm I} }
    \bigg(
        \bigtimes_{ j \in \bm j^{\bm I}_k }\nu_{\alpha^*(j)  }(d w_{k,j})
    \bigg),
    \label{def, measure nu type I, cluster size, appendix}
    \\
    \notationdef{notation-measure-C-i-type-I-cluster-size}{\mathbf C_i^{\bm I}(\cdot)}
        & \delequal
    \int \mathbbm{I}
    \Bigg\{
        \sum_{ k = 1 }^{ \mathcal K^{\bm I} }\sum_{ j \in \bm j^{\bm I}_k  }w_{k,j}\bar{\bm s}_j \in \ \cdot\ 
    \Bigg\}
     \Bigg(
     \bar s_{l^*(j^{\bm I}_1) \leftarrow i}
    \prod_{ k = 1 }^{ \mathcal K^{\bm I} - 1}
        g_{ \bm j^{\bm I}_k \leftarrow \bm j^{\bm I}_{k+1} }(\bm w_k)
    \Bigg)
    \nu^{\bm I}(d \bm w),
    \label{def: measure C i I, cluster, appendix}
\end{align}
where we adopt the notations $\bm w_k = (w_{k,j})_{j \in \bm j^{\bm I}_k}$
and $\bm w = (\bm w_k)_{k \in [\mathcal K^I]}$.
Besides, note that $\mathbf C^{\bm I}_i(\cdot)$ is supported on the cone $\R^d(\bm j^{\bm I})$ (see \eqref{def: cone R d index i}).
The measures $\mathbf C^{\bm j}_i(\cdot)$ in \eqref{def: measure C i I, cluster, main paper} admit the form
\begin{align}
    \mathbf C^{\bm j}_i = \sum_{ \bm I \in \mathscr I:\ \bm j^{\bm I} = \bm j }\mathbf C^{\bm I}_i.
    \label{def: measure C j i, for tail asymptotics}
\end{align}

Next, continuing the discussions in Remark~\ref{remark: evaluation of limiting measures, LD for Hawkes},
we note that the evaluation of the limiting measures 
$
\breve{\mathbf C}^{ \subInfty  }_{\bm{\mathcal K};\bm\mu_{\bm N}}(\cdot)
$
in \eqref{claim, theorem: LD for Hawkes} can be addressed by rejection sampling for $\mathbf C^{\bm I}_i(\cdot)$ in \eqref{def: measure C i I, cluster, appendix}.
Indeed, 
as noted in Remark~\ref{remark: evaluation of limiting measures, LD for Hawkes}, the key task is to sample under the law $\bar{\mathbf C}_{\bm j, \bar\delta}(\cdot)$ in \eqref{def: measure bar C j bar delta, for monte carlo simulation of limiting measures}.
Furthermore, the evaluation of $\mathbf C^{\bm I}_i\big( \bar \R^{d}_>(\bm j,\bar\delta)   \big)$ is detailed in Remark~4 of \cite{blanchet2025tailasymptoticsclustersizes}.
Then,
by \eqref{def: measure C indices j, sample path LD for Hawkes} and \eqref{def: measure C j i, for tail asymptotics},
it suffices to check how to sample from probability measures
$$\hat{\mathbf C}^{\bm I}_{i,\bar\delta}(\cdot) \delequal
    \mathbf C^{\bm I}_i\Big( \ \cdot\ \cap \bar \R^{d}_>(\bm j,\bar\delta)   \Big)\Big/
     \mathbf C^{\bm I}_i\Big( \bar \R^{d}_>(\bm j,\bar\delta)   \Big),
$$
where 
$\bm I \in \mathscr I$ is such that $\bm j^{\bm I} = \bm j$ (see Definition~\ref{def: type of clusters}).
Furthermore,
the proof of Lemma~4.10 (b) of \cite{blanchet2025tailasymptoticsclustersizes}
implies the existence of some constant $M^\prime < \infty$ such that (we write $\bm w_k = (w_{k,j})_{j \in \bm j^{\bm I}_k}$)
$$
\Bigg[ \prod_{ k = 1  }^{ \mathcal K^{\bm I} - 1 }g_{ \bm j^{\bm I}_k \leftarrow \bm j^{\bm I}_{k+1} }(\bm w_k)\Bigg] \cdot 
    \prod_{k = 1}^{\mathcal K^{\bm I}}\prod_{j \in \bm j^{\bm I}_k}{  w_{k,j}^{ 
-\alpha^*(j) - 1 } }
\leq M^\prime \cdot   \prod_{k = 1}^{\mathcal K^{\bm I}}\prod_{j \in \bm j^{\bm I}_k}{  w_{k,j}^{ 
-\alpha^*(j) }  }
$$
whenever
$
\sum_{ k = 1 }^{ \mathcal K^{\bm I} }\sum_{ j \in \bm j^{\bm I}_k  }w_{k,j}\bar{\bm s}_j \in \bar \R^{d}_>(\bm j,\bar\delta).
$
Here,
the mapping $g_{\mathcal I \leftarrow \mathcal J}(\cdot)$ is defined in \eqref{def: function g mathcal I mathcal J, for measure C i bm I, cluster size},
$\alpha^*(\cdot)$ is defined in \eqref{def: cluster size, alpha * l * j},
and
the $\mathcal K^{\bm I}$ and $\bm j^{\bm I}_k$'s are specified in Definition~\ref{def: type of clusters}.
Then, by definitions in \eqref{def: measure C i I, cluster, appendix},
to sample from $\hat{\mathbf C}^{\bm I}_{i,\bar\delta}(\cdot)$
it suffices to 
set
\begin{align}
    \hat g^{\bm I}(\bm w) \delequal 
    \Bigg[ \prod_{ k = 1  }^{ \mathcal K^{\bm I} - 1 }g_{ \bm j^{\bm I}_k \leftarrow \bm j^{\bm I}_{k+1} }(\bm w_k) \Bigg]\cdot 
     \prod_{ k = 1  }^{ \mathcal K^{\bm I}  }
    \prod_{j \in \bm j^{\bm I}_k}{  \bigg(\frac{\delta}{w_{k,j}}\bigg)^{ 
\alpha^*(j) + 1 }  },
\quad
\hat f^{\bm I}(\bm w) \delequal 
 \prod_{ k = 1  }^{ \mathcal K^{\bm I}  }
    \prod_{j \in \bm j^{\bm I}_k}{  \bigg(\frac{\delta}{w_{k,j}}\bigg)^{ 
\alpha^*(j) }  },
    \nonumber
\end{align}
pick $\delta > 0$ small enough and $M > 0$ large enough, and run rejection sampling as follows.
\begin{itemize}
    \item 
        Independently
        for each $k \in [\mathcal K^{\bm I}]$ and $j \in \bm j^{\bm I}_k$,
        generate a Pareto (i.e., exact power-law) random variable $W^{(\delta)}_{k,j}$ with lower bound $\delta$ and power-law index $\alpha^*(j) - 1$;
        Write $\bm W^{(\delta)} = \big(W^{(\delta)}_{k,j}\big)_{ k \in [\mathcal K^{\bm I}], j \in \bm j^{\bm I}_k }$.

    \item 
        Generate $U \sim \text{Unif}(0,1)$.

    \item 
        If $ \sum_{k \in [\mathcal K^{\bm I}]}\sum_{ j \in \bm j }W^{(\delta)}_{k,j}\bar{\bm s}_j \in \bar \R^{d}_>(\bm j,\bar\delta)$ and 
        $
        U < \hat g^{\bm I}( \bm W^{(\delta)}) \big/\big[
            M \cdot \hat f^{\bm I}( \bm W^{(\delta)}) 
        \big],
        $
        return $$\sum_{k \in [\mathcal K^{\bm I}]}\sum_{ j \in \bm j }W^{(\delta)}_{k,j}\bar{\bm s}_j.$$
        Otherwise, rerun this procedure.
\end{itemize}

\section{Counterexamples of Topology and Tail Behavior}
\label{sec: counter examples}

\begin{example}
\label{example: LD for Hawkes, cadlag space with compact domain}
In this two-dimensional example, we demonstrate that the LDP \eqref{claim, theorem: LD for Hawkes} stated in Theorem~\ref{theorem: LD for Hawkes} would generally fail w.r.t.\ the product $M_1$ topology of $\D[0,T]$, the càdlàg space with compact domain.
The intuition behind the counterexample is that, for any immigrant arriving near the end of the time interval $[0, T]$, its cluster size vector may be arbitrarily truncated at time $T$ due to randomness in the birth times of its descendants.
As a result, the LDP in \eqref{claim, theorem: LD for Hawkes} generally fails under the product $M_1$ topology on $\D[0, T]$ for rare event sets $B$ that involve the value of the process at time $T$.

Without loss of generality, we fix $T = 1$, and lighten the notations by writing 
\begin{align*}
    & \D = \D\big([0,1],\R^d\big),\qquad 
    { \shortbarDepsk{\epsilon}{\bm{\mathcal K} }  }
    =
    \barDxepskT{\bm \mu_{\bm N}}{\epsilon}{\bm{\mathcal K}}{[0,1]},
    \qquad 
    { \shortbarDepsk{\epsilon}{\leqslant \bm k} }
    =
    \barDxepskT{\bm \mu_{\bm N}}{\epsilon}{\leqslant \bm k}{[0,1]},
    \\ 
    &
    {\breve{\mathbf C}_{\bm{\mathcal K}} }
    =
    \breve{\mathbf C}^{ _{[0,1]} }_{\bm{\mathcal K};\bm \mu_{\bm N}},
    \qquad 
    {\bar{\bm N}_n}
    =
    \bar{\bm N}_n^{ _{[0,1]} },
    \qquad 
    {\dmp{}}=\dmp{[0,1]}.
\end{align*}
We focus on the case with $d = 2$ and adopt all assumptions imposed in Theorem~\ref{theorem: LD for Hawkes}.
Besides, we impose the following conditions on the fertility functions (see \eqref{def: conditional intensity, hawkes process}):
\begin{align}
    f^{\bm N}_{1 \leftarrow 1}(t) = 0,& \qquad \forall t > 1,
    \label{example, cond fertility function 1, LD for Hawkes, cadlag space with compact domain}
    \\
    f^{\bm N}_{2 \leftarrow 1}(t) = 0,& \qquad \forall t \in [0,2].
    \label{example, cond fertility function 2, LD for Hawkes, cadlag space with compact domain}
\end{align}
Here, condition \eqref{example, cond fertility function 1, LD for Hawkes, cadlag space with compact domain} implies that the time a type-1 parent waits to give birth to a type-1 child would, with probability 1, take values over $[0,1]$;
condition~\eqref{example, cond fertility function 2, LD for Hawkes, cadlag space with compact domain} implies that the time a type-1 parent waits to give birth to a type-2 child is always strictly larger than $2$.
We note that:
(i) it is easy to identify
concrete cases that are compatible with both \eqref{example, cond fertility function 1, LD for Hawkes, cadlag space with compact domain}--\eqref{example, cond fertility function 2, LD for Hawkes, cadlag space with compact domain}
and the
tail condition \eqref{condtion: tail of fertility function hij} in Theorem~\ref{theorem: LD for Hawkes} under any $\bm k$;
for instance,
one can take $f^{\bm N}_{1 \leftarrow 1}(t) = \mathbbm{I}_{(0,1)}(t)$, which induces a uniform distribution over $(0,1)$,
and $f^{\bm N}_{2 \leftarrow 1}(t) = \mathbbm{I}_{(2,\infty)}(t)\cdot \exp(-(t-2))$,
which induces an exponential distribution with $+2$ offset;
(ii)
the exact form of conditions \eqref{example, cond fertility function 1, LD for Hawkes, cadlag space with compact domain}--\eqref{example, cond fertility function 2, LD for Hawkes, cadlag space with compact domain} are not pivotal to the analysis below and are imposed only for simplicity;
for instance, the arguments below would still hold after minor modifications if we set $f^{\bm N}_{j \leftarrow i}(t) = A_{j\leftarrow i} \exp(- b_{j \leftarrow i}\cdot t)$
(i.e., with exact exponential tails in decay functions, which make the Hawkes process Markovian).

Regarding the tail indices for the clusters, we assume that
\begin{align}
    \alpha^*(1) = \alpha_{1 \leftarrow 1} > 1,\qquad \alpha_{2 \leftarrow 1} > \alpha_{1\leftarrow 1}, \qquad \alpha^*(2) > \alpha^*(1) + 1,
    \label{example, cond tail index alpha *, LD for Hawkes, cadlag space with compact domain}
\end{align}
where
$\alpha_{i \leftarrow j}$ is the regular variation index for $B_{i \leftarrow j}$ (see Assumption~\ref{assumption: heavy tails in B i j} and the discussion right above) and
$\alpha^*(\cdot)$ is defined in \eqref{def: cluster size, alpha * l * j}.

We are interested in studying the asymptotics (as $n \to \infty$)
\begin{align*}
    \P\big( n^{-1}\bm N(n) \in A\big)
    =
    \P\big( \bar{\bm N}_n \in E  \big),
\end{align*}
where $\bar{\bm N}_n = \{ \bm N(nt)/n:\ t \in [0,1]  \}$ is the scaled sample path of the Hawkes process $\bm N(t)$ embedded in $\D$, and 
\begin{align}
     A \delequal
    \big\{
        (x_1,x_2)^\text{T} \in \R^2_+:\ x_1 > 1, \ c x_1 > x_2
    \big\},
    \qquad
     E \delequal \big\{ \xi \in \D:\ \xi(1) \in A  \big\}.
    \label{example, def set A B, LD for Hawkes, cadlag space with compact domain}
\end{align}
To specify the choice of $c$,
we note that
by Assumption~\ref{assumption: regularity condition 1, cluster size, July 2024},
in $\bar{\bm s}_i = (\bar s_{1 \leftarrow i},\bar s_{2\leftarrow i})^\top$ for each $i \in \{1,2\}$ (see \eqref{def: bar s i, ray, expectation of cluster size})
and  $\bm \mu_{\bm N} = (\mu_{\bm N,1},\mu_{\bm N, 2})^\top$ (see \eqref{def: approximation, mean of N(t)}),
all coordinates are strictly positive.
Therefore, 
we can fix $c > 0$ small enough such that
the set
$
\{ \bm x + \bm \mu_{\bm N}:\ \bm x \in  \bar\R^2(\{1,2\},\epsilon)  \}
$
is bounded away from $A$ for some $\epsilon > 0$;
see \eqref{def: enlarged cone R d index i epsilon} for the definition of cones $\bar\R^d(\bm j,\epsilon)$.
More generally, given $a \in (0,\infty)$, there exists $ M =  M(c,a) \in (0,\infty)$ such that
\begin{align}
    \begin{pmatrix}
        0 \\ 0
    \end{pmatrix}
    \leq 
    \begin{pmatrix}
        w_1 \\ w_2
    \end{pmatrix}
    \leq 
    \begin{pmatrix}
        a \\ a
    \end{pmatrix},
    \ 
    y > M
    \qquad\Longrightarrow\qquad
    \begin{pmatrix}
        w_1 \\ w_2
    \end{pmatrix}
    +
    \begin{pmatrix}
        y \\ 0
    \end{pmatrix}
    \in A.
    \label{example, choice of M, LD for Hawkes, cadlag space with compact domain}
\end{align}

We start by making a few observations.
Recall that $\powersetTilde{m}$ is the collection of non-empty subsets of $[m]$,
and hence $\powersetTilde{2} = \{ \{1\}, \{2\}, \{1,2\} \}$.
Also, in the context of Theorem~\ref{theorem: LD for Hawkes}, recall the definitions of 
\begin{align}
     \shortbarDepsk{\epsilon}{\leqslant \bm k} = 
 \bigcup_{
        \substack{
            \bm{\mathcal K} \in  \mathbb Z_+^{ \powersetTilde{2} }:
            \\
            \bm{\mathcal K} \notin \mathbb A(\bm k),\ \breve c(\bm{\mathcal K}) \leq c(\bm k)
        }
    }
    \shortbarDepsk{\epsilon}{ \bm{\mathcal K}},
    \qquad 
    \breve c(\bm{\mathcal K})
    =
    \sum_{ \bm j \in \powersetTilde{2} }\mathcal K_{\bm j} \cdot \big( \bm \alpha(\bm j) - 1\big),
    \qquad
    c(\bm k) = \sum_{i = 1,2}k_i \cdot  \big(\alpha(\{i\}) - 1\big),
    \label{example, def of k jump sets and index breve c, LD for Hawkes, cadlag space with compact domain}
\end{align}
where $\bm \alpha(\bm j ) = 1 + \sum_{i \in \bm j}\big(\alpha^*(i) - 1 \big)$ is defined in \eqref{def: cost function, cone, cluster}.
In particular, note that $\alpha(\{1\}) = \alpha^*(1)$, $\alpha(\{2\}) = \alpha^*(2)$, and $\alpha(\{1,2\}) = \alpha^*(1) + \alpha^*(2) - 1$.
Also,
we introduce notations $\bm{\mathcal K}^{\ \bm i} = (\mathcal K^{\ \bm i}_{\bm j})_{\bm j \in \powersetTilde{2}} \in \mathbb Z_+^{ \powersetTilde{2} }$
by setting $\mathcal K^{\ \bm i}_{\bm j} = 1$ if $\bm j = \bm i$, and $\mathcal K^{\ \bm i}_{\bm j} = 0$ otherwise.
For instance, in $\bm{\mathcal K}^{\{1,2\}} = (\mathcal K^{\{1,2\}}_{\bm j})_{\bm j \in \powersetTilde{2}}$, we have $\mathcal K^{\{1,2\}}_{\{1,2\}} = 1$ and $\mathcal K^{\{1,2\}}_{\{1\}} = \mathcal K^{\{1,2\}}_{\{2\}} = 0$.
By  \eqref{example, cond tail index alpha *, LD for Hawkes, cadlag space with compact domain} and \eqref{example, def of k jump sets and index breve c, LD for Hawkes, cadlag space with compact domain},
we have
\begin{align*}
    \shortbarDepsk{\epsilon}{\leqslant (0,1)} \subseteq 
    \bigcup_{ \bm{\mathcal K} \in \mathbb Z_+^{ \powersetTilde{2} } :\ \mathcal K_{\{2\}} = \mathcal K_{ \{1,2\} } = 0  }\shortbarDepsk{\epsilon}{\bm{\mathcal K}}.
\end{align*}
Furthermore, for any $\bm{\mathcal K} \in \mathbb Z_+^{ \powersetTilde{2} }$ such that $\mathcal K_{\{2\}} = \mathcal K_{ \{1,2\} } = 0 $
and any $\xi \in \shortbarDepsk{\epsilon}{\bm{\mathcal K}}$,
we have
\begin{align*}
    \xi(1) \in \bar\R^2(\{1\},\epsilon) + \bm \mu_{\bm N}.
\end{align*}
Meanwhile,
recall the definitions of
the sets $A$ and $E$ in \eqref{example, def set A B, LD for Hawkes, cadlag space with compact domain}.
It has been noted above that
$
\bar\R^2(\{1,2\},\epsilon) + \bm \mu_{\bm N}
$
is bounded away from $A$, 
which implies that the set $E$ is bounded away from $\shortbarDepsk{\epsilon}{\leqslant (0,1) }$ under $\dmp{}$.
Now, suppose that the asymptotics \eqref{claim, theorem: LD for Hawkes} stated in Theorem~\ref{theorem: LD for Hawkes} hold w.r.t.\ the product $M_1$ topology of $\D$ (i.e., on the metric space $(\D,\dmp{})$).
Then,
we are led to believe that
\begin{align}
    \P\big(\bar{\bm N}_n \in E\big) = \bo\Big( \breve\lambda_{(0,1) }(n)\Big),
    \qquad\text{as }n \to \infty.
    \label{example, wrong claim, LD for Hawkes, cadlag space with compact domain}
\end{align}
However, our analysis below disproves \eqref{example, wrong claim, LD for Hawkes, cadlag space with compact domain} by verifying that
\begin{align}
    \liminf_{n \to \infty}
    \P\big(\bar{\bm N}_n \in E\big)
    \big/ \P(B_{1 \leftarrow 1} > n) > 0.
    \label{example, goal, LD for Hawkes, cadlag space with compact domain}
\end{align}
Indeed, by Assumption~\ref{assumption: heavy tails in B i j},
we have 
$
 \P(B_{1 \leftarrow 1} > n) \in \RV_{  -\alpha_{1 \leftarrow 1} }(n);
$
by 
 \eqref{example, cond tail index alpha *, LD for Hawkes, cadlag space with compact domain} and \eqref{example, def of k jump sets and index breve c, LD for Hawkes, cadlag space with compact domain},
we have
$
\breve \lambda_{(0,1)}(n) \in \RV_{ -c((0,1)) }(n)
$
with $c\big( (0,1)\big) = \alpha^*(2) - 1 > \alpha_{1\leftarrow 1}$.
Then, by
\begin{align*}
    \liminf_{n \to \infty}
    \frac{\P\big(\bar{\bm N}_n \in E\big)}{ \breve \lambda_{(0,1)}(n)}
    \geq 
    \liminf_{n \to \infty}
    \frac{\P\big(\bar{\bm N}_n \in E\big)}{ \P(B_{1 \leftarrow 1} > n) }
    \cdot 
    \liminf_{n \to \infty}
     \frac{ \P(B_{1 \leftarrow 1} > n) }{ \breve \lambda_{(0,1)}(n) },
\end{align*}
we end up with 
$
\liminf_{n \to \infty}
    {\P\big(\bar{\bm N}_n \in E\big)}\big/{ \breve\lambda_{ (0,1) } (n) } = \infty,
$
which is a clear contradiction to \eqref{example, wrong claim, LD for Hawkes, cadlag space with compact domain}.
We thus confirm that it is generally not possible to strengthen
the characterizations in \eqref{claim, theorem: LD for Hawkes} stated in Theorem~\ref{theorem: LD for Hawkes} w.r.t.\ the product $M_1$ topology of $\D[0,\infty)$ to those on $\D[0,T]$.

Below, we verify Claim \eqref{example, goal, LD for Hawkes, cadlag space with compact domain}.
To proceed, we adopt the notations in \eqref{def: center process, Poisson cluster process, notation}--\eqref{def: branching process approach, N as a point process, notation section}
for the cluster representation of Hawkes processes.
Specifically, for each $j \in \{1,2\}$, 
we use the sequence $\big(T_{j;k}^\mathcal{C}\big)_{k \geq 1}$ to denote the arrival times of type-$j$ immigrants, which are generated by a Poisson process $G^{\bm N}_j(t)$ with rate $c^{\bm N}_{j}$ 
(see \eqref{def: conditional intensity, hawkes process}).
For  $\bm N^{\mathcal O}_{ (T^\mathcal{C}_{j;k},j) }$, the cluster process induced by the  $k^\text{th}$ type-$j$ immigrant, 
we denote its size by
\begin{align*}
    S^{(k)}_{i \leftarrow j} & \delequal 
    \sum_{m \geq 0}\mathbbm{I}\Big\{ A^\mathcal{O}_{j;k}(m) = i \Big\},
    \qquad
    \bm S^{(k)}_j \delequal \Big( S^{(k)}_{1 \leftarrow j}, S^{(k)}_{2 \leftarrow j}, \ldots, S^{(k)}_{d \leftarrow j}\Big)^\text{T}. 
\end{align*}
Next, we define the event (with $\bm \mu_{\bm N}$ defined in \eqref{def: approximation, mean of N(t)})
\begin{align}
    \text{(I)}
    =
    \Bigg\{
        n^{-1}
        \sum_{j \in \{1,2\}}\  \sum_{ \substack{k \geq 1 
        :
        T_{j;k}^\mathcal{C} \leq n - 2 } }
        \bm S^{(k)}_j
        \leq 2\bm \mu_{\bm N}
    \Bigg\},
    \nonumber
\end{align}
which represents the case where the accumulated size of all clusters arrived by time $n-2$ is upper bounded by the vector $2n\bm \mu_{\bm N}$.
Also, recall that we use $G^{\bm N}_j(t)$ to denote the Poisson process generating the arrival times $T^\mathcal{C}_{j;k}$ for the immigrants.
Let
\begin{align*}
    \text{(II)}
    =
    \Big\{
        G^{\bm N}_1(n - 1) - G^{\bm N}_1(n - 2) = 1;\ G^{\bm N}_1(n) - G^{\bm N}_1(n - 1) = 0;\ 
        G^{\bm N}_2(n) - G^{\bm N}_2(n - 2) = 0
    \Big\}.
\end{align*}
That is, on event (II), there is only one type-$1$ immigrant who arrived during the time window $(n-2,n]$, with the arrival time lying in $(n-2,n-1]$, and no type-$2$ immigrant arrived during $(n-2,n]$.
On this event, for the only type-$1$ immigrant that arrived during $(n-2,n]$, we use $B_{1 \leftarrow 1}$ to specifically denote the number of its type-$1$ children (in one generation).
Let
\begin{align*}
    \text{(III)} = \big\{ B_{1 \leftarrow 1} > nM  \big\}.
\end{align*}
Here, by \eqref{example, choice of M, LD for Hawkes, cadlag space with compact domain}, we fix some $M$ large enough such that 
\begin{align}
    \begin{pmatrix}
        0 \\ 0
    \end{pmatrix}
    \leq 
    \begin{pmatrix}
        w_1 \\ w_2
    \end{pmatrix}
    \leq 
    2\bm \mu_{\bm N},
    \ 
    y > M
    \qquad\Longrightarrow\qquad
    \begin{pmatrix}
        w_1 \\ w_2
    \end{pmatrix}
    +
    \begin{pmatrix}
        y \\ 0
    \end{pmatrix}
    \in A.
    \label{example, choice of M, 2, LD for Hawkes, cadlag space with compact domain}
\end{align}
Now, we make some observations.
First, on the event $\text{(II)}$,
by \eqref{example, cond fertility function 1, LD for Hawkes, cadlag space with compact domain}
we know that all the $B_{1 \leftarrow 1}$ children of the type-$1$ ancestor must have arrived by time $n$,
and they may further give birth to more type-$1$ offspring by time $n$.
On the other hand, by \eqref{example, cond fertility function 2, LD for Hawkes, cadlag space with compact domain},
we know that any type-$2$ offspring in this cluster, induced by the type-$1$ immigrant who arrived during $(n-2,n]$, will have to arrive after time $n$.
Then, by \eqref{example, choice of M, 2, LD for Hawkes, cadlag space with compact domain}, on event $\text{(I)} \cap \text{(II)} \cap \text{(III)}$ we must have $n^{-1}\bm N(n) \in A$.
This leads to
\begin{align}
    \liminf_{n \to \infty}
    \frac{ \P\big(\bar{\bm N}_n \in E\big) }{ \P(B_{1\leftarrow 1} > n) }
    =
     \liminf_{n \to \infty}
    \frac{ \P\big(n^{-1}\bm N(n) \in A \big) }{ \P(B_{1\leftarrow 1} > n) }
    \geq 
    \lim_{n \to \infty }\P\big( \text{(I)} \big)
    \cdot 
    \P\big( \text{(II)}\big)
    \cdot 
    \lim_{n \to \infty} \frac{ \P(B_{1 \leftarrow 1} > nM) }{ \P(B_{1\leftarrow 1} > n) },
    \nonumber
\end{align}
where the inequality follows from the independent and stationary increments in Poisson processes $G^{\bm N}_j(t)$.
By the law of large numbers, we get 
$
\lim_{n \to \infty }\P\big( \text{(I)} \big) = 1.
$
By the law of Poisson processes, we have
\begin{align*}
    \P\big(\text{(II)}\big)
    =
    \P\Big(
        G^{\bm N}_1(1) = 1;\ G^{\bm N}_1(2) - G^{\bm N}_1(1) = 0;\ 
        G^{\bm N}_2(2)  = 0
    \Big) > 0.
\end{align*}
Lastly, by the regularly varying law of $B_{1 \leftarrow 1}$, we get
$
\lim_{n \to \infty} \frac{ \P(B_{1 \leftarrow 1} > nM) }{ \P(B_{1 \leftarrow 1} > n) } = M^{ -\alpha_{1\leftarrow 1} } > 0.
$
This concludes the proof of Claim~\eqref{example, goal, LD for Hawkes, cadlag space with compact domain}.
\end{example}

\begin{example}
\label{example: necessity of the tail bound on decay functions}
    This example demonstrates that, even in the univariate setting, we need sufficiently tight power-law bounds on the tail behavior of the decay functions (like in \eqref{condtion: tail of fertility function hij}) for the large deviations asymptotics stated in Theorem~\ref{theorem: LD for Hawkes} to hold.
    The intuition of this is that, without sufficiently tight tail bounds on the decay functions, some rare events may be driven by a single large cluster with a long lifetime—effectively a big jump spread broadly across the entire timeline—rather than by multiple large clusters.

    Specifically, we consider a univariate Hawkes process $N(t)$ with initial value $N(0) = 0$ and conditional intensity 
    $
    h^{N}(t) = c^{N} + \int_0^t \tilde B(s) f^{N}(t-s)dN(s),
    $
    where the constant $c^{N} > 0$ is the  immigration rate, 
    the decay function $f^{N}(\cdot)$ takes the form
    \begin{align}
        f^N(t) = \frac{\theta}{ (1+t)^{\theta+1}  },\qquad\forall t \geq 0,
        \label{def: decay function, example: necessity of the tail bound on decay functions}
    \end{align}
    for some $\theta > 1$ 
    (i.e., $\int_t^\infty f^N(s)ds = (1+t)^{-\theta}$, and $\norm{f^N}_1 = \int_0^\infty f^N(t)dt = 1$),
    and the excitation rates $\tilde B(s)$ are independent copies of some non-negative random variable $\tilde B$
    with law
    \begin{align}
        \P(\tilde B > x) = \frac{1}{ \max\{1, x/\tilde c  \}^\alpha },\qquad\forall x \in \R.
        \label{def: law of excitation rates, example: necessity of the tail bound on decay functions}
    \end{align}
    Here, we have $\alpha > 1$ for the power-law tail index; 
    the constant $\tilde c > 0$ is small enough such that $\E[\tilde B] \cdot \norm{f^N}_1 = \E[\tilde B] < 1$ (so the sub-critical condition in Assumption~\ref{assumption: subcriticality} also holds).
    By \eqref{def: approximation, mean of N(t)}, in this univariate setting we have
    $
    \mu_N = \frac{c^N}{ 1 -  \E[\tilde B] \cdot \norm{f^N}_1  } = \frac{c^N}{ 1 -  \E[\tilde B]  }.
    $

Recall that
$\bar{N}_n(t) = N(nt)/n$,
and that
$\bar{\bm N}^{\subInfty}_n \delequal \{ \bar N_n(t):\ t \geq 0  \}$ is the scaled sample path of the Hawkes process $N(t)$.
In this example, we study events of the form
$
\{ \bar{\bm N}^{\subInfty}_n \in A(k) \}
$
with
\begin{align}
    A(k) \delequal \big\{
        \xi \in \D\big([0,\infty),\R\big):\
        \xi(t_l + u) - \xi(t_l) > \mu_N\ \forall l = 0,1,\ldots,k - 1
    \big\},
\end{align}
where 
\begin{itemize}
    \item 
        $k$ is some positive integer;

    \item 
        the $t_l$'s are such that
        $0 = t_0 < t_1 < t_2 < \ldots < t_{k-1} < 1$;

    \item 
        Furthermore, $u \in (0,1)$ is small enough such that, across $l = 0,1,\ldots,k - 1$,
        the intervals $I_l = [t_l,t_l + u]$ are mutually disjoint and are all contained within $[0,1]$.
\end{itemize}
In other words, for any path $\xi$ that belongs to the set $A(k)$, the increment of $\xi$ over each of the mutually disjoint intervals $I_l$ is bounded from below by $\mu_N$. 
In particular, note that 
$A(k)$ is bounded away from (by adapting \eqref{def: j jump path set with drft c, LD for Levy, MRV} and  \eqref{def: path with costs less than j jump set, drift x, LD for Levy MRV} to the univariate setting)
\begin{align*}
    \breve \D^\epsilon_{ \leqslant k;\mu_N}[0,\infty)
    =
    \bigcup_{ k^\prime = 0,1,\ldots,k - 1  }\breve \D^\epsilon_{ k^\prime;\mu_N}[0,\infty)
    =\breve \D^\epsilon_{ k - 1;\mu_N}[0,\infty)
\end{align*}
under $\dm{[0,\infty)}$
for any $\epsilon > 0$ small enough:
indeed,
a linear path of slope $\mu_N$ would only increase by $u \cdot \mu_N < \mu_N$ over each interval $I_l$,
meaning that at least one jump needs to be added to the linear path  over each $(I_l)_{l = 0,1,\ldots,k - 1}$ for it to enter the set $A(k)$.
Also, when there is no ambiguity about the choice of $k$, we simply write $A = A(k)$.

We note that, regardless of the value of the tail indices $\theta$ in \eqref{def: decay function, example: necessity of the tail bound on decay functions} for the decay function and  $\alpha$ in \eqref{def: law of excitation rates, example: necessity of the tail bound on decay functions} for the law of the excitation rates,
the tail condition \eqref{condtion: tail of fertility function hij} would be violated eventually for all $k$ large enough.
Indeed, by \eqref{def: rate function lambda j n, cluster size} and \eqref{property, rate function for assignment mathcal K, LD for Levy},
in this univariate setting we have
\begin{align*}
    \breve \lambda_k(x) = 
    \Big(
        x \P(\tilde B > x)
    \Big)^k
    =
\bo(x^{ -k(\alpha - 1) }).
\end{align*}
Therefore, condition \eqref{condtion: tail of fertility function hij} fails if $\theta < k(\alpha - 1) + 1$.
Suppose that Theorem~\ref{theorem: LD for Hawkes} holds even after dropping the tail condition \eqref{condtion: tail of fertility function hij} on the decay functions,
then we are led to believe that
\begin{align}
    \P\big( \bar{\bm N}^{\subInfty}_n \in A \big) =
    \bo\big(\breve \lambda_k(n)\big) = 
    \bo\big( n^{ -k(\alpha - 1)  } \big),
    \quad 
    \text{as }n \to \infty.
    \label{false claim, example: necessity of the tail bound on decay functions}
\end{align}
However, our analysis below derives the lower bound 
\begin{align}
    \liminf_{ n \to \infty }\P\big( \bar{\bm N}^{\subInfty}_n \in A \big) \cdot n^{ \alpha(\theta+1) - 1  } > 0.
    \label{goal, lower bound, example: necessity of the tail bound on decay functions}
\end{align}
This clearly contradicts \eqref{false claim, example: necessity of the tail bound on decay functions} for all $k$ large enough such that $k(\alpha - 1) > \alpha(\theta + 1) - 1$.
In summary, even in the univariate setting,
it is generally not possible to remove
tail bounds on the decay functions from Theorem~\ref{theorem: LD for Hawkes}.

The rest of this example is devoted to establishing the lower bound \eqref{goal, lower bound, example: necessity of the tail bound on decay functions},
and
we proceed by defining a few events.
Let $0 < T_1 < T_2 < \ldots$ be a sequence generated by a Poisson process with rate $c^N$.
This sequence denotes the arrival time sequence of the immigrants in $N(t)$.
Also, let $(B_i)_{i \geq 1}$ be independent copies of 
$
B \distequal \text{Poisson}( \tilde B \cdot \norm{f^N}_1 ) \distequal \text{Poisson}( \tilde B )
$
(recall that we have $\norm{f^N}_1 = 1$).
Note that $\P(B > x) \sim \P(\tilde B > x)$ as $x \to \infty$.
In the cluster representation of $N(t)$, each $B_i$ denotes the count of first-generation offspring of the immigrant arriving at time $T_i$.
We fix $C \in(0,\infty)$ large enough such that
\begin{align}
    \frac{C\theta u}{4} > \mu_N,
    \label{choice of C, example: necessity of the tail bound on decay functions}
\end{align}
and
define the event
\begin{align}
    (A:*) \delequal 
    \bigg\{
        \exists i \text{ such that } T_i \leq \frac{nu}{2},\  B_i > C n^{\theta + 1}
    \bigg\}.
    \label{def, event A *, example: necessity of the tail bound on decay functions}
\end{align}
That is, on the event $(A:*)$, an immigrant arrives before time $\frac{nu}{2}$ and plans to give birth to more than $n^{\theta + 1}$ descendants in one generation.
Next, on this event, we use $i^*$ to denote the smallest $i$ such that the conditions in \eqref{def, event A *, example: necessity of the tail bound on decay functions} hold. 
Let the sequence $(T^{(i^*)}_{ k  })_{k \geq 1}$
be independent copies with density $f^N(\cdot)/\norm{f^N}_1 = f^N(\cdot)$;
this sequence denotes the gap between the arrival times of the $i^*$-th immigrant and its first-generation offspring.
Using these notations, we define the event
\begin{align}
    (A:0) \delequal 
    \bigg\{
        \#\big\{ k \leq \ceil{C n^{\theta + 1}}:\ T^{(i^*)}_k \leq nu/2  \big\} > \mu_N \cdot n
    \bigg\}.
\end{align}
On the event $(A:*)\cap (A:0)$, note that in the Hawkes process $N(t)$
we must have more than $\mu_{\bm N} \cdot n$ points arriving by time $nu$.
Indeed, on this event, there is an immigrant who arrives by time $nu/2$ and gives birth to more than $n \cdot \mu_N$ of its first-generation descendants within $nu/2$ time after its arrival.
Then, under the $\bo(n)$ scaling for both time and space in $\bar{\bm N}^{\subInfty}_n$,
we have $\bar{ N}_n(u) - \bar{ N}_n(0) > \mu_N$.
Besides, for each $l \in [k - 1]$ we define 
\begin{align}
    (A:l) \delequal 
    \Bigg\{
        \#\bigg\{ k \leq \ceil{C  n^{\theta + 1}}:\ T^{(i^*)}_k \in \bigg[ nt_l,\  nt_l + \frac{nu}{2} \bigg]  \bigg\}
        > \mu_N \cdot n
    \Bigg\}.
\end{align}
Similarly, on the event $(A:*)\cap (A:l)$ we must have more than $\mu_N \cdot n$ points arriving during $[nt_l,n(t_l + u)]$.
Then, under the $\bo(n)$ scaling for both time and space in $\bar{\bm N}^{\subInfty}_n$,
we have $\bar{ N}_n(t_l + u) - \bar{N}_n(t_l) > \mu_N$.
In summary,
on the event $(A:*) \cap \big( \bigcup_{l = 0,1,\ldots,k}(A:l) \big)$,
we have 
$
\bar{\bm N}^{\subInfty}_n \in A.
$
Furthermore, if we can verify that
\begin{align}
    \liminf_{n \to \infty}\P\big( (A:*) \big) \cdot n^{ \alpha(\theta+1) - 1  } & > 0,
    \label{claim, event A *, example: necessity of the tail bound on decay functions}
    \\ 
    \lim_{n \to\infty}\P\big( (A:l)\ \big|\ (A:*)  \big) & = 1,\qquad\forall l = 0,1,\ldots,k - 1,
    \label{claim, event A l, example: necessity of the tail bound on decay functions}
\end{align}
then, the lower bound \eqref{goal, lower bound, example: necessity of the tail bound on decay functions} follows immediately.

\medskip
\noindent
\textbf{Verification of Claim~\eqref{claim, event A *, example: necessity of the tail bound on decay functions}}.
By the law of a compound Poisson process, 
\begin{align*}
    \P\big( (A:*) \big)
    & = 
    \P\bigg(
        \text{Poisson}\Big( \frac{un}{2} \cdot c^N \cdot \P(B > C n^{\theta + 1}) \Big)
        \geq 1
    \bigg)
    = 
    1 - \exp
    \bigg(
        -  \frac{c^Nu}{2} \cdot n\P(B >C n^{\theta + 1})
    \bigg).
\end{align*}
Then, by  $\P(B > x) \sim \P(\tilde B > x)$ as $x \to \infty$ and \eqref{def: law of excitation rates, example: necessity of the tail bound on decay functions},
\begin{align*}
    \lim_{n \to \infty}
    \P\big( (A:*) \big)\cdot n^{ \alpha(\theta+1) - 1 }
    & = 
    \frac{c^Nu}{2} \cdot
    \lim_{n \to \infty}
     n^{ \alpha(\theta+1) - 1 } \cdot
    {
        n\P(B > C n^{\theta + 1})
    }
    = 
    \frac{c^Nu}{2} \cdot \big(C/\tilde c\big)^{-\alpha} > 0.
\end{align*}

\medskip
\noindent
\textbf{Verification of Claim~\eqref{claim, event A l, example: necessity of the tail bound on decay functions}, ($l = 0$)}.
We first verify the case where $l = 0$.
Since the $T^{(i^*)}_k$'s are independent copies with density $f^N(\cdot)$, we have
\begin{align*}
    & \P\Big( \big((A:0)\big)^\complement\ \Big|\ (A:*) \Big)
    \\ 
    & = 
    \P\bigg(
        \text{Binomial}\Big( \ceil{Cn^{\theta + 1}}, \int_0^{ nu/2  }f^N(t)dt  \Big) \leq \mu_N n
    \bigg)
    \\ 
    & \leq 
    \P\bigg(
        \text{Binomial}\Big( \ceil{Cn^{\theta + 1}}, 1/2  \Big) \leq \mu_N n
    \bigg)
    \quad 
    \text{for any $n$ large enough due to }\norm{f^N}_1 = 1
    \\ 
    & \leq 
    \P\bigg(
        \text{Binomial}\Big( \ceil{Cn^{\theta + 1}}, 1/2  \Big) \leq \ceil{C n^{\theta + 1}}/4
    \bigg)
    \quad
    \text{for any $n$ large enough due to $\theta > 0$}.
\end{align*}
Then, by the law of large numbers,
\begin{align*}
    \lim_{n \to \infty}
    \P\Big( \big((A:0)\big)^\complement\ \Big|\ (A:*) \Big)
    & \leq 
    \lim_{n \to \infty}
    \P\bigg(
        \text{Binomial}\Big( \ceil{Cn^{\theta + 1}}, 1/2  \Big) < \ceil{Cn^{\theta + 1}}/4
    \bigg)
    = 0.
\end{align*}
This verifies that 
$
\lim_{n \to \infty}
    \P\big( (A:0)\ \big|\ (A:*) \big) = 1.
$

\medskip
\noindent
\textbf{Verification of Claim~\eqref{claim, event A l, example: necessity of the tail bound on decay functions}, ($l = 1,2,\ldots,k-1$)}.
Next, we take any $l \in [k-1]$. Note that
\begin{align*}
    \P\Big( \big((A:l)\big)^\complement\ \Big|\ (A:*) \Big)
    & = 
    \P\bigg(
        \text{Binomial}\Big( \ceil{Cn^{\theta + 1}}, \underbrace{ \int_{nt_l}^{ n(t_l + \frac{u}{2})  }f^N(t)dt}_{ \delequal F_l }  \Big) \leq \mu_N n
    \bigg).
\end{align*}
By \eqref{def: decay function, example: necessity of the tail bound on decay functions}
and that $t_l + u < 1$,
\begin{align*}
    F_l = (nt_l)^{- \theta} - \bigg(nt_l + \frac{nu}{2}\bigg)^{- \theta}
    \geq
    \frac{\theta}{ (1+n)^{\theta + 1}  } \cdot \frac{nu}{2}.
\end{align*}
As a result, 
for any $n$ large enough we have
 $F_l \geq \frac{\theta u}{4}  \cdot n^{-\theta}$, and hence
\begin{align*}
    \P\Big( \big((A:l)\big)^\complement\ \Big|\ (A:*) \Big)
    \leq 
    \P\bigg(
        \text{Binomial}
        \Big( \ceil{Cn^{\theta + 1}}, 
            \frac{\theta u}{4}  \cdot n^{-\theta}
        \Big) \leq \mu_N n
    \bigg).
\end{align*}
 By Chernoff bounds, for any $w > 0$ we have
 \begin{align*}
     \P\Big( \big((A:l)\big)^\complement\ \Big|\ (A:*) \Big)
     \leq 
     \exp(w\mu_N n)
     \cdot 
     \bigg( 1 - \frac{\theta u}{4 n^\theta} +   \frac{\theta u e^{-w}}{4 n^\theta}  \bigg)^{ C n^{\theta + 1} }. 
 \end{align*}
Then, due to $(1 + x_n)^n \sim \exp( n x_n)$ when $\lim_{n \to \infty }x_n = 0$,
we get (for any $w > 0$)
\begin{align*}
    \limsup_{n \to \infty}
    \P\Big( \big((A:l)\big)^\complement\ \Big|\ (A:*) \Big)
    & \leq 
    \exp\bigg( - C n^{\theta + 1}\cdot (1 - e^{-w})\cdot \frac{\theta u}{4 n^\theta} + w\mu_N n \bigg)
    \\ 
    & = 
    \exp\bigg(
    -n \cdot \Big( \frac{C\theta u(1 - e^{-w}) }{4} - w\mu_N   \Big)
    \bigg).
\end{align*}
By our choice of $C$ in \eqref{choice of C, example: necessity of the tail bound on decay functions},
it holds for any $w > 0$ small enough that 
$
\frac{C\theta u(1 - e^{-w}) }{4} - w\mu_N  > 0,
$
which implies 
$
\limsup_{n \to \infty}
    \P\Big( \big((A:l)\big)^\complement\ \Big|\ (A:*) \Big) = 0
$
and hence
$
\lim_{n \to \infty}
    \P\big( (A:l)\ \big|\ (A:*) \big) = 1.
$
\end{example}

\section{Proof for Large Deviations of L\'evy Processes with $\MRV$ Increments}
\label{subsubsec: proof of LD with MRV}

Without loss of generality, we prove Theorem~\ref{theorem: LD for levy, MRV} for $T = 1$.
Besides, considering the arbitrariness of the drift constant $\bm c_{\bm L}$ in \eqref{def: expectation of levy process},
we can w.l.o.g.\ impose the following assumption and focus on centered L\'evy processes for the proof of Theorem~\ref{theorem: LD for levy, MRV}.
\begin{assumption}[WLOG Assumption for Theorem~\ref{theorem: LD for levy, MRV}]
\label{assumption: WLOG, LD for Levy MRV}
$\bm \mu_{\bm L} = \bm 0$ in \eqref{def: expectation of levy process}, and $T = 1$.
\end{assumption}
Henceforth in Section~\ref{subsubsec: proof of LD with MRV}, we also lighten notations in the proofs by writing
\begin{align}
    \notationdef{notation-LD-cadlag-space-D-j-c-shorthand}{ \shortbarDepsk{\epsilon}{ \bm{\mathcal K} }  }
    \delequal
    \barDxepskT{\bm 0}{\epsilon}{\bm{\mathcal K} }{[0,1]},
    \quad
    \notationdef{notation-jump-sets-with-smaller-cost-LD-for-levy-shorthand}{ \shortbarDepsk{\epsilon}{\leqslant \bm k} }
    \delequal
    \barDxepskT{\bm 0}{\epsilon}{\leqslant \bm k}{[0,1]},
    \quad
    \notationdef{notation-measure-C-j-L-LD-for-Levy-shorthand}{\breve{\mathbf C}_{\bm{\mathcal K} } }
    \delequal
    \breve{\mathbf C}^{ _{[0,1]} }_{\bm{\mathcal K} ;\bm 0},
    \quad 
    \notationdef{notation-bar-L-n-scaled-levy-process-shorthand}{\bar{\bm L}_n}
    \delequal
    \bar{\bm L}_n^{ _{[0,1]} },
    \quad 
    \notationdef{notation-J1-metric-shorthand}{\dj{}}\delequal\dj{[0,1]}.
    \label{proof, def: shorthand, LD for Levy}
\end{align}

\subsection{Proof of Theorem~\ref{theorem: LD for levy, MRV}}
\label{subsec: proof, LD of Levy, proof of main result}

We start by identifying the \emph{large jumps} in $\bar{\bm L}_n$.
Given $n \in \mathbb N$ and $\delta > 0$, let
\begin{align}
    \notationdef{notation-ith-large-jump-tau-delta-n-i}{\tau^{>\delta}_n(k)} & \delequal
    \inf\bigg\{
        t > \tau^{>\delta}_n(k - 1):\ 
            \norm{ \Delta \bar{\bm L}_n(t) } > \delta
    \bigg\},\ 
    \ \forall k \geq 1,
    \qquad
    \tau^{>\delta}_n(0) \delequal 0;
    \label{def: large jump time tau for Levy process L}
    \\
     \notationdef{notation-ith-large-jump-W-delta-n-i}{\bm W^{>\delta}_n(k)} & \delequal \Delta \bar{\bm L}_n\Big( \tau^{>\delta}_n(k) \Big),
    \qquad \forall k \geq 1.\label{def: large jump size tau for Levy process L}
\end{align}
In \eqref{def: large jump time tau for Levy process L}, note that $\Delta \bar{\bm L}_n(t) = \Delta \bm{L}(nt)/n$.
Intuitively speaking, the sequence $\big(\tau^{>\delta}_n(k)\big)_{k \geq 1}$ marks the arrival times of jumps in $\bar{\bm L}_n(t)$ with size (in terms of $L_1$ norm) larger than $\delta$, which correspond to jumps of size larger than $n\delta$ in $\bm L_n(t)$;
the sequence $\big({\bm W^{>\delta}_n(k)}\big)_{k \geq 1}$ represents the sizes of the large jumps.

Building upon the definitions in \eqref{def: large jump time tau for Levy process L}--\eqref{def: large jump size tau for Levy process L},
we introduce a \emph{large-jump approximation} for $\bar{\bm L}_n(t)$.
Specifically, given $\delta > 0$ and $n \geq 1$, let 
\begin{align}
    \notationdef{notation-large-jump-approximation-LD-for-Levy-MRV}{\hat{\bm L}^{>\delta}_n(t)}
    \delequal 
    \sum_{ k \geq 1 }\bm W^{>\delta}_n(k)\mathbbm I_{ [\tau^{>\delta}_n(k), 1] }(t),
    \qquad\forall t \in [0,1],
    \label{def: large jump approximation, LD for Levy MRV}
\end{align}
and 
$
\hat{\bm L}^{>\delta}_n \delequal
\{\hat{\bm L}^{>\delta}_n(t):\ t \in [0,1]\}.
$
Clearly, ${\hat{\bm L}^{>\delta}_n(t)}$ is a step function (i.e., piece-wise constant) in $\D = \D\big([0,1],\R^d\big)$ that vanishes at the origin and approximates $\bar{\bm L}_n(t)$ by only keeping the large jumps (under threshold $\delta$).

Our proof of Theorem~\ref{theorem: LD for levy, MRV}
hinges on
 Propositions~\ref{proposition: asymptotic equivalence, LD for Levy MRV} and \ref{proposition: weak convergence, LD for Levy MRV}
that outline the following key steps:
(i) first, we establish the asymptotic equivalence between $\bar{\bm L}_n$ and $\hat{\bm L}^{>\delta}_n$ as in Lemma~\ref{lemma: asymptotic equivalence when bounded away, equivalence of M convergence};
(ii) next, we provide a detailed asymptotic analysis for the law of large jumps.
To carry out these two steps, we further classify the large jumps $\bm W^{>\delta}_n(k)$ based on their locations
w.r.t.\ some cones $\bar\R^{>\delta}(\bm j)$.
Specifically, let
\begin{align}
    \notationdef{notation-tilde-R-delta-j-cone}{\bar\R^{>\delta}(\bm j)}
    \delequal
    \big\{
        \bm x \in \R^d_+:
        \norm{\bm x} > \delta,\ \bm x \in \bar{\R}^d(\bm j,\delta)\setminus \bar{\R}^d_\leqslant(\bm j,\delta)
    \big\},
    \qquad
    \forall 
    \delta > 0,\ \bm j \in \powersetTilde{d}.
    \label{def: tilde R geq delta, LD for Levy}
\end{align}
We highlight several useful properties regarding ${\bar\R^{>\delta}(\bm j)}$.
\begin{itemize}
    \item 
        First, note that $\bigcup_{ \bm j \in \powersetTilde{d}  }\bar\R^{>\delta}(\bm j) = \{ \bm x \in \R^d_+:\ \norm{\bm x} > \delta,\ \bm x \in \bar{\R}([d],\delta)   \}$.
Indeed, 
it is obvious that 
$\bigcup_{ \bm j \in \powersetTilde{d}  }\bar\R^{>\delta}(\bm j) \subseteq \{ \bm x \in \R^d_+:\ \norm{\bm x} > \delta,\ \bm x \in \bar{\R}([d],\delta)   \}$.
On the other hand, given any $\delta > 0$ and 
$\bm x \in \bar\R^d([d],\delta)$ with $\norm{\bm x} > \delta$,
the argument minimum
\begin{align}
    \bm j (\bm x) \delequal \underset{ \bm j \in \powersetTilde{d}:\ \bm x \in \bar\R^d(\bm j,\delta)  }{\arg\min}
    \alpha(\bm j)
    \nonumber
\end{align}
uniquely exists under 
the condition $\alpha(\bm j) \neq \alpha(\bm j^\prime)\ \forall \bm j,\bm j^\prime \in \powersetTilde{d}$
in Assumption~\ref{assumption: ARMV in levy process}.
Then,
we must have $\bm x \in \bar\R^{>\delta}(\bm j(\bm x))$ due to the definition of $\bar\R^d_\leqslant(\bm j,\delta) = 
\bigcup_{
                \bm j^\prime \subseteq [k]:\ 
                \bm j^\prime \neq \bm j,\ \alpha(\bm j^\prime) \leq \alpha(\bm j)
            } \bar\R^d(\bm j^\prime,\delta).
$
This confirms that 
$\bigcup_{ \bm j \in \powersetTilde{d}  }\bar\R^{>\delta}(\bm j) \supseteq \{ \bm x \in \R^d_+:\ \norm{\bm x} > \delta,\ \bm x \in \bar{\R}([d],\delta)   \}$.

    \item 
        Next, we note that $\bar \R^{>\delta}(\bm j) \cap \bar \R^{>\delta}(\bm j^\prime) = \emptyset$ holds for any $\bm j,\bm j^\prime \in \powersetTilde{d}$ with $\bm j \neq \bm j^\prime$.
To see why, we assume
w.l.o.g.\ that $\alpha(\bm j^\prime) \leq \alpha(\bm j)$.
Then, due to $\bm j^\prime \neq \bm j$ and $\alpha(\bm j^\prime) \leq \alpha(\bm j)$,
we  have $\bar\R^{>\delta}(\bm j^\prime) \subseteq \bar\R^d_\leqslant(\bm j,\delta)$.
Therefore, for any $\bm x \in \bar \R^{>\delta}(\bm j^\prime)$, we must have
$\bm x \notin \bar\R^d(\bm j,\delta)\setminus \bar\R^d_\leqslant(\bm j,\delta)$.
Repeating this argument for all $\bm j, \bm j^\prime \in \powersetTilde{d}$,
we confirm that $\bar \R^{>\delta}(\bm j) \cap \bar \R^{>\delta}(\bm j^\prime) = \emptyset$.

    \item 
        In summary, 
the collection of sets $\bar \R^{>\delta}(\bm j)$ provide a partition of $\{ \bm x\in \R^d_+:\ \norm{\bm x} > \delta,\ \bm x\in \bar\R^d([d],\delta)  \}$.
Meanwhile, note that
\begin{align}
    \sum_{ \bm j \in \powersetTilde{d} }\mathbf C_{\bm j}\Big(
        \partial{\bar{\R}^{>\delta}(\bm j)}
        \Big) = 0,
    \qquad
    \text{for all but countably many }\delta > 0,
    \label{proof: choice of c, bounded away set from cone i, Ld for Levy MRV}
\end{align}
where we use $\notationdef{notation-boundary-of-a-set}{\partial E} \delequal E^- \setminus E^\circ$ to denote the boundary of the set $E$.
This claim is a  consequence of the definition of $\MRV$,
and we state the proof below.
\end{itemize}

\begin{lemma}\label{lemma: zero mass for boundary sets, LD for Levy}
\linksinthm{lemma: zero mass for boundary sets, LD for Levy}
Let Assumption~\ref{assumption: ARMV in levy process} hold.
Let $\bar\R^{>\delta}(\bm j)$ be defined as in \eqref{def: tilde R geq delta, LD for Levy}.
For each $\bm j \in \powersetTilde{d}$,
\begin{align}
    \mathbf C_{\bm j}\Big(
        \partial{\bar{\R}^{>\delta}(\bm j)}
        \Big) = 0,
    \qquad
    \text{for all but countably many }\delta > 0.
    \label{claim, lemma: zero mass for boundary sets, LD for Levy}
\end{align}
\end{lemma}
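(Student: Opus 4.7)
The plan is to reduce $\partial \bar\R^{>\delta}(\bm j)$ to a union of level sets of three continuous, homogeneous-of-degree-zero functions, and then to exploit the pairwise disjointness of these level sets across distinct $\delta$, combined with the local finiteness of $\mathbf C_{\bm j}$, to rule out positive mass except on a countable set of $\delta$.

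The first step is to introduce the angular-distance functions on $\R^d_+ \setminus \{\bm 0\}$,
\[
\rho_{\bm j}(\bm x) \delequal \inf_{\bm y \in \R^d(\bm j) \cap \mathfrak N^d_+}\norm{\bm x/\norm{\bm x} - \bm y}, \qquad \rho_\leqslant(\bm x) \delequal \min_{\substack{\bm j^\prime \in \powersetTilde{d}:\\ \bm j^\prime \neq \bm j,\ \alpha(\bm j^\prime) \leq \alpha(\bm j)}}\rho_{\bm j^\prime}(\bm x),
\]
and to trace through \eqref{def: enlarged cone R d index i epsilon}, \eqref{def: cone R d i basis S index alpha}, and \eqref{def: tilde R geq delta, LD for Levy} to rewrite
\[
\bar\R^{>\delta}(\bm j) = \big\{\bm x \in \R^d_+ \setminus \{\bm 0\}:\ \norm{\bm x} > \delta,\ \rho_{\bm j}(\bm x) \leq \delta,\ \rho_\leqslant(\bm x) > \delta\big\}.
\]
Since $\norm{\cdot}$, $\rho_{\bm j}$, and $\rho_\leqslant$ are continuous on $\R^d_+ \setminus \{\bm 0\}$, elementary topology yields
\[
\partial \bar\R^{>\delta}(\bm j) \subseteq \{\norm{\cdot} = \delta\} \cup \{\rho_{\bm j} = \delta\} \cup \{\rho_\leqslant = \delta\},
\]
and each boundary point further satisfies $\norm{\bm x} \geq \delta$ and $\rho_\leqslant(\bm x) \geq \delta$.

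Next, fixing any $\delta_0 > 0$, the plan is to consider the Borel set $B_{\delta_0} \delequal \{\bm x \in \R^d_+:\ \norm{\bm x} \geq \delta_0,\ \rho_\leqslant(\bm x) \geq \delta_0\}$ and to argue that $B_{\delta_0}$ is bounded away from $\bar\R^d_\leqslant(\bm j, \delta_0/2)$ under $\norm{\cdot}$. This uses a short sequential argument: for any pair of sequences $\bm x_n \in B_{\delta_0}$ and $\bm y_n \in \bar\R^d_\leqslant(\bm j,\delta_0/2)$ with $\norm{\bm x_n - \bm y_n}\to 0$, one passes to normalized limits $\bm x_n/\norm{\bm x_n}$ and $\bm y_n/\norm{\bm y_n}$ on the compact sphere $\mathfrak N^d_+$ and uses the continuity and degree-zero homogeneity of $\rho_\leqslant$ to derive the incompatible bounds $\rho_\leqslant(\bm s^*) \geq \delta_0$ and $\rho_\leqslant(\bm s^*) \leq \delta_0/2$ on the limiting direction. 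The $\MRV^*$ clause in Assumption~\ref{assumption: ARMV in levy process} then gives $\mathbf C_{\bm j}(B_{\delta_0}) < \infty$, and by the boundary description from the previous paragraph, $\partial \bar\R^{>\delta}(\bm j) \subseteq B_{\delta_0}$ for every $\delta \geq \delta_0$.

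The concluding step is purely measure-theoretic: the three families of level sets $\{\norm{\cdot} = \delta\}$, $\{\rho_{\bm j} = \delta\}$, $\{\rho_\leqslant = \delta\}$ are each pairwise disjoint across distinct $\delta$, so since $\mathbf C_{\bm j}(B_{\delta_0}) < \infty$, at most countably many $\delta \in [\delta_0,\infty)$ can assign positive $\mathbf C_{\bm j}$-mass to any single level set in these families. A countable union over $\delta_0 = 1/n$, $n \in \mathbb N$, then yields \eqref{claim, lemma: zero mass for boundary sets, LD for Levy}. The only mildly nontrivial point is the bounded-away verification for $B_{\delta_0}$, where the subcase of unbounded sequences is the main obstacle; however, even there the argument is light, thanks to the degree-zero homogeneity of $\rho_{\bm j^\prime}$ and compactness of the unit sphere $\mathfrak N^d_+$.
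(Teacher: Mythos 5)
Your proof is correct and takes essentially the same route as the paper's: fix $\delta_0 > 0$, cover $\partial\bar\R^{>\delta}(\bm j)$ by three $\delta$-indexed families of sets that are pairwise disjoint across $\delta$ and all contained in a fixed region of finite $\mathbf C_{\bm j}$-mass bounded away from $\bar\R^d_\leqslant(\bm j,\delta_0/2)$, then send $\delta_0 \downarrow 0$. Framing the boundary via level sets of the degree-zero homogeneous functions $\rho_{\bm j}$ and $\rho_\leqslant$ is a slightly cleaner bookkeeping of the paper's decomposition into $E_1(\delta), E_2(\delta), E_3(\delta)$ (it even avoids the paper's further split of $E_3$ over $\bm j'$), but the underlying idea is identical; one small attribution nit is that the finiteness $\mathbf C_{\bm j}(B_{\delta_0}) < \infty$ comes from the explicit local-finiteness condition on $\mathbf C_{\bm j}$ stated in Assumption~\ref{assumption: ARMV in levy process}, not specifically from the $\MRV^*$ clause.
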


\begin{proof}
\linksinpf{lemma: zero mass for boundary sets, LD for Levy}
Take any $\delta_0 > 0$.
For each $\delta > \delta_0$, note that
\begin{align*}
    \partial \bar \R^{>\delta}(\bm j)
    & \subseteq
    \underbrace{ 
        \big\{ \bm x \in \R^d_+ \setminus \bar\R^d_\leqslant(\bm j,\delta_0):\ \norm{\bm x} = \delta \big\}
    }_{ \delequal E_1(\delta) }
    \cup 
    \underbrace{ \big\{
        \bm x \in \R^d_+ \setminus \bar\R^d_\leqslant(\bm j,\delta_0):\ 
        \bm x \in \partial \bar\R^d(\bm j,\delta),\ \norm{\bm x} > \delta_0 
    \big\}
    }_{ \delequal E_2(\delta) }
    \\ 
    & \quad 
    \cup 
    \underbrace{ 
       \big\{ \bm x \in \R^d_+ \setminus \bar\R^d_\leqslant(\bm j,\delta_0):\   \bm x \in \partial \bar\R^d_\leqslant(\bm j,\delta),\ \norm{\bm x} > \delta_0   \big\}
    }_{ \delequal E_3(\delta) }.
\end{align*}
We first analyze the set $E_1(\delta)$.
Note that $E_1(\delta)\cap E_1(\delta^\prime) = \emptyset$ for any $\delta_0 < \delta < \delta^\prime$, and
$
\bigcup_{ \delta > \delta_0 }E_1(\delta) \subseteq \R^d_+ \setminus \bar\R^d_\leqslant(\bm j,\delta_0).
$
Meanwhile, 
under the $\MRV$ condition in Assumption~\ref{assumption: ARMV in levy process},
we have $$\mathbf C_{\bm j}\Big(  \big(\bar\R^d_\leqslant(\bm j,\delta_0)\big)^\complement \Big) < \infty,\qquad,\delta > 0;$$ see Definition~\ref{def: MRV}.
As a result, given any $\epsilon > 0$, there exists at most finitely many $\delta \in (\delta_0,\infty)$ such that $\mathbf C_{\bm j}\big(E_1(\delta)\big) > \epsilon$.
Sending $\epsilon \downarrow 0$, we confirm that
$
\mathbf C_{\bm j}\big(E_1(\delta)\big) = 0
$
for all but countably many $\delta \in (\delta_0, \infty)$.

Similarly, note that 
 $
        \bigcup_{ \delta > \delta_0 }E_2(\delta) \subseteq \R^d_+ \setminus \bar\R^d_\leqslant(\bm j,\delta_0),
        $
and that 
$
        E_2(\delta) \cap E_2(\delta^\prime) = \emptyset
        $
for any $\delta^\prime > \delta > \delta_0$.
The same arguments above confirm that 
$
\mathbf C_{\bm j}\big(E_2(\delta)\big) = 0
$
for all but countably many $\delta \in (\delta_0, \infty)$.
Next, 
by the definition of $\bar\R^d_\leqslant(\bm j,\delta)$,
we have $E_3(\delta) \subseteq \bigcup_{ \bm j^\prime \in \powersetTilde{d}:\ \bm j^\prime \neq \bm j,\ \alpha(\bm j^\prime) \leq \alpha(\bm j) }E^{\bm j^\prime}_3(\delta)$, where
\begin{align*}
    E^{\bm j^\prime}_3(\delta)
    \delequal 
    \big\{ \bm x \in \R^d_+ \setminus \bar\R^d_\leqslant(\bm j,\delta_0):\   \bm x \in \partial \bar\R^d(\bm j^\prime,\delta),\ \norm{\bm x} > \delta_0   \big\}.
\end{align*}
For each $\bm j^\prime \in \powersetTilde{d}$ with $\bm j^\prime \neq \bm j,\ \alpha(\bm j^\prime) \leq \alpha(\bm j)$,
one can see that 
 $
        \bigcup_{ \delta \in (\delta_0,\infty) }E_3^{\bm j^\prime}(\delta) \subseteq \R^d_+ \setminus \bar\R^d_\leqslant(\bm j,\delta_0),
        $
and 
$
        E_3^{\bm j^\prime}(\delta) \cap E_3^{\bm j^\prime}(\delta^\prime) = \emptyset
        $
for any $\delta^\prime > \delta > \delta_0$.
Again, we get
$$
\mathbf C_{\bm j}\big(E_3(\delta)\big)
\leq 
\sum_{ \bm j^\prime \in \powersetTilde{d}:\ \bm j^\prime \neq \bm j,\ \alpha(\bm j^\prime) \leq \alpha(\bm j) }
\mathbf C_{\bm j}\big(E_3^{\bm j^\prime}(\delta)\big) = 0
$$
for all but countably many $\delta \in (\delta_0, \infty)$.

In summary, we have verified that the claim
$
\mathbf C_{\bm j}\big( \partial \bar\R^{>\delta}(\bm j) \big) = 0
$
holds
for all but countably many $\delta \in (\delta_0, \infty)$.
We conclude the proof by sending $\delta_0 \downarrow 0$.
\end{proof}

To proceed, given $\delta  > 0$, $\bm j \in \powersetTilde{d}$, and $n \in \mathbb N$, let (for each $k\geq 1$)
\begin{align}
    \notationdef{notation-kth-jump-time-in-cone-i-LD-for-MRV}{\tau^{>\delta}_{ n }(k; \bm j)}
    & \delequal
    \inf\Big\{ 
        t >  \tau^{>\delta}_{ n }(k - 1; \bm j):\ 
         \Delta \bar{\bm L}_n(t) \in \bar\R^{>\delta}(\bm j)
    \Big\},
    \label{def: large jump in cone i, tau > delta n k i, LD for Levy MRV}
    \\ 
    \notationdef{notation-kth-jump-size-in-cone-i-LD-for-MRV}{\bm W^{>\delta}_{ n }(k; \bm j)}
    & \delequal 
    \Delta \bar{\bm L}_n\Big( \tau^{>\delta}_{ n }(k; \bm j) \Big),
     \label{def: large jump size in cone i, tau > delta n k i, LD for Levy MRV}
\end{align}
and we adopt the convention $\tau^{>\delta}_{ n }(0; \bm j) = 0$.
Analogous to the definitions in \eqref{def: large jump time tau for Levy process L}--\eqref{def: large jump size tau for Levy process L}, the ${\tau^{>\delta}_{ n }(k; \bm j)}$'s and ${\bm W^{>\delta}_{ n }(k; \bm j)}$'s are the arrival times and sizes of the $k^\text{th}$ large jump that belongs to $\bar{\R}^{>\delta}(\bm j)$.
To provide the individual count for different types of large jumps, we define
\begin{align}
    \notationdef{notation-count-for-large-jump-in-cone-j}{K^{>\delta}_n(\bm j)} \delequal \max\{ k \geq 0:\ \tau^{>\delta}_n(k;\bm j) \leq 1 \},
    \qquad
    \forall \bm j \in \powersetTilde{d},
    \label{def: K delta n type j, count scalar, LD for Levy}
\end{align}
and keep track of all counts by defining
\begin{align}
    \notationdef{notation-count-for-large-jump-in-cones}{\bm K^{>\delta}_n}
    \delequal
    \Big( {K^{>\delta}_n(\bm j)} \Big)_{ \bm j \in \powersetTilde{d} } \in \mathbb Z_+^{ \powersetTilde{d} }.
    \label{def: K delta n type j, count vector, LD for Levy}
\end{align}
Similarly, we define
\begin{align}
    \widetilde \tau^{>\delta}_n(k)
    &\delequal 
    \inf\big\{
        t > \tau^{>\delta}_n(k-1):\  \Delta \bar{\bm L}_n(t) \notin \bar\R^d([d],\delta)
    \big\},
    \qquad
    \tilde \tau^{>\delta}_n(0) = 0,
    \label{def: tilde tau n delta k, big jump outside of full cone}
    \\ 
    \widetilde K^{>\delta}_n 
    & \delequal 
    \max\{ k \geq 0:\ \widetilde\tau^{>\delta}_n(k) \leq 1 \},
    \label{def: tilde K n delta k, big jump outside of full cone}
\end{align}
for big jumps that are not aligned with any direction of the cone $\bar\R^d([d],\delta)$.
Given $\bm{\mathcal K} = (\mathcal K_{\bm j})_{ \bm j \in \powersetTilde{d} } \in \mathbb Z_+^{\powersetTilde{d}}$, 
note that on the event $\{ {\bm K^{>\delta}_n} = \bm{\mathcal K}  \} \cap \{ \widetilde K^{>\delta}_n = 0  \}$,
the process $\hat{\bm L}^{>\delta}_n(t)$ admits the form
\begin{align}
    {\hat{\bm L}^{>\delta}_n(t)}
    =
    \sum_{ \bm j \in \powersetTilde{d} } \sum_{  k \in [\mathcal K_{\bm j}] }
        \bm W^{>\delta}_n(k;\bm j)\mathbbm{I}_{ [ \tau^{>\delta}_n(k;\bm j) ,1 ]  }(t),
        \qquad
        \forall t \in [0,1]
        \label{property: approximation hat L n when type = j, LD for Levy MRV}
\end{align}
with the ${\tau^{>\delta}_{ n }(k; \bm j)}$'s and ${\bm W^{>\delta}_{ n }(k; \bm j)}$'s defined in 
\eqref{def: large jump in cone i, tau > delta n k i, LD for Levy MRV}--\eqref{def: large jump size in cone i, tau > delta n k i, LD for Levy MRV}.
In particular, on the event $\{ {\bm K^{>\delta}_n} = \bm{\mathcal K}  \} \cap \{ \widetilde K^{>\delta}_n = 0  \}$, 
we have $\hat{\bm L}^{>\delta}_n \in \shortbarDepsk{\delta}{ \bm{\mathcal K} } = 
 \barDxepskT{\bm 0}{\delta}{ \bm{\mathcal K} }{[0,1]},$; 
see \eqref{def: j jump path set with drft c, LD for Levy, MRV}.

We are now ready to state  Propositions~\ref{proposition: asymptotic equivalence, LD for Levy MRV} and \ref{proposition: weak convergence, LD for Levy MRV}.
Specifically, let 
\begin{align}
    \bar{\bm K}^{>\delta}_n
    \delequal
    \big({\bm K}^{>\delta}_n, \widetilde K^\delta_n\big)
\end{align}
be the concatenation of $\bm K^{>\delta}_n$ and $\widetilde K^{>\delta}_n$ defined in 
\eqref{def: K delta n type j, count scalar, LD for Levy}--\eqref{def: tilde K n delta k, big jump outside of full cone}.
Let 
\begin{align}
    \bar{\mathbb A}(\bm k)
    \delequal 
    \big\{
        (\bm{\mathcal K}, 0):\ \bm{\mathcal K} \in {\mathbb A}(\bm k)
    \big\},
    \qquad\forall
    \bm k \in \mathbb Z^d_+,
\end{align}
where $\mathbb A(\bm k)$ is introduced in Definition~\ref{def: assignment, jump configuration k}.
That is, we augment each allocation $ \bm{\mathcal K} \in  \mathbb A(\bm k)$ with the same constant $0$.

\begin{proposition}\label{proposition: asymptotic equivalence, LD for Levy MRV}
\linksinthm{proposition: asymptotic equivalence, LD for Levy MRV}
Let Assumptions~\ref{assumption: ARMV in levy process} and \ref{assumption: WLOG, LD for Levy MRV} hold.
For each $\bm k = (k_{j})_{ j \in [d] } \in \mathbb Z_+^{ d } \setminus \{\bm 0\}$ and $\Delta > 0$,
\begin{align}
    \lim_{n \to \infty}
    \P\Big(
        \dj{}\big( \hat{\bm L}_n^{>\delta}, \bar{\bm L}_n \big)
        > \Delta 
    \Big) 
    \Big/ \breve \lambda_{\bm k}(n)
    = 0,
    \qquad
    \forall \delta > 0\text{ small enough,}
    \label{proof: goal 1, AE, theorem: LD for levy, MRV}
\end{align}
where $\breve \lambda_{\bm k}(n)$ is defined in \eqref{def: scale function for Levy LD MRV}.
\end{proposition}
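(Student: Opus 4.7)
The plan is to reduce the $J_1$ comparison to a uniform comparison, and then bound the uniform error by concentration for the small-jump part of $\bm L$. The first observation is that the identity map is always an admissible time-change in \eqref{def: J1 metric on [0,T]}, so
$\dj{}(\hat{\bm L}^{>\delta}_n,\bar{\bm L}_n)\le \sup_{t\in[0,1]}\|\bar{\bm L}_n(t)-\hat{\bm L}^{>\delta}_n(t)\|$.
Writing $R_n(t)\delequal\bar{\bm L}_n(t)-\hat{\bm L}^{>\delta}_n(t)=n^{-1}\bm L^{\le n\delta}(nt)$, where $\bm L^{\le n\delta}$ denotes the L\'evy process obtained from $\bm L$ by discarding every jump of norm $>n\delta$, it therefore suffices to prove
\[
    \P\Big(\sup_{t\in[0,1]}\|R_n(t)\|>\Delta\Big)= \lo\big(\breve\lambda_{\bm k}(n)\big)
    \qquad\text{for all $\delta>0$ small enough.}
\]

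The next step is to split $R_n$ into a mean and a centered part. Under Assumption~\ref{assumption: WLOG, LD for Levy MRV} we have $\bm\mu_{\bm L}=\bm 0$, so $\E R_n(t)= -t\int_{\|\bm x\|>n\delta}\bm x\,\nu(d\bm x)$. Since Assumption~\ref{assumption: ARMV in levy process} yields $\alpha(\bm j)>1$ for every $\bm j\in\powersetTilde{d}$, the $\MRV^*$-tail of $\nu$ integrated with $\|\bm x\|$ is $\bo(n^{1-\alpha^*})$ with $\alpha^*\delequal\min_{i\in[d]}\alpha(\{i\})>1$, so $\sup_{t\in[0,1]}\|\E R_n(t)\|\to 0$. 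For all $n$ large, $\sup_t\|\E R_n(t)\|\le \Delta/2$, and it is enough to prove
\[
    \P\Big(\sup_{t\in[0,1]}\|R_n(t)-\E R_n(t)\|>\Delta/2\Big)=\lo(\breve\lambda_{\bm k}(n)).
\]

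The centered process $\widetilde R_n\delequal R_n-\E R_n$ is a c\`adl\`ag martingale whose jumps are uniformly bounded by $\delta$ in norm. The plan here is to invoke the announced concentration inequality Lemma~\ref{lemma: concentration of small jump process, Ld for Levy MRV}, whose role is precisely to control the Laplace transform of the small-jump process before the first big jump; coordinate-wise, the Laplace exponent $\psi_n$ of $\widetilde R_n$ satisfies $\psi_n(\lambda)\le \tfrac{\lambda^2}{2}V_n e^{\lambda\delta}$, where $V_n\delequal\mathrm{tr}(\bm\Sigma_{\bm L})/n+ n^{-1}\int_{\|\bm x\|\le n\delta}\|\bm x\|^2\nu(d\bm x)\to 0$ (again because $\alpha^*>1$). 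Combining a Doob maximal inequality with an exponential Markov bound and optimizing $\lambda$ of order $\delta^{-1}\log(\Delta/V_n)$ gives a bound of the form
\[
    \P\Big(\sup_{t\in[0,1]}\|\widetilde R_n(t)\|>\Delta/2\Big)\le C_\Delta\, V_n^{\,\Delta/(C'\delta)},
\]
which, since $V_n$ decays polynomially in $n$, is $\lo(n^{-M})$ for any prescribed $M$ once $\delta$ is chosen small enough. Because $\breve\lambda_{\bm k}(n)\in\RV_{-c(\bm k)}(n)$, fixing $\delta$ so that $\Delta/(C'\delta)$ exceeds $c(\bm k)/\min\{\alpha^*-1,1\}$ gives the required negligibility.

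The main obstacle is the third step: one must derive an exponential deviation inequality that is sharp enough to beat every prescribed polynomial rate $\breve\lambda_{\bm k}(n)$, uniformly in $n$, while the jump-size bound $\delta$ is treated as an independent free parameter. This is precisely the content of Lemma~\ref{lemma: concentration of small jump process, Ld for Levy MRV}; the other ingredients (reduction to uniform distance, centering the mean, controlling the truncated second moment) are essentially bookkeeping powered by the integrability estimates that flow from $\alpha(\bm j)>1$ in Assumption~\ref{assumption: ARMV in levy process}.
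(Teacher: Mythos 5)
Your proposal is correct, and it takes a genuinely different route from the paper's own proof. Let me spell out the comparison.

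You observe that $\bar{\bm L}_n - \hat{\bm L}^{>\delta}_n$ is nothing but the scaled L\'evy process $R_n(t) = n^{-1}\bm L^{\leq n\delta}(nt)$ obtained by discarding all jumps of $\bm L$ of norm exceeding $n\delta$, bound $\dj{}$ by the uniform distance via the identity time change, subtract the (vanishing) mean of $R_n$, and then apply a single Bennett--Bernstein bound to the centered martingale $\widetilde R_n$ over the whole interval $[0,1]$, exploiting that all jumps of $\widetilde R_n$ are bounded by $\delta$ while its truncated variance $V_n$ decays polynomially thanks to $\alpha^*>1$. This works and avoids any conditioning on the number or location of big jumps. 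The paper, in contrast, never introduces the truncated process explicitly; it splits the event into three pieces via $B_1 = \{\widetilde K^{>\delta}_n=0\}$ (no off-cone big jump), $B_2 = \{\breve c(\bm K^{>\delta}_n)\le c(\bm k)\}$ (bounded big-jump configuration), and $B_0\cap B_1\cap B_2$, and on the last piece it applies the strong Markov property at each big-jump time together with Lemma~\ref{lemma: large jump approximation, LD for Levy MRV} to reduce to the ``before the first big jump'' bound of Lemma~\ref{lemma: concentration of small jump process, Ld for Levy MRV}. The bound on $B_2^\complement$ and the use of Lemma~\ref{lemma: large jump approximation, LD for Levy MRV} are precisely what allows the per-interval errors to be summed; in your argument this conditioning is unnecessary because the Bennett bound controls $\sup_{t\in[0,1]}\|\widetilde R_n(t)\|$ globally. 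What the paper's formulation buys is modularity: Lemma~\ref{lemma: concentration of small jump process, Ld for Levy MRV} in its ``stopped at $\tau^{>\delta}_n(1)$'' form is reused elsewhere (Lemma~\ref{lemma: asymptotic equivalence between L and large jump, LD for Hawkes}), whereas your argument is a self-contained, somewhat shorter, replacement specific to Proposition~\ref{proposition: asymptotic equivalence, LD for Levy MRV}.

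Two small points of looseness worth noting. First, you characterize Lemma~\ref{lemma: concentration of small jump process, Ld for Levy MRV} as ``controlling the Laplace transform of the small-jump process,'' but the paper's lemma is instead a probability bound obtained by decomposing $\bm L$ into its Brownian, compensated-small-jump, and compound Poisson parts and applying Doob, Burkholder--Davis--Gundy, and a concentration result for truncated regularly varying sums; your MGF derivation is a self-contained \emph{alternative} to that lemma, not an invocation of it. Second, the parenthetical ``optimizing $\lambda$ of order $\delta^{-1}\log(\Delta/V_n)$'' should be handled with care: the naive choice $\lambda=\delta^{-1}\log(\Delta\delta/V_n)$ makes the exponent $-\lambda a + \tfrac{\lambda^2}{2}V_n e^{\lambda\delta}$ positive for $V_n$ small; the correct choice is $\lambda$ solving $\lambda e^{\lambda\delta}=a/V_n$, i.e.\ $\lambda\delta=W(a\delta/V_n)\approx\log(a\delta/V_n)-\log\log(a\delta/V_n)$, which then gives the bound $(V_n/(a\delta))^{a/(2\delta)}$ you quote. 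Neither issue affects the validity of the route.
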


\begin{proposition}\label{proposition: weak convergence, LD for Levy MRV}
\linksinthm{proposition: weak convergence, LD for Levy MRV}
Let Assumptions~\ref{assumption: ARMV in levy process} and \ref{assumption: WLOG, LD for Levy MRV} hold.
Let $f: \D \to [0,\infty)$ be bounded (i.e., $\norm{f} \delequal \sup_{\xi \in \D}|f(\xi)| < \infty$) and continuous (w.r.t.\ the $J_1$ topology of $\D$).
Let $\bm k = (k_{j})_{ j \in [d] } \in \mathbb Z_+^{ d } \setminus \{\bm 0\}$ and $\epsilon > 0$.
Suppose 
that
$B = \text{supp}(f)$
is bounded away from $\shortbarDepsk{\epsilon}{\leqslant \bm k}$
    under $\dj{}$.
Then, 
\begin{enumerate}[(a)]
    \item
        for any $\delta \in (0,\epsilon)$, 
        \begin{align}
            \lim_{n \to \infty}
    \frac{
        \E\Big[ f(\hat{\bm L}^{>\delta}_n)\mathbbm{I}\big\{ \bar{\bm K}^{>\delta}_n \notin \bar{\mathbb A}(\bm k) \big\} \Big]
    }{
        \breve \lambda_{\bm k}(n)
    }
    = 0;
            \nonumber
        \end{align}

    \item
        there exists $\delta_0 > 0$ such that
        for all but countably many $\delta \in (0,\delta_0)$,
        \begin{align*}
            \lim_{n \to \infty}
            \frac{
                \E\Big[ f(\hat{\bm L}^{>\delta}_n)\mathbbm{I}\big\{ 
                \bar{\bm K}^{>\delta}_n = (\bm{\mathcal K},0)
                \big\} \Big]
         }{
             \breve \lambda_{\bm k}(n)
            }
            =
            \breve{\mathbf C}_{\bm{\mathcal K}}(f) < \infty,
            \qquad\forall \bm{\mathcal K} \in \mathbb A(\bm k),
        \end{align*}
        where the measure $\breve{\mathbf C}_{\bm{\mathcal K}}(\cdot) = \breve{\mathbf C}^{ \subZeroT{1} }_{\bm{\mathcal K};\bm 0}(\cdot)$ is defined in \eqref{def: measure C type j L, LD for Levy, MRV}.
\end{enumerate}
\end{proposition}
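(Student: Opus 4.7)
The plan is to decompose by the configuration of large jumps in $\bar{\bm L}_n$ and invoke the Lévy–Itô Poisson structure together with the $\MRV^*$ condition. By \eqref{prelim: levy ito decomp}, the counts $\{K^{>\delta}_n(\bm j)\}_{\bm j\in\powersetTilde{d}}$ and $\widetilde K^{>\delta}_n$ are jointly independent Poisson variables, since the cones $\{\bar\R^{>\delta}(\bm j)\}_{\bm j\in\powersetTilde{d}}$ partition $\{x\in\R^d_+:\|x\|>\delta,\ x\in\bar\R^d([d],\delta)\}$ and jumps of $\bar{\bm L}_n$ in disjoint regions arise from independent thinnings of $\mathrm{PRM}_\nu$. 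Combining Assumption~\ref{assumption: ARMV in levy process} with Lemma~\ref{lemma: zero mass for boundary sets, LD for Levy}, for all but countably many $\delta>0$ one has
$$\E[K^{>\delta}_n(\bm j)]=n\nu_n(\bar\R^{>\delta}(\bm j))\sim n\lambda_{\bm j}(n)\cdot c_{\bm j,\delta},\quad c_{\bm j,\delta}\delequal\mathbf C_{\bm j}(\bar\R^{>\delta}(\bm j))\in(0,\infty),$$
while the supplementary condition \eqref{cond, outside of the full cone, finite index, def: MRV} gives $\E[\widetilde K^{>\delta}_n]=\lo(n^{-\gamma})$ for every $\gamma>0$.

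For part (a), the event $\{\bar{\bm K}^{>\delta}_n\notin\bar{\mathbb A}(\bm k)\}$ splits into three sub-cases. \emph{Case 1 ($\widetilde K^{>\delta}_n\geq 1$):} Since $f$ is bounded and $\P(\widetilde K^{>\delta}_n\geq 1)\leq \E[\widetilde K^{>\delta}_n]=\lo(n^{-\gamma})$ for every $\gamma$, this contributes $\lo(\breve\lambda_{\bm k}(n))$. \emph{Case 2 ($\bar{\bm K}^{>\delta}_n=(\bm{\mathcal K}',0)$ with $\bm{\mathcal K}'\notin\mathbb A(\bm k)$ and $\breve c(\bm{\mathcal K}')\leq c(\bm k)$):} By construction $\hat{\bm L}^{>\delta}_n\in\shortbarDepsk{\delta}{\bm{\mathcal K}'}\subseteq \shortbarDepsk{\epsilon}{\bm{\mathcal K}'}\subseteq \shortbarDepsk{\epsilon}{\leqslant\bm k}$ (using $\delta<\epsilon$ and monotonicity of $\bar\R^d(\bm j,\cdot)$), so $\hat{\bm L}^{>\delta}_n\notin B=\mathrm{supp}(f)$ and $f(\hat{\bm L}^{>\delta}_n)=0$ identically. \emph{Case 3 ($\breve c(\bm{\mathcal K}')>c(\bm k)$):} Independence yields $\P(\bar{\bm K}^{>\delta}_n=(\bm{\mathcal K}',0))=\bo\!\big(\prod_{\bm j}(n\lambda_{\bm j}(n))^{\mathcal K'_{\bm j}}\big)\in \RV_{-\breve c(\bm{\mathcal K}')}(n)$, and a uniform-in-$\bm{\mathcal K}'$ Poisson tail bound summed over the (infinite) family of such configurations still gives $\lo(\breve\lambda_{\bm k}(n))$ since $n\lambda_{\bm j}(n)\to 0$.

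For part (b), fix $\bm{\mathcal K}\in\mathbb A(\bm k)$ and restrict $\delta\in(0,\delta_0)$ to the co-countable subset on which $\mathbf C_{\bm j}(\partial\bar\R^{>\delta}(\bm j))=0$ for every $\bm j\in\powersetTilde{d}$. Conditional on $\{\bar{\bm K}^{>\delta}_n=(\bm{\mathcal K},0)\}$, the arrival times $\{\tau^{>\delta}_n(k;\bm j)\}_{k,\bm j}$ are i.i.d.\ $\mathrm{Uniform}[0,1]$ and, independently, each jump size $\bm W^{>\delta}_n(k;\bm j)$ is distributed as the law $\nu_n(\,\cdot\,)|_{\bar\R^{>\delta}(\bm j)}/\nu_n(\bar\R^{>\delta}(\bm j))$, which converges weakly to $c_{\bm j,\delta}^{-1}\mathbf C_{\bm j}|_{\bar\R^{>\delta}(\bm j)}$ by $\MRV$ convergence and the boundary condition just imposed. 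The finite-dimensional map
$$\Psi:\big((t_{\bm j,k},\bm w_{\bm j,k})\big)_{k,\bm j}\;\mapsto\; \sum_{\bm j\in\powersetTilde{d}}\sum_{k\in[\mathcal K_{\bm j}]}\bm w_{\bm j,k}\mathbbm I_{[t_{\bm j,k},1]}$$
is $\dj{}$-continuous on the almost-sure event of distinct arrival times. Applying the continuous mapping theorem, the conditional expectation of $f(\hat{\bm L}^{>\delta}_n)$ converges to $\int f\circ \Psi\; d\big(\bigtimes_{\bm j,k}(c_{\bm j,\delta}^{-1}\mathbf C_{\bm j}|_{\bar\R^{>\delta}(\bm j)}\times \mathcal L_{[0,1]})\big)$. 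Multiplying by $\P(\bar{\bm K}^{>\delta}_n=(\bm{\mathcal K},0))\sim \prod_{\bm j}(n\lambda_{\bm j}(n)\,c_{\bm j,\delta})^{\mathcal K_{\bm j}}/\mathcal K_{\bm j}! = \breve\lambda_{\bm k}(n)\prod_{\bm j}c_{\bm j,\delta}^{\mathcal K_{\bm j}}/\mathcal K_{\bm j}!$ (using \eqref{property, rate function for assignment mathcal K, LD for Levy}), the $c_{\bm j,\delta}^{\mathcal K_{\bm j}}$ factors cancel against the normalisers of the jump-size laws and leave exactly the integral defining $\breve{\mathbf C}_{\bm{\mathcal K}}(f)$ in \eqref{def: measure C type j L, LD for Levy, MRV}. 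Finiteness of $\breve{\mathbf C}_{\bm{\mathcal K}}(f)$ follows from $\|f\|<\infty$ and $c_{\bm j,\delta}<\infty$.

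The principal obstacles are twofold. In part (a), uniformly controlling the infinite family of configurations $\bm{\mathcal K}'$ with $\breve c(\bm{\mathcal K}')>c(\bm k)$ requires a Poisson moment/tail bound that remains summable after dividing by $\breve\lambda_{\bm k}(n)$; this will rely on $\sup_n n\lambda_{\bm j}(n)<\infty$ and the factorial denominators from the Poisson pmf. In part (b), the continuous mapping step is delicate at cone boundaries, which is why restricting $\delta$ to the co-countable set identified by Lemma~\ref{lemma: zero mass for boundary sets, LD for Levy} is essential: it upgrades the $\M$-convergence of the scaled jump-size laws on $\bar\R^{>\delta}(\bm j)$ to genuine weak convergence of probability measures on that set.
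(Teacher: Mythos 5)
Your overall plan mirrors the paper's proof: decompose part (a) into three sub-cases (stray jumps outside $\bar\R^d([d],\delta)$; allowed configurations with higher cost; allowed-cost but wrong configurations), show the third contributes $0$ deterministically because the path lands in $\shortbarDepsk{\epsilon}{\leqslant\bm k}$ and so outside $\mathrm{supp}(f)$, and for part (b) pass to the conditional law of arrival times and jump sizes, apply the continuous mapping theorem, and cancel the $c_{\bm j,\delta}$ normalisers against the Poisson pmf. The paper does exactly the same.

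There is one genuine gap in part (b), at the final identification. After cancelling the $c_{\bm j,\delta}^{\mathcal K_{\bm j}}$ factors, what you actually obtain is
\begin{align*}
\frac{1}{\prod_{\bm j}\mathcal K_{\bm j}!}\int f\circ \Psi\; d\Bigl(\bigtimes_{\bm j,k}\bigl(\mathbf C_{\bm j}\big|_{\bar\R^{>\delta}(\bm j)}\times \mathcal L_{[0,1]}\bigr)\Bigr),
\end{align*}
i.e.\ the product measure uses $\mathbf C_{\bm j}$ \emph{restricted to} $\bar\R^{>\delta}(\bm j)$, whereas $\breve{\mathbf C}_{\bm{\mathcal K}}(f)$ in \eqref{def: measure C type j L, LD for Levy, MRV} is defined with the unrestricted $\mathbf C_{\bm j}$. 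To upgrade one to the other, you need to know that $f\circ\Psi$ vanishes unless every jump $\bm w_{\bm j,k}$ has $\norm{\bm w_{\bm j,k}}>\bar\delta$ and $\bm w_{\bm j,k}\notin\bar\R^d_\leqslant(\bm j,\bar\delta)$ for some fixed $\bar\delta>0$ (and then take $\delta<\bar\delta$): this is precisely the content of Lemma~\ref{lemma: choice of bar epsilon and bar delta, LD for Levy MRV}, which you never invoke. The same geometric fact is what underlies the finiteness of $\breve{\mathbf C}_{\bm{\mathcal K}}(f)$ — your justification ``$\norm{f}<\infty$ and $c_{\bm j,\delta}<\infty$'' only covers the restricted integral, not $\breve{\mathbf C}_{\bm{\mathcal K}}(f)$ itself, which \emph{a priori} integrates against measures of possibly infinite mass near the sub-cones $\bar\R^d_\leqslant(\bm j,\cdot)$ and near the origin. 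The paper handles this via Lemma~\ref{lemma: finiteness of measure breve C k}, which again rests on Lemma~\ref{lemma: choice of bar epsilon and bar delta, LD for Levy MRV}.

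A minor remark on part (a), Case 3: you propose summing a Poisson tail bound over the infinitely many configurations $\bm{\mathcal K}'$ with $\breve c(\bm{\mathcal K}')>c(\bm k)$. That can be made to work, but the cost region is not a coordinate box, so the straightforward summation requires some care. The paper reduces cleanly to finitely many bounds by exhibiting a finite antichain $\mathbb K^*_>$ of minimal configurations (w.r.t.\ the lattice order on $\mathbb Z_+^{\powersetTilde{d}}$) dominating the whole region, and then using $\P(\text{Poisson}(\lambda)\geq m)\leq\lambda^m$. Either route reaches the goal, but the finite-antichain argument avoids having to justify interchanging the sum with the $n\to\infty$ limit.
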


We defer their proofs to Section~\ref{subsec: proof, LD of Levy, proof of technical results},
and conclude this subsection by applying Propositions~\ref{proposition: asymptotic equivalence, LD for Levy MRV} and \ref{proposition: weak convergence, LD for Levy MRV} and establishing Theorem~\ref{theorem: LD for levy, MRV}.

\begin{proof}[Proof of Theorem~\ref{theorem: LD for levy, MRV}]
\linksinpf{theorem: LD for levy, MRV}
Without loss of generality, we prove Theorem~\ref{theorem: LD for levy, MRV} under Assumption~\ref{assumption: WLOG, LD for Levy MRV}.
For any $\epsilon > 0$ and any Borel set $B \subseteq \D$ that is bounded away from $\shortbarDepsk{\epsilon}{\leqslant \bm k}$,
note that $B^\Delta$ is also bounded away from $\shortbarDepsk{\epsilon}{\leqslant \bm k}$ under any $\Delta > 0$ sufficiently small.
By Urysohn's Lemma, one can identify some bounded and continuous $f:\D \to [0,1]$ such that $\mathbbm{I}_{ B } \leq f \leq \mathbbm{I}_{B^{\Delta}}$.
Then, part (b) of Proposition~\ref{proposition: weak convergence, LD for Levy MRV} confirms that
$\sum_{\bm{\mathcal K} \in \mathbb A(\bm k)}\breve{\mathbf C}_{\bm{\mathcal K}}(B)
\leq 
\sum_{\bm{\mathcal K} \in \mathbb A(\bm k)}\breve{\mathbf C}_{\bm{\mathcal K}}(f)
< \infty$.
This establishes the finite upper bound in Claim~\eqref{claim, finite index, theorem: LD for levy, MRV} and verifies that $\sum_{\bm{\mathcal K}\in \mathbb A(\bm k)}\breve{\mathbf C}_{\bm{\mathcal K}} \in \mathbb M\big( \D \setminus  \shortbarDepsk{\epsilon}{\leqslant \bm k} \big)$, thus allowing us to apply Lemma~\ref{lemma: asymptotic equivalence when bounded away, equivalence of M convergence}.
Next, given $\epsilon > 0$ and
under the choice of 
\begin{align}
    (\mathbb S,\bm d) = (\D,\dj{}),\quad 
    \mathbb C = \shortbarDepsk{\epsilon}{\leqslant \bm k},\quad 
    \epsilon_n = \breve \lambda_{\bm k}(n),
    \quad 
    X_n = \bar{\bm L}_n,\quad 
    Y^\delta_n = \hat{\bm L}^{>\delta}_n,
    \quad 
    V^\delta_n = \bar{\bm K}^{>\delta}_n,
    \quad
    \mathcal V = \bar{\mathbb A}(\bm k),
    \label{proof, choice of abstract notations, theorem: LD for levy, MRV}
\end{align}
Condition (i) of Lemma~\ref{lemma: asymptotic equivalence when bounded away, equivalence of M convergence},
is verified by Proposition~\ref{proposition: asymptotic equivalence, LD for Levy MRV},
and Condition (ii) is verified by Proposition~\ref{proposition: weak convergence, LD for Levy MRV}.
In particular, 
in Condition (i) of  Lemma~\ref{lemma: asymptotic equivalence when bounded away, equivalence of M convergence}, note that 
$$
\P\big( \bm{d}(X_n, Y^\delta_n)\mathbbm{I}( X_n \in B\text{ or }Y^\delta_n \in B ) > \Delta \big)
\leq 
\P\big( \bm{d}(X_n, Y^\delta_n) > \Delta \big),
$$
and the claim
$
\P\big( \bm{d}(X_n, Y^\delta_n) > \Delta \big)= \lo( \epsilon_n)
$
as $n \to \infty$, under the choices in \eqref{proof, choice of abstract notations, theorem: LD for levy, MRV} with $\delta > 0$ small enough, is exactly the content of Proposition~\ref{proposition: asymptotic equivalence, LD for Levy MRV}.
As for Condition (ii), note that the claims are trivial when $B = \emptyset$.
In case that $B \neq \emptyset$, since $B$ is bounded away from $\shortbarDepsk{\epsilon}{\leqslant \bm k}$, we pick $\Delta > 0$ small enough $B^\Delta$ is still bounded away from $\mathbb C$.
By Urysohn's Lemma, one can find some bounded and continuous $f:\D \to [0,1]$ such that $\mathbbm{I}_{ B } \leq f \leq \mathbbm{I}_{B^{\Delta}}$.
Then, by applying part (b) of Proposition~\ref{proposition: weak convergence, LD for Levy MRV} onto such $f$,
one can identify some $\delta_0 = \delta_0(B,\Delta) > 0$ such that for all but countably many $\delta \in (0,\delta_0)$,
\begin{align*}
    & \limsup_{n \to \infty}
    \frac{
        \P\big(\hat{\bm L}^{>\delta}_n \in B,\ \bar{\bm K}^{>\delta}_n = (\bm{\mathcal K},0) \big)
        }{
           \breve \lambda_{\bm k}(n)
        }
        \\ 
        &
    \leq 
    \lim_{n \to \infty}
    \frac{
        \E\big[ f(\hat{\bm L}^{>\delta}_n)\mathbbm{I}\{ \bar{\bm K}^{>\delta}_n = (\bm{\mathcal K},0) \} \big]
        }{
           \breve \lambda_{\bm k}(n)
        }
    =
    \breve{\mathbf C}_{ \bm{\mathcal K} }(f)
    \leq 
    \breve{\mathbf C}_{\bm{\mathcal K}}( B^{\Delta}),
    \qquad \forall \bm{\mathcal K} \in \mathbb A(\bm k).
\end{align*}
By a similar argument using Urysohn's Lemma, one can identify some bounded and continuous $f$ with
$\mathbbm{I}_{ B_\Delta } \leq f \leq \mathbbm{I}_{B}$.
Using part (b) of Proposition~\ref{proposition: weak convergence, LD for Levy MRV} again (and by picking a smaller $\delta_0 > 0$ if necessary), it follows for all but countably many $\delta \in (0,\delta_0)$ that
\begin{align*}
    \liminf_{n \to \infty}
    \frac{
        \P\big(\hat{\bm L}^{>\delta}_n \in B, \bar{\bm K}^{>\delta}_n = (\bm{\mathcal K},0) \big)
        }{
           \breve \lambda_{\bm k}(n)
        }
    \geq
    \breve{\mathbf C}_{\bm{\mathcal K}}( B_{\Delta}),
    \qquad \forall \bm{\mathcal K} \in \mathbb A(\bm k).
\end{align*}
Likewise, by part (a) of of Proposition~\ref{proposition: weak convergence, LD for Levy MRV},
for the continuous and bounded $f$ with $\mathbbm{I}_{ B } \leq f \leq \mathbbm{I}_{B^{\Delta}}$ identified above,
it holds for any $\delta > 0$ small enough that 
\begin{align*}
    \limsup_{n \to \infty}
    \frac{
        \P\big(\hat{\bm L}^{>\delta}_n \in B,\ \bar{\bm K}^{>\delta}_n \notin \bar{\mathbb A}(\bm k)\big)
        }{
           \breve \lambda_{\bm k}(n)
        }
    \leq 
    \lim_{n \to \infty}
    \frac{
        \E\big[ f(\hat{\bm L}^{>\delta}_n)\mathbbm{I}\{ \bar{\bm K}^{>\delta}_n \notin \bar{\mathbb A}(\bm k)\} \big]
        }{
           \breve \lambda_{\bm k}(n)
        }
        = 0.
\end{align*}
These calculations verify Condition (ii) of Lemma~\ref{lemma: asymptotic equivalence when bounded away, equivalence of M convergence} under the choices in \eqref{proof, choice of abstract notations, theorem: LD for levy, MRV}.
Using Lemma~\ref{lemma: asymptotic equivalence when bounded away, equivalence of M convergence}, we conclude the proof of Theorem~\ref{theorem: LD for levy, MRV}.
\end{proof}

\subsection{Proofs of  Propositions~\ref{proposition: asymptotic equivalence, LD for Levy MRV} and \ref{proposition: weak convergence, LD for Levy MRV}}
\label{subsec: proof, LD of Levy, proof of technical results}

This subsection collects the proofs of Propositions~\ref{proposition: asymptotic equivalence, LD for Levy MRV} and \ref{proposition: weak convergence, LD for Levy MRV}.
To this end, we prepare a few technical lemmas.
First, Lemma~\ref{lemma: concentration of small jump process, Ld for Levy MRV} establishes a concentration inequality for $\bar{\bm L}_n(t)$ before the arrival of any large jump.

\begin{lemma}\label{lemma: concentration of small jump process, Ld for Levy MRV}
\linksinthm{lemma: concentration of small jump process, Ld for Levy MRV}
Let Assumption~\ref{assumption: ARMV in levy process} hold.
Given any $\epsilon,\beta,T > 0$, there exists $\delta_0 = \delta_0(\epsilon,\beta)$ such that
\begin{align*}
    \lim_{n \to \infty}n^{-\beta}\cdot\P\Bigg(
        \sup_{ t \in [0,T]:\ t < \tau^{>\delta}_n(1)  }
        \norm{
            \bar{\bm L}_n(t) - \bm\mu_{\bm L}t
        }
        > \epsilon
    \Bigg)
    = 0
    \qquad \forall \delta \in (0,\delta_0).
\end{align*}
\end{lemma}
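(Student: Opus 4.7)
The plan is to reduce the concentration estimate to a tail bound for a L\'evy martingale with bounded jumps and then invoke Bennett's maximal inequality in its Poisson-tail regime to obtain a polynomial-in-$n$ rate whose exponent can be made arbitrarily large by shrinking $\delta$.

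First I would eliminate the stopping time. On $\{t<\tau_n^{>\delta}(1)\}$ no jump of $\bm L$ of norm larger than $n\delta$ has occurred in $[0,nt]$, so $\bar{\bm L}_n$ coincides on this event with the truncated scaled process $\tilde{\bm L}_n(t):=\bm L^{\leq n\delta}(nt)/n$, where $\bm L^{\leq n\delta}$ is the L\'evy process with triplet $(\bm c_{\bm L},\bm\Sigma_{\bm L},\nu|_{\{\|x\|\leq n\delta\}})$. It therefore suffices to prove the same superpolynomial decay for $\P\bigl(\sup_{t\leq T}\|\tilde{\bm L}_n(t)-t\bm\mu_{\bm L}\|>\epsilon\bigr)$. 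Writing $\tilde{\bm L}_n(t)-t\bm\mu_{\bm L}=M_n(t)-t\bm r_n$ with $M_n(t):=\tilde{\bm L}_n(t)-t\,\E\tilde{\bm L}_n(1)$ a centered L\'evy martingale and $\bm r_n:=\int_{\|x\|>n\delta}\bm x\,\nu(d\bm x)$ deterministic, Assumption~\ref{assumption: ARMV in levy process} together with the $\MRV^*$ condition implies $\nu(\{\|x\|>u\})=O(u^{-\alpha_*})$ with $\alpha_*:=\min_{i\in[d]}\alpha(\{i\})>1$, so that $\|\bm r_n\|=O((n\delta)^{1-\alpha_*})\to 0$ and the deterministic term contributes at most $\epsilon/2$ once $n$ is large enough.

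Next I would bound $\P\bigl(\sup_{t\leq T}\|M_n(t)\|>\epsilon/2\bigr)$ coordinate by coordinate via Bennett's maximal inequality (equivalently, Doob's inequality applied to the exponential supermartingale generated by $nM_n$). The unscaled martingale $nM_n$ has jumps bounded by $a:=n\delta$ and coordinate quadratic-variation rate bounded by
\begin{align*}
\sigma_n^2 \leq \mathrm{tr}(\bm\Sigma_{\bm L})+\int_{\|x\|\leq n\delta}\|x\|^2\,\nu(dx)\leq C\bigl(1+(n\delta)^{(2-\alpha_*)^+}\bigr),
\end{align*}
again by the $\MRV^*$ tail estimate. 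With $v_n:=nT\sigma_n^2$ and $u:=n\epsilon/(2d)$, Bennett yields a bound $\exp\bigl(-(v_n/a^2)h(au/v_n)\bigr)$ where $h(x):=(1+x)\log(1+x)-x$. Because $\alpha_*>1$, one checks that $au/v_n\to\infty$, so the asymptotic $h(y)\sim y\log y$ gives
\begin{align*}
\tfrac{v_n}{a^2}\,h(au/v_n)\sim \tfrac{u}{a}\log(au/v_n)=\tfrac{\epsilon}{2d\delta}\bigl(\min\{1,\alpha_*-1\}+o(1)\bigr)\log n,
\end{align*}
which produces a bound of order $n^{-\kappa(\delta)(1+o(1))}$ with $\kappa(\delta)\geq \min\{1,\alpha_*-1\}\,\epsilon/(2d\delta)$. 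Taking $\delta_0(\epsilon,\beta):=\min\{1,\alpha_*-1\}\,\epsilon/(4d\beta)$ forces $\kappa(\delta)\geq 2\beta$ for every $\delta\in(0,\delta_0)$, and a union bound over the $2d$ coordinate-sign pairs produces the desired $o(n^{-\beta})$ decay.

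The main obstacle is the sharpness required in the concentration step. Bernstein's or sub-Gaussian bounds would produce only $\exp(-c/\delta)$, a constant in $n$, since the jump bound $n\delta$ grows linearly with $n$; the lemma however demands decay faster than every polynomial. Bennett's inequality is the right tool precisely because it supplies the extra logarithmic factor $\log(au/v_n)\asymp\log n$, which upgrades the constant $\epsilon/\delta$ into the polynomial exponent $(\epsilon/\delta)\log n$—this is the crux of the argument.
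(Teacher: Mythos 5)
Your approach is correct but genuinely different from the paper's. The paper proves the lemma by applying the L\'evy--It\^o decomposition and treating the three pieces with separate tools: Doob's inequality plus the Gaussian tail for the Brownian part, Burkholder--Davis--Gundy plus Minkowski for the compensated bounded-jump martingale $\bm L_2$ (extracting a $p$-th moment bound with $p>2\beta$), and, for the large-jump compound Poisson part $\bm L_3$, a combination of Cram\'er's theorem for the Poisson counting process and an external concentration result (Lemma~3.1 of \cite{wang2023large}) for sums of truncated regularly varying vectors. You instead observe that after the reduction to the truncated process $\bm L^{\leq n\delta}$ the entire (centered) object is a single L\'evy martingale with jumps bounded by $n\delta$, and you apply Bennett's maximal inequality directly via the exponential supermartingale. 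Your crucial observation---that Bernstein's sub-exponential regime only yields a bound that is constant in $n$, whereas Bennett's Poisson-tail regime supplies an extra $\log n$ factor that upgrades the exponent to $\kappa(\delta)\log n$ with $\kappa(\delta)\to\infty$ as $\delta\downarrow0$---is exactly the right mechanism, and it is presumably the same mechanism hidden inside the cited truncated-heavy-tail concentration lemma. What your version buys is self-containedness and unification (one inequality instead of three specialized arguments plus a cited lemma); what the paper's version buys is modularity and the ability to reuse the already-stated external result. One small technicality worth flagging: the tail bound $\nu(\{\|\bm x\|>u\})=O(u^{-\alpha_*})$ is not literally implied by regular variation in the presence of slowly varying factors; you need Potter's bounds, i.e.\ $O(u^{-\alpha_*+\eta})$ for any $\eta>0$, but since $\alpha_*>1$ this is harmless---you may choose $\eta$ small enough that all subsequent estimates survive.
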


\begin{proof}\linksinpf{lemma: concentration of small jump process, Ld for Levy MRV}
Without loss of generality, we work with the assumption that $\mu_{\bm L} = \bm 0$ and $T = 1$.
Recall the L\'evy–Itô decomposition stated in \eqref{prelim: levy ito decomp}:
\begin{align*}
    \bm L(t)
    & \distequal
    \underbrace{ \bm \Sigma_{\bm L}^{1/2} \bm B(t) }_{ \delequal \bm L_1(t)  }
    +
    \underbrace{ \int_{\norm{\bm x} \leq 1}\bm x\big[ \text{PRM}_{\nu}([0,t]\times d\bm x) - t\nu(dx) \big]  }_{ \delequal \bm L_2(t) }
    +
    \underbrace{ \bm c_{\bm L}t + \int_{\norm{\bm x} > 1}\bm x\text{PRM}_{\nu}( [0,t]\times d\bm x ) }_{ \delequal \bm L_3(t) },
\end{align*}
where 
$\nu$ is the L\'evy measure supported on $\R^d_+ \setminus \{\bm 0\}$ 
satisfying $\int_{ \bm x \in \R^d_+ \setminus\{\bm 0\}  } (\norm{\bm x}^2\wedge 1) \nu(d\bm x) < \infty$.
By definitions in \eqref{def: large jump time tau for Levy process L}, $\tau^{>\delta}_n(1)$ is the arrival time of the first discontinuity in $\bar{\bm L}_n(t)$ with norm larger than $\delta$.
Therefore, after the $1/n$ time-scaling, $n\tau^{>\delta}_n(1)$ is the arrival time of the first discontinuity in $\bm L(t)$ larger than $n\delta$. 
Then, it suffices to show that (as $n \to \infty$)
\begin{align}
    \P\Big( \sup_{ t \in [0,n]  }\norm{ \bm L_1(t)  } > n\epsilon/3  \Big) & = \lo(n^{-\beta}),
    \label{proof, goal 1, lemma: concentration of small jump process, Ld for Levy MRV}
    \\
    \P\Big( \sup_{ t \in [0,n]  }\norm{ \bm L_2(t)  } > n\epsilon/3  \Big) & = \lo(n^{-\beta}),
    \label{proof, goal 2, lemma: concentration of small jump process, Ld for Levy MRV}
    \\ 
    \P\bigg( \sup_{ t \in [0,n]:\ t < n\tau^{>\delta}_n(1)  }\norm{ \bm L_3(t)  } > n\epsilon/3  \bigg) & = \lo(n^{-\beta}),
    \qquad
    \forall \delta > 0\text{ small enough}.
    \label{proof, goal 3, lemma: concentration of small jump process, Ld for Levy MRV}
\end{align}

\medskip
\noindent
\textbf{Proof of Claim \eqref{proof, goal 1, lemma: concentration of small jump process, Ld for Levy MRV}}.
We use $\sigma_{i,j}^{1/2}$ to denote the element in the $i^\text{th}$ row and $j^\text{th}$ column of the  positive semi-definite matrix $\bm \Sigma_{\bm L}^{1/2} \in \R^{d \times d}$,
and write $\bm B(t) = \big(B_1(t),\ldots,B_d(t)\big)^\text{T}$.
Let $C \delequal \max_{i,j \in [d]}| \sigma_{i,j}^{1/2} |$.
Suppose that 
$
\sup_{t \in [0,n]}|B_j(t)| \leq \frac{n\epsilon}{3Cd^2}
$
holds for each $j \in [d]$.
Then, under the $L_1$ norm,
\begin{align*}
    \sup_{ t \in [0,n]  }\norm{ \bm L_1(t)  } 
    & \leq 
    \sum_{ i \in [d]}\sum_{j \in [d]}|\sigma^{1/2}_{i,j}| \cdot \sup_{ t\in [0,n] }|B_j(t)|
    \leq 
    \sum_{j \in [d]}\sup_{ t\in [0,n] }|B_j(t)| \cdot Cd
    \leq d \cdot \frac{n\epsilon}{3Cd^2} \cdot Cd \leq \epsilon/3.
\end{align*}
Therefore, it suffices to show that
$
\P\big( \sup_{t \in [0,n]}|B(t)| > \frac{ n\epsilon  }{3Cd^2} \big) =  \lo(n^{-\beta})
$
holds for a standard Brownian motion $B(t)$ in $\R^1$.
By Doob's maximal inequality and the MGF of $B(t)$, we get
\begin{align*}
    \P\bigg( \sup_{t \in [0,n]}|B(t)| > \frac{ n\epsilon  }{3Cd^2} \bigg)
    & \leq 
    2 \cdot \exp\bigg[ -\frac{1}{2n} \cdot \bigg( \frac{n\epsilon}{3Cd^2}\bigg)^2 \bigg]
    =
    2 \cdot \exp\bigg(
        - n \cdot \frac{\epsilon^2}{18C^2d^4}
    \bigg)
    =  \lo(n^{-\beta}),
\end{align*}
and conclude the proof of Claim \eqref{proof, goal 1, lemma: concentration of small jump process, Ld for Levy MRV}.

\medskip
\noindent
\textbf{Proof of Claim \eqref{proof, goal 2, lemma: concentration of small jump process, Ld for Levy MRV}}.
Note that each coordinate of $\bm L_2(t) = (L_{2,1}(t),\ldots,L_{2,d}(t))^\top$ is a L\'evy process with bounded jumps and also a martingale.
Therefore, each $L_{2,j}(t)$ has finite moments of any order;
see Theorem 34 in Chapter I of \cite{protter2005stochastic}.
Meanwhile, by 
Burkholder-Davis-Gundy inequalities (see Theorem 48 in Chapter IV of \cite{protter2005stochastic}),
for each $p \in [1,\infty)$, there exists some constant $c_p \in (0,\infty)$ (whose value does not vary with the law of $\bm L$) such that 
 \begin{align*}
     \E \bigg[ \sup_{t \in [0,n]}|L_{2,j}(t)|^p \bigg]
     \leq 
     c_p \E\bigg[  \Big( [L_{2,j},L_{2,j}](n) \Big)^{p/2}  \bigg],
     \qquad
     \forall j \in [d],\ n \geq 1,
 \end{align*}
 where $V_j(t) \delequal [L_{2,j},L_{2,j}](t)$ is the quadratic variation process of $L_{2,j}(t)$.
Due to the independent and stationary increments of L\'evy processes $L_{2,j}(t)$,
the quadratic variation $V_j(t) = [L_{2,j},L_{2,j}](t)$ also has stationary and independent increments. 
Then, by the Minkowski inequality,
for each $p \in [1,\infty)$ we have
\begin{align*}
     \Bigg( \E \bigg[ \sup_{t \in [0,n]}|L_{2,j}(t)|^p \bigg] \Bigg)^{2/p}
     & \leq 
     (c_p)^{2/p} \cdot 
     \Bigg( \E\bigg[  \Big( [L_{2,j},L_{2,j}](n) \Big)^{p/2}  \bigg] \Bigg)^{2/p}
     \\
     &
     \leq 
     (c_p)^{2/p} \cdot n \cdot \Big(\underbrace{ \E \big[ \big(V_j(1)\big)^{p/2}\big]}_{ \delequal v_{p,j} < \infty }\Big)^{2/p}.
\end{align*}
As a result, for each $j \in [d]$, $p \in [1,\infty)$, and $n \geq 1$,
\begin{align*}
    \P\bigg( \sup_{t \in [0,n]}|L_{2,j}(t)| > \frac{ n\epsilon  }{3d} \bigg)
    & \leq 
    \bigg( \frac{
        3d
    }{
        \epsilon
    }\bigg)^p \cdot 
    \frac{
         \E \big[ \sup_{t \in [0,n]}|L_{2,j}(t)|^p \big]
    }{ n^p}
    \\ 
    &
    \leq 
    \bigg( \frac{
        3d
    }{
        \epsilon
    }\bigg)^p \cdot 
    \frac{
        n^{p/2} \cdot c_p v_{p,j}
    }{
        n^p
    }
    =
    \bigg( \frac{
        3d
    }{
        \epsilon
    }\bigg)^p \cdot 
    \frac{
        c_p v_{p,j}
    }{
        n^{p/2}
    }.
\end{align*}
Picking $p > 2\beta$,
we conclude the proof of Claim~\eqref{proof, goal 2, lemma: concentration of small jump process, Ld for Levy MRV}.

\medskip
\noindent
\textbf{Proof of Claim \eqref{proof, goal 3, lemma: concentration of small jump process, Ld for Levy MRV}}.
Note that $\bm L_3(t)$ is a compound Poisson process with drift, i.e., 
$
\bm L_3(t) =
    \bm c_{\bm L} t + \sum_{ i  = 1  }^{N(t)}\bm Z_n,
$
where $N(t)$ is a Poisson process with rate $\lambda = \nu\big( \{ \norm{\bm x} \in \R^d_+:\ \norm{\bm x} > 1   \} \big)$,
and the 
$\bm Z_n$'s are i.i.d.\ copies under the law
\begin{align*}
    \P(\bm Z \in \ \cdot \ ) = \frac{
        \nu\big( \ \cdot \ \cap \{ \norm{\bm x} \in \R^d_+:\ \norm{\bm x} > 1   \} \big)
    }{
         \nu\{ \norm{\bm x} \in \R^d_+:\ \norm{\bm x} > 1   \}
    }.
\end{align*}
In particular, under our running assumption $\mu_{\bm L} = \bm 0$,
by \eqref{def: expectation of levy process}
we  have $\bm c_{\bm L} = -\lambda \E\bm Z$ for the linear drift, and hence
\begin{align}
    \bm L_3(t) =
    \sum_{ i  = 1  }^{N(t)}\bm Z_n - t\cdot \lambda\E\bm Z.
    \nonumber
\end{align}
Also, by Assumption~\ref{assumption: ARMV in levy process} and the $\MRV$ condition in Definition~\ref{def: MRV},
$\P(\norm{\bm Z} > n) \in \RV_{ -\alpha(\{i^*\}) }(n)$ with $\alpha(\{i^*\}) > 1$,
where $i^* \delequal \arg\min_{ i \in [d]  }\alpha(\{i\})$.
This confirms that $\norm{\E\bm Z} < \infty$ and $\norm{\bm Z}$ has regularly varying law with tail index larger than 1.

To proceed, let $j^* = j^*(n,\delta)$ be the index of the first $\bm Z_j$ with $\norm{\bm Z_j} > n\delta$,
and $\tau^* = \tau^*(n,\delta)$ be its arrival time in the compound Poisson process $\sum_{ i  = 1  }^{N(t)}\bm Z_n$.
Given $\hat \epsilon > 0$, on the event 
$
\big\{
        \sup_{ t \in [0,n] }| N(t) - \lambda t | \leq n\hat\epsilon
    \big\},
$
observe that 
\begin{align*}
    \sup_{ t \in [0,n]:\ t < n\tau^{>\delta}_n(1)  }\norm{ \bm L_3(t)  }
    & = 
     \sup_{ t \in [0,n]:\ t < \tau^*  }
     \norm{
            \sum_{ i  = 1  }^{N(t)}\bm Z_n - t\cdot \lambda\E\bm Z
     }
     \\ 
     & \leq 
     \sup_{ t \in [0,n]:\ t < \tau^*  }
     \norm{
            \sum_{ i  = 1  }^{N(t)}\bm Z_n - N(t)\cdot \E\bm Z
     }
     +
     \sup_{ t \in [0,n]:\ t < \tau^*  }| N(t) - \lambda t  | \cdot \norm{\E\bm Z}
     \\ 
     & \leq 
     \max_{ j \leq (\lambda + \hat\epsilon)\cdot n:\ j < j^*  }
     \norm{
        \sum_{i = 1}^j \bm Z_i - i \cdot \E\bm Z
     }
     + n\hat\epsilon \cdot \norm{\E\bm Z}
     \\ 
     & \leq 
     \max_{j \leq (\lambda + \hat\epsilon)\cdot n}
     \norm{
        \sum_{i = 1}^j \bm Z_i \mathbbm{I}\{ \norm{\bm Z_i} \leq n\delta  \} - i \cdot \E\bm Z
     }
      + n\hat\epsilon \cdot \norm{\E\bm Z}.
\end{align*}
In particular, by picking $\hat\epsilon > 0$ small enough, we get $\hat\epsilon \cdot \norm{\E\bm Z} < \epsilon/6$.
Therefore, to prove Claim~\eqref{proof, goal 3, lemma: concentration of small jump process, Ld for Levy MRV},
it suffices to show that 
\begin{align}
    \P\bigg(
             \sup_{ t \in [0,n] }| N(t) - \lambda t | > n\hat\epsilon
        \bigg) & = \lo(n^{-\beta}),
        \label{proof, goal 3, subgoal 1, lemma: concentration of small jump process, Ld for Levy MRV}
        \\ 
    \P\Bigg(
         \max_{j \leq (\lambda + \hat\epsilon)\cdot n}
     \norm{
        \sum_{i = 1}^j \bm Z_i \mathbbm{I}\{ \norm{\bm Z_i} \leq n\delta  \} - i \cdot \E\bm Z
     }
      > n\epsilon/6
    \Bigg)
        & = \lo(n^{-\beta}),
        \label{proof, goal 3, subgoal 2, lemma: concentration of small jump process, Ld for Levy MRV}
    \quad\forall \delta > 0\text{ small enough}.
\end{align}
However, the bound \eqref{proof, goal 3, subgoal 1, lemma: concentration of small jump process, Ld for Levy MRV} follows from Doob's maximal inequality and Cramer's theorem,
and the bound \eqref{proof, goal 3, subgoal 2, lemma: concentration of small jump process, Ld for Levy MRV} 
follows from Lemma~3.1 in \cite{wang2023large} (i.e., concentration inequalities for truncated regularly varying vectors).
This concludes the proof of Claim~\eqref{proof, goal 3, lemma: concentration of small jump process, Ld for Levy MRV}.
\end{proof}

The next result justifies the use of $\hat{\bm L}^{>\delta}_n$ in \eqref{def: large jump approximation, LD for Levy MRV} as an approximator to $\bar{\bm L}_n$.

\begin{lemma}\label{lemma: large jump approximation, LD for Levy MRV}
\linksinthm{lemma: large jump approximation, LD for Levy MRV}
Let $k \in \mathbb N$ and $\epsilon > 0$.
For any $n \geq 1$ and $\delta > 0$,
it holds on event
\begin{align*}
   &  \big\{ \tau^{>\delta}_n(k) \leq 1 < \tau^{>\delta}_n(k+ 1)  \big\}
   \\ 
   &\qquad
    \cap 
    \Bigg(
        \bigcap_{ m = 1 }^{k+1}
        \underbrace{
        \bigg\{
            \sup_{ t \in [\tau^{>\delta}_n(m-1), 1]:\ t < \tau^{>\delta}_n(m)}
            \norm{
            \bar{\bm L}_n(t) - \bar{\bm L}_n\Big( \tau^{>\delta}_n(m-1) \Big)
            } \leq \frac{\epsilon}{k+1}
        \bigg\}
        }_{ \delequal A_n(m;\delta,k,\epsilon) }
    \Bigg)
\end{align*}
that
\begin{align*}
    \sup_{ t \in [0,1] }\norm{
        \hat{\bm L}^{>\delta}_n(t) - \bar{\bm L}_n(t)
    } \leq \epsilon.
\end{align*}
\end{lemma}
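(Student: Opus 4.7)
The plan is to argue by a direct, pathwise bookkeeping of the error process $E(t) \delequal \hat{\bm L}^{>\delta}_n(t) - \bar{\bm L}_n(t)$, exploiting two elementary features of the large-jump approximation. First, by \eqref{def: large jump approximation, LD for Levy MRV}, on the event $\{\tau^{>\delta}_n(k) \leq 1 < \tau^{>\delta}_n(k+1)\}$ the approximation $\hat{\bm L}^{>\delta}_n$ is piecewise constant on each of the $k+1$ inter-arrival intervals partitioning $[0,1]$, namely $[\tau^{>\delta}_n(m-1), \tau^{>\delta}_n(m))$ for $m = 1,\ldots,k$ and $[\tau^{>\delta}_n(k),1]$, with constant value $\sum_{j=1}^{m-1} \bm W^{>\delta}_n(j)$ on the $m$-th interval. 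Second, at each large-jump time $\tau^{>\delta}_n(m)$ with $m \leq k$, the jump of $\hat{\bm L}^{>\delta}_n$ is, by construction, exactly $\bm W^{>\delta}_n(m) = \Delta \bar{\bm L}_n(\tau^{>\delta}_n(m))$, which is also the jump of $\bar{\bm L}_n$ at that instant; hence $\Delta E(\tau^{>\delta}_n(m)) = \bm 0$, i.e., the error process is continuous through each large jump.

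Next I would bound $\|E(t)\|$ inductively across the $k+1$ inter-arrival intervals. Writing $\tau_m \delequal \tau^{>\delta}_n(m)$ for brevity and setting $\tau_0 \delequal 0$, on the $m$-th interval the approximation is constant, so
\begin{align*}
    E(t) - E(\tau_{m-1}) = \bar{\bm L}_n(\tau_{m-1}) - \bar{\bm L}_n(t),
    \qquad t \in [\tau_{m-1}, \tau_m) \cap [0,1].
\end{align*}
By the hypothesis $A_n(m;\delta,k,\epsilon)$, the right-hand side has norm at most $\epsilon/(k+1)$. Combined with the continuity $E(\tau_m-) = E(\tau_m)$ at each large jump, this yields the recursion $\sup_{t \in [\tau_{m-1},\tau_m \wedge 1]} \|E(t)\| \leq \|E(\tau_{m-1})\| + \epsilon/(k+1)$. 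Since $E(0) = \bm 0$, iterating $k+1$ times gives $\sup_{t \in [0,1]} \|E(t)\| \leq (k+1) \cdot \epsilon/(k+1) = \epsilon$, which is the claim.

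There is no real obstacle here—the statement is essentially a tautology once one notices the matching-jump property and the telescoping structure. The only minor points requiring care are (i) treating the final segment $[\tau_k, 1]$, which by $\tau_{k+1} > 1$ is entirely covered by $A_n(k+1;\delta,k,\epsilon)$, and (ii) correctly evaluating $\hat{\bm L}^{>\delta}_n$ and $\bar{\bm L}_n$ at the jump times themselves (both are right-continuous and jump by $\bm W^{>\delta}_n(m)$), so that the error truly does not jump. With these minor clarifications the argument is complete in a few lines.
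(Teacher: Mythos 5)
Your argument is correct and is essentially the same telescoping construction the paper uses: the paper first records an abstract inequality splitting $\sup_{[0,1]}\|\xi-\tilde\xi\|$ over the inter-arrival intervals and then specializes to $\xi=\bar{\bm L}_n$, $\tilde\xi=\hat{\bm L}^{>\delta}_n$, while you run the same induction directly on the error process $E(t)$, using the two identical facts that $\hat{\bm L}^{>\delta}_n$ is constant between large jumps and that the jumps of $\hat{\bm L}^{>\delta}_n$ and $\bar{\bm L}_n$ coincide at each $\tau^{>\delta}_n(m)$. The bookkeeping (final segment handled by $\tau^{>\delta}_n(k+1)>1$, continuity of $E$ through each large jump) matches the paper's.
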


\begin{proof}
\linksinpf{lemma: large jump approximation, LD for Levy MRV}
For any $\xi,\tilde \xi \in \D$ and any $0\leq u< v \leq 1$, observe the elementary bound
\begin{align*}
    & \sup_{t \in [0,v]}\norm{\xi(t) - \tilde \xi(t)}
    \\
    & \leq 
    \sup_{ t \in [0,u]}
    \norm{\xi(t) - \tilde \xi(t)}
    +
    \sup_{ t \in [u,v) }
    \norm{
    \Big(\xi(t) - \xi(u) \Big) - \Big(\tilde \xi(t) - \tilde \xi(u)\Big)
    }
    +
    \norm{
    \Delta \xi(v) - \Delta \tilde \xi(v)
    }.
\end{align*}
Furthermore, applying this bound inductively, it holds for any $0 < t_1 < t_2 < \ldots < t_k \leq 1$ that (under the convention that $t_0 = 0$ and $t_{k+1} = 1$)
\begin{align*}
    & \sup_{t \in [0,1]}\norm{\xi(t) - \tilde \xi(t)}
    \\ 
    & \leq 
    \sum_{m = 1  }^{k+1}
    \sup_{ t \in [t_{m-1},t_m) }
    \norm{
    \Big(\xi(t) - \xi(t_{m-1}) \Big) - \Big(\tilde \xi(t) - \tilde \xi(t_{m-1})\Big)
    }
    +
    \norm{
    \Delta \xi(t_m) - \Delta \tilde \xi(t_{m})
    }.
\end{align*}
Now, on event 
$
\big\{ \tau^{>\delta}_n(k) \leq 1 < \tau^{>\delta}_n(k+ 1)  \big\}
\cap 
\big(
    \bigcap_{m = 1}^{k+1}A_n(m;\delta,k,\epsilon)
\big),
$
we apply this bound with $\xi = \bar{\bm L}_n$, $\tilde \xi = \hat{\bm L}^{>\delta}_n$, and 
$
t_m = \tau^{>\delta}_n(m).
$
In particular, by \eqref{def: large jump approximation, LD for Levy MRV},
we have
$
\sup_{ t \in [t_{m-1},t_m) }
    \norm{
    \tilde \xi(t) - \tilde \xi(t_{m-1})
    } = 0
$
and
$
\norm{
    \Delta \xi(t_m) - \Delta \tilde \xi(t_{m})
    } = 0.
$
The claims then follow directly from the condition in the event $A_n(m;\delta,k,\epsilon)$.
\end{proof}


In Lemma~\ref{lemma: asymptotic law for large jumps, LD for Levy MRV}, we develop useful asymptotics for ${\tau^{>\delta}_{ n }(k; \bm j)}$ and ${\bm W^{>\delta}_{ n }(k; \bm j)}$.
Specifically, given $\delta > 0$, $n \geq 1$, 
we define the event
\begin{align}
    \widetilde E^{>\delta}_n \delequal \big\{ \widetilde K^{>\delta}_n = 0  \big\}.
\end{align}
Furthermore, given
$\mathcal K = (\mathcal K_{\bm j})_{ \bm j \in \powersetTilde{d} } \in \mathbb Z_+^{\powersetTilde{d}}$, we define
\begin{align}
    \notationdef{notation-event-E->delta-n-c-j}{E^{>\delta}_n(\mathcal K)} \delequal 
    \big\{
        \bm K^{>\delta}_n = \mathcal K
    \big\}
    \cap \widetilde E^{>\delta}_n
    =
    \Bigg( \bigcap_{ \bm j \in \powersetTilde{d}  }
    \underbrace{ \Big\{
        K^{>\delta}_n(\bm j) = \mathcal K_{\bm j}
    \Big\}
    }_{ \delequal E^{>\delta}_{n;\bm j}(\mathcal K_{\bm j})   }
    \Bigg) \cap 
    \big\{ \widetilde K^{>\delta}_n = 0  \big\}.
    \label{def: event-E->delta-n-c-j}
\end{align}
Given $\bm j \in \powersetTilde{d}$ and $c > 0$,
let $\big(\bm W_{*}^{(c)}(k;\bm j)\big)_{k \geq 1}$
be sequences that are independent across $\bm j \in \powersetTilde{d}$,
where each $\bm W_{*}^{(c)}(k;\bm j)$
is an i.i.d.\ copy of
$
\notationdef{notation-W-*-c-type-i-LD-for-Levy-MRV}{\bm W_{*}^{(c)}(\bm j)}
$
with law
\begin{align}
    \P\Big(\bm W_{*}^{(c)}(\bm j) \in \ \cdot\ \Big)
    \delequal 
    \mathbf C_{\bm j}\Big(\ \cdot\ \cap
        \bar \R^{>c}(\bm j)
    \Big)\Big/
     \mathbf C_{\bm j}\Big( \bar \R^{>c}(\bm j) \Big),
     \label{def: limiting law of large jumps, LD for levy, MRV}
\end{align}
where the $\mathbf C_{\bm j}$'s are the limiting measures in the $\MRV$ condition of Assumption~\ref{assumption: ARMV in levy process},
and the $\bar \R^{>c}(\bm j)$'s are defined in \eqref{def: tilde R geq delta, LD for Levy}.
Besides, let the $U_{\bm j, k}$'s be i.i.d.\ copies of Unif$(0,1)$,
and, for each $k \geq 1$ and $ \bm j \in \powersetTilde{d}$,
let 
$
U_{\bm j,(1:k)} \leq U_{\bm j,(2:k)} \leq \ldots \leq U_{\bm j,(k:k)}
$
be the order statistics of $(U_{\bm j,q})_{q \in [k]}$.

\begin{lemma}\label{lemma: asymptotic law for large jumps, LD for Levy MRV}
\linksinthm{lemma: asymptotic law for large jumps, LD for Levy MRV}
    Let Assumptions~\ref{assumption: ARMV in levy process} and \ref{assumption: WLOG, LD for Levy MRV} hold.
    Let 
    $\mathcal K = (\mathcal K_{\bm j})_{ \bm j \in \powersetTilde{d} } \in \mathbb Z_+^{\powersetTilde{d}}$,
    and let
    $\bm k \in \mathbb Z^d_+$ satisfy \eqref{def, assignment of k jump set} (i.e., $\mathcal K$ is an allocation of $\bm k$).
    First,
    \begin{align}
        \lim_{n \to \infty}
        n^{\gamma}\cdot 
        \P\Big( \big( \widetilde E^{>\delta}_n \big)^\complement\Big) = 0,
        \qquad\forall \gamma > 0,\ \delta > 0.
        \label{claim, probability of event tilde E, lemma: asymptotic law for large jumps, LD for Levy MRV}
    \end{align}
    Next,
    for any $\delta > 0$
    if Claim \eqref{proof: choice of c, bounded away set from cone i, Ld for Levy MRV} holds, then
    \begin{align}
        \lim_{n \to \infty}
        \frac{
           \P\big( E^{>\delta}_n(\mathcal K) \big)
        }{
            \breve \lambda_{\bm k}(n)
        }
        =
        \prod_{ \bm j \in \powersetTilde{d} }
            \frac{1}{ \mathcal K_{\bm j} !}\cdot 
            \Big( 
                \mathbf C_{\bm j}\big( \bar \R^{>\delta}(\bm j) \big)
            \Big)^{ \mathcal K_{\bm j}},
            \label{claim, probability of event E, lemma: asymptotic law for large jumps, LD for Levy MRV}
    \end{align}
    where
    $
    \breve \lambda_{\bm j}(\cdot)
    $
    is defined in \eqref{def: scale function for Levy LD MRV}.
    Furthermore,
    if $\mathcal K \neq \bm 0$, then
    \begin{align}
        & \mathscr L
        \Bigg(
        \Big(
            \tau^{>\delta}_n(k;\bm j),\bm W ^{>\delta}_n(k;\bm j)
            \Big)_{\bm j \in \powersetTilde{d},\ k \in [\mathcal K_{\bm j}]}
        \Bigg|\  E^{>\delta}_n(\mathcal K)
        \Bigg)
        \rightarrow
        \mathscr L
        \Bigg(
        \Big(
            U_{\bm j,(k:\mathcal K_{\bm j})}, \bm W^{(\delta)}_*(k;\bm j)
        \Big)_{\bm j \in \powersetTilde{d},\ k \in [\mathcal K_{\bm j}]}
        \Bigg)
         \label{claim, weak convergence of conditional law, lemma: asymptotic law for large jumps, LD for Levy MRV}
    \end{align}
    as $n \to \infty$
    in terms of weak convergence.
\end{lemma}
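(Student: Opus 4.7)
\medskip

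\noindent\textbf{Proof Plan.} The plan is to exploit the Poisson random measure (PRM) structure of the jumps of $\bm L$. By the L\'evy--It\^o decomposition \eqref{prelim: levy ito decomp}, the jumps of $\bm L$ of size larger than $n\delta$ over the time horizon $[0,n]$ form a Poisson random measure on $[0,n] \times \{\bm x \in \R^d_+ : \norm{\bm x} > n\delta\}$ with intensity $\mathcal L_{[0,n]} \times \nu$. Rescaling time by $1/n$ and space by $1/n$, the discontinuities of $\bar{\bm L}_n$ of size larger than $\delta$ over $[0,1]$ form a PRM on $[0,1] \times \{\bm x : \norm{\bm x} > \delta\}$ with intensity $n \cdot \mathcal L_{[0,1]} \times \nu_n$, where $\nu_n(\cdot) = \nu(n\,\cdot\,)$. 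Under this viewpoint, all three claims reduce to standard facts about PRMs combined with the $\MRV^*$ tail asymptotics in Assumption~\ref{assumption: ARMV in levy process}.

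For Claim~\eqref{claim, probability of event tilde E, lemma: asymptotic law for large jumps, LD for Levy MRV}, observe that $(\widetilde E^{>\delta}_n)^\complement$ occurs iff at least one jump of $\bar{\bm L}_n$ over $[0,1]$ has size lying in $A_\delta \delequal \{\bm x \in \R^d_+ : \norm{\bm x} > \delta,\ \bm x \notin \bar\R^d([d], \delta)\}$, so a union bound gives $\P((\widetilde E^{>\delta}_n)^\complement) \leq n \nu_n(A_\delta)$. Since $A_\delta$ is bounded away from $\bar\R^d([d], \delta/2)$, condition~\eqref{cond, outside of the full cone, finite index, def: MRV} of the $\MRV^*$ formalism gives $\nu_n(A_\delta) = \lo(n^{-\gamma})$ for every $\gamma > 0$, which delivers the super-polynomial decay.

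For Claim~\eqref{claim, probability of event E, lemma: asymptotic law for large jumps, LD for Levy MRV}, the key structural observation, established in the preamble of this subsection, is that the cones $\bar\R^{>\delta}(\bm j)$, $\bm j \in \powersetTilde{d}$, partition $\{\bm x \in \R^d_+ : \norm{\bm x} > \delta\} \cap \bar\R^d([d], \delta)$ into pairwise disjoint pieces. Consequently the counts $K^{>\delta}_n(\bm j)$ are mutually independent Poisson random variables with means $n\nu_n(\bar\R^{>\delta}(\bm j))$, and they are jointly independent of the event $\widetilde E^{>\delta}_n$ as well. Because each $\bar\R^{>\delta}(\bm j)$ is bounded away from $\bar\R^d_\leqslant(\bm j, \epsilon')$ for some $\epsilon' > 0$ and Claim~\eqref{proof: choice of c, bounded away set from cone i, Ld for Levy MRV} forces the boundary of $\bar\R^{>\delta}(\bm j)$ to have zero $\mathbf C_{\bm j}$-mass, the $\MRV$ condition yields $\nu_n(\bar\R^{>\delta}(\bm j))/\lambda_{\bm j}(n) \to \mathbf C_{\bm j}(\bar\R^{>\delta}(\bm j))$. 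Since $n\nu_n(\bar\R^{>\delta}(\bm j)) \to 0$, the Poisson point-mass is dominated by its leading term, and taking the product over $\bm j$ gives
\begin{align*}
    \P(E^{>\delta}_n(\mathcal K)) \sim \P(\widetilde E^{>\delta}_n) \prod_{\bm j \in \powersetTilde{d}} \frac{(n\nu_n(\bar\R^{>\delta}(\bm j)))^{\mathcal K_{\bm j}}}{\mathcal K_{\bm j}!} \sim \prod_{\bm j \in \powersetTilde{d}} \frac{(n\lambda_{\bm j}(n))^{\mathcal K_{\bm j}}}{\mathcal K_{\bm j}!} \bigl(\mathbf C_{\bm j}(\bar\R^{>\delta}(\bm j))\bigr)^{\mathcal K_{\bm j}}.
\end{align*}
Identity~\eqref{property, rate function for assignment mathcal K, LD for Levy} rewrites $\prod_{\bm j}(n\lambda_{\bm j}(n))^{\mathcal K_{\bm j}}$ as $\breve\lambda_{\bm k}(n)$ whenever $\mathcal K \in \mathbb A(\bm k)$, completing the claim.

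For the weak convergence in Claim~\eqref{claim, weak convergence of conditional law, lemma: asymptotic law for large jumps, LD for Levy MRV}, the classical order-statistics property of PRMs says that conditional on $K^{>\delta}_n(\bm j) = \mathcal K_{\bm j}$ (and on $\widetilde E^{>\delta}_n$), the arrival times $\tau^{>\delta}_n(k;\bm j)$ for $k \in [\mathcal K_{\bm j}]$ are distributed as the order statistics of $\mathcal K_{\bm j}$ i.i.d.\ $\mathrm{Unif}(0,1)$ variables, the marks $\bm W^{>\delta}_n(k;\bm j)$ are i.i.d.\ from the normalized distribution $\nu_n(\cdot \cap \bar\R^{>\delta}(\bm j))/\nu_n(\bar\R^{>\delta}(\bm j))$, and all of this is jointly independent across $\bm j$. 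The $\MRV$ condition, combined with the zero boundary mass from Lemma~\ref{lemma: zero mass for boundary sets, LD for Levy}, shows that this normalized mark distribution converges weakly to $\mathbf C_{\bm j}(\cdot \cap \bar\R^{>\delta}(\bm j))/\mathbf C_{\bm j}(\bar\R^{>\delta}(\bm j))$, which is precisely $\mathscr L(\bm W^{(\delta)}_*(\bm j))$ by definition~\eqref{def: limiting law of large jumps, LD for levy, MRV}; this is the main technical step and its difficulty lies in upgrading $\MRV$-convergence on sets bounded away from $\bar\R^d_\leqslant(\bm j, \epsilon')$ to weak convergence of the normalized probability measures. Portmanteau on each cone and the joint independence inherited from the PRM then assemble the marginal convergences into the claimed joint limit.
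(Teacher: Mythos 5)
Your proposal follows the same route as the paper's proof: a Markov bound on the outside-cone Poisson count plus condition~\eqref{cond, outside of the full cone, finite index, def: MRV} for the super-polynomial decay; independent Poisson counts over the disjoint cones $\bar\R^{>\delta}(\bm j)$, the $\MRV$ rate convergence $\nu_n(\bar\R^{>\delta}(\bm j))/\lambda_{\bm j}(n)\to\mathbf C_{\bm j}(\bar\R^{>\delta}(\bm j))$ (using \eqref{proof: choice of c, bounded away set from cone i, Ld for Levy MRV}), and identity~\eqref{property, rate function for assignment mathcal K, LD for Levy} for claim~\eqref{claim, probability of event E, lemma: asymptotic law for large jumps, LD for Levy MRV}; and the compound-Poisson order-statistics/i.i.d.-marks decomposition plus Portmanteau for the conditional weak convergence. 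The one worthwhile detail is that your explicit $\norm{\bm x} > \delta$ cutoff in $A_\delta$ is what makes $A_\delta$ bounded away from $\bar\R^d([d],\delta/2)$ (whereas the scale-invariant set $\R^d_+\setminus\bar\R^d([d],\delta)$ is not), which is exactly what is needed to invoke~\eqref{cond, outside of the full cone, finite index, def: MRV}.
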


\begin{proof}\linksinpf{lemma: asymptotic law for large jumps, LD for Levy MRV}
Recall the definition of $\nu_n(A) = \nu\{n \bm x:\ \bm x \in A\}$.
First, by the law of the (scaled) L\'evy process $\bar{\bm L}_n(t) = \frac{1}{n}\bm L(nt)$ in \eqref{prelim: levy ito decomp} and the definitions in \eqref{def: tilde tau n delta k, big jump outside of full cone}--\eqref{def: tilde K n delta k, big jump outside of full cone},
\begin{align*}
     \P\Big( \big( \widetilde E^{>\delta}_n \big)^\complement\Big)
     & = 
     \P\Bigg(
        \text{Poisson}\Big(
            n\cdot \nu_n\big( \R^d_+ \setminus \bar \R^{d}([d],\delta) \big)
        \Big) \geq 1
    \Bigg)
    \\ 
    & \leq 
    \E\Bigg[
        \text{Poisson}\Big(
            n\cdot \nu_n\big( \R^d_+ \setminus \bar \R^{d}([d],\delta) \big)
        \Big)
    \Bigg]
    =
    n\cdot \nu_n\bigg( \Big( \bar \R^{d}([d],\delta) \Big)^\complement \bigg)
    \ 
    \text{by Markov's inequality}.
\end{align*}
By the $\MRV$ condition in Assumption~\ref{assumption: ARMV in levy process} (in particular, Claim~\eqref{cond, outside of the full cone, finite index, def: MRV} in Definition~\ref{def: MRV}),
we verify Claim~\eqref{claim, probability of event tilde E, lemma: asymptotic law for large jumps, LD for Levy MRV}.

Next,
by the independence when splitting the Poisson random measure $\text{PRM}_{\nu}$ in \eqref{prelim: levy ito decomp},
$$
\P\Big( E^{>\delta}_n(\mathcal K) \Big)
=
\Bigg( \prod_{ \bm j \in \powersetTilde{d} }\P\Big( E^{>\delta}_{n;\bm j}(\mathcal K_{\bm j}) \Big) \Bigg)
\cdot \P\big(\widetilde E^{>\delta}_n\big).
$$
On the one hand, Claim~\eqref{claim, probability of event tilde E, lemma: asymptotic law for large jumps, LD for Levy MRV} implies that 
$
\lim_{n \to \infty}\P\big(\widetilde E^{>\delta}_n\big) = 1.
$
On the other hand, in light of property \eqref{property, rate function for assignment mathcal K, LD for Levy}, to prove \eqref{claim, probability of event E, lemma: asymptotic law for large jumps, LD for Levy MRV} it suffices to show that for any $\bm j \in \powersetTilde{d}$ and $k \geq 0$,
\begin{align}
    \lim_{n \to \infty}
    \frac{
        \P\big( E^{>\delta}_{n;\bm j}(k) \big)
    }{
        \big( n \lambda_{\bm j}(n)\big)^{k}
    }
    =
    \frac{1}{k!} \cdot \Big(\mathbf C_{\bm j}\big( \bar \R^{>\delta}(\bm j) \big)\Big)^{k}.
    \label{proof, goal for prob of event E, lemma: asymptotic law for large jumps, LD for Levy MRV}
\end{align}
By the law of the (scaled) L\'evy process $\bar{\bm L}_n(t) $ in \eqref{prelim: levy ito decomp} and the definitions in \eqref{def: large jump in cone i, tau > delta n k i, LD for Levy MRV}--\eqref{def: large jump size in cone i, tau > delta n k i, LD for Levy MRV},
\begin{align}
    \P\Big( E^{>\delta}_{n;\bm j}(k) \Big)
    & = 
    \P\Bigg(
        \text{Poisson}\Big(
            n\cdot \underbrace{ \nu_n\big( \bar\R^{>\delta}(\bm j) \big) }_{ \delequal \theta_n(\bm j,\delta)  }
        \Big) = k
    \Bigg)
    = 
    \exp\big( - n \theta_n(\bm j,\delta) \big)
    \cdot 
    \frac{
        \big(n \theta_n(\bm j,\delta)\big)^{k}
    }{
        k !
    }.
    \label{proof, calculation 1, prob of event E, lemma: asymptotic law for large jumps, LD for Levy MRV}
\end{align}
By the $\MRV$ condition in Assumption~\ref{assumption: ARMV in levy process},
\begin{align*}
    & \lim_{n \to \infty}
    \frac{
        \theta_n(\bm j,\delta)
    }{
        \lambda_{\bm j}(n)
    }
    =
    \mathbf C_{\bm j}\Big( \bar \R^{>\delta}(\bm j) \Big);
    \qquad
    \text{see \eqref{cond, finite index, def: MRV} and our choice of $\delta$ in \eqref{proof: choice of c, bounded away set from cone i, Ld for Levy MRV}}.
\end{align*}
This also implies $\theta_n(\bm j,\delta) = \bo\big( \lambda_{\bm j}(n) \big) = \lo(n)$ (due to $\lambda_{\bm j}(n) \in \RV_{ -\alpha(\bm j) }(n)$ with $\alpha(\bm j) > 1$),
and hence
$
\lim_{n \to \infty} \exp\big( - n \theta_n(\bm j,\delta) \big) = 1.
$
Plugging these limits back into \eqref{proof, calculation 1, prob of event E, lemma: asymptotic law for large jumps, LD for Levy MRV}, we conclude the proof of Claim~\eqref{proof, goal for prob of event E, lemma: asymptotic law for large jumps, LD for Levy MRV}.

Next, we prove the weak convergence stated in \eqref{claim, weak convergence of conditional law, lemma: asymptotic law for large jumps, LD for Levy MRV}.
Again, by the independence of the Poisson splitting,
it suffices to show that for any $\bm j \in \powersetTilde{d}$ and $K \geq 1$,
\begin{align}
& \mathscr L
        \bigg(
        \Big(
            \tau^{>\delta}_n(k;\bm j),\bm W ^{>\delta}_n(k;\bm j)
            \Big)_{k \in [K]}
        \bigg|\ 
            E^{>\delta}_{n;\bm j}(K)
        \bigg)
        \rightarrow
        \mathscr L
        \bigg(
            \Big(
            U_{\bm j,(k:K)}, \bm W^{(\delta)}_*(k;\bm j)
        \Big)_{ k \in [K]}
        \bigg).
    \label{proof: goal, weak convergence claim, lemma: asymptotic law for large jumps, LD for Levy MRV}
\end{align}
Next, by the law of a compound Poisson process,
the arrival times $\tau^{>\delta}_n(k;\bm j)$ are independent of the jump sizes  $\bm W^{>\delta}_n(k;\bm j)$,
which implies that the law of the jump size sequence $\big(\bm W^{>\delta}_n(k;\bm j)\big)_{k \geq 1}$  is independent of the event $E^{>\delta}_{n;\bm j}(K)$.
Furthermore,
the conditional law of
the $\tau^{>\delta}_n(k;\bm j)$'s---the sequence of arrival times of jumps in a compound Poisson process conditioning on the number of jumps---admits the form
\begin{align*}
    \mathscr L
    \bigg(
        \tau^{>\delta}_n(1;\bm j), \tau^{>\delta}_n(2;\bm j), \ldots, \tau^{>\delta}_n(K;\bm j)
        \ \bigg|\ E^{>\delta}_{n;\bm j}(K)
    \bigg)
    =
    \mathscr L
    \bigg(
        U_{\bm j,(1:K)},U_{\bm j,(2:K)},\ldots,U_{\bm j,(K:K)}
    \bigg).
\end{align*}
Therefore, it only remains to study the marginal law for $\bm W^{>\delta}_n(1;\bm j)$ (since all $\bm W^{>\delta}_n(k;\bm j)$'s admit the same law) and show that
\begin{align*}
    \P\Big( 
        \bm W^{>\delta}_n(1;\bm j)\in \ \cdot\ 
    \Big)
    \rightarrow
    \P\Big(\bm W^{(\delta)}_*(\bm j) \in \ \cdot \ \Big)
\end{align*}
in terms of weak convergence,
where the law of $\bm W^{(\delta)}_*(\bm j)$ is defined in \eqref{def: limiting law of large jumps, LD for levy, MRV}.
To this end, 
note that by the definitions in \eqref{def: large jump in cone i, tau > delta n k i, LD for Levy MRV}--\eqref{def: large jump size in cone i, tau > delta n k i, LD for Levy MRV},
\begin{align}
    \P\Big(
        \bm W^{>\delta}_n(1; \bm j) \in\ \cdot\ 
    \Big)
    = \frac{
        \nu_n\big( \bar \R^{>\delta}(\bm j) \cap\ \cdot\   \big)
    }{
        \nu_n\big( \bar \R^{>\delta}(\bm j) \big)
    },
    \qquad
    \forall \bm j \in \powersetTilde{d}.
    \label{proof: law of large jump in cone i, lemma: asymptotic law for large jumps, LD for Levy MRV}
\end{align}
Then, given a Borel set $A \subseteq \R^d_+$,
\begin{align*}
    & \limsup_{n \to \infty}
    \P\Big( 
        \bm W^{>\delta}_n(1;\bm j)\in A
    \Big)
    \\ 
    & = 
    \limsup_{n \to \infty}
    \frac{
        \nu_n\Big( A \cap \bar \R^{>\delta}(\bm j)\Big)
        \Big/ \lambda_{\bm j}(n)
    }{
        \nu_n\Big( \bar \R^{>\delta}(\bm j)\Big)
        \Big/ \lambda_{\bm j}(n)
    }
    \qquad
    \text{by \eqref{proof: law of large jump in cone i, lemma: asymptotic law for large jumps, LD for Levy MRV}}
    \\
    & \leq 
    \mathbf C_{\bm j}\bigg( \Big( A \cap \bar\R^{>\delta}(\bm j) \Big)^-  \bigg)
    \bigg/ 
    \mathbf C_{\bm j}\bigg( \Big( \bar \R^{>\delta}(\bm j)\Big)^\circ  \bigg)
    \qquad
    \text{by the $\MRV$ condition and \eqref{cond, finite index, def: MRV}}
    \\ 
    & \leq 
    \mathbf C_{\bm j}\Big( A^- \cap  \bar \R^{>\delta}(\bm j)  \Big)
    \Big/ 
    \mathbf C_{\bm j}\Big(  \bar \R^{>\delta}(\bm j) \Big)
    \qquad
    \text{because of $(A\cap B)^- \subseteq A^- \cap B^-$ and \eqref{proof: choice of c, bounded away set from cone i, Ld for Levy MRV} 
    }
    \\
    & = \P\Big(\bm W^{(\delta)}_*(\bm j) \in A^- \Big);
    \qquad\text{see \eqref{def: limiting law of large jumps, LD for levy, MRV}.}
\end{align*}
Using the Portmanteau Theorem, we conclude the proof.
\end{proof}

Given $\bm{\mathcal K} = (\mathcal K_{\bm j})_{\bm j \in \powersetTilde{d}} \in \mathbb Z_+^{ \powersetTilde{d}}$ and $\epsilon \geq 0$,
recall that any path $\xi \in \shortbarDepsk{\epsilon}{ \bm{\mathcal K}}$ admits the form
\begin{align}
    \xi(t) = \sum_{\bm j \in \powersetTilde{d} }\sum_{k = 1}^{ \mathcal K_{\bm j} }\bm w_{\bm j,k} \mathbbm{I}_{ [t_{\bm j,k},1] }(t),\qquad \forall t \in [0,1],
    \label{proof: path expression for elements in set D j, LD for levy MRV}
\end{align}
where $\bm w_{\bm j,k} \in \bar{\R}^d(\bm j, \epsilon)$ and $t_{\bm j,k} \in (0,1]$ for each $\bm j \in \powersetTilde{d}$ and $k \in [\mathcal K_{\bm j}]$, and any two elements in  the sequence $(t_{\bm j,k})_{ \bm j \in \powersetTilde{d},\ k \in [\mathcal K_{\bm j}] }$ will not coincide;
see \eqref{def: j jump path set with drft c, LD for Levy, MRV} and \eqref{expression for xi in breve D set}.
Besides, recall that in Definition~\ref{def: assignment, jump configuration k},
we use $\mathbb A(\bm k)$ to denote the set of all allocations of $\bm k \in \mathbb Z^d_+$.
We prepare the next two lemmas to collect useful properties of sets $ \shortbarDepsk{\epsilon}{\mathcal K}$ and measures $\breve{\mathbf C}_{\mathcal K}(\cdot)$ (see \eqref{def: measure C type j L, LD for Levy, MRV}).

\begin{lemma}\label{lemma: choice of bar epsilon and bar delta, LD for Levy MRV}
\linksinthm{lemma: choice of bar epsilon and bar delta, LD for Levy MRV}
    Let Assumption~\ref{assumption: ARMV in levy process} hold.
    Let $\bm k = (k_{j})_{j \in [d]} \in \mathbb Z_+^{d}$ with $\bm k \neq (0,\ldots,0)$, $\epsilon > 0$,
    and let $B$ be a Borel set of $\D$ equipped with Skorokhod $J_1$ topology.
    Suppose that $B$ is bounded away from $\shortbarDepsk{\epsilon}{\leqslant \bm k}$ under $\dj{}$ for some $\epsilon > 0$.
    Then, there exist $\bar\epsilon> 0$ and $\bar\delta > 0$ such that the following claims hold:
    \begin{enumerate}[(a)]
        \item
            $\dj{}\big( B^{\bar\epsilon}, \shortbarDepsk{\bar\epsilon}{\leqslant \bm k}\big)> \bar\epsilon$;
            
        \item
            Given any $\bm{\mathcal K} = (\mathcal K_{\bm j})_{ \bm j \in \powersetTilde{d} } \in \mathbb A(\bm k)$ and
            $\xi \in B^{\bar\epsilon} \cap \shortbarDepsk{\bar\epsilon}{ \bm{\mathcal K}}$,
            in the expression \eqref{proof: path expression for elements in set D j, LD for levy MRV} for $\xi$ we have
            \begin{align}
                \norm{\bm w_{\bm j, k}} > \bar\delta\text{ and }
                 \bm w_{\bm j,k}\notin \bar{\R}^d_\leqslant(\bm j,\bar\delta),
                \qquad \forall \bm j \in \powersetTilde{d},\ k \in [\mathcal K_{\bm j}],
                \label{claim, condition, part a, lemma: choice of bar epsilon and bar delta, LD for Levy MRV}
            \end{align}
            where the set $\bar\R^d_\leqslant(\bm j,\epsilon) = \bar\R^d_{\leqslant}(\bm j,\epsilon;\bar{\textbf S},\bm \alpha)$
            is defined in \eqref{def: cone R d i basis S index alpha}.

    \end{enumerate}
\end{lemma}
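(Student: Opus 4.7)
The plan is to establish part (a) via a monotonicity argument, and then to deduce part (b) from part (a) via a contradiction based on either deleting or re-labeling a problematic jump in $\xi$.

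For part (a), let $\eta \delequal \dj{}\bigl(B, \shortbarDepsk{\epsilon}{\leqslant \bm k}\bigr) > 0$, which is strictly positive by the bounded-away hypothesis. I first observe that the cones $\bar\R^d(\bm j, \epsilon')$ in \eqref{def: enlarged cone R d index i epsilon} are monotonically non-decreasing in $\epsilon'$, and therefore so are the jump-path sets $\shortbarDepsk{\epsilon'}{\bm{\mathcal K}}$ defined in \eqref{def: j jump path set with drft c, LD for Levy, MRV} and their union $\shortbarDepsk{\epsilon'}{\leqslant \bm k}$ in \eqref{def: path with costs less than j jump set, drift x, LD for Levy MRV}. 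Choosing any $\bar\epsilon \in (0, \min\{\epsilon, \eta/3\})$ then gives $\shortbarDepsk{\bar\epsilon}{\leqslant \bm k} \subseteq \shortbarDepsk{\epsilon}{\leqslant \bm k}$, and by the triangle inequality
\begin{equation*}
    \dj{}\bigl(B^{\bar\epsilon}, \shortbarDepsk{\bar\epsilon}{\leqslant \bm k}\bigr) \;\geq\; \dj{}\bigl(B, \shortbarDepsk{\bar\epsilon}{\leqslant \bm k}\bigr) - \bar\epsilon \;\geq\; \eta - \bar\epsilon \;>\; \bar\epsilon.
\end{equation*}

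For part (b), I plan to prove the contrapositive: I choose $\bar\delta \in (0, \bar\epsilon]$, fix $\bm{\mathcal K} \in \mathbb A(\bm k)$ and $\xi \in B^{\bar\epsilon} \cap \shortbarDepsk{\bar\epsilon}{\bm{\mathcal K}}$ in its canonical form \eqref{proof: path expression for elements in set D j, LD for levy MRV}, and suppose some jump $\bm w_{\bm j, k}$ violates \eqref{claim, condition, part a, lemma: choice of bar epsilon and bar delta, LD for Levy MRV}. There are two cases. In Case~1, $\norm{\bm w_{\bm j, k}} \leq \bar\delta$: let $\tilde\xi$ be obtained from $\xi$ by deleting the single summand at $t_{\bm j, k}$, so $\tilde\xi \in \shortbarDepsk{\bar\epsilon}{\bm{\mathcal K}'}$ with $\bm{\mathcal K}'$ agreeing with $\bm{\mathcal K}$ except that $\mathcal K'_{\bm j} = \mathcal K_{\bm j} - 1$. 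Using the identity time change in \eqref{def: J1 metric on [0,T]} shows $\dj{}(\xi, \tilde\xi) \leq \norm{\bm w_{\bm j, k}} \leq \bar\delta$. Since $\bm{\mathcal K}' \notin \mathbb A(\bm k)$ (as $\sum \mathcal K'_{\bm j''} \bm e(\bm j'') = \bm k - \bm e(\bm j) \neq \bm k$) and $\breve c(\bm{\mathcal K}') = c(\bm k) - (\alpha(\bm j) - 1) < c(\bm k)$ by \eqref{property, cost c under allocation} and the assumption $\alpha(\bm j) > 1$ in Assumption~\ref{assumption: ARMV in levy process}, one has $\tilde\xi \in \shortbarDepsk{\bar\epsilon}{\leqslant \bm k}$, contradicting part (a) as long as $\bar\delta \leq \bar\epsilon$.

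In Case~2, $\bm w_{\bm j, k} \in \bar\R^d_\leqslant(\bm j, \bar\delta)$: pick $\bm j' \neq \bm j$ with $\alpha(\bm j') \leq \alpha(\bm j)$ and $\bm w_{\bm j, k} \in \bar\R^d(\bm j', \bar\delta) \subseteq \bar\R^d(\bm j', \bar\epsilon)$. Now \emph{re-label} this jump as belonging to cone $\bm j'$: defining $\bm{\mathcal K}'$ by $\mathcal K'_{\bm j} = \mathcal K_{\bm j} - 1$, $\mathcal K'_{\bm j'} = \mathcal K_{\bm j'} + 1$, and $\mathcal K'_{\bm j''} = \mathcal K_{\bm j''}$ otherwise, the very same $\xi$ belongs to $\shortbarDepsk{\bar\epsilon}{\bm{\mathcal K}'}$. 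Since $\bm j \neq \bm j'$ gives $\sum \mathcal K'_{\bm j''} \bm e(\bm j'') = \bm k - \bm e(\bm j) + \bm e(\bm j') \neq \bm k$ and $\breve c(\bm{\mathcal K}') = c(\bm k) + (\alpha(\bm j') - \alpha(\bm j)) \leq c(\bm k)$, we get $\xi \in \shortbarDepsk{\bar\epsilon}{\leqslant \bm k} \cap B^{\bar\epsilon}$, again contradicting part (a). Thus both cases are ruled out when $\bar\delta \leq \bar\epsilon$, which completes the argument.

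The only mildly delicate step is the metric bound $\dj{}(\xi, \tilde\xi) \leq \norm{\bm w_{\bm j, k}}$ in Case~1, which relies on the uniform-in-time bound obtainable from the identity time change in \eqref{def: J1 metric on [0,T]}; and the finite-combinatorial observation in Case~2 that re-labeling a jump preserves membership in a jump-path set while strictly lowering the count type, so the re-labeled configuration automatically lands in $\shortbarDepsk{\bar\epsilon}{\leqslant \bm k}$.
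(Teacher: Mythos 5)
Your proof is correct and follows essentially the same strategy as the paper's: part (a) via monotonicity of $\epsilon \mapsto \shortbarDepsk{\epsilon}{\leqslant\bm k}$ together with the bounded-away hypothesis, and part (b) by contradiction, either deleting a small jump (bounding $\dj{}$ by the sup-norm via the identity time change and shifting to a strictly smaller cost $\breve c$) or re-labeling a misaligned jump to a cone of no greater cost, then invoking part (a). The only cosmetic difference is that the paper proves the two conditions in \eqref{claim, condition, part a, lemma: choice of bar epsilon and bar delta, LD for Levy MRV} sequentially (so that $\norm{\bm w_{\bm j^*,k^*}}>\bar\delta$ is already in hand when picking $\bm j'\in\powersetTilde{d}$, guaranteeing $\bm j'\neq\emptyset$), whereas you treat them as alternatives; you should note in Case~2 that $\norm{\bm w_{\bm j,k}}>\bar\delta$ (else Case~1 applies) so that the chosen $\bm j'$ is nonempty.
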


\begin{proof}\linksinpf{lemma: choice of bar epsilon and bar delta, LD for Levy MRV}
Part (a) follows directly from the fact that
$B$ is bounded away from $\shortbarDepsk{\epsilon}{\leqslant \bm k}$ under $\dj{}$ for some  $\epsilon > 0$, as well as the monotonicity of $\shortbarDepsk{\epsilon^\prime}{\bm{\mathcal K} } \subseteq \shortbarDepsk{\epsilon}{\bm{\mathcal K} }$
for any $\epsilon \geq \epsilon^\prime \geq 0$ and $\bm{\mathcal K} \in \mathbb Z^{\powersetTilde{d}}_+$.

Moving on to part (b),
since there are only finitely elements in $\mathbb A(\bm k)$,
if suffices to fix some $\bm{\mathcal K} = (\mathcal K_{\bm j})_{\bm j \in \powersetTilde{d}} \in \mathbb A(\bm k)$
and prove the claim \eqref{claim, condition, part a, lemma: choice of bar epsilon and bar delta, LD for Levy MRV}.
Specifically,
we take any $\bar\delta \in (0,\bar\epsilon)$
and proceed with a proof by contradiction.
Suppose that, for some $\xi \in B^{\bar\epsilon} \cap \shortbarDepsk{\bar\epsilon}{\bm{\mathcal K}}$, 
there are $\bm j^* \in \powersetTilde{d}$ and $k^* \in [\mathcal K_{\bm j^*}]$ (which, of course, requires that $\mathcal K_{\bm j^*} \geq 1$)
such that
$\norm{\bm w_{\bm j^*,k^*}} \leq \bar\delta$ 
in the expression for $\xi$  in \eqref{proof: path expression for elements in set D j, LD for levy MRV}.
Then, with
\begin{align}
    \xi^*(t) 
    \delequal 
     \sum_{\bm j \in \powersetTilde{d}\setminus\{\bm j^*\} }\sum_{k = 1}^{ \mathcal K_{\bm j} }\bm w_{\bm j,k} \mathbbm{I}_{ [t_{\bm j,k},1] }(t)
     +
     \sum_{ k \in [\mathcal K_{\bm j^*}]:\ k \neq k^* }\bm w_{\bm j^*,k} \mathbbm{I}_{ [t_{\bm j^*,k},1] }(t),
     \qquad \forall t \in [0,1],
    \nonumber
\end{align}
we construct a path $\xi^*$ by removing the jump $\bm w_{\bm j^*,k^*}$ from $\xi$.
By defining $\bm{\mathcal K}^\prime = (\mathcal K^\prime_{\bm j})_{ \bm j \in \powersetTilde{d}  }$ as
\begin{align*}
    \mathcal K^\prime_{\bm j} \delequal
    \begin{cases}
        \mathcal K_{\bm j}&\text{ if }\bm j \neq \bm j^*
        \\ 
        \mathcal K_{\bm j^*} - 1 & \text{ if }\bm j = \bm j^*
    \end{cases},
\end{align*}
we have $\xi^* \in  \shortbarDepsk{\bar\epsilon}{\bm{\mathcal K}^\prime}$
and $\breve{c}(\bm{\mathcal K}^\prime) < \breve{c}(\bm{\mathcal K})$ (see \eqref{def: cost alpha j, LD for Levy MRV} and \eqref{property, cost c under allocation}),
thus implying 
$
\xi^* \in \shortbarDepsk{\bar\epsilon}{\leqslant\bm k}.
$
However, due to
\begin{align}
    \dj{}(\xi,\xi^*) \leq \sup_{t \in [0,1]}\norm{\xi(t) - \xi^*(t)} = \norm{\bm w_{\bm j^*,k^*}} \leq \bar\delta < \bar\epsilon
    \nonumber
\end{align}
and $\xi \in B^{\bar\epsilon}$, 
 we arrive at the contradiction that $\dj{}(B^{\bar\epsilon}, \shortbarDepsk{\bar\epsilon}{\leqslant \bm k}) \leq \bar\epsilon$.
In summary, we have shown that
$
\norm{\bm w_{\bm j, k}} > \bar\delta
$
for any $\bm j \in \powersetTilde{d},\ k \in [\mathcal K_{\bm j}]$ in the expression \eqref{proof: path expression for elements in set D j, LD for levy MRV} for $\xi$.

Next,
suppose that, for some $\xi \in B^{\bar\epsilon} \cap \shortbarDepsk{\bar\epsilon}{\mathcal K}$, 
there exist $\bm j^* \in \powersetTilde{d}$ and $k^* \in [\mathcal K_{\bm j^*}]$ (which, again, requires that $\mathcal K_{\bm j^*} \geq 1$) such that $\bm w_{\bm j^*,k^*} \in \bar\R^d_\leqslant(\bm j^*,\bar\delta)$. 
Then, by the definition of $\bar\R^d_\leqslant(\bm j^*,\bar\delta)$ in \eqref{def: cone R d i basis S index alpha}
and our choice of $\bar\delta \in (0,\bar\epsilon)$,
there exists some $\bm j^\prime \in \powersetTilde{d}$ with 
$\bm j^\prime \neq \bm j^*$, $\alpha(\bm j^\prime) \leq \alpha(\bm j^*)$ such that $\bm w_{\bm j^*,k^*} \in \bar\R^d(\bm j^\prime,\bar\delta) \subseteq \bar\R^d(\bm j^\prime,\bar\epsilon)$.
Now, define
$\bm{\mathcal K}^\prime = (\mathcal K^\prime_{\bm j})_{ \bm j \in \powersetTilde{d}  }$ by
\begin{align*}
    \mathcal K^\prime_{\bm j}
    \delequal
    \begin{cases}
        \mathcal K_{\bm j^*} - 1 & \text{ if }\bm j = \bm j^*
        \\ 
        \mathcal K_{\bm j^\prime} + 1 & \text{ if }\bm j = \bm j^\prime 
        \\ 
        \mathcal K_{\bm j} & \text{ otherwise}
    \end{cases},
\end{align*}
and note that $\xi \in \shortbarDepsk{\bar\epsilon}{\bm{\mathcal K}^\prime}$ due to
$\bm w_{\bm j^*,k^*} \in \bar\R^d(\bm j^\prime,\bar\epsilon)$.
Due to $\alpha(\bm j^\prime) \leq \alpha(\bm j^*)$,
we have
$\breve c(\bm{\mathcal K}^\prime) \leq \breve c(\bm{\mathcal K}) = c(\bm k)$ (see \eqref{def: cost alpha j, LD for Levy MRV}),
and hence $\xi \in \shortbarDepsk{\bar\epsilon}{\leqslant\bm k}$ (see \eqref{def: path with costs less than j jump set, drift x, LD for Levy MRV}).
In light of the running assumption $\xi \in B^{\bar\epsilon}$,
we arrive at the contradiction that 
$B^{\bar\epsilon}\cap \shortbarDepsk{\bar\epsilon}{\leqslant \bm k} \neq \emptyset$.
This concludes the proof of part (b).
\end{proof}

\begin{lemma}\label{lemma: finiteness of measure breve C k}
\linksinthm{lemma: finiteness of measure breve C k}
Let Assumption~\ref{assumption: ARMV in levy process} hold.
Let $\bm k = (k_{j})_{j \in [d]} \in \mathbb Z_+^{d}$ such that $\bm k \neq \bm 0$.
Let $T,\epsilon > 0$ and $\bm x \in \R^d$.
For any Borel set $B$ of $(\D[0,T],\dj{\subZeroT{T}})$
that is bounded away from $\shortbarDepsk{\epsilon}{\leqslant \bm k;\bm x}[0,T]$ under $\dj{\subZeroT{T}}$,
\begin{align*}
    \sum_{\bm{\mathcal K} \in \mathbb A(\bm k)}\breve{\mathbf C}^{ \subZeroT{T} }_{\bm{\mathcal K};\bm x}(B) < \infty. 
\end{align*}
\end{lemma}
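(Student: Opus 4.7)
The plan is to exploit the finiteness of $\mathbb A(\bm k)$ (Remark~\ref{remark: implications, def of allocation}) and reduce the claim to showing, for each individual $\bm{\mathcal K} \in \mathbb A(\bm k)$, that $\breve{\mathbf C}^{[0,T]}_{\bm{\mathcal K};\bm x}(B) < \infty$. The key is to combine the geometric structure imposed on the jump sizes by the assumption that $B$ is bounded away from $\barDxepskT{\bm x}{\epsilon}{\leqslant \bm k}{[0,T]}$ with the integrability provided by the $\MRV^*$ condition applied to each factor in the product-form integral~\eqref{def: measure C type j L, LD for Levy, MRV}.

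First I would invoke (an adaptation of) Lemma~\ref{lemma: choice of bar epsilon and bar delta, LD for Levy MRV}, whose proof extends verbatim from the setting $\bm x = \bm 0$, $T = 1$ to general $\bm x \in \R^d$, $T \in (0,\infty)$, to obtain $\bar\epsilon, \bar\delta \in (0,\epsilon)$ such that every $\xi \in B^{\bar\epsilon} \cap \barDxepskT{\bm x}{\bar\epsilon}{\bm{\mathcal K}}{[0,T]}$, when written in the form~\eqref{expression for xi in breve D set}, satisfies $\norm{\bm w_{\bm j,k}} > \bar\delta$ and $\bm w_{\bm j,k} \notin \bar\R^d_\leqslant(\bm j,\bar\delta)$ for every $\bm j \in \powersetTilde{d}$ and $k \in [\mathcal K_{\bm j}]$. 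Since $\breve{\mathbf C}^{[0,T]}_{\bm{\mathcal K};\bm x}$ is supported on $\barDxepskT{\bm x}{0}{\bm{\mathcal K}}{[0,T]} \subseteq \barDxepskT{\bm x}{\bar\epsilon}{\bm{\mathcal K}}{[0,T]}$ and $B \subseteq B^{\bar\epsilon}$, the defining integral~\eqref{def: measure C type j L, LD for Levy, MRV} is dominated by restricting each jump variable $\bm w_{\bm j,k}$ to the Borel set
\begin{align*}
A_{\bm j} \delequal \big\{\bm w \in \R^d_+:\ \norm{\bm w} > \bar\delta,\ \bm w \notin \bar\R^d_\leqslant(\bm j, \bar\delta)\big\},
\end{align*}
which, together with the fact that the time variables are integrated over $[0,T]$ under Lebesgue measure, yields
\begin{align*}
\breve{\mathbf C}^{[0,T]}_{\bm{\mathcal K};\bm x}(B) \leq \frac{1}{\prod_{\bm j \in \powersetTilde{d}} \mathcal K_{\bm j}!} \prod_{\bm j \in \powersetTilde{d}} \Big(T \cdot \mathbf C_{\bm j}(A_{\bm j})\Big)^{\mathcal K_{\bm j}}.
\end{align*}

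The remaining task is to show $\mathbf C_{\bm j}(A_{\bm j}) < \infty$. By the $\MRV^*$ condition~\eqref{cond, finite index, def: MRV} in Assumption~\ref{assumption: ARMV in levy process}, it suffices to verify that $A_{\bm j}$ is bounded away (in the $L_1$ norm) from the strictly smaller set $\bar\R^d_\leqslant(\bm j, \bar\delta/2)$. I expect this geometric separation to be the main—albeit modest—obstacle: while $A_{\bm j}$ is disjoint from $\bar\R^d_\leqslant(\bm j, \bar\delta)$ by definition, positive Euclidean distance from $\bar\R^d_\leqslant(\bm j, \bar\delta/2)$ does not follow automatically. The plan is to pass to polar coordinates via~\eqref{def: enlarged cone R d index i epsilon}: on the unit sphere $\mathfrak N^d_+$, the angular profiles characterizing membership in $A_{\bm j}$ and in $\bar\R^d_\leqslant(\bm j, \bar\delta/2)\setminus\{\bm 0\}$ are $\bar\delta/2$-separated by the triangle inequality applied to distance-to-cone, and one can then translate this angular separation into a Euclidean lower bound through a short case analysis on the relative magnitudes of $\norm{\bm w}$ and $\norm{\bm y}$ (distinguishing $\norm{\bm y} \leq \norm{\bm w}/2$, for which the radial lower bound $\norm{\bm w} > \bar\delta$ alone suffices, from the complementary case, where the angular bound dominates after accounting for comparable radii). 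With that separation in hand, the $\MRV^*$ condition delivers $\mathbf C_{\bm j}(A_{\bm j}) < \infty$ and the proof concludes.
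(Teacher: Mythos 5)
Your proposal is correct and follows essentially the same route as the paper's proof: reduce to a fixed $\bm{\mathcal K}\in\mathbb A(\bm k)$ by finiteness of $\mathbb A(\bm k)$, invoke Lemma~\ref{lemma: choice of bar epsilon and bar delta, LD for Levy MRV} to pin each jump $\bm w_{\bm j,k}$ into $A_{\bm j} = \{\bm w:\ \norm{\bm w}>\bar\delta,\ \bm w\notin\bar\R^d_\leqslant(\bm j,\bar\delta)\}$, bound the integral~\eqref{def: measure C type j L, LD for Levy, MRV} by $\frac{1}{\prod\mathcal K_{\bm j}!}\prod_{\bm j}(T\cdot\mathbf C_{\bm j}(A_{\bm j}))^{\mathcal K_{\bm j}}$, and finish with the $\MRV$ integrability condition~\eqref{cond, finite index, def: MRV}. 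The only difference is that the paper simply asserts $A_{\bm j}$ is bounded away from $\bar\R^d_\leqslant(\bm j,\delta)$ for $\delta\in(0,\bar\delta)$, while you spell out the polar-coordinate/case-analysis argument justifying this separation—this is a welcome addition of detail, not a departure in method.
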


\begin{proof}\linksinpf{lemma: finiteness of measure breve C k}
Without loss of generality, we impose Assumption~\ref{assumption: WLOG, LD for Levy MRV} (i.e., we focus on the case of $T = 1$ and $\bm x = \bm 0$), and adopt the notations in \eqref{proof, def: shorthand, LD for Levy}.
Also,
since there are only finitely many elements in $\mathbb A(\bm k)$,
it suffices to fix some $\bm{\mathcal K} = (\mathcal K_{\bm j})_{\bm j \in \powersetTilde{d}} \in \mathbb A(\bm k)$ and
 some Borel set $B$ of $(\D,\dj{})$ that is bounded away from $\shortbarDepsk{\epsilon}{\leqslant \bm k}$, and show that
$
\breve{
    \mathbf C
    }_{\bm{\mathcal K}}(B)<\infty.
$
Since the measure $\breve{\mathbf C}_{\bm{\mathcal K}}(\cdot)$ is supported on 
    $\shortbarDepsk{0}{\bm{\mathcal K}}$ (see  \eqref{def: measure C type j L, LD for Levy, MRV} and the remarks right after),
    it suffices to show that 
\begin{align}
    \breve{
    \mathbf C
    }_{\bm{\mathcal K}}\big( B \cap \shortbarDepsk{0}{\bm{\mathcal K}} \big)
    < \infty.
    \nonumber
\end{align}
Since $B$ is bounded away from $\shortbarDepsk{\epsilon}{\leqslant \bm k}$,
Lemma~\ref{lemma: choice of bar epsilon and bar delta, LD for Levy MRV} allows us to fix some $\bar\delta > 0$ such that the following claim holds:
given any $\xi \in B  \cap \shortbarDepsk{0}{\bm{\mathcal K}}$, 
in the expression \eqref{proof: path expression for elements in set D j, LD for levy MRV} for $\xi$ we have
\begin{align}
    \norm{\bm w_{\bm j, k}} > \bar\delta\text{ and }
                 \bm w_{\bm j,k}\notin \bar{\R}^d_\leqslant(\bm j,\bar\delta),
                \qquad \forall \bm j \in \powersetTilde{d},\ k \in [\mathcal K_{\bm j}].
                \nonumber
\end{align}
Then, by the definition of $\breve{
    \mathbf C
    }_{\bm{\mathcal K}}$ in \eqref{def: measure C type j L, LD for Levy, MRV},
\begin{align}
    \breve{
    \mathbf C
    }_{\bm{\mathcal K}}\big( B \cap \shortbarDepsk{0}{\bm{\mathcal K}} \big)
    & \leq 
    \frac{1}{\prod_{ \bm j \in \powersetTilde{d} } \mathcal K_{\bm j}!} 
    \cdot 
    \prod_{ \bm j \in \powersetTilde{d} }
    \bigg(
        \underbrace{ \mathbf C_{\bm j}\Big(
            \big\{
                \bm w \in \R^d_+:\ \norm{\bm w} > \bar\delta,\ \bm w \notin \bar\R^d_\leqslant(\bm j,\bar\delta)
            \big\}
        \Big) }_{ \delequal c_{\bm j}(\bar\delta) }
    \bigg)^{ \mathcal K_{\bm j} }.
    \label{proof: inequality, lemma: finiteness of measure breve C k}
\end{align}
In particular, for any $\delta \in (0,\bar\delta)$, the set 
$
 \big\{
                \bm w \in \R^d_+:\ \norm{\bm w} > \bar\delta,\ \bm w \notin \bar\R^d_\leqslant(\bm j,\bar\delta)
            \big\}
$
is bounded away from $\bar\R^d_\leqslant(\bm j,\delta)$.
Then, by the $\MRV$ condition in Assumption~\ref{assumption: ARMV in levy process}
(in particular, Claim~\eqref{cond, finite index, def: MRV} in Definition~\ref{def: MRV})
we get
$c_{\bm j}(\bar\delta) < \infty$  for each $\bm j \in \powersetTilde{d}$.
Plugging these bounds back into \eqref{proof: inequality, lemma: finiteness of measure breve C k}, we conclude the proof.
\end{proof}

Now, we are ready to state the proof of Propositions~\ref{proposition: asymptotic equivalence, LD for Levy MRV} and \ref{proposition: weak convergence, LD for Levy MRV}.

\begin{proof}[Proof of Proposition~\ref{proposition: asymptotic equivalence, LD for Levy MRV}]
\linksinpf{proposition: asymptotic equivalence, LD for Levy MRV}
Recall the definition of 
$c:\mathbb Z_+^d \to [0,\infty)$ in \eqref{def, cost function, k jump set, LD for Levy} and
$\breve c: \mathbb Z_+^{\powersetTilde{d}} \to [0,\infty)$ in \eqref{def: cost alpha j, LD for Levy MRV}.
Define events
\begin{align}
     B_0 & \delequal
    \Big\{
        \dj{}\big( \hat{\bm L}_n^{>\delta}, \bar{\bm L}_n \big)
        > \Delta 
    \Big\},
    \quad 
    B_1 \delequal 
    \Big\{
        \widetilde K^{>\delta}_n = 0
    \Big\},
    \quad
    \notationdef{notation-event-B1-in-proof-MRV}{B_2} \delequal \Big\{
        \breve c(\bm{K}^{>\delta}_n) \leq c({\bm k})
        \Big\}.
    \label{def: events B1 and B2; proposition: asymptotic equivalence, LD for Levy MRV}
\end{align}
To establish Claim~\eqref{proof: goal 1, AE, theorem: LD for levy, MRV}, it suffices to show that for any $\delta > 0$ small enough,
\begin{align}
    \lim_{n \to \infty}
    \P\big( B_0 \setminus B_1 \big) 
    \big/ \breve \lambda_{\bm k}(n) 
    & = 0,
    \label{proof: asymptotic equivalence, goal 1, theorem: LD for levy, MRV}
    \\
    \lim_{n \to \infty}
    \P\big( (B_0 \cap B_1) \setminus B_2 \big) 
    \big/ \breve \lambda_{\bm k}(n) 
    & = 0,
    \label{proof: asymptotic equivalence, goal 2, theorem: LD for levy, MRV}
    \\
    \lim_{n \to \infty}
    \P\big( B_0 \cap B_1 \cap B_2 \big) 
    \big/ \breve \lambda_{\bm k}(n) 
    & = 0.
    \label{proof: asymptotic equivalence, goal 3, theorem: LD for levy, MRV}
\end{align}

\medskip
\noindent
\textbf{Proof of Claim~\eqref{proof: asymptotic equivalence, goal 1, theorem: LD for levy, MRV}}.
By Claim~\eqref{claim, probability of event tilde E, lemma: asymptotic law for large jumps, LD for Levy MRV} of Lemma~\ref{lemma: asymptotic law for large jumps, LD for Levy MRV},
we get
$
\P\big((B_1)^\complement\big) = \lo\big(\breve \lambda_{\bm k}(n)\big)
$
as $n \to \infty$ under any $\delta > 0$.

\medskip
\noindent
\textbf{Proof of Claim~\eqref{proof: asymptotic equivalence, goal 2, theorem: LD for levy, MRV}}.
We prove this claim for any $\delta > 0$.
Let 
$$\mathbb K_{>} \delequal \Big\{ \bm{\mathcal K} \in \mathbb Z_+^{\powersetTilde{d}} :\ \breve c(\bm{\mathcal K}) >  c({\bm k})   \Big\}.$$
Note that 
$
(B_0 \cap B_1) \setminus B_2 \subseteq (B_2)^\complement
=
\{
\breve c(\bm{K}^{>\delta}_n) > c({\bm k})
 \}
= \{
    \bm{K}^{>\delta}_n \in \mathbb K_>
\}.
$
Meanwhile, by the definition of the linear function $\breve c(\cdot)$ in \eqref{def: cost alpha j, LD for Levy MRV}
with coefficients $\alpha(\bm j) - 1 > 0$ for any $\bm j \in \powersetTilde{d}$ (see Assumption~\ref{assumption: ARMV in levy process}),
there exists a subset $\mathbb K^*_{>} \subseteq  \mathbb K_{>}$ 
such that $|\mathbb K^*_{>}| < \infty$ (i.e., containing only finitely many elements) and
\begin{align}
    \breve c(\bm{\mathcal K}^\prime) > c({\bm k})
    \quad \Longrightarrow \quad 
    \bm{\mathcal K}^{\prime} \geq \bm{\mathcal K}\text{ for some }\bm{\mathcal K} \in \mathbb K^*_{>}.
    \nonumber
\end{align}
Here, 
the ordering $\bm x \geq \bm y$ between two real vectors is equivalent to $x_i \geq y_i$ for all $i$.
Then,
$$
\big\{
\breve c(\bm{K}^{>\delta}_n) >  c({\bm k})
 \big\}
\subseteq
\bigcup_{  \bm{\mathcal K} \in \mathbb K^*_>  }\big\{ \bm{K}^{>\delta}_n \geq \bm{\mathcal K}  \big\}.
$$
Therefore, to prove Claim~\eqref{proof: asymptotic equivalence, goal 2, theorem: LD for levy, MRV},
it suffices to fix some $\bm{\mathcal K} \in \mathbb K^*_>$
(or, more generally, some 
$\bm{\mathcal K} = (\mathcal K_{\bm j})_{ \bm j \in \powersetTilde{d} } \in \mathbb Z_+^{\powersetTilde{d}}$ such that $\breve c(\bm{\mathcal K}) > c({\bm k})$)
and show that 
\begin{align}
    \lim_{n \to \infty}
    \P\big(
        \bm{{K}}^{>\delta}_n \geq \bm{\mathcal K}
    \big)
    \big/ \breve \lambda_{\bm k}(n) & = 0.
    \nonumber
\end{align}
By the definition of $\bm K^{>\delta}_n$ in \eqref{def: K delta n type j, count vector, LD for Levy} and the law of $\bm L(t)$ in \eqref{prelim: levy ito decomp}, 
\begin{align}
   &  \P\big(
        \bm{{K}}^{>\delta}_n \geq \bm{\mathcal K}
    \big)
    \label{proof: display 1; asymptotic equivalence, goal 1, theorem: LD for levy, MRV}
    \\
    & = 
    \prod_{\bm j \in \powersetTilde{d}}
    \P\bigg(
        K^{>\delta}_n(\bm j) \geq  \mathcal K_{\bm j}
    \bigg)
    \quad
    \text{due to the independence of Poisson splitting}
    \nonumber
    \\ 
    & = 
    \prod_{\bm j \in \powersetTilde{d}}
    \P\bigg(
        \text{Poisson}\Big( n\underbrace{ \nu_n\big( \bar \R^{>\delta}(\bm j)\big) }_{ \delequal \theta_n(\bm j,\delta)  } \Big) \geq \mathcal K_{\bm j}
        \bigg)
    \qquad
    \text{by \eqref{def: large jump in cone i, tau > delta n k i, LD for Levy MRV}--\eqref{def: K delta n type j, count scalar, LD for Levy}, where }\nu_n(A)  = \nu\{n\bm x:\ \bm x \in A\}
    \nonumber
    \\ 
    & \leq 
    \prod_{\bm j \in \powersetTilde{d} }
    \Big( n \theta_n(\bm j,\delta) \Big)^{\mathcal K_{\bm j}}
    \qquad
    \text{due to }
    \P\big( \text{Poisson}(\lambda) \geq k \big) \leq \lambda^k\ \forall \lambda \geq 0.
    \nonumber
\end{align}
Under the $\MRV$ condition in Assumption~\ref{assumption: ARMV in levy process}
(in particular, by Claim~\eqref{cond, finite index, def: MRV} in Definition~\ref{def: MRV}),
we have
$
\theta_n(\bm j,\delta) = \bo\big( \lambda_{\bm j}(n)  \big)
$
for each $\bm j \in \powersetTilde{d}$.
Plugging these bounds into display \eqref{proof: display 1; asymptotic equivalence, goal 1, theorem: LD for levy, MRV}, we get
\begin{align*}
    \P\big(
        \bm{{K}}^{>\delta}_n \geq \bm{\mathcal K}
    \big)
=
\bo\Bigg( 
    \prod_{ \bm j \in \powersetTilde{d}  } \big( n\lambda_{\bm j}(n)\big)^{ \mathcal K_{\bm j}  }
\Bigg),
\qquad\text{as }n \to \infty.
\end{align*}
By  the definition of $\breve c(\cdot)$ in \eqref{def: cost alpha j, LD for Levy MRV}
and $\lambda_{\bm j}(n)\in \RV_{-\alpha(\bm j) }(n)$,
we have 
$
\prod_{ \bm j \in \powersetTilde{d}  } \big( n\lambda_{\bm j}(n)\big)^{ \mathcal K_{\bm j}  }
\in \RV_{ -\breve c(\bm{\mathcal K}) }(n).
$
Lastly, due to $\breve \lambda_{\bm k}(n) \in \RV_{ -c(\bm k) }(n)$ (see \eqref{def: scale function for Levy LD MRV}) and $\breve c(\bm{\mathcal K}) > c(\bm k)$,
we get
$
\P\big(
        \bm{{K}}^{>\delta}_n \geq \bm{\mathcal K}
    \big)
= \lo\big(  \breve \lambda_{\bm k}(n) \big).
$
This concludes the proof of Claim~\eqref{proof: asymptotic equivalence, goal 2, theorem: LD for levy, MRV}.


\medskip
\noindent
\textbf{Proof of Claim~\eqref{proof: asymptotic equivalence, goal 3, theorem: LD for levy, MRV}}.
We first note that the set
\begin{align*}
    \mathbb K_{\leqslant}
    \delequal 
    \big\{
        \bm{\mathcal K} \in \mathbb Z_+^{\powersetTilde{d}}:\ 
        \bm{\mathcal K} \notin \mathbb A(\bm k),\ \breve c(\bm{\mathcal K}) \leq c(\bm k)
    \big\}
\end{align*}
contains only finitely many elements.
Therefore, 
it suffices to fix some $\bm{\mathcal K}\in \mathbb K_{\leqslant}$ and show that 
\begin{align}
    \lim_{n \to \infty}
    \P\Big(
    \big\{
        \dj{}\big(\bar{\bm L}_n,\ \hat{\bm L}^{>\delta}_n\big) > \Delta
    \big\}
    \cap 
    \big\{
        \bm K^{>\delta}_n = \bm{\mathcal K},\ \widetilde K^{>\delta}_n = 0
    \big\}
    \Big)
    \Big/ \breve \lambda_{\bm k}(n) = 0,
    \qquad
    \forall \delta > 0\text{ small enough.}
    \nonumber
\end{align}
Let events $A_n(m;\delta,k,\Delta)$ be defined as in Lemma~\ref{lemma: large jump approximation, LD for Levy MRV},
and let $|\bm{\mathcal K}|= \sum_{\bm j \in \powersetTilde{d}} \mathcal K_{\bm j}$.
By Lemma~\ref{lemma: large jump approximation, LD for Levy MRV},
\begin{align}
    \dj{}\big(\bar{\bm L}_n,\ \hat{\bm L}^{>\delta}_n\big) \leq \Delta
    \text{ holds on the event }
    \big\{
        \bm K^{>\delta}_n = \bm{\mathcal K},\ \widetilde K^{>\delta}_n = 0
    \big\} \cap \big(\cap_{m = 1}^{|\bm{\mathcal K}| + 1}A_n(m;\delta,|\bm{\mathcal K}|,\Delta)\big).
    \label{proof: bar L n close to hat L n; asymptotic equivalence, goal 2, theorem: LD for levy, MRV}
\end{align}
Then, it only remains to show that
$
\lim_{n \to \infty}\P
    \Big(
        \big(\bigcap_{m = 1}^{|\bm{\mathcal K}| + 1}A_n(m;\delta,|\bm{\mathcal K}|,\Delta)\big)^\complement
    \Big)
    \Big/\breve \lambda_{\bm k}(n) = 0.
$
By the strong Markov property of the L\'evy process $\bar{\bm L}_n$ at stopping times $\tau^{>\delta}_n(m)$
defined in \eqref{def: large jump time tau for Levy process L}, we have
\begin{align*}
    \P
    \Bigg(
        \Big(\bigcap_{m = 1}^{|\bm{\mathcal K}|+ 1}A_n(m;\delta,|\bm{\mathcal K}|,\Delta)\Big)^\complement
    \Bigg)
    & \leq 
    (|\bm{\mathcal K}|+1) \cdot 
    \P\Bigg(
        \sup_{ t \in [0,1]:\ t < \tau^{>\delta}_n(1)  }
        \norm{
            \bar{\bm L}_n(t)
        }
        > \frac{\Delta}{|\bm{\mathcal K}| + 1}
    \Bigg)
    \\ 
    & = 
    \lo\Big( 
        \breve \lambda_{\bm k}(n)
    \Big)
    \qquad
    \text{ as $n\to\infty$ for any $\delta > 0$ small enough.}
\end{align*}
The last line follows from Lemma~\ref{lemma: concentration of small jump process, Ld for Levy MRV}.
This concludes the proof of 
Claim~\eqref{proof: asymptotic equivalence, goal 3, theorem: LD for levy, MRV}.
\end{proof}

\begin{proof}[Proof of Proposition~\ref{proposition: weak convergence, LD for Levy MRV}]
\linksinpf{proposition: weak convergence, LD for Levy MRV}
{(a)} Note that
\begin{align*}
    & f(\hat{\bm L}^{>\delta}_n)\mathbbm{I}\big\{ \bar{\bm K}^{>\delta}_n \notin \bar{\mathbb A}(\bm k) \big\}
    \\
    & \leq
    \norm{f}\cdot
    \mathbbm{I}\big\{ \hat{\bm L}^{>\delta}_n \in B  \}\mathbbm{I}\big\{ \bar{\bm K}^{>\delta}_n \notin \bar{\mathbb A}(\bm k) \big\}
    \qquad
    \text{due to $B = \text{supp}(f)$}
    \\ 
    & \leq 
    \norm{f}\cdot\underbrace{ \mathbbm{I}\big\{ \hat{\bm L}^{>\delta}_n \in B  \}\mathbbm{I}\big\{ 
        \widetilde K^{>\delta}_n \geq 1
    \big\} }_{  \delequal I_1(n,\delta) }
    +
    \norm{f}\cdot\underbrace{ \mathbbm{I}\big\{ \hat{\bm L}^{>\delta}_n \in B  \}\mathbbm{I}\big\{
        \breve c(\bm K^{>\delta}_n) > c(\bm k)
        \big\} }_{  \delequal I_2(n,\delta) }
    \\ 
    &\quad
    +
    \norm{f}\cdot\underbrace{
        \mathbbm{I}\big\{ \hat{\bm L}^{>\delta}_n \in B  \}
    \mathbbm{I}\big\{ 
        \bm K^{>\delta}_n \notin \mathbb A(\bm k),\ 
        \breve c(\bm K^{>\delta}_n) \leq c(\bm k),\ \widetilde K^{>\delta}_n = 0
        \big\}
    }_{ \delequal I_3(n,\delta) }.
\end{align*}
Repeating the proof of Claims~\eqref{proof: asymptotic equivalence, goal 1, theorem: LD for levy, MRV} and \eqref{proof: asymptotic equivalence, goal 2, theorem: LD for levy, MRV}
for Proposition~\ref{proposition: asymptotic equivalence, LD for Levy MRV},
we get
$
\E\big[ I_1(n,\delta) \big]= \lo\big(  \breve \lambda_{\bm k}(n) \big)
$
and
$
\E\big[ I_2(n,\delta) \big]= \lo\big(  \breve \lambda_{\bm k}(n) \big)
$
(as $n \to \infty$) for any $\delta > 0$.
Next, let
$
\mathbb K_< \delequal
\big\{
    \bm{\mathcal K} \in \mathbb Z_+^{ \powersetTilde{d} }:\ 
    \bm{\mathcal K} \notin \mathbb A(\bm k),\ \breve c(\bm{\mathcal K}) \leq c(\bm k)
\big\}.
$
By the definition of 
$\shortbarDepsk{\epsilon}{ \leqslant \bm k  } = \shortbarDepsk{\epsilon}{ \leqslant \bm k;\bm 0  }[0,1]$ in \eqref{claim, finite index, theorem: LD for levy, MRV},
for any $\bm{\mathcal K} \in \mathbb K_<$ we have $\shortbarDepsk{\delta}{ \bm{\mathcal K}} \subset \shortbarDepsk{\delta}{\leqslant\bm k}$.
Therefore,
given $\bm{\mathcal K} \in \mathbb K_<$,
it holds on the event $\{ \bm K^{>\delta}_n = \bm{\mathcal K},\ \widetilde K^{>\delta}_n = 0  \}$ that
\begin{align*}
    \hat{\bm L}_n^{>\delta} \in \shortbarDepsk{\delta}{\mathcal K} \subseteq \shortbarDepsk{\epsilon}{ \leqslant \bm k  },
    \qquad
    \text{due to property~\eqref{property: approximation hat L n when type = j, LD for Levy MRV} and our choice of $\delta \in (0,\epsilon)$.}
\end{align*}
Since $B$ is bounded away from $\shortbarDepsk{\epsilon}{\leqslant \bm k}$ under $\dj{}$ 
we must have $\big\{ \hat{\bm L}^{>\delta}_n \in B  \} \cap \big\{ 
        \bm K^{>\delta}_n \notin \mathbb A(\bm k),\ 
        \breve c(\bm K^{>\delta}_n) \leq c(\bm k) 
        \big\} = \emptyset$.
In summary, we have shown that $I_3(n,\delta) = 0$ for each $\delta \in (0,\epsilon)$ and $n \geq 1$.
This concludes the proof of part (a).

\medskip
\noindent
(b)
Recall that there are only finitely many assignments for $\bm k$ (i.e., $|\mathbb A(\bm k)| < \infty$).
Therefore, it suffices to prove part (b) for some fixed $\bm{\mathcal K} = (\mathcal K_{\bm j})_{\bm j \in \powersetTilde{d}} \in \mathbb A(\bm k)$.
By the definition of $B =\text{supp}(f)$, we have
$
\breve{\mathbf C}_{\bm{\mathcal K}}(f) \leq \norm{f} \cdot \breve{\mathbf C}_{\bm{\mathcal K}}(B).
$
Using Lemma~\ref{lemma: finiteness of measure breve C k}, we confirm that 
$
\breve{\mathbf C}_{\bm{\mathcal K}}(f) < \infty.
$
Next,
applying Lemma~\ref{lemma: choice of bar epsilon and bar delta, LD for Levy MRV} onto $B=\text{supp}(f)$, one can fix some $\bar\epsilon,\ \bar\delta > 0$ such that 
$
\dj{}(B^{\bar\epsilon},\shortbarDepsk{\bar\epsilon}{\leqslant \bm k}) > \bar\epsilon
$
and, given $\xi \in B^{\bar\epsilon} \cap \shortbarDepsk{\bar\epsilon}{\bm{\mathcal K}}$,
in the expression \eqref{proof: path expression for elements in set D j, LD for levy MRV} for $\xi$ we have
\begin{align}
    \norm{\bm w_{\bm j, k}} > \bar\delta\text{ and }
                \bm w_{\bm j, k} \notin \bar{\R}^d_\leqslant(\bm j,\bar\delta),
                \qquad \forall \bm j \in \powersetTilde{d},\ k \in [\mathcal K_{\bm j}].
    \label{proof: choice of bar delta; goal 1, AE, theorem: LD for levy, MRV}
\end{align}
To proceed, let
\begin{align*}
    I_{\bm{\mathcal{K}}}(n,\delta) \delequal f(\hat{\bm L}^{>\delta}_n)\mathbbm{I}\big\{ \bm K^{>\delta}_n = \bm{\mathcal K},\ \widetilde K^{>\delta}_n = 0  \big\}.
\end{align*}
Note that
$\{ \bm K^{>\delta}_n = \bm{\mathcal K},\ \widetilde K^{>\delta}_n = 0  \big\} = E^{>\delta}_n(\bm{\mathcal K})$;
see \eqref{def: event-E->delta-n-c-j}.
Also, note that Claim~\eqref{proof: choice of c, bounded away set from cone i, Ld for Levy MRV} holds for all but countably many $\delta > 0$, which is verified by Lemma~\ref{lemma: zero mass for boundary sets, LD for Levy}.
Henceforth in this proof, we only consider $\delta \in (0,\bar\delta)$ such that Claim~\eqref{proof: choice of c, bounded away set from cone i, Ld for Levy MRV} holds.
Next,
let $|\bm{\mathcal K}| \delequal  \sum_{\bm j \in \powersetTilde{d}}\mathcal K_{\bm j}$,
and define the mapping $h(\cdot)$ as follows:
given
$
\textbf{W} = (\bm w_1,\ldots,\bm w_{|\bm{\mathcal K}|}) \in (\R^{d}_+)^{|\bm{\mathcal K}|}
$
and 
$
\bm t = (t_1,\ldots,t_{|\bm{\mathcal K}|}) \in (0,1]^{|\bm{\mathcal K}|},
$
let 
\begin{align}
    h(\textbf{W},\bm t) \delequal 
    \sum_{ k = 1,2,\ldots, |\bm{\mathcal K}| }\bm w_k \mathbbm{I}_{ [t_k,1] }.
    \label{proof, def mapping h, proposition: weak convergence, LD for Levy MRV}
\end{align}
One can see that $h:(\R^{d}_+)^{|\bm{\mathcal K}|} \times (0,1]^{|\bm{\mathcal K}|} \to \D$ is continuous (w.r.t.\ the $J_1$ topology of $\D$) when restricted on the domain
$
(\R^{d}_+)^{|\bm{\mathcal K}|} \times (0,1)^{|\bm{\mathcal K}|, *},
$
with
$
A^{k,*} \delequal
    \big\{
        (t_1,\ldots,t_k) \in A^k:\ t_i \neq t_j \ \forall i,j \in [d] \text{ with }i\neq j
    \big\}.
$
This allows us to apply the continuous mapping theorem and study the asymptotic law of $f(\hat{\bm L}^{>\delta}_n)$ conditioned on the event 
$
E^{>\delta}_n(\bm{\mathcal K}).
$
Specifically,
on $\{ \bm K^{>\delta}_n = \bm{\mathcal K},\ \widetilde K^{>\delta}_n = 0 \}$, we write
\begin{align*}
    \textbf{W}^{>\delta}_n 
    & =
    \Big(
        \bm W^{>\delta}_n(k;\bm j)
    \Big)_{\bm j \in \powersetTilde{d},\  k \in [\mathcal K_{\bm j}] },
    \qquad
    \textbf{T}^{>\delta}_n 
    = 
    \Big(
        \tau^{>\delta}_n(k;\bm j)
    \Big)_{\bm j \in \powersetTilde{d},\  k \in [\mathcal K_{\bm j}] }.
\end{align*}
By \eqref{property: approximation hat L n when type = j, LD for Levy MRV}, on the event
$
E^{>\delta}_n(\bm{\mathcal K})
$
we have
$
\hat{\bm L}^{>\delta}_n = h\big(\textbf{W}^{>\delta}_n, \textbf{T}^{>\delta}_n \big).
$
As a result,
\begin{align*}
    \frac{ \E \big[ I_{\bm{\mathcal{K}}}(n,\delta) \big] }{
        \breve \lambda_{\bm k}(n)
    }
    =
    \E\Big[
        f\Big(
            h\big(\textbf{W}^{>\delta}_n, \textbf{T}^{>\delta}_n \big)
        \Big)
        \
        \Big| 
        \ 
           E^{>\delta}_n(\bm{\mathcal K})
    \Big]
    \cdot 
    \frac{
        \P\big( E^{>\delta}_n(\bm{\mathcal K}) \big)
    }{
        \breve \lambda_{\bm k}(n)
    }.
\end{align*}
Then, by Lemma~\ref{lemma: asymptotic law for large jumps, LD for Levy MRV} and the continuity of $f\big(h(\cdot)\big)$ on $(\R^{d}_+)^{ |\mathcal K| } \times (0,1)^{ |\mathcal K|, *}$, we obtain the following results
(note that $f(\xi) = 0\ \forall \xi \notin B$), where we use the $U_{\bm j, k}$'s to denote i.i.d.\ copies of Unif$(0,1)$,
$
U_{\bm j,(1:k)} \leq U_{\bm j,(2:k)} \leq \ldots \leq U_{\bm j,(k:k)}
$
for the order statistics of $(U_{\bm j,q})_{q \in [k]}$,
$\mathcal L_I$ for the Lebesgue measure restricted on interval $I$,
and $\mathbf C_{\bm j}(\cdot)$ for the location measure in the $\MRV$ condition of Assumption~\ref{assumption: ARMV in levy process} supported on $\R^d(\bm j)$ (see Definition~\ref{def: MRV}):
\begin{align}
    & \lim_{n \to \infty}
    \E\bigg[
        f\Big(
            h\big(\textbf{W}^{>\delta}_n, \textbf{T}^{>\delta}_n \big)
        \Big)
        \
        \bigg| 
        \ 
            E^{>\delta}_n(\bm{\mathcal K})
    \bigg]
    \nonumber
    \\
    & = 
    \int 
    f\Bigg(
        \sum_{\bm j \in \powersetTilde{d}}\sum_{ k \in [\mathcal K_{\bm j}] }
        \bm w_{\bm j, k}\mathbbm{I}_{[t_{\bm j,k},1]}
    \Bigg)
    \nonumber
    \\
    &\qquad
    \cdot 
    \mathbbm{I}\Bigg\{ 
        \sum_{\bm j \in \powersetTilde{d}}\sum_{ k \in [\mathcal K_{\bm j}] }
        \bm w_{\bm j, k}\mathbbm{I}_{[t_{\bm j,k},1]} \in B 
        \Bigg\}
        \cdot
    \mathbbm{I}\Big\{ 
        \bm w_{\bm j,k} \in \bar\R^{>\delta}(\bm j)\ 
        \forall \bm j \in \powersetTilde{d},\ k \in [\mathcal K_{\bm j}]
        \Big\}
    \nonumber
    \\
    &\qquad
    \cdot
    \Bigg(
        \bigtimes_{ \bm j \in \powersetTilde{d} }\bigtimes_{k \in [\mathcal K_{\bm j}]}
        \frac{
            \mathbf C_{\bm j}(d\bm w_{\bm j,k})
        }{
            \mathbf C_{\bm j}\big( \bar\R^{>\delta}(\bm j) \big)
        }
    \Bigg)
    \Bigg(
        \bigtimes_{\bm j \in \powersetTilde{d}}
        \P\bigg(
        \Big( U_{\bm j,(1:\mathcal K_{\bm j})}, \ldots,U_{\bm j,(\mathcal K_{\bm j}:\mathcal K_{\bm j})}\Big)
        \in d (t_{\bm j,1},\ldots, t_{\bm j, \mathcal K_{\bm j}})
        \bigg)
    \Bigg)
    \nonumber
    \\
    & \stackrel{(*)}{=} 
    \int 
    f\Bigg(
        \sum_{\bm j \in \powersetTilde{d}}\sum_{ k \in [\mathcal K_{\bm j}] }
        \bm w_{\bm j, k}\mathbbm{I}_{[t_{\bm j,k},1]}
    \Bigg)
    \nonumber
    \\
    &\qquad
    \cdot 
    \mathbbm{I}\Bigg\{ 
        \sum_{\bm j \in \powersetTilde{d}}\sum_{ k \in [\mathcal K_{\bm j}] }
        \bm w_{\bm j, k}\mathbbm{I}_{[t_{\bm j,k},1]} \in B 
        \Bigg\}
        \cdot
    \mathbbm{I}\Big\{ 
        \bm w_{\bm j,k} \notin \bar\R^d_\leqslant(\bm j,\delta)\ 
        \forall \bm j \in \powersetTilde{d},\ k \in [\mathcal K_{\bm j}]
        \Big\}
    \nonumber
    \\
    &\qquad
    \cdot
    \Bigg(
        \bigtimes_{ \bm j \in \powersetTilde{d} }\bigtimes_{k \in [\mathcal K_{\bm j}]}
        \frac{
            \mathbf C_{\bm j}(d\bm w_{\bm j,k})
        }{
            \mathbf C_{\bm j}\big( \bar \R^{>\delta}(\bm j) \big)
        }
    \Bigg)
    \Bigg(
        \bigtimes_{\bm j \in \powersetTilde{d}}
        \P\bigg(
        \Big( U_{\bm j,(1:\mathcal K_{\bm j})}, \ldots,U_{\bm j,(\mathcal K_{\bm j}:\mathcal K_{\bm j})}\Big)
        \in d (t_{\bm j,1},\ldots, t_{\bm j, \mathcal K_{\bm j}})
        \bigg)
    \Bigg)
    \nonumber
    \\ 
    & \stackrel{(\triangle)}{=}
    \int
    f\Bigg(
        \sum_{\bm j \in \powersetTilde{d}}\sum_{ k \in [\mathcal K_{\bm j}] }
        \bm w_{\bm j, k}\mathbbm{I}_{[t_{\bm j,k},1]}
    \Bigg)
    \nonumber
    \\ 
    &
    \qquad
    \cdot
    \Bigg(
        \bigtimes_{ \bm j \in \powersetTilde{d} }\bigtimes_{k \in [\mathcal K_{\bm j}]}
        \frac{
            \mathbf C_{\bm j}(d\bm w_{\bm j,k})
        }{
            \mathbf C_{\bm j}\big( \bar \R^{>\delta}(\bm j) \big)
        }
    \Bigg)
    \Bigg(
        \bigtimes_{\bm j \in \powersetTilde{d}}
        \P\bigg(
        \Big( U_{\bm j,(1:\mathcal K_{\bm j})}, \ldots,U_{\bm j,(\mathcal K_{\bm j}:\mathcal K_{\bm j})}\Big)
        \in d (t_{\bm j,1},\ldots, t_{\bm j, \mathcal K_{\bm j}})
        \bigg)
    \Bigg).
    \nonumber
\end{align}
In the display above, the step $(*)$ holds since $\bar\R^{>\delta}(\bm j) = \bar\R^d(\bm j,\delta)\setminus \bar \R^d_\leqslant(\bm j,\delta)$, and $\mathbf C_{\bm j}(\cdot)$ is supported on $\R^d(\bm j) \subseteq \bar\R^d(\bm j,\delta)$;
the step $(\triangle)$ follows from  the definition of $B = \text{supp}(f)$,
our choice of $\bar\delta$ in \eqref{proof: choice of bar delta; goal 1, AE, theorem: LD for levy, MRV} and $\delta \in (0,\bar\delta)$.
To proceed, we make a few observations.
First, using $U_{(1:k)} < U_{(2:k)} < \ldots < U_{(k:k)}$ to denote the order statistics of $k$ copies of Unif$(0,1)$, we have
\begin{align*}
    \P\Big(
        (U_{(1:k)},U_{(2:k)},\ldots,U_{(k:k)}) \in d(t_1,\ldots,t_k)
    \Big)
    =
    k! \mathbbm{I}\{ 0< t_1 < t_2 < \ldots < t_k < 1\}dt_1dt_2\ldots dt_k.
\end{align*}
Second, the mapping $h$ defined in \eqref{proof, def mapping h, proposition: weak convergence, LD for Levy MRV} is invariant under permutation of its arguments.
That is, given $\textbf{W} = (\bm w_1,\ldots,\bm w_{ |\bm{\mathcal K}| })$ and $\bm t = (t_1,\ldots,t_{|\bm{\mathcal K}|})$,
we have
$$
h(\textbf W,\bm t) =
h\Big( (\bm w_{\sigma(i)})_{ i =1,\ldots,|\bm{\mathcal K}| },\ (t_{\sigma(i)})_{ i =1,\ldots,|\bm{\mathcal K}| }  \Big)
$$
for any permutation $(\sigma(i))_{ i =1,\ldots,|\bm{\mathcal K}| }$ of $\{1,2,\ldots,|\bm{\mathcal K}|\}$.
These two properties allow us to re-evaluate step $(\triangle)$ as an integral over the domain 
$
\{ (t_{\bm j,k})_{\bm j, k}:\ t_{\bm j,k} \in (0,1) \forall \bm j,k  \}
$
instead of imposing the ordering constraint 
$
t_{\bm j, 1} < t_{\bm j, 2} < \ldots < t_{\bm j, \mathcal K_{\bm j}}
$
for each $\bm j$.
Specifically,
\begin{align}
    & \lim_{n \to \infty}
    \E\bigg[
        f\Big(
            h\big(\textbf{W}^{>\delta}_n, \textbf{T}^{>\delta}_n \big)
        \Big)
        \
        \bigg| 
        \ 
            E^{>\delta}_n(\bm{\mathcal K})
    \bigg]
    \label{proof: limit 1, proposition: weak convergence, LD for Levy MRV}
    \\ 
    & = 
   \Bigg[\prod_{ \bm j \in \powersetTilde{d} } \bigg( 
        \mathbf C_{\bm j}\big(\bar\R^{>\delta}(\bm j)\big)
    \bigg)^{- \mathcal K_{\bm j} } \Bigg]
    \nonumber
    \\ 
    &\qquad\qquad
    \cdot 
    \int 
    f\Bigg(
        \sum_{\bm j \in \powersetTilde{d}}\sum_{ k \in [\mathcal K_{\bm j}] }
        \bm w_{\bm j, k}\mathbbm{I}_{[t_{\bm j,k},1]}
    \Bigg)
    \bigtimes_{ \bm j \in \powersetTilde{d} }\bigtimes_{k \in [\mathcal K_{\bm j}]}
    \bigg(\mathbf C_{\bm j} \times \mathcal L_{(0,1)}\Big(d (\bm w_{\bm j, k}, t_{ \bm j,k })\Big)\bigg)
    \nonumber
    \\ 
    & = 
   \Bigg[\prod_{ \bm j \in \powersetTilde{d} } \bigg( 
        \mathbf C_{\bm j}\big(\bar\R^{>\delta}(\bm j)\big)
    \bigg)^{- \mathcal K_{\bm j} } \Bigg]
    \cdot 
    \Bigg( {\prod_{ \bm j \in \powersetTilde{d} } \mathcal K_{\bm j}!}  \Bigg)
    \cdot
    \breve{\mathbf C}_{\mathcal K}(f)
    \qquad
    \text{ by definitions in \eqref{def: measure C type j L, LD for Levy, MRV} and \eqref{proof, def: shorthand, LD for Levy}}.
    \nonumber
\end{align}
On the other hand, applying the claim~\eqref{claim, probability of event E, lemma: asymptotic law for large jumps, LD for Levy MRV} in Lemma~\ref{lemma: asymptotic law for large jumps, LD for Levy MRV},
we get
\begin{align}
    \lim_{n \to \infty}
    \frac{
        \P\big( E^{>\delta}_n(\mathcal K) \big)
    }{
        \breve \lambda_{\bm k}(n)
    }
    =
    \prod_{ \bm j \in \powersetTilde{d} }
            \frac{1}{\mathcal K_{\bm j} !}\cdot 
            \Big( 
                \mathbf C_{\bm j}\big( \bar\R^{>\delta}(\bm j) \big)
            \Big)^{\mathcal K_{\bm j}}.
    \label{proof: limit 2, proposition: weak convergence, LD for Levy MRV}
\end{align}
Combining \eqref{proof: limit 1, proposition: weak convergence, LD for Levy MRV} and \eqref{proof: limit 2, proposition: weak convergence, LD for Levy MRV}, we conclude the proof.
\end{proof}


\section{Proof for Large Deviations of Multivariate Heavy-Tailed Hawkes Processes}
\label{subsec: proof, theorem: LD for Hawkes}

\subsection{Adapting Theorem~\ref{theorem: LD for levy, MRV} to $\D[0,\infty)$}
\label{subsec: corollary, LD for Levy, unbounded domain}

We first establish sample path large deviations for L\'evy processes with $\MRV$ increments (i.e., a suitable modification of Theorem~\ref{theorem: LD for levy, MRV}) w.r.t.\ the $J_1$ topology on $\D[0,\infty)$.
Specifically, 
recall that the projection mapping
${\phi_t}: \D[0,\infty) \to \D[0,t]$ is defined by $\phi_t(\xi)(s) = \xi(s)$ for any $s \in [0,t]$,
and
let
\begin{align}
    \notationdef{notation-J1-metric-for-D-infty}{\dj{[0,\infty)}(\xi^{(1)},\xi^{(2)})}
    \delequal
    \int_0^\infty 
        e^{-t} \cdot \Big[ \dj{[0,t]}\Big( \phi_t\big(\xi^{(1)}),\ \phi_t\big(\xi^{(2)}\big)\Big)  \wedge 1 \Big]dt,
    \qquad
    \forall \xi^{(1)},\xi^{(2)} \in \D[0,\infty).
    \label{def: J1 metric on [0,infty]}
\end{align}
We need the following two lemmas.
First, Lemma~\ref{lemma: B bounded away under J1 infinity} adapts Lemmas~\ref{lemma: choice of bar epsilon and bar delta, LD for Levy MRV} and \ref{lemma: finiteness of measure breve C k} to $\D[0,\infty)$.

\begin{lemma}\label{lemma: B bounded away under J1 infinity}
\linksinthm{lemma: B bounded away under J1 infinity}
Let Assumption~\ref{assumption: ARMV in levy process} hold.
Let
    $\bm k = (k_{j})_{ j \in [d] } \in \mathbb Z_+^{ d } \setminus \{\bm 0\}$,
    and $\epsilon > 0$.
Given any Borel set $B$ of $(\D[0,\infty),\dj{\subInfty})$ that is bounded away from $\barDxepskT{\bm\mu_{\bm L}}{\epsilon}{\leqslant \bm k}{[0,\infty)}$ under $\dj{[0,\infty)}$,
\begin{enumerate}[(i)]
    \item
        there exist $T \in (0,\infty)$ and $\bar\delta \in (0,\infty)$ such that the following claim holds:
        for any 
        $\bm{\mathcal K} = (\mathcal K_{\bm j})_{\bm j \in \powersetTilde{d}} \in \mathbb A(\bm k)$
        and
        $\xi \in B \cap \shortbarDepsk{\epsilon}{\bm{\mathcal K}; \bm \mu_{\bm L}}[0,\infty)$, in the expression \eqref{expression for xi in breve D set} for $\xi$ (i.e., under $I = [0,\infty)$ and $\bm x = \bm \mu_{\bm L}$) , we have
        \begin{align}
            t_{\bm j, k} \leq T,\quad
            \norm{\bm w_{\bm j, k}} > \bar\delta,\text{ and }
                 \bm w_{\bm j,k}\notin \bar{\R}^d_\leqslant(\bm j,\bar\delta),
                  \qquad\qquad \forall \bm j \in \powersetTilde{d},\ k \in [\mathcal K_{\bm j}];
                  \label{claim, part i, lemma: B bounded away under J1 infinity}
        \end{align}

    \item
        $
            \sum_{ \bm{\mathcal K} \in \mathbb A(\bm k) }\breve{ \mathbf C }^{ \subInfty }_{ \bm{\mathcal K};\bm \mu_{\bm L} }(B) < \infty.
        $
\end{enumerate}
\end{lemma}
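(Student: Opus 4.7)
The plan is to adapt the contradiction arguments behind Lemma~\ref{lemma: choice of bar epsilon and bar delta, LD for Levy MRV} and Lemma~\ref{lemma: finiteness of measure breve C k} to the unbounded-horizon space $(\D[0,\infty),\dj{\subInfty})$, extracting a finite time horizon $T$ from the exponentially weighted integrated metric \eqref{def: J1 metric on [0,infty]}. Let $\eta \delequal \dj{\subInfty}\big(B, \barDxepskT{\bm\mu_{\bm L}}{\epsilon}{\leqslant \bm k}{[0,\infty)}\big) > 0$ denote the bounded-away distance. Since $|\mathbb A(\bm k)| < \infty$, it suffices to verify each condition in part (i) for an arbitrary fixed $\bm{\mathcal K} \in \mathbb A(\bm k)$, and the same $T$ and $\bar\delta$ will work for all allocations.

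For the horizon bound $t_{\bm j,k} \leq T$, I proceed by contradiction: if $\xi \in \shortbarDepsk{\epsilon}{\bm{\mathcal K};\bm\mu_{\bm L}}[0,\infty)$ contains a jump at some time $t_{\bm j,k} > T$ in its expression \eqref{expression for xi in breve D set}, then deleting this jump yields $\xi^*$ whose allocation has the $\bm j$-entry reduced by one; since $\alpha(\bm j) - 1 > 0$, the new cost is strictly less than $c(\bm k)$, so $\xi^* \in \shortbarDepsk{\epsilon}{\leqslant\bm k;\bm\mu_{\bm L}}[0,\infty)$. Because $\xi$ and $\xi^*$ agree on $[0,T]$ and $\dj{[0,t]}(\phi_t(\xi),\phi_t(\xi^*)) \wedge 1 \leq 1$ for all $t$,
\begin{align*}
    \dj{\subInfty}(\xi,\xi^*) \leq \int_T^\infty e^{-t}\,dt = e^{-T}.
\end{align*}
Choosing $T$ large enough that $e^{-T} < \eta$ contradicts $\xi \in B$, and this $T$ is uniform over $\xi$ and $\bm{\mathcal K}$. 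For the norm condition, if $\norm{\bm w_{\bm j,k}} \leq \bar\delta$, deleting that jump again produces $\xi^* \in \shortbarDepsk{\epsilon}{\leqslant\bm k;\bm\mu_{\bm L}}[0,\infty)$; since $\sup_t \norm{\xi(t) - \xi^*(t)} \leq \bar\delta$, one gets $\dj{\subInfty}(\xi,\xi^*) \leq \bar\delta$, a contradiction once $\bar\delta < \eta$. For the direction condition, if $\bm w_{\bm j,k} \in \bar\R^d_\leqslant(\bm j,\bar\delta)$ with $\bar\delta \leq \epsilon$, then some $\bm j' \neq \bm j$ with $\alpha(\bm j') \leq \alpha(\bm j)$ satisfies $\bm w_{\bm j,k} \in \bar\R^d(\bm j',\bar\delta) \subseteq \bar\R^d(\bm j',\epsilon)$; reclassifying this jump into index set $\bm j'$ gives $\xi \in \shortbarDepsk{\epsilon}{\bm{\mathcal K}''}$ with $\bm{\mathcal K}'' \notin \mathbb A(\bm k)$ and $\breve c(\bm{\mathcal K}'') \leq c(\bm k)$, placing $\xi$ itself in $\shortbarDepsk{\epsilon}{\leqslant\bm k;\bm\mu_{\bm L}}[0,\infty)$, a direct contradiction to $\xi \in B$. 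Thus any $\bar\delta < \eta \wedge \epsilon$ works.

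For part (ii), part (i) implies that $B \cap \shortbarDepsk{0}{\bm{\mathcal K};\bm\mu_{\bm L}}[0,\infty)$ contains only paths whose jumps are located in $[0,T]$, with norms exceeding $\bar\delta$ and directions bounded away from $\bar\R^d_\leqslant(\bm j,\bar\delta)$. Plugging these restrictions into the definition \eqref{def: measure C type j L, LD for Levy, MRV} (with $I = [0,\infty)$ and $\mathcal L_{[0,\infty)}$ restricted to $[0,T]$) gives
\begin{align*}
    \breve{\mathbf C}^{\subInfty}_{\bm{\mathcal K};\bm\mu_{\bm L}}(B)
    \leq
    \frac{1}{\prod_{\bm j \in \powersetTilde{d}}\mathcal K_{\bm j}!}
    \prod_{\bm j \in \powersetTilde{d}}
    \Big(T \cdot \mathbf C_{\bm j}\big(\{\bm w \in \R^d_+ : \norm{\bm w} > \bar\delta,\ \bm w \notin \bar\R^d_\leqslant(\bm j,\bar\delta)\}\big)\Big)^{\mathcal K_{\bm j}}.
\end{align*}
Each $\mathbf C_{\bm j}$-mass is finite by Definition~\ref{def: MRV} and the $\MRV^*$ condition in Assumption~\ref{assumption: ARMV in levy process}, exactly as in Lemma~\ref{lemma: finiteness of measure breve C k}. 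Summing over the finitely many allocations in $\mathbb A(\bm k)$ yields the finiteness claim.

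The main obstacle is the first condition of part (i), where the exponentially weighted structure of $\dj{\subInfty}$ genuinely enters: one needs the perturbation $\xi^*$ to sit in the \emph{strictly cheaper} cost class so that it belongs to $\shortbarDepsk{\epsilon}{\leqslant\bm k;\bm\mu_{\bm L}}[0,\infty)$, which is why removing (rather than merely re-locating) a late jump is essential. The remaining two conditions reduce cleanly to surgery arguments identical in spirit to those of Lemma~\ref{lemma: choice of bar epsilon and bar delta, LD for Levy MRV}.
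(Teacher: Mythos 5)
Your proposal is correct and follows essentially the same approach as the paper's: a surgery/contradiction argument on the jump representation, with the $e^{-T}$ tail of the weighted metric controlling the time-horizon bound, and the identical reduction to Lemma~\ref{lemma: finiteness of measure breve C k}--style finiteness of $\mathbf C_{\bm j}$-mass for part (ii). The only cosmetic difference is that for the time bound the paper removes \emph{all} jumps after time $T$ in one step, whereas you remove a single jump at $t_{\bm j^*,k^*}>T$; both versions yield $\dj{\subInfty}(\xi,\xi^*)\leq e^{-T}$ and land $\xi^*$ in the strictly cheaper cost class.
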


\begin{proof}\linksinpf{lemma: B bounded away under J1 infinity}
Fix some $\Delta>0$ such that 
$
\dj{\subInfty}\big(B, \barDxepskT{\bm\mu_{\bm L}}{\epsilon}{\leqslant \bm k}{[0,\infty)} \big) > \Delta.
$

\medskip
\noindent
$(i)$
Since $|\mathbb A(\bm k)| < \infty$,
it suffices to prove part $(i)$ for some fixed $\bm{\mathcal K} = (\mathcal K_{\bm j})_{\bm j \in \powersetTilde{d}} \in \mathbb A(\bm k)$.
We first show that Claim~\eqref{claim, part i, lemma: B bounded away under J1 infinity} holds for any $T > 0$ large enough such that 
\begin{align}
    e^{-T} < \Delta.
    \label{proof, part i, choice of T, lemma: B bounded away under J1 infinity}
\end{align}
We consider a proof by contradiction.
Suppose that for some $\xi \in B \cap \shortbarDepsk{\epsilon}{\bm{\mathcal K}; \bm \mu_{\bm L}}[0,\infty)$, in expression \eqref{expression for xi in breve D set}
there exist some $\bm j^* \in \powersetTilde{d}$ and $k^* \in [\mathcal K_{\bm j^*}]$ such that $t_{\bm j^*, k^*} > T$.
By defining
\begin{align}
    \xi^\prime(t) \delequal 
    \bm \mu_{\bm L}t
    + 
    \sum_{ \bm j \in \powersetTilde{d} }\sum_{ k = 1 }^{ \mathcal K_{\bm j} }\bm w_{\bm j, k} 
    \mathbbm{I}\{ t_{\bm j,k} \leq T   \}\cdot 
    \mathbbm{I}_{ [t_{\bm j, k},\infty) }(t),
    \qquad\forall t \geq 0,
    \nonumber
\end{align}
we construct a path $\xi^\prime$ by removing any jump in $\xi$ that arrives after time $T$.
On the one hand, the existence of $t_{\bm j^*, k^*} > T$ implies that we are removing at least one jump when defining $\xi^\prime$,
and hence
$
\xi^\prime \in \shortbarDepsk{\epsilon}{\leqslant \bm k;\bm \mu_{\bm L}}[0,\infty)
$
by definitions in \eqref{def: path with costs less than j jump set, drift x, LD for Levy MRV}.
On the other hand, since $\xi(t) = \xi^\prime(t)$ for each $t \in [0,T]$, we have 
$
\dj{\subInfty}(\xi,\xi^\prime) \leq \int_{t > T}e^{-t}dt = e^{-T} < \Delta. 
$
Due to $\xi \in B$, we arrive at the contradiction that $
\dj{\subInfty}\big(B, \barDxepskT{\bm \mu_{\bm L}}{\epsilon}{\leqslant \bm k}{[0,\infty)} \big) < \Delta,
$
which verifies claim~\eqref{claim, part i, lemma: B bounded away under J1 infinity} under our choice of $T$ in \eqref{proof, part i, choice of T, lemma: B bounded away under J1 infinity}.

Next, 
through a proof by contradiction,
we show that  Claim~\eqref{claim, part i, lemma: B bounded away under J1 infinity} holds under 
\begin{align}
    \bar\delta \in (0, \epsilon \wedge \Delta).
    \label{proof, part i, choice of bar delta, lemma: B bounded away under J1 infinity}
\end{align}
First, suppose that for some $\xi \in B \cap \shortbarDepsk{\epsilon}{\bm{\mathcal K}; \bm \mu_{\bm L}}[0,\infty)$, 
in expression \eqref{expression for xi in breve D set}
there exist some $\bm j^* \in \powersetTilde{d}$ and $k^* \in [\mathcal K_{\bm j^*}]$ such that $\norm{\bm w_{ \bm j^*,k^* }} \leq \bar\delta$.
By defining 
\begin{align}
    \xi^\prime(t) \delequal 
    \bm \mu_{\bm L}t
    + 
    \sum_{ \bm j \in \powersetTilde{d} }\sum_{ k = 1 }^{ \mathcal K_{\bm j} }\bm w_{\bm j, k} 
    \mathbbm{I}\Big\{ (\bm j,k) \neq (\bm j^*,k^*)  \Big\}\cdot 
    \mathbbm{I}_{ [t_{\bm j, k},\infty) }(t),
    \qquad\forall t \geq 0,
    \nonumber
\end{align}
we construct a path $\xi^\prime$ by removing the jump $\bm w_{ \bm j^*,k^* }$---arriving at time $t_{ \bm j^*,k^* }$---from $\xi$.
Again, we have 
$
\xi^\prime \in \shortbarDepsk{\epsilon}{\leqslant \bm k;\bm \mu_{\bm L}}[0,\infty).
$
On the other hand, due to $\norm{\bm w_{ \bm j^*,k^* }} \leq \bar\delta$,
we have $\sup_{t \geq 0}\norm{ \xi(t) - \xi^\prime(t) } \leq \bar\delta $, and hence
$
\dj{\subInfty}(\xi,\xi^\prime) \leq \int_0^\infty e^{-t }\cdot \bar\delta dt = \bar\delta < \Delta;
$
see \eqref{proof, part i, choice of bar delta, lemma: B bounded away under J1 infinity}.
We then arrive at the contradiction that 
$
\dj{\subInfty}\big(B, \barDxepskT{\bm \mu_{\bm L}}{\epsilon}{\leqslant \bm k}{[0,\infty)} \big) < \Delta.
$
Second,
suppose that for some $\xi \in B \cap \shortbarDepsk{\epsilon}{\bm{\mathcal K}; \bm\mu_{\bm L}}[0,\infty)$, in expression \eqref{expression for xi in breve D set}
there exist some $\bm j^* \in \powersetTilde{d}$ and $k^* \in [\mathcal K_{\bm j^*}]$ such that ${\bm w_{ \bm j^*,k^* }} \in \bar{\R}^d_\leqslant(\bm j^*,\bar\delta)
\subseteq
\bar{\R}^d_\leqslant(\bm j^*,\epsilon)
$ 
(due to our choice of $\bar\delta < \epsilon$ in \eqref{proof, part i, choice of bar delta, lemma: B bounded away under J1 infinity}).
Then, by definitions in \eqref{def: cone R d i basis S index alpha},
there exists
some $\bm j^\prime \in \powersetTilde{d}$ with 
$\bm j^\prime \neq \bm j^*$, $\alpha(\bm j^\prime) \leq \alpha(\bm j^*)$ such that $\bm w_{\bm j^*,k^*} \in \bar\R^d(\bm j^\prime,\epsilon)$.
Now, define
$\bm{\mathcal K}^\prime = (\mathcal K^\prime_{\bm j})_{ \bm j \in \powersetTilde{d}  }$ by
\begin{align*}
    \mathcal K^\prime_{\bm j}
    \delequal
    \begin{cases}
        \mathcal K_{\bm j^*} - 1 & \text{ if }\bm j = \bm j^*
        \\ 
        \mathcal K_{\bm j^\prime} + 1 & \text{ if }\bm j = \bm j^\prime 
        \\ 
        \mathcal K_{\bm j} & \text{ otherwise}
    \end{cases}.
\end{align*}
First, $\bm w_{\bm j^*,k^*} \in \bar\R^d(\bm j^\prime,\epsilon)$ implies that
$
\xi \in \shortbarDepsk{\epsilon}{\bm{\mathcal K}^\prime;\bm \mu_{\bm L}}[0,\infty).
$
Moreover, due to $\alpha(\bm j^\prime) \leq \alpha(\bm j^*)$,
we have $\breve c(\bm{\mathcal K}^\prime) \leq \breve c(\bm{\mathcal K}) = c(\bm k)$,
and hence
$
\shortbarDepsk{\epsilon}{\bm{\mathcal K}^\prime;\bm\mu_{\bm L}}[0,\infty)
\subseteq
\shortbarDepsk{\epsilon}{\leqslant \bm k;\bm\mu_{\bm L}}[0,\infty).
$
By $\xi \in B$,
we now arrive at 
$
B \cap \shortbarDepsk{\epsilon}{\leqslant \bm k;\bm \mu_{\bm L}}[0,\infty) \neq\emptyset,
$
which clearly contradicts $
\dj{\subInfty}\big(B, \barDxepskT{\bm \mu_{\bm L}}{\epsilon}{\leqslant \bm k}{[0,\infty)} \big) > \Delta.
$
This concludes the proof of claim~\eqref{claim, part i, lemma: B bounded away under J1 infinity} under our choice of $\bar\delta$ in \eqref{proof, part i, choice of bar delta, lemma: B bounded away under J1 infinity}.

\medskip
\noindent
$(ii)$
Again, it suffices to fix some $\bm{\mathcal K} = (\mathcal K_{\bm j})_{\bm j \in \powersetTilde{d}} \in \mathbb A(\bm k)$
and show that 
$
\breve{ \mathbf C }^{ \subInfty }_{ \bm{\mathcal K};\bm\mu_{\bm L} }(B) < \infty.
$
Let $T$ and $\bar\delta$ be characterized as in part $(i)$.
By the definition of $\breve{ \mathbf C }^{ \subInfty }_{ \bm{\mathcal K};\bm\mu_{\bm L} }$ in \eqref{def: measure C type j L, LD for Levy, MRV},
\begin{align*}
    \breve{ \mathbf C }^{ \subInfty }_{ \bm{\mathcal K};\bm\mu_{\bm L} }(B)
    & \leq
    \frac{1}{\prod_{ \bm j \in \powersetTilde{d} } \mathcal K_{\bm j}!} 
    \cdot
    \int 
    \mathbbm{I}\Bigg\{
        t_{\bm j, k} \leq T,\
            \norm{\bm w_{\bm j, k}} > \bar\delta,\text{ and }
                 \bm w_{\bm j,k}\notin \bar{\R}^d_\leqslant(\bm j,\bar\delta),
                  \quad \forall \bm j \in \powersetTilde{d},\ k \in [\mathcal K_{\bm j}]
    \Bigg\}
    \\
    &\qquad\qquad\qquad\qquad\qquad\qquad\qquad\qquad\qquad
    \bigtimes_{\bm j \in \powersetTilde{d}}
    \bigtimes_{ k \in [\mathcal K_{\bm j}] }\bigg( (\mathbf C_{ \bm j } \times  \mathcal L_{(0,\infty)})\Big(d (\bm w_{\bm j, k}, t_{ \bm j,k })\Big)\bigg)
    \\ 
    & \leq 
    \frac{1}{\prod_{ \bm j \in \powersetTilde{d} } \mathcal K_{\bm j}!} 
    \cdot 
    \prod_{ \bm j \in \powersetTilde{d} }
    \bigg(
        T \cdot 
        \underbrace{ \mathbf C_{\bm j}\Big(
            \big\{
                \bm w \in \R^d_+:\ \norm{\bm w} > \bar\delta,\ \bm w \notin \bar\R^d_\leqslant(\bm j,\bar\delta)
            \big\}
        \Big) }_{ \delequal c_{\bm j}(\bar\delta) }
    \bigg)^{ \mathcal K_{\bm j} }.
\end{align*}
Furthermore, it has been shown in the proof of Lemma~\ref{lemma: finiteness of measure breve C k} that $c_{\bm j}(\bar\delta) < \infty$.
This concludes the proof of part $(ii)$.
\end{proof}

Recall the definition of the projection mapping
${\phi_t}: \D[0,\infty) \to \D[0,t]$ in \eqref{def: projection mapping from D infty to D T},
i.e., $\phi_t(\xi)(s) = \xi(s)$ for any $s \in [0,t]$.
Lemma~\ref{lemma: bounded away condition under J1, from infty to T} connects the bounded-away-from-$\barDxepskT{\bm x}{\epsilon}{\leqslant \bm k}{[0,\infty)}$ condition (under $\dj{\subInfty}$) to that of 
$\barDxepskT{\bm x}{\epsilon}{\leqslant \bm k}{[0,T]}$ and $\dj{\subZeroT{T}}$.

\begin{lemma}\label{lemma: bounded away condition under J1, from infty to T}
\linksinthm{lemma: bounded away condition under J1, from infty to T}
Let
    $\bm k \in \mathbb Z_+^{ d } \setminus \{\bm 0\}$,
    $\epsilon > 0$, and $\bm x\in \R^d$.
Given a Borel set $B$ of $(\D[0,\infty),\dj{\subInfty})$ that is bounded away from $\barDxepskT{\bm x}{\epsilon}{\leqslant \bm k}{[0,\infty)}$ under $\dj{[0,\infty)}$,
it holds for any $T > 0$ large enough that 
$
\phi_T(B)
$
is bounded away from 
$\barDxepskT{\bm x}{\epsilon}{\leqslant \bm k}{[0,T]}$ under $\dj{[0,T]}$.
\end{lemma}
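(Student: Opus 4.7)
The plan is to argue by contradiction. Set $\Delta := \dj{[0,\infty)}(B, \barDxepskT{\bm x}{\epsilon}{\leqslant \bm k}{[0,\infty)}) > 0$ and pick $T$ large enough that $e^{-T} < \Delta/2$. Suppose for contradiction that $\phi_T(B)$ is not bounded away from $\barDxepskT{\bm x}{\epsilon}{\leqslant \bm k}{[0,T]}$ under $\dj{[0,T]}$. Then we can extract sequences $\xi_n \in B$ and $\eta_n \in \barDxepskT{\bm x}{\epsilon}{\leqslant \bm k}{[0,T]}$ with $\delta_n := \dj{[0,T]}(\phi_T(\xi_n), \eta_n) \to 0$. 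Writing each $\eta_n$ in the form \eqref{expression for xi in breve D set} with $I = [0,T]$, extend it by keeping its jumps and continuing the linear drift past $T$:
\begin{equation*}
    \tilde\eta_n(t) \delequal \bm x t + \sum_{\bm j \in \powersetTilde{d}}\sum_{k=1}^{\mathcal K_{n,\bm j}} \bm w_{n,\bm j,k}\,\mathbbm{I}_{[t_{n,\bm j,k},\infty)}(t),\qquad t \geq 0.
\end{equation*}
By \eqref{def: j jump path set with drft c, LD for Levy, MRV}, $\tilde\eta_n \in \barDxepskT{\bm x}{\epsilon}{\leqslant \bm k}{[0,\infty)}$, so it suffices to show $\dj{[0,\infty)}(\xi_n, \tilde\eta_n) \to e^{-T} < \Delta$ to reach a contradiction.

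Splitting the integral that defines $\dj{[0,\infty)}$ at $T$ and using both $\phi_t(\tilde\eta_n) = \phi_t(\eta_n)$ for $t \leq T$ and the trivial bound $\dj{[0,t]} \wedge 1 \leq 1$ on $(T, \infty)$ gives
\begin{equation*}
    \dj{[0,\infty)}(\xi_n, \tilde\eta_n) \leq \int_0^T e^{-t}\big[\dj{[0,t]}(\phi_t(\xi_n), \phi_t(\eta_n)) \wedge 1\big]\, dt + e^{-T}.
\end{equation*}
The main task is to show the first term vanishes. The key observation is that the paths $\eta_n$ have uniformly bounded jump counts: since $\alpha(\bm j) > 1$ for every $\bm j \in \powersetTilde{d}$ by Assumption~\ref{assumption: ARMV in levy process}, $\breve c(\bm{\mathcal K}) > 0$ for $\bm{\mathcal K} \neq \bm 0$, so $\{\bm{\mathcal K} : \breve c(\bm{\mathcal K}) \leq c(\bm k)\}$ is finite and every $\eta_n$ has at most some fixed $N < \infty$ jumps. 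Fixing near-optimal homeomorphisms $\lambda_n:[0,T]\to[0,T]$ realizing $\delta_n$, each jump time $t_{n,\bm j,k}$ of $\eta_n$ is matched to a jump time $s_{n,\bm j,k} := \lambda_n^{-1}(t_{n,\bm j,k})$ of $\phi_T(\xi_n)$ with $|s_{n,\bm j,k} - t_{n,\bm j,k}| \leq \delta_n$. Let $G_n$ be the union of the at most $N$ intervals $[s_{n,\bm j,k} \wedge t_{n,\bm j,k},\, s_{n,\bm j,k} \vee t_{n,\bm j,k}]$, so $|G_n| \leq N \delta_n \to 0$. For $t \in [0,T]\setminus G_n$ with $t > 2\delta_n$, $\phi_t(\xi_n)$ and $\phi_t(\eta_n)$ see exactly the same matched jumps, and a linear perturbation of $\lambda_n|_{[0,t]}$ on the $2\delta_n$-window $[t-2\delta_n, t]$ yields a homeomorphism $\mu_{n,t}$ of $[0,t]$ witnessing $\dj{[0,t]}(\phi_t(\xi_n), \phi_t(\eta_n)) = O(\delta_n)$. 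Combining with the trivial bound on $G_n$ gives the integral estimate $|G_n| + T \cdot O(\delta_n) \to 0$, as required.

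The main obstacle I anticipate is the endpoint perturbation that turns $\lambda_n|_{[0,t]}$ into a homeomorphism of $[0,t]$: the magnitudes of the jumps $\bm w_{n,\bm j,k}$ are not controlled a priori, so one must verify that the perturbation window $[t-2\delta_n, t]$ contains no jump of $\eta_n$---this uses precisely that $t \notin G_n$ and that $t$ is separated from all matched jump times---so that the uniform bound on $\norm{\xi_n(\mu_{n,t}(s)) - \eta_n(s)}$ remains $O(\delta_n)$ rather than absorbing an unbounded jump magnitude. The remaining ingredients---the contradiction setup, the extension construction, and the splitting at $T$---parallel the truncation argument already used for part $(i)$ of Lemma~\ref{lemma: B bounded away under J1 infinity}.
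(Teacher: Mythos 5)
Your argument is correct and takes essentially the same route as the paper's proof: both proceed by contradiction, extend the approximant $\eta_n$ linearly past $T$, split the exponentially weighted $\dj{\subInfty}$ integral at $T$, and control the $[0,T]$ part by perturbing the time-change $\lambda_n$ near $t$ for those $t$ that stay away from the finitely many jump times of $\eta_n$. Two minor differences, neither substantive: the paper fixes a small $\delta$ (chosen in advance relative to $\Delta$, $\norm{\bm x}$, and the jump count) and passes to a subsequence on which all $\eta_n$ share a single jump configuration $\bm{\mathcal K}$, rather than working with a vanishing $\delta_n$; and the pulled-back times $s_{n,\bm j,k} = \lambda_n^{-1}(t_{n,\bm j,k})$ need not be discontinuity points of $\phi_T(\xi_n)$, so $G_n$ as you define it is not quite the correct excluded set. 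But, as you correctly note in your closing paragraph, what the endpoint-perturbation actually requires is only that $\eta_n$ be jump-free in an $O(\delta_n)$-window around $t$; replacing $G_n$ by $O(\delta_n)$-neighborhoods of the $t_{n,\bm j,k}$ keeps the excluded set of measure $O(\delta_n)$ and the integral bound unchanged.
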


\begin{proof}\linksinpf{lemma: bounded away condition under J1, from infty to T}
Fix some $\Delta>0$ such that 
$
\dj{\subInfty}\big(B, \barDxepskT{\bm x}{\epsilon}{\leqslant \bm k}{[0,\infty)} \big) > \Delta.
$
Using a proof by contradiction,
we show that the claim holds for any $T > 0$ large enough such that 
\begin{align}
    e^{-T} < \Delta/2.
    \label{proof: choice of T, lemma: bounded away condition under J1, from infty to T}
\end{align}
Specifically,
suppose there are sequences $ \xi_n \in \phi_T(B)$, $\xi^\prime_n \in \barDxepskT{\bm x}{\epsilon}{\leqslant \bm k}{[0,T]}$
such that 
\begin{align}
    \lim_{n \to \infty}\dj{[0,T]}(\xi_n,\xi^\prime_n) = 0.
    \label{proof, implication of proof by contradicton, lemma: bounded away condition under J1, from infty to T}
\end{align}
Then, 
considering the definition of $\barDxepskT{\bm x}{\epsilon}{\leqslant \bm k}{[0,T]}$ in  \eqref{def: path with costs less than j jump set, drift x, LD for Levy MRV}
and
by picking a sub-sequence if needed,
we can w.l.o.g.\ assume the existence of $\bm{\mathcal K} = (\mathcal K_{\bm j})_{\bm j \in \powersetTilde{d}} \in \mathbb Z_+^{ \powersetTilde{d} }$ such that
$\bm{\mathcal K} \in \mathbb A(\bm k),\ \breve c(\bm{\mathcal K}) \leq c(\bm k)$,
and
$
\xi^\prime_n \in \barDxepskT{\bm x}{\epsilon}{\bm{\mathcal K}}{[0,T]}
$
for each $n \geq 1$.
In particular, due to $\bm k \neq \bm 0$, we must have $\bm{\mathcal K} \neq \bm 0$.
Note that
$
|\bm{\mathcal K}|\delequal  \sum_{ \bm j \in \powersetTilde{d}  }\mathcal K_{\bm j}
$
is the number of jumps for any path in $\barDxepskT{\bm x}{\epsilon}{ \bm{\mathcal K} }{[0,T]}$.
Since $\xi^\prime_n \in  \barDxepskT{\bm x}{\epsilon}{\bm{\mathcal K} }{[0,T]}$ for each $n$,
by \eqref{expression for xi in breve D set} we know that $\xi^\prime_n$ admits the form
\begin{align}
    \xi^\prime_n(t) 
    = t \bm x
    + \sum_{ \bm j \in \powersetTilde{d} }\sum_{ k = 1 }^{ \mathcal K_{\bm j} }\bm w^{(n)}_{\bm j, k} \mathbbm{I}_{ [t^{(n)}_{\bm j, k},T]}(t),
    \qquad\forall t \in [0,T],
    \nonumber
\end{align}
where 
each $\bm w^{(n)}_{\bm j,k}$ belongs to the cone $\bar{\R}^d(\bm j,\epsilon)$.
Next, we fix some 
\begin{align}
    \delta \in \bigg(0, \frac{\Delta}{
     2 + 4\norm{\bm x} + 4|\bm{\mathcal K}|
    }\bigg).
    \label{proof, choice of delta, lemma: bounded away condition under J1, from infty to T}
\end{align}
By \eqref{proof, implication of proof by contradicton, lemma: bounded away condition under J1, from infty to T},
it holds for any $n$ large enough that we have
$
\dj{[0,T]}(\xi_n,\xi^\prime_n) < \delta. 
$
In other words, for each $n$ large enough there exists $\lambda_n$---a homeomorphism on $[0,T]$---such that 
\begin{align}
    \sup_{t \in [0,T]}| \lambda_n(t) -t   | \vee \norm{ \xi^\prime_n\big(\lambda_n(t)\big) - \xi_n(t)  } < \delta.
    \label{proof, J1 norm on compact set, lemma: bounded away condition under J1, from infty to T}
\end{align}
Under such large $n$,
we consider $t \in (0,T)$ such that
\begin{align}
    t \notin [t^{(n)}_{\bm j,k} - \delta,t^{(n)}_{\bm j,k} + \delta],
    \qquad \forall \bm j \in \powersetTilde{d},\ k \in [\mathcal K_{\bm j}].
    \label{proof, interval for regular t, lemma: bounded away condition under J1, from infty to T}
\end{align}
By \eqref{proof, interval for regular t, lemma: bounded away condition under J1, from infty to T}, $\xi^\prime_n(\cdot)$ does note make any jump over the interval $[t - \delta, t + \delta] \cap [0,T]$,
meaning that, over this interval, the path $\xi^\prime_n(\cdot)$ is a linear function with slope $\bm x$.
Then, by \eqref{proof, J1 norm on compact set, lemma: bounded away condition under J1, from infty to T}, we get
\begin{align}
    \sup_{ t_1,t_2 \in [t-\delta,t+\delta] \cap [0,T]  }\norm{ \xi_n(t_1) - \xi^\prime_n(t_2)   } \leq \delta + \norm{\bm x} \cdot 2\delta.
    \nonumber
\end{align}
Therefore, by modifying $\lambda_n(\cdot)$ only over $[t-\delta,t]$,
one can obtain $\hat \lambda^{(t)}_n$---a homeomorphism over $[0,t]$---such that
\begin{align}
    \sup_{u \in [0,t]}| \hat\lambda^{(t)}_n(u) -u   | \vee \norm{ \xi^\prime_n\big(\hat\lambda^{(t)}_n(u)\big) - \xi_n(u)  } < \delta\cdot(1 + 2\norm{\bm x}).
    \nonumber
\end{align}
Applying this bound on $\dj{[0,t]}( \phi_t(\xi_n),\phi_t(\xi^\prime_n)) \wedge 1$ for any $t \in (0,T)$ satisfying \eqref{proof, interval for regular t, lemma: bounded away condition under J1, from infty to T},
and applying the trivial upper bound $1$ for any $t$ where \eqref{proof, interval for regular t, lemma: bounded away condition under J1, from infty to T} doest not hold,
we get (for all $n$ large enough)
\begin{align}
    & \int_0^T e^{-t}
    \cdot \Big[ \dj{[0,t]}\Big( \phi_t\big(\xi_n),\ \phi_t\big(\xi^\prime_n\big)\Big)  \wedge 1 \Big]dt
    \label{proof, ineq 1, lemma: bounded away condition under J1, from infty to T}
    \\ 
    & \leq 
    \int_{ t \in [0,T]:\  t \notin [t^{(n)}_{\bm j,k} - \delta,t^{(n)}_{\bm j,k} + \delta]
    \ \forall \bm j \in \powersetTilde{d},\ k \in [\mathcal K_{\bm j}]  }
    e^{-t} \cdot 
    \delta\cdot(1 + 2\norm{\bm x})dt 
    +
    |\bm{\mathcal K}| \cdot 2\delta
    \nonumber
    \\ 
    & \leq 
    \delta \cdot \big(
        1 + 2\norm{\bm x} + 2|\bm{\mathcal K}|
    \big)
    < \Delta/2 
    \qquad
    \text{by \eqref{proof, choice of delta, lemma: bounded away condition under J1, from infty to T}}.
    \nonumber
\end{align}
Furthermore, since $\xi_n \in \phi_t(B)$, for each $n \geq 1$ there is $\tilde \xi_n \in B$ such that $\tilde \xi_n(t) = \xi_n(t)\ \forall t \in [0,T]$.
Also, by extending the path $\xi^\prime_n$ linearly with slope $\bm x$ over $(T,\infty)$,
we obtain $\tilde \xi^\prime_n \in \barDxepskT{\bm x}{\epsilon}{\bm{\mathcal K}}{[0,\infty)}$
such that $\tilde \xi^\prime_n(t) = \xi^\prime_n(t)\ \forall t \in [0,T]$.
Following from the inequality in display \eqref{proof, ineq 1, lemma: bounded away condition under J1, from infty to T},
\begin{align*}
    \dj{[0,\infty)}(\tilde\xi_n,\tilde \xi^\prime_n)
    & < \frac{\Delta}{2} + 
    \int_T^\infty e^{-t}
    \cdot \Big[ \dj{[0,t]}\Big( \phi_t\big(\tilde\xi_n),\ \phi_t\big(\tilde\xi^\prime_n\big)\Big)  \wedge 1 \Big]dt
    \\ 
    & < \Delta 
    \qquad
    \text{by \eqref{proof: choice of T, lemma: bounded away condition under J1, from infty to T}},
\end{align*}
which clearly contradicts $
\dj{\subInfty}\big(B, \barDxepskT{\bm x}{\epsilon}{\leqslant \bm k}{[0,\infty)} \big) > \Delta.
$
This concludes the proof.
\end{proof}

Now, we are ready to prove Theorem~\ref{corollary: LD for Levy with MRV},
which adapts Theorem~\ref{theorem: LD for levy, MRV} to $\big( \D[0,\infty), \dj{[0,\infty)} \big)$.

\begin{theorem}\label{corollary: LD for Levy with MRV}
\linksinthm{corollary: LD for Levy with MRV}
    Let Assumption~\ref{assumption: ARMV in levy process} hold.
    Let
    $\bm k = (k_{j})_{j \in [d] } \in \mathbb Z_+^{ d} \setminus \{\bm 0\}$,
    and let $B$ be a Borel set of $\D[0,\infty)$ equipped with $J_1$ topology (i.e., induced by $\dj{\subInfty}$).
    Suppose that $B$  is bounded away from $\barDxepskT{\bm\mu_{\bm L}}{\epsilon}{\leqslant \bm k}{[0,\infty)}$ under $\dj{[0,\infty)}$ for some $\epsilon > 0$.
    Then,
    \begin{align}
        \sum_{ \bm{\mathcal K} \in \mathbb A(\bm k) }\breve{\mathbf C}_{\bm{\mathcal K};\bm \mu_{\bm L}}^{ _{[0,\infty)} }(B^\circ)
        \leq 
        \liminf_{n \to \infty}
        \frac{
        \P\big(\bar{\bm L}^{ _{[0,\infty)} }_n \in B\big)
        }{
            \breve \lambda_{\bm k}(n)
        }
        \leq 
        \limsup_{n \to \infty}
        \frac{
        \P\big(\bar{\bm L}^{ _{[0,\infty)} }_n \in B\big)
        }{
            \breve \lambda_{\bm k}(n)
        }
        \leq 
        \sum_{ \bm{\mathcal K} \in \mathbb A(\bm k) }\breve{\mathbf C}_{\bm{\mathcal K};\bm \mu_{\bm L}}^{ _{[0,\infty)} }(B^-) < \infty,
        \label{claim, finite index, corollary: LD for levy, MRV}
    \end{align}
    where
    ${\breve \lambda_{\bm k}(n)}$,
    $\bm \mu_{\bm L}$, 
    $\breve{\mathbf C}_{\bm{\mathcal K};\bm \mu_{\bm L}}^{ _{[0,\infty)} }$,
    and $\barDxepskT{\bm\mu_{\bm L}}{\epsilon}{\leqslant \bm k}{[0,\infty)}$
    are defined in 
    \eqref{def: scale function for Levy LD MRV},
    \eqref{def: expectation of levy process}, 
    \eqref{def: measure C type j L, LD for Levy, MRV},
    and \eqref{def: path with costs less than j jump set, drift x, LD for Levy MRV}, 
     respectively.
\end{theorem}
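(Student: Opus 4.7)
The plan is to deduce Theorem~\ref{corollary: LD for Levy with MRV} from the compact-horizon Theorem~\ref{theorem: LD for levy, MRV} by truncating to $[0,T]$ for large $T$ and exploiting the elementary bound $\dj{\subInfty}(\xi,\eta)\le e^{-T}$ whenever $\xi$ and $\eta$ agree on $[0,T]$. I will work with the linear-extension map $\iota_T:\D[0,T]\to\D[0,\infty)$ defined by $\iota_T(\xi')(t)=\xi'(t)$ for $t\le T$ and $\iota_T(\xi')(t)=\xi'(T)+\bm\mu_{\bm L}(t-T)$ for $t>T$. A short $J_1$ calculation---using that any $J_1$ homeomorphism on $[0,T]$ must fix the endpoint $T$, so $|\xi'_1(T)-\xi'_2(T)|\le\dj{[0,T]}(\xi'_1,\xi'_2)$---shows that $\iota_T$ is Lipschitz from $(\D[0,T],\dj{[0,T]})$ into $(\D[0,\infty),\dj{\subInfty})$, and that $\dj{\subInfty}(\xi,\iota_T(\phi_T(\xi)))\le e^{-T}$ for every $\xi$. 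Lemma~\ref{lemma: B bounded away under J1 infinity}(ii) immediately supplies the finiteness $\sum_{\bm{\mathcal K}}\breve{\mathbf C}^{\subInfty}_{\bm{\mathcal K};\bm\mu_{\bm L}}(B^-)<\infty$, and Lemma~\ref{lemma: bounded away condition under J1, from infty to T} provides a threshold $T_0$ such that $\phi_T(B)$ is bounded away from $\barDxepskT{\bm\mu_{\bm L}}{\epsilon}{\leqslant \bm k}{[0,T]}$ under $\dj{[0,T]}$ for every $T>T_0$; the rest of the argument fixes a generic such $T$ and lets $T\to\infty$ at the end.

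For the upper bound, the inclusion $\{\bar{\bm L}^{\subInfty}_n\in B\}\subseteq\{\bar{\bm L}^{[0,T]}_n\in\phi_T(B)\}$ combined with Theorem~\ref{theorem: LD for levy, MRV} on $[0,T]$ gives $\limsup_n\P(\bar{\bm L}^{\subInfty}_n\in B)/\breve\lambda_{\bm k}(n)\le\sum_{\bm{\mathcal K}}\breve{\mathbf C}^{\subZeroT{T}}_{\bm{\mathcal K};\bm\mu_{\bm L}}(\phi_T(B)^-)$. A direct comparison of the integral representations in \eqref{def: measure C type j L, LD for Levy, MRV}, restricted via Lemma~\ref{lemma: B bounded away under J1 infinity}(i) to paths whose jumps all lie in $[0,T]$, will yield the key measure identity $\breve{\mathbf C}^{\subZeroT{T}}_{\bm{\mathcal K};\bm\mu_{\bm L}}(A')=\breve{\mathbf C}^{\subInfty}_{\bm{\mathcal K};\bm\mu_{\bm L}}(\iota_T(A'))$ for any Borel $A'\subseteq\D[0,T]$. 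Applying this to $A'=\phi_T(B)^-$, together with the elementary inclusion $\iota_T(\phi_T(B)^-)\subseteq B^{e^{-T}}$ derived from $\dj{\subInfty}(\iota_T(\phi_T(\xi)),\xi)\le e^{-T}$, gives $\breve{\mathbf C}^{\subZeroT{T}}_{\bm{\mathcal K};\bm\mu_{\bm L}}(\phi_T(B)^-)\le\breve{\mathbf C}^{\subInfty}_{\bm{\mathcal K};\bm\mu_{\bm L}}(B^{e^{-T}})$; letting $T\to\infty$ and invoking continuity of measure from above (legitimate by the finiteness noted above) collapses $B^{e^{-T}}$ down to $B^-$.

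For the lower bound, I set $\tilde B_T:=\iota_T^{-1}(B_{e^{-T}})$, which is open in $(\D[0,T],\dj{[0,T]})$ by continuity of $\iota_T$ (so $\tilde B_T^\circ=\tilde B_T$) and is bounded away from $\barDxepskT{\bm\mu_{\bm L}}{\epsilon}{\leqslant\bm k}{[0,T]}$ since $\tilde B_T\subseteq\phi_T(B)$. On the event $\{\bar{\bm L}^{[0,T]}_n\in\tilde B_T\}$, combining $\iota_T(\bar{\bm L}^{[0,T]}_n)\in B_{e^{-T}}$ with $\dj{\subInfty}(\bar{\bm L}^{\subInfty}_n,\iota_T(\bar{\bm L}^{[0,T]}_n))\le e^{-T}$ yields $\dj{\subInfty}(\bar{\bm L}^{\subInfty}_n,B^c)>0$, i.e., $\bar{\bm L}^{\subInfty}_n\in B^\circ\subseteq B$. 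Theorem~\ref{theorem: LD for levy, MRV} and the same measure identity then give $\liminf_n\P(\bar{\bm L}^{\subInfty}_n\in B)/\breve\lambda_{\bm k}(n)\ge\sum_{\bm{\mathcal K}}\breve{\mathbf C}^{\subInfty}_{\bm{\mathcal K};\bm\mu_{\bm L}}(\iota_T(\D[0,T])\cap B_{e^{-T}})$. Any $\xi$ in the support of $\breve{\mathbf C}^{\subInfty}_{\bm{\mathcal K};\bm\mu_{\bm L}}$ that lies in $B^\circ$ has finitely many jumps at finite times and satisfies $\dj{\subInfty}(\xi,B^c)>0$, so it belongs to $\iota_T(\D[0,T])\cap B_{e^{-T}}$ for every sufficiently large $T$; continuity of measure from below along this monotone family in $T$ produces the lower bound by $B^\circ$.

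The principal obstacle is the careful transport of interior and closure operations across the two pairs $(\D[0,T],\dj{[0,T]})$ and $(\D[0,\infty),\dj{\subInfty})$, most delicately on the lower-bound side. The Lipschitz continuity of $\iota_T$---needed to guarantee openness of $\tilde B_T$---is not obvious from the asymmetric integral definition of $\dj{\subInfty}$ and requires the endpoint-fixing property of $J_1$ homeomorphisms. Combined with the support restriction from Lemma~\ref{lemma: B bounded away under J1 infinity}(i), it yields the bijective correspondence underlying the measure identity between $\breve{\mathbf C}^{\subZeroT{T}}$ and $\breve{\mathbf C}^{\subInfty}$; everything else is then standard measure-theoretic continuity.
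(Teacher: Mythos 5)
Your proposal is correct and takes a genuinely different route from the paper. The paper proves this result indirectly via the Portmanteau theorem for $\mathbb{M}$-convergence (Theorem~\ref{portmanteau theorem M convergence}): it reduces the claim to showing $\lim_n \E f(\bar{\bm L}_n^{\subInfty})/\breve\lambda_{\bm k}(n) = \sum_{\bm{\mathcal K}}\breve{\mathbf C}_{\bm{\mathcal K};\bm\mu_{\bm L}}^{\subInfty}(f)$ for every \emph{uniformly continuous} bounded $f$ with support bounded away from the relevant cone set, then truncates $f$ to $[0,t]$ via a functional $f_t = f \circ \phi_t^{\text{inv}}$ (your $\iota_T$), uses uniform continuity and the indicator bound $|f(\xi)-f(\hat\phi_t(\xi))|\le\epsilon\,\mathbbm{I}\{\phi_t(\xi)\in\phi_t(B)\}$ to control the truncation error, and applies Theorem~\ref{theorem: LD for levy, MRV} on $\D[0,t]$. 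You instead work directly with Borel sets and inclusions, establishing the same measure identity $\breve{\mathbf C}^{\subZeroT{T}}_{\bm{\mathcal K};\bm\mu_{\bm L}}(A')=\breve{\mathbf C}^{\subInfty}_{\bm{\mathcal K};\bm\mu_{\bm L}}(\iota_T(A'))$ but applying it to projected sets $\phi_T(B)^-$ and $\iota_T^{-1}(B_{e^{-T}})$ rather than test functions. Both proofs lean on the same auxiliary Lemmas~\ref{lemma: B bounded away under J1 infinity} and \ref{lemma: bounded away condition under J1, from infty to T}. Your set-based route is more elementary in the sense that it never invokes the $\mathbb M$-convergence formalism, but it pays for this by having to track interior/closure operations across $(\D[0,T],\dj{[0,T]})$ and $(\D[0,\infty),\dj{\subInfty})$, which the paper's test-function machinery elegantly sidesteps.

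One technical point in your argument is overstated: you claim $\iota_T$ is \emph{Lipschitz}, justifying it only via the endpoint-fixing property of $J_1$ reparametrizations, which controls the contribution to $\dj{\subInfty}$ from $s > T$ but not from $s < T$. For $s < T$ the projection $\phi_s$ is not a Lipschitz (or even everywhere-continuous) map between the $J_1$ spaces, so a uniform Lipschitz bound is not immediate. What you actually need, and what is true, is \emph{continuity} of $\iota_T$: if $\xi'_n \to \xi'$ in $\dj{[0,T]}$, then for all continuity points $s$ of $\xi'$ (hence all but countably many $s < T$) one has $\dj{[0,s]}(\phi_s(\xi'_n),\phi_s(\xi'))\to 0$, and dominated convergence under the $e^{-s}\,[\,\cdot\wedge1]$ integrand then gives $\dj{\subInfty}(\iota_T(\xi'_n),\iota_T(\xi'))\to 0$. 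Continuity is exactly what openness of $\tilde B_T$ and closedness-preservation in $\iota_T(\phi_T(B)^-)\subseteq B^{e^{-T}}$ require, so your conclusions stand, but the Lipschitz claim as stated would need to be repaired or weakened. A second minor point: the "continuity of measure from above" step needs finiteness of $\breve{\mathbf C}^{\subInfty}_{\bm{\mathcal K};\bm\mu_{\bm L}}(B^{e^{-T_0}})$ for some finite $T_0$, which follows not from the finiteness of $\breve{\mathbf C}^{\subInfty}_{\bm{\mathcal K};\bm\mu_{\bm L}}(B^-)$ but from applying Lemma~\ref{lemma: B bounded away under J1 infinity}(ii) to the enlarged set $B^{e^{-T_0}}$ (which is still bounded away for $T_0$ large); this should be stated explicitly.
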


\begin{proof}\linksinpf{corollary: LD for Levy with MRV}
Part $(ii)$ of Lemma~\ref{lemma: B bounded away under J1 infinity} confirms that 
$\sum_{ \bm{\mathcal K} \in \mathbb A(\bm k) }\breve{\mathbf C}_{ \bm{\mathcal K};\bm \mu_{\bm L}}^{ _{[0,\infty)} } \in \mathbb M\big( \D[0,\infty) \setminus \barDxepskT{\bm \mu_{\bm L}}{\epsilon}{\leqslant \bm k}{[0,\infty)} \big)$
(under $\dj{[0,\infty)}$) for any $\epsilon > 0$,
which verifies the finite upper bound in \eqref{claim, finite index, corollary: LD for levy, MRV}.
Then,
by Theorem~\ref{portmanteau theorem M convergence} (the Portmanteau Theorem for $\mathbb M$-convergence),
it suffices to prove the following claim:
for any $f:\D[0,\infty) \to [0,\infty)$ \emph{uniformly continuous} (w.r.t.\ $(\D[0,\infty),\dj{[0,\infty)})$) and bounded (i.e., $\norm{f} = \sup_{\xi \in \D[0,\infty)} f(\xi) < \infty$)
such that
$B \delequal \text{supp}(f)$ is bounded away from $\barDxepskT{\bm \mu_{\bm L}}{\epsilon}{\leqslant \bm k}{[0,\infty)}$
under $\dj{\subInfty}$ for some $\epsilon > 0$, we have
\begin{align}
    \lim_{n \to \infty}
        \frac{ 
            \E f(\bar{\bm L}_n^{ \subInfty })
        }{\breve \lambda_{\bm k}(n)}
        =
        \sum_{ \bm{\mathcal K} \in \mathbb A(\bm k) }\breve{\mathbf C}_{\bm{\mathcal K};\bm \mu_{\bm L}}^{ _{[0,\infty)} }(f).
        \label{proof, goal 1, corollary: LD for Levy with MRV}
\end{align}
To proceed, for each $t \in [0,\infty)$ we define the mapping $f_t:\D[0,t] \to \R$ by 
(note that in the display~\eqref{proof, def phi t inverse, corollary: LD for Levy with MRV} below, the path $\xi$ is an element of $\D[0,t]$)
\begin{align}
    f_t(\xi) \delequal f\big( \phi^\text{inv}_t(\xi)\big),
    \quad \text{where }
    \phi^\text{inv}_t(\xi)(s)\delequal \xi(s\wedge t) + [0 \vee (s - t)]\cdot \bm\mu_{\bm L} \ \forall s \geq 0.
    \label{proof, def phi t inverse, corollary: LD for Levy with MRV}
\end{align}
That is, $\phi^\text{inv}_t$ extends the path $\xi\in\D[0,t]$ to $\D[0,\infty)$ linearly with slope $\bm\mu_{\bm L}$  over $(t,\infty)$, which, in a way, ``inverts'' the projection mapping $\phi_t$ in \eqref{def: projection mapping from D infty to D T}.
Now, we make a few observations.
\begin{itemize}
    \item 
        Define $\hat \phi_t: \D[0,\infty) \to \D[0,\infty)$ by 
\begin{align*}
    \hat \phi_t(\xi)(s) \delequal \xi(t\wedge s) + [0 \vee (s - t)]\cdot \bm\mu_{\bm L},
    \qquad
    \forall \xi \in \D[0,\infty),\ s \geq 0.
\end{align*}
By the definition of $\dj{[0,\infty)}$ in \eqref{def: J1 metric on [0,infty]},
for any $t > 0$ and $\xi \in \D[0,\infty)$, we have
$
\dj{[0,\infty)}
    \big(
        \hat \phi_t(\xi),\xi
    \big)
    \leq \int_{s > t  } e^{-s}ds = e^{-t}.
$
Then, by the uniform continuity of the function $f$ fixed in \eqref{proof, goal 1, corollary: LD for Levy with MRV}, given $\epsilon > 0$, there exists $T = T(\epsilon) > 0$ such that
\begin{align}
    \big|
        f(\xi) - f\big( \hat\phi_t(\xi) \big)
    \big|
    < \epsilon,
    \qquad\forall t \geq T,\ \xi \in \D[0,\infty).
    \label{proof: ineq 1, corollary: LD for Levy with MRV}
\end{align}

    \item 
        Recall that $B = \text{supp}(f)$, and note that
        \begin{align}
            f(\xi) = 0\text{ and }f\big(\hat \phi_t(\xi)\big) = 0,
            \qquad \forall t > 0,\ \xi \in \D[0,\infty)\text{ such that }\phi_t(\xi) \notin \phi_t(B).
            \label{proof: property 2, corollary: LD for Levy with MRV}
        \end{align}

    \item 
        Combining \eqref{proof: ineq 1, corollary: LD for Levy with MRV} and \eqref{proof: property 2, corollary: LD for Levy with MRV},
        we know that given $\epsilon > 0$, there is $T = T(\epsilon) \in (0,\infty)$ such that
        \begin{align}
            \big|
                f(\xi) - f\big( \hat\phi_t(\xi) \big)
            \big|
            \leq \epsilon \cdot \mathbbm{I}\big\{ \phi_t(\xi) \in \phi_t(B)  \big\},
            \qquad\forall t \geq T,\ \xi \in \D[0,\infty).
            \label{proof, observation about phi t, implication, corollary: LD for Levy with MRV}
        \end{align}

    \item 
        Recall that in \eqref{proof, def phi t inverse, corollary: LD for Levy with MRV}, we use $\xi$ to denote an element of $\D[0,t]$, which differs from our convention henceforth that $\xi \in \D[0,\infty)$.
        By definitions in \eqref{proof, def phi t inverse, corollary: LD for Levy with MRV},
        it holds for any $t > 0$ and $\xi \in \D[0,\infty)$ that
        $
        f_t\big(\phi_t(\xi)\big) = f\big(\hat\phi_t(\xi)\big).
        $
        Then, by \eqref{proof, observation about phi t, implication, corollary: LD for Levy with MRV}, given $\epsilon > 0$ there exists $T = T(\epsilon) > 0$ such that 
        for any $t \geq T$,
        \begin{align}
        \limsup_{n \to \infty}\frac{ 
            \E f(\bar{\bm L}_n^{ \subInfty })
        }{\breve \lambda_{\bm k}(n)}
        & \leq 
        \lim_{n \to \infty} \frac{\E f_t(\bar{\bm L}_n^{ \subZeroT{t} })
        }{\breve \lambda_{\bm k}(n)}
        +
        \epsilon \cdot 
        \limsup_{n \to \infty} \frac{\P(\bar{\bm L}_n^{ \subZeroT{t}} \in \phi_t(B) )
        }{\breve \lambda_{\bm k}(n)},
        \label{proof, UB for goal 1, corollary: LD for Levy with MRV}
        \\
        \liminf_{n \to \infty}\frac{ 
            \E f(\bar{\bm L}_n^{ \subInfty })
        }{\breve \lambda_{\bm k}(n)}
        & \geq 
        \lim_{n \to \infty} \frac{\E f_t(\bar{\bm L}_n^{ \subZeroT{t} })
        }{\breve \lambda_{\bm k}(n)}
        -
        \epsilon \cdot 
        \limsup_{n \to \infty} \frac{\P(\bar{\bm L}_n^{ \subZeroT{t}} \in \phi_t(B) )
        }{\breve \lambda_{\bm k}(n)}.
        \label{proof, LB for goal 1, corollary: LD for Levy with MRV}
        \end{align}
\end{itemize}
Suppose we can show that
\begin{align}
    & \lim_{t \to \infty}\lim_{n \to \infty} \frac{\E f_t(\bar{\bm L}_n^{ \subZeroT{t} })
        }{\breve \lambda_{\bm k}(n)} 
        = 
        \sum_{ \bm{\mathcal K} \in \mathbb A(\bm k) }\breve{\mathbf C}_{\bm{\mathcal K};\bm \mu_{\bm L}}^{ _{[0,\infty)} }(f),
        \label{proof, intermediate goal 1,  corollary: LD for Levy with MRV}
    \\ 
        & \limsup_{t \to \infty}
    \limsup_{n \to \infty} \frac{\P(\bar{\bm L}_n^{ \subZeroT{t}} \in \phi_t(B) )
        }{\breve \lambda_{\bm k}(n)} < \infty.
        \label{proof, intermediate goal 2,  corollary: LD for Levy with MRV}
\end{align}
Then, by plugging these results into  \eqref{proof, UB for goal 1, corollary: LD for Levy with MRV}--\eqref{proof, LB for goal 1, corollary: LD for Levy with MRV} and sending $t \to \infty$, $\epsilon\downarrow 0$,
we conclude the proof of claim~\eqref{proof, goal 1, corollary: LD for Levy with MRV}.
Now, it only remains to prove claims~\eqref{proof, intermediate goal 1,  corollary: LD for Levy with MRV} and \eqref{proof, intermediate goal 2,  corollary: LD for Levy with MRV}.

\medskip
\noindent
\textbf{Proof of Claim \eqref{proof, intermediate goal 1,  corollary: LD for Levy with MRV}}.
For any $t > 0$, the function $f_t$ is continuous and bounded.
Besides, note that 
$f_t(\xi) = 0$ whenever $\xi \notin \phi_t(B)$,
where $B = \text{supp}(f)$.
On the other hand,
 Lemma~\ref{lemma: bounded away condition under J1, from infty to T} shows that $\phi_t(B)$ is bounded away from 
$\barDxepskT{\bm\mu_{\bm L}}{\epsilon}{\leqslant \bm k}{[0,t]}$ under $\dj{[0,t]}$ for any $t > 0$ large enough.
This allows us to apply Theorem~\ref{theorem: LD for levy, MRV} and Theorem~\ref{portmanteau theorem M convergence}, the Portmanteau Theorem for $\mathbb M$-convergence, to show that
\begin{align}
    \lim_{n \to \infty} \frac{\E f_t(\bar{\bm L}_n^{ \subZeroT{t} })
        }{\breve \lambda_{\bm k}(n)}
        =
        \sum_{ \bm{\mathcal K} \in \mathbb A(\bm k) }\breve{\mathbf C}_{\bm{\mathcal K};\bm\mu_{\bm L}}^{ _{[0,t]} }(f_t),
        \qquad\forall t > 0 \text{ sufficiently large.}
    \nonumber
\end{align}
Also, note that $|\mathbb A(\bm k)| < \infty$ (i.e., the set contains only finitely many elements).
Therefore, to prove Claim~\eqref{proof, intermediate goal 1,  corollary: LD for Levy with MRV},
it suffices to fix some $\bm{\mathcal K} = (\mathcal K_{\bm j})_{\bm j \in \powersetTilde{d}} \in \mathbb A(\bm k)$
and show that
$
\breve{\mathbf C}_{\bm{\mathcal K};\bm\mu_{\bm L}}^{ _{[0,t]} }(f_t)
    =
    \breve{\mathbf C}_{\bm{\mathcal K};\bm\mu_{\bm L}}^{ _{[0,\infty)} }(f)
$
for any $t > 0$ sufficiently large.
Now,
by the definition of $f_t$ in \eqref{proof, def phi t inverse, corollary: LD for Levy with MRV} and 
$
\breve{ \mathbf C }_{ \bm{\mathcal K};\bm \mu_{\bm L} }^{ \subZeroT{t} }
$
in \eqref{def: measure C type j L, LD for Levy, MRV},
\begin{align}
    \breve{ \mathbf C }_{ \bm{\mathcal K};\bm\mu_{\bm L} }^{ \subZeroT{t} }(f_t)
    & = 
    \frac{1}{\prod_{ \bm j \in \powersetTilde{d} } \mathcal K_{\bm j}!} 
    \cdot
    \int 
    f\Bigg(\phi^\text{inv}_t\bigg(
        \bm\mu_{\bm L}\bm 1  + \sum_{ \bm j \in \powersetTilde{d} } \sum_{ k \in [\mathcal K_{\bm j}] } \bm w_{\bm j,k}\mathbbm{I}_{ [t_{\bm j, k},t] } 
    \bigg)\Bigg)
    \label{proof, expression for measure breve C mathcal K, corollary: LD for Levy with MRV}
    \\ 
    & \qquad\qquad\qquad\qquad\qquad\qquad\qquad
    \bigtimes_{\bm j \in \powersetTilde{d}}
    \bigtimes_{ k \in [\mathcal K_{\bm j}] }\bigg( (\mathbf C_{ \bm j } \times  \mathcal L_{(0,t)})\Big(d (\bm w_{\bm j, k}, t_{ \bm j,k })\Big)\bigg)
    \nonumber
    \\
    & = 
    \frac{1}{\prod_{ \bm j \in \powersetTilde{d} } \mathcal K_{\bm j}!} 
    \cdot
    \int 
    f\Bigg(\phi^\text{inv}_t\bigg(
        \bm \mu_{\bm L}\bm 1  + \sum_{ \bm j \in \powersetTilde{d} } \sum_{ k \in [\mathcal K_{\bm j}] } \bm w_{\bm j,k}\mathbbm{I}_{ [t_{\bm j, k},t] } 
    \bigg)\Bigg)
    \nonumber
    \\ 
    & \qquad
    \cdot \mathbbm{I}\Big\{
        t_{\bm j,k} <t\ \forall \bm j \in \powersetTilde{d},\ k \in [\mathcal K_{\bm j}]
    \Big\}
    \bigtimes_{\bm j \in \powersetTilde{d}}
    \bigtimes_{ k \in [\mathcal K_{\bm j}] }\bigg( (\mathbf C_{ \bm j } \times  \mathcal L_{(0,\infty)})\Big(d (\bm w_{\bm j, k}, t_{ \bm j,k })\Big)\bigg),
    \nonumber
\end{align}
where $\mathcal L_I$ is the Lebesgue measure restricted on interval $I$.
Furthermore, for $B = \text{supp}(f)$, by part $(i)$ of Lemma~\ref{lemma: B bounded away under J1 infinity},
there exists some $T \in (0,\infty)$ such that the following claim holds:
for any piece-wise linear function $\xi \in \D[0,\infty)$ of the form
$
\xi(t) =
    \bm \mu_{\bm L}t
    + 
    \sum_{ \bm j \in \powersetTilde{d} }\sum_{ k = 1 }^{ \mathcal K_{\bm j} }\bm w_{\bm j, k} 
    \mathbbm{I}_{ [t_{\bm j, k},\infty) }(t)
$
with $\bm w_{\bm j,k} \in \R^d(\bm j)$ for each $\bm j \in \powersetTilde{d},\ k \in [\mathcal K_{\bm j}]$,
we have 
\begin{align}
    \xi \in B
    \quad\Longrightarrow\quad
    t_{\bm j,k} < T\ \forall \bm j \in \powersetTilde{d},\ k \in [\mathcal K_{\bm j}].
    \label{proof, intermediate goal 1, choice of T, corollary: LD for Levy with MRV}
\end{align}
Therefore,
provided that $\bm w_{\bm j,k} \in \R^d(\bm j)$ for each $\bm j \in \powersetTilde{d}$ and $k \in [\mathcal K_{\bm j}]$,
given any $t > T$ we have
\begin{align*}
    & f\Bigg(\phi^\text{inv}_t\bigg(
        \bm\mu_{\bm L}\bm 1  + \sum_{ \bm j \in \powersetTilde{d} } \sum_{ k \in [\mathcal K_{\bm j}] } \bm w_{\bm j,k}\mathbbm{I}_{ [t_{\bm j, k},t] } 
    \bigg)\Bigg)
    \cdot \mathbbm{I}\Big\{
        t_{\bm j,k} <t\ \forall \bm j \in \powersetTilde{d},\ k \in [\mathcal K_{\bm j}]
    \Big\}
    \\ 
    & = 
    f\Bigg(
        \bm\mu_{\bm L}\bm 1  + \sum_{ \bm j \in \powersetTilde{d} } \sum_{ k \in [\mathcal K_{\bm j}] } \bm w_{\bm j,k}\mathbbm{I}_{ [t_{\bm j, k},\infty) } 
    \Bigg)
    \cdot \mathbbm{I}\Big\{
        t_{\bm j,k} <t\ \forall \bm j \in \powersetTilde{d},\ k \in [\mathcal K_{\bm j}]
    \Big\}
    \quad 
    \text{by \eqref{proof, def phi t inverse, corollary: LD for Levy with MRV},}
    \\
    & = 
    f\Bigg(
        \bm\mu_{\bm L}\bm 1  + \sum_{ \bm j \in \powersetTilde{d} } \sum_{ k \in [\mathcal K_{\bm j}] } \bm w_{\bm j,k}\mathbbm{I}_{ [t_{\bm j, k},\infty) } 
    \Bigg)
    \qquad\text{due to \eqref{proof, intermediate goal 1, choice of T, corollary: LD for Levy with MRV} and $B = \text{supp}(f)$}.
\end{align*}
Also,
by the $\MRV$ condition of Assumption~\ref{assumption: ARMV in levy process},
for each $\bm j \in \powersetTilde{d}$ the measure $\mathbf C_{\bm j}(\cdot)$ is supported on $\R^d(\bm j)$ (see also Section~\ref{subsec: MRV}).
Then, in \eqref{proof, expression for measure breve C mathcal K, corollary: LD for Levy with MRV}, it holds for any $t > T$ that 
\begin{align*}
    & \breve{ \mathbf C }_{ \bm{\mathcal K};\bm\mu_{\bm L} }^{ \subZeroT{t} }(f_t)
    \\
    & = 
    \frac{1}{\prod_{ \bm j \in \powersetTilde{d} } \mathcal K_{\bm j}!} 
    \cdot
    \int 
    f\Bigg(
        \bm \mu_{\bm L}\bm 1  + \sum_{ \bm j \in \powersetTilde{d} } \sum_{ k \in [\mathcal K_{\bm j}] } \bm w_{\bm j,k}\mathbbm{I}_{ [t_{\bm j, k},\infty) } 
    \Bigg)
    \bigtimes_{\bm j \in \powersetTilde{d}}
    \bigtimes_{ k \in [\mathcal K_{\bm j}] }\bigg( (\mathbf C_{ \bm j } \times  \mathcal L_{(0,\infty)})\Big(d (\bm w_{\bm j, k}, t_{ \bm j,k })\Big)\bigg)
    \\ 
    & = 
    \breve{ \mathbf C }_{ \bm{\mathcal K};\bm\mu_{\bm L} }^{ \subInfty }(f).
\end{align*}
This concludes the proof of Claim \eqref{proof, intermediate goal 1,  corollary: LD for Levy with MRV}.

\medskip
\noindent
\textbf{Proof of Claim \eqref{proof, intermediate goal 2,  corollary: LD for Levy with MRV}}.
By the definition of the projection mapping $\phi_t$ in \eqref{def: projection mapping from D infty to D T},
for any $t^\prime > t > 0$ we have
$
\big\{ \bar{\bm L}_n^{ \subZeroT{t} } \in \phi_t(B)  \big\}
    \supseteq
    \big\{ \bar{\bm L}_n^{ \subZeroT{t^\prime} } \in \phi_{t^\prime}(B)  \big\}.
$
This monotonicity w.r.t.\ $t$
implies that, to prove Claim~\eqref{proof, intermediate goal 2,  corollary: LD for Levy with MRV},
it suffices to find $T \in (0,\infty)$ such that 
\begin{align}
    \limsup_{n \to \infty} \frac{\P(\bar{\bm L}_n^{ \subZeroT{T}} \in \phi_T(B) )
        }{\breve \lambda_{\bm k}(n)} & < \infty.
    \label{proof, intermediate goal 2, sub goal, corollary: LD for Levy with MRV}
\end{align}
Since $B = \text{supp}(f)$ is bounded away from $\barDxepskT{\bm\mu_{\bm L}}{\epsilon}{\leqslant \bm k}{[0,\infty)}$
under $\dj{\subInfty}$ for some $\epsilon > 0$,
Lemma~\ref{lemma: bounded away condition under J1, from infty to T} shows that $\phi_T(B)$ is bounded away from 
$\barDxepskT{\bm\mu_{\bm L}}{\epsilon}{\leqslant \bm k}{[0,T]}$ under $\dj{[0,T]}$ for any $T > 0$ large enough.
Claim~\eqref{proof, intermediate goal 2, sub goal, corollary: LD for Levy with MRV} then follows from the finite upper bound in Claim~\eqref{claim, finite index, theorem: LD for levy, MRV} of Theorem~\ref{theorem: LD for levy, MRV}.
\end{proof}

\subsection{Proof of Proposition~\ref{proposition, law of the levy process approximation, LD for Hawkes}}
\label{subsec: proof for propositions, LD for Hawkes, appendix}

To rigorously explain the coupling between the Hawkes process $\bm N(t)$ and the compound Poisson process $\bm L(t)$ in \eqref{def: Levy process, LD for Hawkes},
we first review the cluster representation of Hawkes processes,
which reveals the underlying branching structure in $\bm N(t)$.
Intuitively speaking, the cluster representation shows that the Hawkes process $\bm N(t)$ can be constructed by first generating a sequence of ``immigrants'' (i.e., points of the $0^\text{th}$ generation) arriving at Poisson rates $c^{\bm N}_i$
and then, iteratively, letting each generation of points to give birth to the next generation of descendants.
In particular, any type-$j$ point gives birth to type-$i$ points
according to an inhomogeneous Poisson processes with rates determined by 
$
\tilde B_{i \leftarrow j}f^{\bm N}_{i \leftarrow j}(\cdot).
$
This approach was first introduced in \cite{209288c5-6a29-3263-8490-c192a9031603} in the univariate setting. 
See also Chapter 4 of \cite{20.500.11850/151886} and Examples 6.3(c) and 6.4(c) of \cite{daley2003introduction} for a treatment in the multivariate setting.

Appealing to the more general framework of Poisson cluster processes,
we start by constructing a point process that identifies the \textit{centers}  of each
cluster, and then augment each center with a separate point process representing the arrival times and types of the offspring in this cluster.
First, we define a point process in the space $\bm N^{\mathcal C}(\cdot)$ in $[0,\infty) \times [d]$, where the superscript $\mathcal C$ denotes ``center''. 
Recall the positive immigration rates $c^{\bm N}_{i}$ in \eqref{def: conditional intensity, hawkes process}.
Independently for each $j \in [d]$, generate
$
0 < T^{\mathcal C}_{i;1} < T^{\mathcal C}_{i;2} < \ldots 
$
under a Poisson process with rate  $c^{\bm N}_{i}$.
Let 
\begin{align}
    \bm N^{\mathcal C}(\cdot)
    \delequal 
    \sum_{j \in [d]}\sum_{k \geq 0}\bm \delta_{ (T^\mathcal{C}_{j;k},j)  }(\cdot),
    \label{def: center process, Poisson cluster process, notation}
\end{align}
where $\notationdef{notation-dirac-measure}{\bm \delta_x}$ denotes the Dirac measure at $x$.
This is equivalent to the superposition of several independent Poisson processes, where each arrival $T^\mathcal{C}_{j;k}$ is augmented with a marker $j$ to denote its type.

Next, we associate each point $(T^\mathcal{C}_{j;k},j)$ in the center process with an \emph{offspring} process, which is a point process in the space $[0,\infty) \times [d]$
(we use the superscript $\mathcal O$ to denote ``offspring''):
\begin{align}
    \bm N^{\mathcal O}_{ (T^\mathcal{C}_{j;k},j) }(\cdot) 
    = 
    \sum_{ m = 0 }^{ K^{\mathcal O}{ (T^\mathcal{C}_{j;k},j) }  }
    \bm \delta_{ (T^{\mathcal O}_{ j;k }(m), A^{\mathcal O}_{ j;k }(m))  }(\cdot).
    \label{def: cluster process associated with each center, Poisson cluster process, notation}
\end{align}
In particular, each $\bm N^{\mathcal O}_{ (T^\mathcal{C}_{j;k},j) }(\cdot) $ is an independent copy of a point process  $C^{\mathcal O}_j$, whose law is described by the following iterative procedure.


\begin{definition}[Offspring Cluster Process $C^{\mathcal{ O}}_j$]
\label{def: offspring cluster process}
For each $j \in [d]$, the point process $C^{\mathcal{ O}}_j(\cdot)$ in the space $[0,\infty) \times [d]$ is defined as follows.
\begin{enumerate}[(i)]
    \item
        (\textbf{Ancestor Immigrant})
        In this procedure, we use $R^{(n);j}_i$ to denote the number of type-$i$ individuals in the $n^\text{th}$ generation of the cluster $C^{\mathcal O}_j$, 
        and $T^{(n);j}_i(k)$ for the time between the arrival of the ancestor of the cluster and the birth of the $k^\text{th}$ type-$i$ individual in the $n^\text{th}$ generation.
        Specifically,
        let $R^{(0);j}_j = 1$ and $R^{(0);j}_i = 0$ for each $i \in [d],\ i \neq j$,
        which represents that the type-$j$ ancestor is the only member in the $0^\text{th}$ generation of this cluster.
        Besides,
        we set $T^{(0);j}_j(1) = 0$.

    \item
        (\textbf{Offspring in the $n^\text{th}$ generation})
        Iteratively,
        do the following for $n = 1,2,\ldots$,
        until we arrive at some $n$ such that $R^{(n-1);j}_i = 0$ for any $i \in [d]$.
        For any $l \in [d]$ with $R^{(n-1);j}_l \geq 1$, and any $m \in [R^{(n-1);j}_l ]$,
        independently for each $i \in [d]$, generate the sequence
        $
        \big( T^{(n,m);j}_{i \leftarrow l}(k)  \big)_{k}
        $
        according to an inhomogeneous Poisson process with rate
        \begin{align}
            \tilde B_{i \leftarrow l}f^{\bm N}_{i \leftarrow l}\big( \ \bcdot\ - T^{(n-1);j}_{l}(m) \big).
            \label{def: non-homogeneous poisson for arrival times in n th generation, cluster representation}
        \end{align}
        Here, by sampling under the rate in \eqref{def: non-homogeneous poisson for arrival times in n th generation, cluster representation}, we mean the following.
        \begin{itemize}
            \item 
                First, generate an independent copy of $\tilde B_{i \leftarrow l}$.
                Conditioning on $\tilde B_{i \leftarrow l} = b$ for some $b \geq 0$,
                generate $B^{(n,m);j}_{i \leftarrow l}$---the count of type-$i$ children (in the $n^\text{th}$ generation) born by the $m^\text{th}$ type-$l$ individual in the $(n-1)^\text{th} $generation---by
                $
                B^{(n,m);j}_{i \leftarrow l} \sim 
                \text{Poisson}\big(b \cdot \mu^{\bm N}_{i \leftarrow l}\big),
                $
                with $\mu^{\bm N}_{i \leftarrow l}$ defined in \eqref{def mu i j for fertility function}.

            \item 
                Next, let $t_k$ be i.i.d.\ copies under the law with density $f^{\bm N}_{i\leftarrow l}(\cdot)/\mu^{\bm N}_{i \leftarrow l}$.
                Let $T^{(n,m);j}_{i \leftarrow l}(k) = t_k + T^{(n-1);j}_l(m)$ for each 
                $k \in [B^{(n,m);j}_{i \leftarrow j}]$.
        \end{itemize}
        Then, given $i \in [d]$, by
        ordering the arrival times $T^{(n,m);j}_{i \leftarrow l}(k)$ 
        across all $l \in [d]$, $m \in [R^{(n-1);j}_l ]$, and $k \in [ B^{(n,m);j}_{i \leftarrow l} ]$,
        we get
        $
        0 < T^{(n);j}_{i}(1) < T^{(n);j}_{i}(2) < \ldots < T^{(n);j}_{i}\big( R^{ (n);j }_{ i } \big),
        $
        where we use $R^{ (n);j}_{ i }$ to denote the count of type-$i$ individuals in the $n^\text{th}$ generation.
        Note that this strictly increasing sequence is almost surely well-defined (i.e., with no ties) 
        since the law of $T^{(n,m);j}_{i \leftarrow l}(k)$ 
        is absolutely continuous w.r.t.\ the Lebesgue measure.

    \item
        (\textbf{Definition of the cluster process})
        Let $K^{\mathcal O}_j \delequal \max\{n \geq 0:\ R^{(n);j}_i \geq 1\text{ for some }i \in [d]  \}$
        be the count of generations in this cluster.
        W.l.o.g.\ we set $R^{(n);j}_i = 0$ for any $n \geq K^{\mathcal O}_j + 1$ and $i \in [d]$.
        The cluster process is defined by
        \begin{align}
            C^\mathcal{O}_j(\cdot)
            \delequal 
            \sum_{n = 0}^{ K^\mathcal{O}_j }
            \sum_{i \in [d]}\sum_{  m = 1 }^{ R^{(n);j}_i }
            \bm \delta_{  (T^{(n);j}_{i}(m),i) }(\cdot).
            \label{def: cluster process, C mathcal O j}
        \end{align}
\end{enumerate}
\end{definition}

 In this paper, we work with conditions (see Section~\ref{subsec: tail of cluster S, statement of results}) which ensure that there are almost surely finitely many points in $C^{\mathcal O}_j$,
and hence the count of points $K^{\mathcal O}{ (T^\mathcal{C}_{j;k},j) }$ is almost surely finite in \eqref{def: cluster process associated with each center, Poisson cluster process, notation}.

Define
 the time-shift operator $\bm \theta_t$ by 
 $
 \bm \theta_t \mu(\cdot) \delequal \sum_{k \geq 1}\bm \delta_{ (t_k + t,a_k) }(\cdot)
 $
 for any $t \geq 0$ and point process $\mu(\cdot) = \sum_{k \geq 1}\bm \delta_{ (t_k,a_k) }$.
Augmenting the center process $\bm N^{\mathcal C}$ with each offspring cluster $\bm N^{\mathcal O}_{ (T^\mathcal{C}_{j;k},j) }$,
we obtain a point process
\begin{align}
    \bm N(\cdot) 
    \delequal
    \sum_{j \in [d]}\sum_{k \geq 0}
    \bm \theta_{  T^{\mathcal C}_{j;k}  } \bm N^{\mathcal O}_{ (T^\mathcal{C}_{j;k},j) }(\cdot) 
    =
    \sum_{j \in [d]}\sum_{k \geq 0}\sum_{ m = 0 }^{ K^{\mathcal O}{ (T^\mathcal{C}_{j;k},j) }  }
    \bm \delta_{ ( T^\mathcal{C}_{j;k} + T^{\mathcal O}_{ j;k }(m),  A^{\mathcal O}_{ j;k }(m))   }(\cdot).
    \label{def: branching process approach, N as a point process, notation section}
\end{align} 
For each $t \geq 0$ and $i \in [d]$, we define the counting process
\begin{align}
    N_i(t) \delequal 
    \bm N\big( [0,t] \times \{i\} \big)
    = 
    \sum_{j \in [d]}\sum_{k \geq 0}\sum_{ m = 0 }^{ K^{\mathcal O}{ (T^\mathcal{C}_{j;k},j) }  }
    \mathbbm{I}
    \bigg\{
        T^\mathcal{C}_{j;k} + T^{\mathcal O}_{ j;k }(m) \leq t,\ A^{\mathcal O}_{ j;k }(m) = i
    \bigg\}.
    \label{def: branching process approach, N i t, notation section}
\end{align}
Under the sub-criticality condition regarding the offspring distributions (i.e., Assumption~\ref{assumption: subcriticality}),
it has been shown in \cite{209288c5-6a29-3263-8490-c192a9031603,daley2003introduction} that the definitions in 
\eqref{def: center process, Poisson cluster process, notation}--\eqref{def: branching process approach, N i t, notation section}
using the cluster representation agree with Definition~\ref{def: hawkes process, conditional intensity}.

We formally define the size of the offspring cluster processes $C^{\mathcal O}_j$.
Using notations in Definition~\ref{def: offspring cluster process},
we define (for each $j \in [d]$)
\begin{align}
    \notationdef{notation-cluster-size-S-i-j}{S_{i \leftarrow j}} \delequal 
    \sum_{n \geq 0}R^{(n);j}_i,
    \qquad
    \bm S_j \delequal (S_{1\leftarrow j}, S_{2\leftarrow j}, \ldots, S_{d\leftarrow j})^\top.
    \label{def: cluster size vector S i, 2}
\end{align}
That is, 
$\bm S_j$ is the size vector of the offspring cluster process $\mathcal C^{\mathcal O}_j(\cdot)$ induced by a type-$j$ ancestor,
with each element $S_{i \leftarrow j}$ representing the count of type-$i$ points in the cluster.
Under the law of $C^{\mathcal O}_j(\cdot)$ specified above,
the vectors $\bm S_j$ solve the distributional fixed-point equations \eqref{def: fixed point equation for cluster S i},
under the offspring distributions $(B_{i \leftarrow j})_{i \in [d]}$ stated in \eqref{def: vector B i, offspring of type i individual}.

Meanwhile,
using notations in \eqref{def: cluster process associated with each center, Poisson cluster process, notation},
we denote the size vector of cluster $\bm N^{\mathcal O}_{ (T^\mathcal{C}_{j;k},j) }$ by
\begin{align}
    S^{(k)}_{i \leftarrow j} & \delequal 
    \sum_{m \geq 0}\mathbbm{I}\Big\{ A^\mathcal{O}_{j;k}(m) = i \Big\},
    \qquad
    \bm S^{(k)}_j \delequal \Big( S^{(k)}_{1 \leftarrow j}, S^{(k)}_{2 \leftarrow j}, \ldots, S^{(k)}_{d \leftarrow j}\Big)^\top. 
    \label{def: cluster size jth type k cluster, LD for Hawkes}
\end{align}
By definition, $\big(\bm S^{(k)}_j\big)_{k \geq 1}$ are i.i.d.\ copies of the $\bm S_j$ defined in \eqref{def: cluster size vector S i, 2},
and hence exhibit the $\MRV^*$ tail asymptotics characterized in Theorem~\ref{theorem: main result, cluster size}.

In the construction of $\bm L(t)$ in \eqref{def: Levy process, LD for Hawkes},
let $\bm S^{(k)}_j$ be defined as in \eqref{def: cluster size jth type k cluster, LD for Hawkes}.
Now,
Proposition~\ref{proposition, law of the levy process approximation, LD for Hawkes} is an immediate consequence of the construction of the compound Poisson process $\bm L(t)$ in \eqref{def: Levy process, LD for Hawkes} and the tail asymptotics of Hawkes process clusters established in Theorem~\ref{theorem: main result, cluster size}.

\begin{proof}[Proof of Proposition~\ref{proposition, law of the levy process approximation, LD for Hawkes}]
\linksinpf{proposition, law of the levy process approximation, LD for Hawkes}
Note that $\bm L(t)$ is the superposition of a sequence of independent compound Poisson processes.
That is, $\bm L(t) = \sum_{j \in [d]}\bm L_{\bcdot \leftarrow j}(t)$, where
\begin{align*}
    \bm L_{\bcdot \leftarrow j}(t) \delequal \sum_{k \geq 0}
    \bm S^{(k)}_j\mathbbm{I}_{ [T^\mathcal{C}_{j;k},\infty) }(t),
    \qquad\forall j \in [d],
\end{align*}
the $(T^\mathcal{C}_{j;k})_{k \geq 1}$ is a sequence generated by a Poisson process on $(0,\infty)$ with a constant rate $c_{j}^{\bm N}$, and $(\bm S^{(k)}_j)_{k \geq 1}$ are i.i.d.\ copies of $\bm S_j$.
In other words,
$\bm L_{\bcdot \leftarrow j}(t)$ is a L\'evy process with generating triplet $(\bm 0, \textbf 0, \nu_j)$,
where the L\'evy measure is 
$
\nu_j(\cdot) = c^{\bm N}_{j}\cdot \P(\bm S_j \in \ \cdot\ ).
$
Therefore, Claim~\eqref{claim, 1, proposition, law of the levy process approximation, LD for Hawkes} follows from $\bm L(t) = \sum_{j \in [d]}\bm L_{\bcdot \leftarrow j}(t)$.
Next, by Theorem~\ref{theorem: main result, cluster size},
\begin{align*}
    \nu_i \in \MRV^*
    \Bigg(
        (\bar{\bm s}_j)_{j \in [d]},\ 
        \big(\alpha(\bm j)\big)_{ \bm j \subseteq [d] },\ 
        (\lambda_{\bm j})_{\bm j \in \powersetTilde{d}},\ 
        \Big( c_{i}^{\bm N}\cdot  
        \mathbf C^{\bm j}_i
        \Big)_{\bm j \in \powersetTilde{d}}
    \Bigg),
    \qquad
    \forall i \in [d].
\end{align*}
Then, the $\MRV^*$ tail condition of $\nu = \sum_{i \in [d]}\nu_i$ claimed in \eqref{claim, 2, proposition, law of the levy process approximation, LD for Hawkes}
follows from  Definition~\ref{def: MRV} for $\MRV^*$,
as well as the definition of
$\mathbf C_{\bm j}=\sum_{i \in [d]}c^{\bm N}_{i} \cdot \mathbf C^{\bm j}_i$ in \eqref{def: measure C indices j, sample path LD for Hawkes}.
\end{proof}

\subsection{Proof of Proposition~\ref{proposition: asymptotic equivalence between Hawkes and Levy, LD for Hawkes}}
\label{subsec: proof for propositions, 2, LD for Hawkes, appendix}

To prove Proposition~\ref{proposition: asymptotic equivalence between Hawkes and Levy, LD for Hawkes}, we prepare a few lemmas.
First, we decompose $\bm L(t)$ in \eqref{def: Levy process, LD for Hawkes} into processes constructed by ``large'' clusters and ``small'' clusters separately. 
For each $\delta,T > 0$ and $n \geq 1$, let
\begin{align}
    \bar{\bm L}^{ >\delta}_n(t)
    & \delequal 
    \frac{1}{n}
    \sum_{ j \in [d] }\sum_{k \geq 0}
    \bm S^{(k)}_j
    \mathbbm{I}\bigg\{ \norm{\bm S^{(k)}_j } >n\delta  \bigg\}\cdot 
    \mathbbm{I}_{ [T^\mathcal{C}_{j;k},\infty) }(nt),
    \label{def: large jump process for Levy, LD for Hawkes}
    \\
    \bar{\bm L}^{ \leqslant\delta}_n(t)
    & \delequal 
    \bar{\bm L}_n(t) - \bar{\bm L}^{ >\delta}_n(t) = 
    \frac{1}{n}
    \sum_{ j \in [d] }\sum_{k \geq 0}
    \bm S^{(k)}_j
    \mathbbm{I}\bigg\{ \norm{\bm S^{(k)}_j } \leq n\delta  \bigg\}\cdot 
    \mathbbm{I}_{ [T^\mathcal{C}_{j;k},\infty) }(nt).
    \label{def: small jump process for Levy, LD for Hawkes}
\end{align}
We also define the (scaled) sample paths
\begin{align}
    \bar{\bm L}^{ >\delta;\subZeroT{T}}_n
    \delequal
    \big\{
        \bar{\bm L}^{ >\delta}_n(t):\ t \in [0,T]
    \big\},\qquad 
    \bar{\bm L}^{ >\delta;\subInfty}_n
    \delequal
    \big\{
        \bar{\bm L}^{ >\delta}_n(t):\ t \geq 0
    \big\},
    \label{def: sample path scaled process, Levy, LD for Hawkes}
\end{align}
and adopt notations $\bar{\bm L}^{ \leqslant\delta;\subZeroT{T}}_n$ and $\bar{\bm L}^{ \leqslant\delta;\subInfty}_n$ analogously.
Recall that 
$\mathbf 1(t) = t$ is the linear function with slope $1$, and that
$ \E \bm L(1) =  \bm \mu_{\bm N}$ (see \eqref{def: approximation, mean of N(t)}).
The next lemma establishes the asymptotic equivalence between $\bar{\bm L}_n^{\subInfty}$ and $\bar{\bm L}^{>\delta;\subInfty}_n + \bm \mu_{\bm N}\mathbf 1$ under $\dmp{\subInfty}$.

\begin{lemma}\label{lemma: asymptotic equivalence between L and large jump, LD for Hawkes}
\linksinthm{lemma: asymptotic equivalence between L and large jump, LD for Hawkes}
    Let Assumptions~\ref{assumption: subcriticality}--\ref{assumption: regularity condition 2, cluster size, July 2024} hold.
    For any $\Delta,\beta > 0$,
    \begin{align}
        \lim_{n \to \infty} n^\beta \cdot 
        \P\bigg(
            \dmp{\subInfty}\Big(
                \bar{\bm L}_n^{\subInfty},\ \bar{\bm L}^{>\delta;\subInfty}_n + \bm \mu_{\bm N}\mathbf 1
            \Big) > \Delta
        \bigg) = 0,
        \qquad
        \forall \delta > 0\text{ small enough.}
        \nonumber
    \end{align}
\end{lemma}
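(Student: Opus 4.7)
\noindent
The plan is to write $\bar{\bm L}_n = \bar{\bm L}_n^{>\delta} + \bar{\bm L}_n^{\leqslant\delta}$ so that the statement is equivalent to showing that $\bar{\bm L}_n^{\leqslant\delta}$ stays close to the linear path $\bm\mu_{\bm N}\mathbf 1$ under $\dmp{\subInfty}$ with super-polynomial probability. Because $\dmp{\subInfty}$ integrates the cap-at-$1$ uniform-$M_1$ distance against $e^{-t}\,dt$, one easily gets the bound $\dmp{\subInfty}(\xi_1,\xi_2) \leq \sup_{s \in [0,T]}\norm{\xi_1(s)-\xi_2(s)} + e^{-T}$, so I would first fix a truncation horizon $T$ with $e^{-T} < \Delta/4$ and reduce the claim to controlling $\sup_{s \in [0,T]}\bigl\|\bar{\bm L}_n^{\leqslant\delta}(s) - s\bm\mu_{\bm N}\bigr\|$.

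\noindent
Next, I would decompose this deviation into a deterministic \emph{bias} piece and a centered \emph{fluctuation} piece. Letting $\bar{\bm\mu}^{\leqslant\delta}_{\bm N,n} \delequal \E\bar{\bm L}_n^{\leqslant\delta}(1) = \sum_j c^{\bm N}_j\,\E\bigl[\bm S_j\mathbbm 1\{\norm{\bm S_j}\leq n\delta\}\bigr]$, the bias $\bigl\|\bar{\bm\mu}^{\leqslant\delta}_{\bm N,n} - \bm\mu_{\bm N}\bigr\| = \bigl\|\sum_j c^{\bm N}_j\E[\bm S_j\mathbbm 1\{\norm{\bm S_j}>n\delta\}]\bigr\|$ is handled deterministically: by Theorem~\ref{theorem: main result, cluster size} (single-index bound applied with $\bm j = \{i\}$), $\P(\norm{\bm S_j} > n) \in \RV_{-\alpha^*(j)}(n)$, so a truncated-mean computation gives $\bigl\|\bar{\bm\mu}^{\leqslant\delta}_{\bm N,n} - \bm\mu_{\bm N}\bigr\| = \bo(n^{1-\alpha^*})$ with $\alpha^* \delequal \min_j\alpha^*(j) > 1$. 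Hence for $n$ large enough the $T\cdot$(bias) term is deterministically below $\Delta/4$, and it only remains to bound
\begin{align*}
\P\Bigl(\sup_{s\in[0,T]}\bigl\|\bar{\bm L}_n^{\leqslant\delta}(s) - s\bar{\bm\mu}^{\leqslant\delta}_{\bm N,n}\bigr\| > \Delta/4\Bigr).
\end{align*}

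\noindent
For the fluctuation, the centered process $\bm M_n(s) \delequal \bar{\bm L}_n^{\leqslant\delta}(s) - s\bar{\bm\mu}^{\leqslant\delta}_{\bm N,n}$ is a martingale whose jumps have norm at most $\delta$. Coordinate by coordinate, I apply the exponential-martingale / Doob bound: for any scalar direction $v$ with $\norm{v}_\infty\leq 1$ and any $\theta\in(0,1/\delta]$,
\begin{align*}
\P\Bigl(\sup_{s\in[0,T]} v^\top \bm M_n(s) > \Delta/(4d)\Bigr) \leq \exp\bigl(-\theta\Delta/(4d) + Tn\lambda\,\E[e^{\theta v^\top \bm Z^{(n)}} - 1 - \theta v^\top \bm Z^{(n)}]\bigr),
\end{align*}
where $\bm Z^{(n)}$ is a generic scaled jump (so $\norm{\bm Z^{(n)}}\leq \delta$) and $n\lambda$ is the total jump rate of the scaled compound Poisson. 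Using $e^x-1-x\leq \tfrac{e}{2}x^2$ for $|x|\leq 1$ together with $\E(v^\top \bm Z^{(n)})^2 \leq \delta\,\E\norm{\bm Z^{(n)}}\leq \delta\,\E\norm{\bm S_{(\cdot)}}/n$ (because $\norm{\bm Z^{(n)}}\leq \delta$ bounds one factor of $\norm{\bm Z^{(n)}}$ in the square), the characteristic term is $\bo(1)$ uniformly in $n$. Choosing $\theta=1/\delta$, the resulting Bennett-type bound is $\bo(n^{-\Delta/(4d\delta)})$ after optimising the logarithmic part in $\theta$ via $h(x)=(1+x)\log(1+x)-x \sim x\log x$, and a union bound over coordinate directions preserves this polynomial rate. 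Given $\beta$, choosing $\delta$ small enough that $\Delta/(4d\delta) > \beta$ yields $n^\beta\cdot\P(\cdots)\to 0$.

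\noindent
The main technical obstacle is the concentration step: because the jumps of $\bar{\bm L}_n^{\leqslant\delta}$ are bounded only by $\delta$ rather than by something shrinking with $n$, the naive Gaussian Chernoff bound gives a constant that does not decay. The fix is to use the Poissonian tail of Bennett's inequality—which gives the $\log n$ factor and hence polynomial decay of order $\propto 1/\delta$—rather than a purely quadratic exponential bound; this is precisely why the lemma requires $\delta$ to be chosen small depending on $\beta$. The remaining pieces (handling the sup via Doob applied to the exponential submartingale $e^{\theta v^\top \bm M_n(s)}$, and passing from coordinates to the $L_1$-norm deviation by a union bound) are routine.
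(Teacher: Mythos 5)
You take a genuinely different route from the paper: the paper first shows that with super-polynomial probability there are at most $K$ large jumps of the \emph{full} process in $[0,T]$, segments $[0,T]$ by those arrival times, and applies Lemma~\ref{lemma: concentration of small jump process, Ld for Levy MRV} on each segment via the strong Markov property; you instead control $\bar{\bm L}^{\leqslant\delta}_n$ over all of $[0,T]$ directly, splitting off the deterministic truncated-mean bias and applying an exponential-martingale bound to the centered fluctuation. The bias estimate and the reduction from $\dmp{\subInfty}$ to the sup-norm on $[0,T]$ are fine.

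The fluctuation step, however, has a concrete gap. Your second-moment bound $\E(v^\top\bm Z^{(n)})^2\leq\delta\,\E\norm{\bm Z^{(n)}}=\bo(\delta/n)$ yields a compound-Poisson variance $\sigma^2_n\asymp nT\lambda\cdot\delta/n=\bo(\delta)$ that does \emph{not} vanish as $n\to\infty$. With this $\sigma^2_n$, the Chernoff bound at $\theta=1/\delta$ (the largest $\theta$ compatible with the $e^{\theta\delta}$ factor) gives $\exp\bigl(-u/\delta+\bo(1/\delta)\bigr)$, a constant in $n$; there is no $\log n$ and hence no $n^{-c/\delta}$ decay, so $n^\beta\cdot\P(\cdot)\not\to 0$. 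The $\log n$ you need in Bennett's inequality has to come from a \emph{vanishing} variance proxy, and your stated bound destroys exactly that information. The correct estimate replaces $\E(v^\top\bm Z^{(n)})^2\leq\delta\E\norm{\bm Z^{(n)}}$ with $\E(v^\top\bm Z^{(n)})^2\leq\frac{1}{n^2}\E\bigl[\norm{\bm S}^2\mathbbm 1\{\norm{\bm S}\leq n\delta\}\bigr]$, and then a Karamata estimate using $\P(\norm{\bm S}>x)\in\RV_{-\alpha^*}(x)$ (from Theorem~\ref{theorem: main result, cluster size}) gives $\sigma^2_n=\bo\bigl(n^{-\gamma}\tilde L(n)\bigr)$ with $\gamma=\min(1,\alpha^*-1)>0$. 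Feeding \emph{that} into Bennett, with the optimal $\theta\approx\frac{1}{\delta}\log\frac{u\delta}{\sigma^2_n}$ (which grows with $n$, not the fixed $\theta=1/\delta$), produces $\exp\bigl(-\tfrac{u\gamma}{\delta}\log n(1+o(1))\bigr)$ and the polynomial rate you claim. Your final paragraph correctly diagnoses that the quadratic Chernoff bound is insufficient and that a Bennett-type $\log n$ is needed, but the middle paragraph's calculation does not produce it; the missing ingredient is precisely the polynomial decay of the truncated second moment, which is also what Lemma~3.1 of \cite{wang2023large} (invoked in the paper's Lemma~\ref{lemma: concentration of small jump process, Ld for Levy MRV}) supplies.
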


\begin{proof}\linksinpf{lemma: asymptotic equivalence between L and large jump, LD for Hawkes}
Fix $T > 0$ large enough such that $e^{-T} < \Delta/2$.
Note that
\begin{align*}
    \int_{t > T}e^{-t}
    \cdot \Big[ \dmp{[0,t]}\Big( \phi_t\big(\xi^{(1)}),\ \phi_t\big(\xi^{(2)}\big)\Big)  \wedge 1 \Big]dt
    \leq e^{-T} < \Delta/2,
    \qquad
    \forall \xi^{(1)},\xi^{(2)} \in \D[0,\infty).
\end{align*}
Besides , for each $t \in [0,T]$, the metric $\dmp{\subZeroT{t}}$ is bounded by the uniform metric on $\D[0,t]$.
Therefore,
it suffices to show that
\begin{align}
    \P\bigg(
        \sup_{t \in [0,T]}
        \norm{
            \bar{\bm L}_n(t)
            -
            \Big(  \bar{\bm L}_n^{>\delta}(t) + \bm \mu_{\bm N}t   \Big)
        } >\Delta/2   
    \bigg) = \lo(n^{-\beta}),
    \qquad
    \forall \delta > 0\text{ small enough.}
    \label{proof, goal, lemma: asymptotic equivalence between L and large jump, LD for Hawkes}
\end{align}
To proceed, let (for each $k \geq 1$)
\begin{align}
    \tau^{>\delta}_n(k)
    \delequal
    \big\{
        t > \tau^{>\delta}_n(k-1):\ \Delta\bar{\bm L}^{ >\delta }(t) \neq 0
    \big\}
    =
    \big\{
        t > \tau^{>\delta}_n(k-1):\ \norm{\Delta\bar{\bm L}(t)} >\delta
    \big\}
    \label{proof, def tau delta n k, large jump time, lemma: asymptotic equivalence between L and large jump, LD for Hawkes}
\end{align}
be the arrival time of the $k^\text{th}$ large jump in $\bar{\bm L}_n(t)$,
under the convention that $\tau^{>\delta}_n(0) = 0$.
Meanwhile,
for any non-negative integer $K$,
on the event $\big\{ \tau^{>\delta}_n(K+1) > T  \big\}$ (meaning that there are at most $K$ large jumps in $\bar{\bm L}_n(t)$ during $t \in [0,T]$),
note that
\begin{align*}
    & \sup_{t \in [0,T]}
        \norm{
            \bar{\bm L}_n(t)
            -
            \Big(  \bar{\bm L}_n^{>\delta}(t) + \bm\mu_{\bm N}t   \Big)
        } 
    \\ 
    & = 
    \sup_{t \in [0,T]}\norm{\bar{\bm L}^{\leqslant \delta}(t) - \mu_{\bm N}t}
    \quad \text{see \eqref{def: small jump process for Levy, LD for Hawkes}}
    \\ 
    & \leq 
    \sum_{ k = 1  }^{K+1}
    \underbrace{ \sup_{ t \in [ \tau^{>\delta}_n(k-1), \tau^{>\delta}_n(k) ) \cap [0,T]   }
    \norm{
    \bar{\bm L}^{\leqslant\delta}_n(t) - \bm \mu_{\bm N}\cdot\big(t - \tau^{>\delta}_n(k-1)\big) -  
    \bar{\bm L}^{\leqslant\delta}_n\big( \tau^{>\delta}_n(k-1) \big)
    } }_{ \delequal R_k(\delta)  }.
\end{align*}
In the last line of the display above, we applied $\tau^{>\delta}_n(K+1) > T$,
as well as the fact that $ \bar{\bm L}^{\leqslant\delta}_n(t)$ makes no jumps at any $t = \tau^{>\delta}_n(k)$; see the definitions in \eqref{def: small jump process for Levy, LD for Hawkes}.
Therefore, to prove Claim~\eqref{proof, goal, lemma: asymptotic equivalence between L and large jump, LD for Hawkes},
it suffices to show the existence of some $K \in \mathbb N$ such that
\begin{align}
    & \P\Big(
        \tau^{>\delta}_n(K+1) \leq T
    \Big) = \lo(n^{-\beta}),
    \qquad
    \forall \delta > 0\text{ sufficiently small},
    \label{proof, sub goal 1, lemma: asymptotic equivalence between L and large jump, LD for Hawkes}
\end{align}
and that for each $k \in [K+1]$,
\begin{align}
     & \P\Bigg(
    R_k(\delta)
    > \frac{\Delta}{2(K+1)}
    \Bigg) 
    = \lo(n^{-\beta}),
    \qquad
    \forall \delta > 0\text{ sufficiently small}.
    \label{proof, sub goal 2, lemma: asymptotic equivalence between L and large jump, LD for Hawkes}
\end{align}

\medskip
\noindent
\textbf{Proof of Claim \eqref{proof, sub goal 1, lemma: asymptotic equivalence between L and large jump, LD for Hawkes}}.
We prove that, for $K$ large enough, the claim holds for any $\delta > 0$.
In light of Proposition~\ref{proposition, law of the levy process approximation, LD for Hawkes} and the definitions in \eqref{def: large jump process for Levy, LD for Hawkes}--\eqref{def: small jump process for Levy, LD for Hawkes},
$\bar{\bm L}^{ > \delta}(t)$ is a L\'evy process with generating triplet $\Big(\bm 0, \textbf 0, n \cdot \nu_n\big(\ \cdot\ \cap \{ \bm x \in \R^d_+:\ \norm{\bm x} >\delta \}  \big) \Big)$,
where $\nu_n(A) = \nu\{ n\bm x:\ \bm x \in A  \}$, and $\nu$ is defined in \eqref{claim, 1, proposition, law of the levy process approximation, LD for Hawkes}, satisfying the $\MRV^*$ tail condition in \eqref{claim, 2, proposition, law of the levy process approximation, LD for Hawkes}.
Next, let $\alpha^*(\cdot)$ and $\alpha(\cdot)$ be defined as in \eqref{def: cluster size, alpha * l * j}--\eqref{def: cost function, cone, cluster},
and let $j^* \delequal \underset{ j \in [d] }{\arg\min}\ \bm c\big( \{j\} \big)
=
\underset{ j \in [d] }{\arg\min}\ \alpha^*(j).
$
Note that this argument minimum is unique and $\alpha^*(j^*) > 1$ under Assumptions~\ref{assumption: heavy tails in B i j} and \ref{assumption: regularity condition 2, cluster size, July 2024}.
As a result, $\bar\R^d_\leqslant(\{j^*\},\epsilon) = \{\bm 0\}$ for any $\epsilon > 0$.
By \eqref{cond, finite index, def: MRV},
\begin{align}
    \limsup_{n \to \infty}\frac{
        n\nu_n\{ \bm x \in \R^d_+:\ \norm{\bm x} > \delta  \}
    }{
        n\lambda_{\{ j^* \} }(n)
    }
    < \infty,
    \qquad\forall \delta > 0,
    \label{proof, bound 1, sub goal 1, lemma: asymptotic equivalence between L and large jump, LD for Hawkes}
\end{align}
where $\lambda_{\bm j}(n)$ is defined in \eqref{def: rate function lambda j n, cluster size}.
This implies 
$
n\nu_n\{ \bm x \in \R^d_+:\ \norm{\bm x} > \delta  \}
=
\bo\big( n\lambda_{\{ j^* \} }(n) \big)
$
as $n \to \infty$.
Then, for any $K \in \mathbb N$,
\begin{align*}
    \P\big( \tau^{>\delta}_n(K+1) \leq T \big)
    & = 
    \P\Big(
        \text{Poisson}\big( n\nu_n\{ \bm x \in \R^d_+:\ \norm{\bm x} > \delta  \}  \cdot T \big) \geq K+1
    \Big)
    \\ 
    & \leq
    T^{K+1} \cdot \big( n\nu_n\{ \bm x \in \R^d_+:\ \norm{\bm x} > \delta  \} \big)^{K+1}
    \quad 
    \text{due to }
    \P\big( \text{Poisson}(\lambda) \geq k \big) \leq \lambda^k
    \\ 
    & = 
    \bo\Big( \big( n \lambda_{ \{j^*\} }(n) \big)^{K+1}  \Big)
    \qquad\text{by \eqref{proof, bound 1, sub goal 1, lemma: asymptotic equivalence between L and large jump, LD for Hawkes}.}
\end{align*}
Lastly, due to $n \lambda_{ \{j^*\} }(n)  \in \RV_{ -(\alpha^*(j^*) - 1)  }(n)$ (see \eqref{def: rate function lambda j n, cluster size}) and $\alpha^*(j^*) > 1$,
it holds for all $K$ large enough that $(K+1)\cdot\big( \alpha^*(j^*) - 1  \big) > \beta$, and hence
$
\big( n \lambda_{ \{j^*\} }(n) \big)^{K+1}  = \lo(n^{-\beta}).
$
This concludes the proof of Claim~\eqref{proof, sub goal 1, lemma: asymptotic equivalence between L and large jump, LD for Hawkes}.

\medskip
\noindent
\textbf{Proof of Claim \eqref{proof, sub goal 2, lemma: asymptotic equivalence between L and large jump, LD for Hawkes}}.
By the Markov property at each stopping time $\tau^{>\delta}_n(k)$,
it suffices to show that (for any $\delta > 0$ small enough),
\begin{align*}
    & \P\Bigg(
        \sup_{ t \in [0,T]:\ t < \tau^{>\delta}_n(1)  }
        \norm{
            \bar{\bm L}^{\leqslant\delta}_n(t) - \bm \mu_{\bm N}t 
        } >
        \frac{\Delta}{2(K+1)}
    \Bigg)
    \\ 
    & 
    = 
    \P\Bigg(
        \sup_{ t \in [0,T]:\ t < \tau^{>\delta}_n(1)  }
        \norm{
            \bar{\bm L}_n(t) - \bm \mu_{\bm N}t 
        } >
        \frac{\Delta}{2(K+1)}
    \Bigg) = \lo(n^{-\beta}),
\end{align*}
where we applied the fact that $\bar{\bm L}^{\leqslant\delta}_n(t) = \bar{\bm L}_n(t)$ for any $t < \tau^{>\delta}_n(1)$; see \eqref{def: large jump process for Levy, LD for Hawkes}--\eqref{def: small jump process for Levy, LD for Hawkes}.
Besides, recall that $\E\bm L(1) = \bm\mu_{\bm N}$.
Applying Lemma~\ref{lemma: concentration of small jump process, Ld for Levy MRV}, we conclude the proof of claim~\eqref{proof, sub goal 2, lemma: asymptotic equivalence between L and large jump, LD for Hawkes}.
\end{proof}

Analogous to Lemma~\ref{lemma: asymptotic equivalence between L and large jump, LD for Hawkes}, we decompose the Hawkes process $\bm N(t)$ by considering the small or large clusters therein.
More precisely,
for the scaled process $\bar{\bm N}_n(t) = \bm N(nt)/n$, 
it follows from the cluster representation \eqref{def: branching process approach, N i t, notation section} that
\begin{align}
    \bar{N}_{n,i}(t)
    = 
    \frac{1}{n}
    \sum_{j \in [d]}\sum_{k \geq 0}\sum_{ m = 0 }^{ K^{\mathcal O}{ (T^\mathcal{C}_j(k),j) }  }
    \mathbbm{I}
    \bigg\{
        T^\mathcal{C}_{j;k} + T^{\mathcal O}_{ j;k }(m) \leq nt,\ A^{\mathcal O}_{ j;k }(m) = i
    \bigg\},
    \quad \forall t \geq 0,\ i \in [d],
    \nonumber
\end{align}
and $\bar{\bm N}_n(t) = \big(  \bar{N}_{n,1}(t), \bar{N}_{n,2}(t), \ldots, \bar{N}_{n,d}(t) \big)^\top$.
Meanwhile, recall that $\bm S^{(k)}_j$ is size vector of the cluster induced by the $k^\text{th}$ type-$j$ immigrant (see \eqref{def: cluster size jth type k cluster, LD for Hawkes})
and $T^{\mathcal C}_{j;k}$ is the arrival time of that immigrant.
By incorporating the cluster size, for each $n \geq 1$ and $\delta > 0$ we define
\begin{align}
    \bar N_{n,i}^{>\delta}(t)
    & \delequal
    \frac{1}{n}
    \sum_{j \in [d]}\sum_{k \geq 0}\sum_{ m = 0 }^{ K^{\mathcal O}{ (T^\mathcal{C}_j(k),j) }  }
    \mathbbm{I}\bigg\{
        \norm{\bm S^{(k)}_j} >n\delta
    \bigg\}\cdot 
    \mathbbm{I}
    \bigg\{
        T^\mathcal{C}_{j;k} + T^{\mathcal O}_{ j;k }(m) \leq nt,\ A^{\mathcal O}_{ j;k }(m) = i
    \bigg\},
    \label{def: large jump process for N, LD for Hawkes}
    \\ 
    \bar N_{n,i}^{\leqslant\delta}(t)
    & \delequal
    \bar N_{n,i}(t) - \bar N_{n,i}^{>\delta}(t)
    \label{def: small jump process for N, LD for Hawkes}
    \\ &
    =
    \frac{1}{n}
    \sum_{j \in [d]}\sum_{k \geq 0}\sum_{ m = 0 }^{ K^{\mathcal O}{ (T^\mathcal{C}_j(k),j) }  }
    \mathbbm{I}\bigg\{
        \norm{\bm S^{(k)}_j} \leq n\delta
    \bigg\}\cdot 
    \mathbbm{I}
    \bigg\{
        T^\mathcal{C}_{j;k} + T^{\mathcal O}_{ j;k }(m) \leq nt,\ A^{\mathcal O}_{ j;k }(m) = i
    \bigg\},
    \nonumber
\end{align}
and 
$
\bar{\bm N}^{>\delta}_n(t) \delequal \big( \bar N_{n,1}^{>\delta}(t),\ldots, \bar N_{n,d}^{>\delta}(t) \big)^\top,
$
$
\bar{\bm N}^{\leqslant\delta}_n(t) \delequal \big( \bar N_{n,1}^{\leqslant\delta}(t),\ldots, \bar N_{n,d}^{\leqslant\delta}(t) \big)^\top.
$
Analogous to \eqref{def: sample path scaled process, Levy, LD for Hawkes}, we define
\begin{align}
    \bar{\bm N}^{>\delta;\subZeroT{T}}_n
    \delequal
    \big\{
        \bar{\bm N}^{>\delta}_n(t):\ t \in [0,T]
    \big\},
    \qquad
    \bar{\bm N}^{>\delta;\subInfty}_n
    \delequal
    \big\{
        \bar{\bm N}^{>\delta}_n(t):\ t \geq 0
    \big\}.
    \label{def: sample path scaled process, N, LD for Hawkes}
\end{align}

The proof of the next lemma involves the lifetime of each cluster.
Adopting notations in \eqref{def: cluster process associated with each center, Poisson cluster process, notation}, we use
\begin{align}
    \notationdef{notation-life-time-for-each-cluster-in-N}{H^{(k)}_j}
    \delequal
    \max_{m \geq 0}T^{\mathcal O}_{ j;k }(m)
    \label{def: life time of the kth cluster with type j ancestor, LD for Hawkes}
\end{align}
to denote the lifetime  of the cluster process $\bm N^{\mathcal O}_{ (T^\mathcal{C}_j(k),j) }$ 
(i.e., the gap in the birth times between the ancestor and the last descendant in this cluster).
Similarly, in \eqref{def: cluster process, C mathcal O j} we use
\begin{align}
    H_j \delequal \max_{m \geq 0,\ i \in [d]} T^{(n);j}_i(m)
    \label{def: life time of the kth cluster with type j ancestor, indp copy, LD for Hawkes}
\end{align}
to denote the lifetime of the point process $C^\mathcal{O}_j$.
Since 
$\bm N^{\mathcal O}_{ (T^\mathcal{C}_j(k),j) }$
are i.i.d.\ copies of $C^\mathcal{O}_j$,
the sequence $H^{(k)}_j$ are also independent copies of $H_j$.
Lemma~\ref{lemma: asymptotic equivalence between N and large jump, LD for Hawkes} establishes the asymptotic equivalence between $\bar{\bm N}_n^{\subInfty}$ and $\bar{\bm N}^{>\delta;\subInfty}_n + \bm\mu_{\bm N}\mathbf 1$ under $\dmp{\subInfty}$.

\begin{lemma}\label{lemma: asymptotic equivalence between N and large jump, LD for Hawkes}
\linksinthm{lemma: asymptotic equivalence between N and large jump, LD for Hawkes}
    Let Assumptions~\ref{assumption: subcriticality}--\ref{assumption: regularity condition 2, cluster size, July 2024} hold.
    For any $\Delta,\beta > 0$,
    \begin{align}
        \lim_{n \to \infty} n^\beta \cdot 
        \P\bigg(
            \dmp{\subInfty}\Big(
                \bar{\bm N}_n^{\subInfty},\ \bar{\bm N}^{>\delta;\subInfty}_n + \bm\mu_{\bm N}\mathbf 1
            \Big) > \Delta
        \bigg) = 0,
        \qquad
        \forall \delta > 0\text{ small enough.}
        \nonumber
    \end{align}
\end{lemma}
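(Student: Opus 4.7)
The plan is to parallel the proof of Lemma~\ref{lemma: asymptotic equivalence between L and large jump, LD for Hawkes}. First fix $T > 0$ large enough that $e^{-T} < \Delta/2$; the tail integral $\int_T^\infty e^{-t}\,dt \leq e^{-T}$ contributes at most $\Delta/2$ to $\dmp{\subInfty}$, and on $[0,T]$ the product $M_1$ metric is dominated by the uniform metric. Since $\bar{\bm N}_n(t) - \bar{\bm N}^{>\delta}_n(t) = \bar{\bm N}^{\leqslant\delta}_n(t)$ by definition, the task reduces to showing
\begin{equation*}
\lim_{n \to \infty} n^\beta \cdot \P\!\left(\sup_{t \in [0,T]} \norm{\bar{\bm N}^{\leqslant\delta}_n(t) - \bm\mu_{\bm N}\,t} > \Delta/2\right) = 0
\end{equation*}
for $\delta > 0$ small enough.

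Next I would exploit the Poisson cluster structure. By independent thinning based on the truncation event $\{\norm{\bm S^{(k)}_j} \leq n\delta\}$, the small-cluster process $\bar{\bm N}^{\leqslant\delta}_n$ is independent of $\bar{\bm N}^{>\delta}_n$ and admits the representation
\begin{equation*}
\bar{\bm N}^{\leqslant\delta}_n(t) = \frac{1}{n}\sum_{j \in [d]}\sum_{k \geq 1} \bm Y^{(k)}_j\bigl(nt - T^\mathcal{C}_{j;k}\bigr) \cdot \mathbbm{I}\bigl\{T^\mathcal{C}_{j;k} \leq nt,\ \norm{\bm S^{(k)}_j} \leq n\delta\bigr\},
\end{equation*}
where $\bm Y^{(k)}_j(u)$ is the vector of type-indexed counts of offspring in the $k$-th type-$j$ cluster born within $u$ time units after the ancestor; in particular $\bm Y^{(k)}_j(u) \uparrow \bm S^{(k)}_j$ as $u \to \infty$, and each $1/n$-scaled summand is bounded in $L_1$ norm by $\delta$ on the truncation event. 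By Fubini and dominated convergence (using $\E\norm{\bm S_j} < \infty$, which follows from Assumption~\ref{assumption: subcriticality}), $\E \bar{\bm N}^{\leqslant\delta}_n(t) \to \bm\mu_{\bm N}\,t$ uniformly in $t \in [0,T]$.

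For the probabilistic fluctuation around the mean, I would exploit the monotonicity of the counting process to reduce the supremum on $[0,T]$ to pointwise concentration on a finite grid $0 = t_0 < \cdots < t_m = T$ chosen fine enough that the deterministic oscillation $\norm{\bm\mu_{\bm N}}T/m$ is below $\Delta/8$, then apply a concentration inequality to each $\bar{\bm N}^{\leqslant\delta}_n(t_i)$ and to each increment $\bar{\bm N}^{\leqslant\delta}_n(t_{i+1}) - \bar{\bm N}^{\leqslant\delta}_n(t_i)$ via a union bound. Clusters falling outside the full cone $\bar\R^d([d],\epsilon)$ can be discarded at super-polynomial cost by the bound \eqref{cond, outside of the full cone, finite index, def: MRV} in Theorem~\ref{theorem: main result, cluster size}, so one may restrict to cone-aligned clusters.

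The main technical obstacle, playing the role of Lemma~\ref{lemma: concentration of small jump process, Ld for Levy MRV} from the Lévy argument, is achieving the super-polynomial concentration rate $o(n^{-\beta})$ for arbitrary $\beta > 0$. A direct Bernstein-type bound with truncation at $n\delta$ yields only $n$-independent Gaussian tails, because each summand can be as large as $\delta$ after scaling. The intended remedy is a multi-scale truncation: clusters with $\norm{\bm S^{(k)}_j} \leq n^\theta\delta$ for some $\theta < 1$ contribute a centered sum whose Bernstein bound is of order $\exp(-c\,n^{1-\theta}/\delta)$, which is super-polynomial in $n$; clusters in the ``shell'' $n^\theta\delta < \norm{\bm S^{(k)}_j} \leq n\delta$ are polynomially rare by the cluster-size tails in Theorem~\ref{theorem: main result, cluster size}, and combining $\theta$ close to $1$ with the $\MRV^*$ suppression outside the full cone should deliver the claimed rate once $\delta$ is chosen small enough (depending on $\beta$).
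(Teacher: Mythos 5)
Your approach is genuinely different from the paper's. The paper never tries to concentrate $\bar{\bm N}^{\leqslant\delta}_n(t)$ directly: it sandwiches $\bar{\bm N}^{\leqslant\delta}_n(t)$ between two \emph{compound Poisson processes}, namely $\bar{\bm L}^{\leqslant\delta|M}_n((t-\epsilon)\vee 0) \leq \bar{\bm N}^{\leqslant\delta}_n(t) \leq \bar{\bm L}^{\leqslant\delta}_n(t)$, where $\bar{\bm L}^{\leqslant\delta|M}_n$ retains only clusters with lifetime $H_j^{(k)} \leq M$; it then controls the mean of the lifetime-truncated process by monotone convergence and defers the super-polynomial concentration to the L\'evy-process machinery (Lemma~\ref{lemma: concentration of small jump process, Ld for Levy MRV}, via Doob/BDG/Cram\'er), already proved for Proposition~\ref{proposition: asymptotic equivalence, LD for Levy MRV}. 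You instead attack $\bar{\bm N}^{\leqslant\delta}_n$ directly via a finite grid and componentwise monotonicity, which is legitimate, and then you must re-derive the concentration from scratch; what makes that viable (you should say so explicitly) is that at each fixed $t_i$ the variable $\bar{\bm N}^{\leqslant\delta}_n(t_i)$ is a \emph{compound Poisson random sum} with i.i.d.\ atoms $\bm Y_j(U)\mathbbm{I}\{\norm{\bm S_j}\leq n\delta\}$ for $U \sim \text{Unif}(0,nt_i)$, so a Bernstein/Chernoff bound applies without any conditioning on the arrival times, and your multi-scale split (tiny clusters $\leq n^\theta\delta$ via Bernstein giving $\exp(-\Omega(n^{1-\theta}/\delta))$, shell clusters counted via a Poisson tail $\P(\text{Poisson}(\mu_n)>k_0)\lesssim \mu_n^{k_0}$ with $\mu_n \asymp n^{1-\theta\alpha^*}\delta^{-\alpha^*}$) does deliver $o(n^{-\beta})$ once $\theta \in (1/\alpha^*,1)$ is fixed and $\delta$ is small enough, matching the quantifier structure of the lemma. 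The appeal to the $\MRV^*$ suppression outside $\bar\R^d([d],\epsilon)$ is a red herring here: the small-cluster process includes all clusters with $\norm{\bm S_j}\leq n\delta$ regardless of direction, and all the shell and Bernstein estimates only use the scalar tail $\P(\norm{\bm S_j}>x)\in\RV_{-\alpha^*}$ with $\alpha^*>1$; the directional refinement never enters this lemma. Likewise, concentration of the grid increments is redundant once you use monotonicity — the grid-point concentration plus the deterministic oscillation of $\bm\mu_{\bm N}t$ already controls the supremum. In trade, your route avoids the lifetime truncation and $\epsilon$-shift entirely, which is a modest simplification, while the paper's route buys reuse of the already-established L\'evy concentration lemma rather than re-proving a compound-Poisson Bernstein bound in a triangular-array setting.
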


\begin{proof}
\linksinpf{lemma: asymptotic equivalence between N and large jump, LD for Hawkes}
The proof is similar to that of Lemma~\ref{lemma: asymptotic equivalence between L and large jump, LD for Hawkes}.
In particular, by fixing $T$ large enough such that $e^{-T} < \Delta/2$, it suffices to prove that 
\begin{align}
    \P\bigg(
        \sup_{t \in [0,T]}
        \norm{
            \bar{\bm N}_n(t)
            -
            \Big(  \bar{\bm N}_n^{>\delta}(t) + \bm \mu_{\bm N}t   \Big)
        } >\Delta/2   
    \bigg) = \lo(n^{-\beta}),
    \qquad
    \forall \delta > 0\text{ small enough.}
    \label{proof, goal, lemma: asymptotic equivalence between N and large jump, LD for Hawkes}
\end{align}
First, by comparing the definitions in \eqref{def: small jump process for Levy, LD for Hawkes} and \eqref{def: small jump process for N, LD for Hawkes},
\begin{align}
    \bm{\bar N}^{\leqslant \delta}_n(t) \leq \bm{\bar L}^{\leqslant \delta}_n(t),
    \qquad\forall \delta > 0,\  n \geq 1,\ t \geq 0,
    \label{proof, ineq 1, lemma: asymptotic equivalence between N and large jump, LD for Hawkes}
\end{align}
since in $\bm{\bar L}^{\leqslant \delta}_n(t)$ we lump all descendants in the same cluster at the arrival time of the immigrant of the cluster.
Here, the order $\bm x \leq \bm y$ between two vectors in $\R^d$ means that $x_i \leq y_i$ for each $i \in [d]$.
Next, for each $M> 0$, let 
\begin{align}
    {\bm L}^{|M}(t)
    \delequal 
    \sum_{ j \in [d] }\sum_{k \geq 0}
    \bm S^{(k)}_j
    \mathbbm{I}\Big\{ H^{(k)}_j \leq M  \Big\}\cdot 
    \mathbbm{I}_{ [T^\mathcal{C}_{j;k},\infty) }(t).
    \nonumber
\end{align}
That is, ${\bm L}^{|M}(t)$ is a modification of $\bm L(t)$ in \eqref{def: Levy process, LD for Hawkes} by only considering clusters with lifetime below $M$.
Note that
\begin{align}
    \E\big[ \bm L^{|M}(1) \big]
    = \sum_{j \in [d]}c_{j}^{\bm N} \cdot 
    \E\big[
        \bm S_j \mathbbm{I}\{ H_j \leq M \}
    \big]
    \rightarrow
    \sum_{j \in [d]}c_{j}^{\bm N} \cdot 
    \E
        \bm S_j
    =\bm \mu_{\bm N},
    \qquad
    \text{as }M \to \infty,
    \label{proof, limit 1, lemma: asymptotic equivalence between N and large jump, LD for Hawkes}
\end{align}
by the monotone convergence theorem.
Besides, for each $n \geq 1$ and $\delta > 0$, we define processes
\begin{align}
    \bar{\bm L}^{|M}_n(t)
    & \delequal
    \frac{1}{n}
    \sum_{ j \in [d] }\sum_{k \geq 0}
    \bm S^{(k)}_j
    \mathbbm{I}\big\{  H^{(k)}_j \leq M   \big\}\cdot 
    \mathbbm{I}_{ [T^\mathcal{C}_{j;k},\infty) }(nt),
    \nonumber
    \\
    \bar{\bm L}^{\leqslant\delta|M}_n(t)
    & \delequal
    \frac{1}{n}
    \sum_{ j \in [d] }\sum_{k \geq 0}
    \bm S^{(k)}_j
    \mathbbm{I}\bigg\{ \norm{\bm S^{(k)}_j } \leqslant n\delta,\ H^{(k)}_j \leq M   \bigg\}\cdot 
    \mathbbm{I}_{ [T^\mathcal{C}_{j;k},\infty) }(nt),
    \nonumber
\end{align}
which scale both time and space by $n$.
For any $\epsilon,\delta > 0$, and any $n$ large enough such that $n\epsilon > M$,
\begin{align}
    \bm{\bar N}^{\leqslant \delta}_n(t) \geq \bm{\bar L}^{\leqslant \delta|M}_n\big( (t-\epsilon) \vee 0 \big),
    \qquad\forall \delta > 0,\  n \geq 1,\ t \geq 0.
    \label{proof, ineq 2, lemma: asymptotic equivalence between N and large jump, LD for Hawkes}
\end{align}
Indeed, for any cluster with its immigrant arriving at $T^{\mathcal C}_{j;k} \leq n(t - \epsilon)$ and lifetime $H^{(k)}_j \leq M$,
all the descendants in this cluster must have arrived by the time $n(t - \epsilon) + M < nt$.
Combining \eqref{proof, ineq 1, lemma: asymptotic equivalence between N and large jump, LD for Hawkes} and \eqref{proof, ineq 2, lemma: asymptotic equivalence between N and large jump, LD for Hawkes},
for any $\epsilon, M, \delta > 0$, it holds for all $n$ large enough with $n\epsilon > M$ that 
\begin{align*}
    & 
    \bm{\bar L}^{\leqslant \delta|M}_n\big( (t-\epsilon) \vee 0 \big)
    - \bm \mu_{\bm N} t
    \leq 
     \bm{\bar N}^{\leqslant \delta}_n(t) - \bm \mu_{\bm N} t 
     \leq 
     \bar{\bm L}^{\leqslant\delta}_n(t) - \bm \mu_{\bm N} t,
     \qquad \forall t \geq 0
     \\ 
     \Longrightarrow
     & 
     \sup_{t \in [0,T]}
     \norm{
        \bm{\bar N}^{\leqslant \delta}_n(t) - \bm\mu_{\bm N} t
     }
     \leq 
     \sup_{t \in [0,T]}
     \norm{
         \bar{\bm L}^{\leqslant\delta}_n(t) - \bm\mu_{\bm N} t
     }
     +
     \sup_{t \in [0,T]}
     \norm{
         \bm{\bar L}^{\leqslant \delta|M}_n\big( (t-\epsilon) \vee 0 \big)
    - \mu_{\bm N} t
     }.
\end{align*}
The last step follows from our choice of the $L_1$ norm and the preliminary bound that
$
y \leq x \leq z \ \Longrightarrow \ |x| \leq |y| + |z|.
$
Furthermore, note that 
\begin{align*}
    &
    \sup_{t \in [0,T]}
     \norm{
         \bm{\bar L}^{\leqslant \delta|M}_n\big( (t-\epsilon) \vee 0 \big)
    - \bm \mu_{\bm N} t
     }
     \\ 
     & \leq 
     \sup_{t \in [0,T]}
     \norm{
         \bm{\bar L}^{\leqslant \delta|M}_n(t)
    - t \cdot \E\big[\bm L^{|M}(1)\big]
     }
     + \epsilon \norm{\bm \mu_{\bm N}} +
     T \cdot \norm{
     \bm \mu_{\bm N} - \E\big[\bm L^{|M}(1)\big]
     }
     \\ 
     & \leq 
     \sup_{t \in [0,T]}
     \norm{
         \bm{\bar L}^{\leqslant \delta|M}_n(t)
    - t \cdot \E\big[\bm L^{|M}(1)\big]
     }
     +\Delta/6
     \qquad
     \text{for any $\epsilon$ small enough and $M$ large enough}.
\end{align*}
The last inequality follows from the limit in \eqref{proof, limit 1, lemma: asymptotic equivalence between N and large jump, LD for Hawkes}.
Therefore, to prove Claim \eqref{proof, goal, lemma: asymptotic equivalence between N and large jump, LD for Hawkes},
it suffices to show that given $M > 0$,
the bounds 
$
\P\Big(
    \sup_{t \in [0,T]}
     \norm{
         \bm{\bar L}^{\leqslant \delta|M}_n(t)
    - t \cdot \E\big[\bm L^{|M}(1)\big]
     } > \Delta/6
    \Big)
    =\lo(n^{-\beta})
$
and 
$
 \P\Big(
        \sup_{t \in [0,T]}
        \norm{
         \bm{\bar L}^{\leqslant \delta}_n(t)
    - \bm \mu_{\bm N} t
     } >\Delta/6  
    \Big) = \lo(n^{-\beta})
$
for any $\delta > 0$ small enough.
By repeating the arguments in Lemma~\ref{lemma: asymptotic equivalence between L and large jump, LD for Hawkes} for \eqref{proof, goal, lemma: asymptotic equivalence between L and large jump, LD for Hawkes}, one can establish both claims above.
\end{proof}

To proceed, we prepare
Lemma~\ref{lemma: lifetime of a cluster, LD for Hawkes} and characterize the tail asymptotics for $H_j$ in \eqref{def: life time of the kth cluster with type j ancestor, indp copy, LD for Hawkes} (i.e., the lifetime of clusters).

\begin{lemma}\label{lemma: lifetime of a cluster, LD for Hawkes}
\linksinthm{lemma: lifetime of a cluster, LD for Hawkes}
Let Assumptions~\ref{assumption: subcriticality} and \ref{assumption: heavy tails in B i j} hold.
Let $\beta > 0$
and $a:(0,\infty)\to (0,\infty)$ be a regularly varying function with $a(x) \in \RV_{-\beta}(x)$ as $x \to \infty$.
Let $f^{\bm N}_{p \leftarrow q}(\cdot)$ be the decay functions in \eqref{def: conditional intensity, hawkes process}.
Suppose that 
\begin{align}
   \int_{x/\log x}^\infty f^{\bm N}_{p \leftarrow q}(t)dt = \lo\big( a(x) \big)
    \ 
    \text{ as }x \to \infty,
    \qquad \forall p,q\in [d].
    \label{tail condition for fertility function, lemma: lifetime of a cluster, LD for Hawkes}
\end{align}
Then,
\begin{align}
    \P( H_j > n\epsilon ) = \lo\big( a(n) \big)\
    \text{ as }n \to \infty,
    \qquad \forall j \in [d],\  \epsilon > 0.
    \label{claim, lemma: lifetime of a cluster, LD for Hawkes}
\end{align}
\end{lemma}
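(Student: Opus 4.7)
The plan is to decompose the event $\{H_j > n\epsilon\}$ according to the depth $D$ of the cluster $C^{\mathcal O}_j$ (the largest $n$ for which $\sum_i R^{(n);j}_i \geq 1$). Let $\rho < 1$ denote the spectral radius of $\bar{\bm B}$ from Assumption~\ref{assumption: subcriticality}. Since $a \in \RV_{-\beta}$ and any slowly varying function is $\lo(n^\eta)$ for every $\eta > 0$, I would fix a constant $K$ large enough that $K |\log \rho'| > \beta$ for some $\rho' \in (\rho,1)$, and split
\begin{align*}
\P(H_j > n\epsilon) \leq \P\big(D \geq K \log n\big) + \P\big(H_j > n\epsilon,\ D < K \log n\big).
\end{align*}

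For the first term I would invoke a standard first-moment bound. Since $\E[R^{(k);j}_i] = (\bar{\bm B}^k)_{j,i}$ and since Gelfand's formula combined with $\rho(\bar{\bm B}) < 1$ gives $\|\bar{\bm B}^k\|_\infty \leq C (\rho')^k$, I get
\begin{align*}
\P(D \geq k) \leq \P\Big(\textstyle\sum_i R^{(k);j}_i \geq 1\Big) \leq \sum_i (\bar{\bm B}^k)_{j,i} \leq C(\rho')^k,
\end{align*}
so that $\P(D \geq \lceil K \log n \rceil) \leq C n^{K \log \rho'} = \lo(a(n))$ by the choice of $K$.

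For the second term I would exploit the structural independence built into Definition~\ref{def: offspring cluster process}: conditional on the cluster tree (i.e., on all the Poisson offspring counts $B^{(n,m);j}_{i \leftarrow l}$ and on the $\tilde B_{i \leftarrow l}$), each edge carries an \emph{independent} waiting-time with density $f^{\bm N}_{i \leftarrow l}/\mu^{\bm N}_{i \leftarrow l}$ depending only on the parent-child types. On $\{D < K \log n\}$, every node is reached from the root along a path of at most $K \log n$ edges, so $\{H_j > n\epsilon\}$ forces at least one edge to have gap exceeding $s_n \delequal n\epsilon/(K \log n)$. A union bound over the (random) edge set followed by taking expectation yields
\begin{align*}
\P(H_j > n\epsilon,\ D < K \log n) \leq \E[|\bm S_j|] \cdot \max_{p,q \in [d]} \P\big(\text{gap}_{p \leftarrow q} > s_n\big),
\end{align*}
where $\E|\bm S_j| < \infty$ by Proposition 1 of \cite{Asmussen_Foss_2018}. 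Choosing $x_n = n\epsilon/K$ so that $x_n/\log x_n \leq s_n$ for $n$ large, condition~\eqref{tail condition for fertility function, lemma: lifetime of a cluster, LD for Hawkes} together with the regular variation of $a$ gives $\max_{p,q}\P(\text{gap}_{p \leftarrow q} > s_n) = \lo(a(x_n)) = \lo(a(n))$, completing the bound.

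The hard part will not be any individual step but rather the joint tuning of the two truncations: $K$ must be large enough in terms of $\beta$ and $\rho$ for the depth bound to beat $a(n)$, while the per-edge threshold $s_n = n\epsilon/(K \log n)$ must still be comparable to $x_n/\log x_n$ with $x_n \asymp n$ so that condition~\eqref{tail condition for fertility function, lemma: lifetime of a cluster, LD for Hawkes} can be applied at scale $n$. This asymptotic matching, together with careful bookkeeping that the tree structure and the edge gaps are genuinely independent (so that the union bound is clean), is where the argument of \cite{10.36045/bbms/1170347811} must be refined to handle the multivariate, $d$-type setting.
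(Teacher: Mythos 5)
Your proposal is correct and follows the same strategy as the paper's proof: decompose by cluster depth, control the depth tail geometrically from Assumption~\ref{assumption: subcriticality}, and on the bounded-depth event control the lifetime via a union bound over edges using the independence between the offspring counts and the edge waiting-times built into Definition~\ref{def: offspring cluster process}. The paper packages the second step as a formal stochastic comparison against $\norm{\bm S_j}$ fresh i.i.d.\ copies of the waiting times and quotes a higher-moment bound from \cite{KEVEI2021109067} where you use first-moment plus Gelfand's formula, but these are inessential variants of the same argument. One small arithmetic slip worth fixing: with $x_n = n\epsilon/K$ the required inequality $x_n/\log x_n \leq s_n = n\epsilon/(K\log n)$ holds only when $\epsilon/K \geq 1$; instead take $x_n = Cn$ with a constant $C < \epsilon/K$, which gives $x_n/\log x_n \leq s_n$ for all large $n$ while $a(x_n) = \bo\big(a(n)\big)$ still holds by $a \in \RV_{-\beta}$, so the conclusion is unaffected.
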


\begin{proof}
\linksinpf{lemma: lifetime of a cluster, LD for Hawkes}
In this proof, we fix some $\epsilon > 0$ and $j \in [d]$.
Recall the cluster representation of Hawkes processes in \eqref{def: center process, Poisson cluster process, notation}--\eqref{def: branching process approach, N i t, notation section}.
The key of this proof is to establish a stochastic comparison of $H_j$, the lifetime of a cluster $\mathcal C^{\mathcal O}_j$,
which refines the bounds for the lifetime of clusters in \cite{10.36045/bbms/1170347811} by taking the maximum instead of the sum of the birth times within each generation.
To be more specific, we introduce a few notations.
Independently for each pair $(i,l) \in [d]^2$,
let $\big(B^{(n,m);j}_{i \leftarrow l}\big)_{ n,m \geq 1 }$ 
be 
the i.i.d.\ copies of $B_{i \leftarrow l}$ in \eqref{def: vector B i, offspring of type i individual}, i.e., with law
\begin{align*}
    B_{i \leftarrow l} \sim \text{Poisson}\bigg( \tilde B_{i \leftarrow l} \int_0^\infty f^{\bm N}_{i \leftarrow l}(t)dt \bigg).
\end{align*}
Here, recall that
by Poisson$(X)$ for a non-negative variable $X$, we mean the law 
$\P\big(\text{Poisson}(X) > y\big) = \int_0^\infty \P\big(\text{Poisson}(x) > y\big)\P(X \in dx)$.
This agrees with the notations in Definition~\ref{def: offspring cluster process},
where we use $B^{(n,m);j}_{i \leftarrow l}$ to denote, in the cluster process $C^\mathcal{O}_j(\cdot)$,
the count of type-$i$ children (in the $n^\text{th}$ generation) born by the $m^\text{th}$ type-$l$ individual in the $(n-1)^\text{th}$ generation.
Besides, independent from the sequence $\big(B^{(n,m);j}_{i \leftarrow l}\big)_{ n,m \geq 1 }$ ,
let $\big(X_{i \leftarrow l}^{(n,k,k^\prime)}\big)_{ n,k,k^\prime \geq 1  }$
be i.i.d.\ copies of $X_{i \leftarrow l}$ with law 
\begin{align}
    \P(X_{i \leftarrow l} \in E) = \frac{
        \int_{t \in E } f^{\bm N}_{i \leftarrow l}(t)dt
    }{
        \int_0^\infty f^{\bm N}_{i \leftarrow l}(t)dt
    },
    \qquad\forall\text{ Borel }E\subseteq(0,\infty).
    \label{proof, def law of X p q, lemma: lifetime of a cluster, LD for Hawkes}
\end{align}
By step (ii) in Definition~\ref{def: offspring cluster process},
we interpret
$X_{i \leftarrow l}^{(n,k,k^\prime)}$ as the time that the $k^\text{th}$ type-$l$ parent in the $(n-1)^\text{th}$ generation waited to give birth to its $(k^\prime)^\text{th}$ type-$i$ child (here, the children are not ordered by age).

As in Definition~\ref{def: offspring cluster process},
we use 
$
R^{(n);j}_i
$
to denote, in the cluster $\mathcal C^{\mathcal O}_j$,
the count of type-$i$ descendants in the $(n-1)^\text{th}$ generation,
and use the sequence $0 < T^{(n);j}_i(1) < T^{(n);j}_i(2) < \ldots$ to denote the birth times
of type-$i$ descendants in the $n^\text{th}$ generation (provided that the cluster does have at least $n$ generations and there are type-$i$ individuals in the $n^\text{th}$ generation).
In particular, note that each $T^{(n);j}_i(r)$ admits the expression
\begin{align*}
    T^{(n);j}_i(r) 
    = 
    X_{j_1 \leftarrow  j}^{(1,1,k^\prime_1)}
    + 
    X_{j_2 \leftarrow  j_1}^{(2,k_2,k^\prime_2)}
    + \ldots + 
    X_{ i \leftarrow  j_{n-1}}^{(n,k_n,k^\prime_n)},
\end{align*}
where, for the (random) indices, we have
$j_l \in [d]$, $k_l \in \big[R^{(l-1);j}_{ j_{l-1}  }\big]$, and 
$k^\prime_l \in \big[ B_{j_l \leftarrow j_{l-1}}^{(n,k_{l-1});j}  \big]$
for each $l$;
here, note that we must have $j_0 \equiv j$ and $k_1 \equiv 1$, since the only individual in the $0^\text{th}$ generation is the type-$j$ ancestor itself.
Next, note that 
\begin{align*}
    T^{(n);j}_i(r) 
    & \leq 
    X_{j_1 \leftarrow  j}^{(1,1,k^\prime_1)}
    + 
    X_{j_2 \leftarrow  j_1}^{(2,k_2,k^\prime_2)}
    + \ldots + 
    X_{ j_{n-1} \leftarrow  j_{n-2}}^{(n-1,k_{n -1 },k^\prime_{n - 1})}
    +
    \max_{p,q \in [d]}
    \max_{
        \substack{
            k \in  [R^{(n-1);j}_p] \\  k^\prime \in  [ B_{q \leftarrow p}^{(n,k);j}] 
            }
    }
    X_{q \leftarrow p}^{(n,k,k^\prime)}
    \\ 
    & \stleq
     X_{j_1 \leftarrow  j}^{(1,1,k^\prime_1)}
    + 
    X_{j_2 \leftarrow  j_1}^{(2,k_2,k^\prime_2)}
    + \ldots + 
    X_{ j_{n-1} \leftarrow  j_{n-2}}^{(n-1,k_{n -1 },k^\prime_{n - 1})}
    +
    \max_{p,q \in [d]}
    \max_{ m \in [ R^{(n);j}_q  ]  }X_{q \leftarrow p}^{(m)}
    \\ 
    & \leq 
   X_{j_1 \leftarrow  j}^{(1,1,k^\prime_1)}
    + 
    X_{j_2 \leftarrow  j_1}^{(2,k_2,k^\prime_2)}
    + \ldots + 
    X_{ j_{n-1} \leftarrow  j_{n-2}}^{(n-1,k_{n -1 },k^\prime_{n - 1})}
    +
    \max_{p,q \in [d]}
    \max_{ m \leq \sum_{ l \in [d] }R^{(n);j}_l   }X_{q \leftarrow p}^{(m)}.
\end{align*}
In the display above,
the stochastic comparison (i.e., the $\stleq$ step)
holds due to the independence between the $X^{(n,k,k^\prime)}_{q \leftarrow p}$'s and $\big( X^{(t,k,k^\prime)}_{u \leftarrow v} \big)_{ u,v \in [d], k \geq 1, k^\prime \geq 1, t \in [n-1]  }$,
where we use $X^{(m)}_{q \leftarrow p}$ to denote generic copies of $X_{q \leftarrow p}$ (see \eqref{proof, def law of X p q, lemma: lifetime of a cluster, LD for Hawkes}) that are independent of the $X^{(t,k,k^\prime)}_{u \leftarrow v}$'s.
We emphasize that this stochastic comparison holds due to the \emph{independence between the size (i.e., offspring counts) and the birth time distributions} in the cluster.
Repeating this argument inductively, we obtain
\begin{align}
   T^{(n);j}_i(r) 
    & \stleq
    n \cdot 
    \max_{p,q \in [d]}\ 
    \max_{1 \leq m \leq \norm{\bm S_j} }X_{q \leftarrow p}^{(m)}.
    \label{proof, intermediate, stochastic dominance for lifetime, lemma: lifetime of a cluster, LD for Hawkes}
\end{align}
We note that \eqref{proof, intermediate, stochastic dominance for lifetime, lemma: lifetime of a cluster, LD for Hawkes} refines the bounds for the lifetime of clusters in \cite{10.36045/bbms/1170347811}.

Adopting the notations in Definition~\ref{def: offspring cluster process},
we let $K^{\mathcal O}_j = \max\{n \geq 0:\ R^{(n);j}_i \geq 1\text{ for some }i \in [d]  \}$
        be the total count of generations in this cluster.
Besides, we use $R^{(n);j} \delequal \sum_{i \in [d]}R^{(n);j}_i$ to denote the offspring count in the $n^\text{th}$ generation of the cluster process $C^\mathcal{O}_j(\cdot)$.
By \eqref{proof, intermediate, stochastic dominance for lifetime, lemma: lifetime of a cluster, LD for Hawkes},
for any $c_1 > 0$ we have
\begin{align*}
    \P(H_j > n\epsilon) & \leq \P(K^{\mathcal O}_j \geq c_1\log n)
    + 
    \P\bigg(
        \max_{p,q \in [d]}\ 
    \max_{1 \leq m \leq \norm{\bm S_j} }X_{p \leftarrow q}^{(m)}
        >
        \frac{n\epsilon}{c_1\log n}
    \bigg)
    \\ 
    & \leq 
    \P(K^{\mathcal O}_j \geq c_1\log n)
    +
    \sum_{p,q \in [d]}
    \P\bigg(
    \max_{1 \leq m \leq \norm{\bm S_j} }X_{p \leftarrow q}^{(m)}
        >
        \frac{n\epsilon}{c_1\log n}
    \bigg).
\end{align*}
To prove \eqref{claim, lemma: lifetime of a cluster, LD for Hawkes}, it suffices to find some constant $c_1 > 0$ (depending only on $\beta$ and $j$) such that 
\begin{align}
    \P(K^{\mathcal O}_j  \geq c_1 \log n) = \lo\big( a(n) \big)\ \text{ as }n\to\infty, 
    \label{proof: claim 1, lemma: lifetime of a cluster, LD for Hawkes}
\end{align}
and that
\begin{align}
    \P\bigg( \max_{ 1 \leq m \leq \norm{\bm S_j}   }X^{(m)}_{p \leftarrow q} > \frac{ n\epsilon }{ c_1 \log n} \bigg)
    = \lo\big( a(n) \big)\ \text{ as }n\to\infty,
    \qquad\forall (p,q) \in [d]^2.
    \label{proof: claim 2, lemma: lifetime of a cluster, LD for Hawkes}
\end{align}

\medskip
\noindent
\textbf{Proof of Claim~\eqref{proof: claim 1, lemma: lifetime of a cluster, LD for Hawkes}}.
We fix some $\tilde \alpha \in (1, \min_{p,q}\alpha_{p \leftarrow q})$.
By Assumption~\ref{assumption: heavy tails in B i j},
we have
$
\E[\tilde B_{p\leftarrow  q}^{\tilde \alpha}] < \infty. 
$
Under this condition and
Assumption~\ref{assumption: subcriticality},
it has been established in Section~3  of 
\cite{KEVEI2021109067} (in particular, see Equation (6) in the paper and the discussion below)
that
\begin{align*}
    \E\big[ ( R^{(k);j}  )^{\tilde \alpha}\big] \leq c_0 v^k,
    \qquad\forall k \geq 1,
\end{align*}
for some $c_0 \in (0,\infty)$ and $v \in (0,1)$ whose values only depend on the law of the $\tilde B_{p,q}$'s.
Then, by Markov's inequality,
\begin{align*}
    \P(K^{\mathcal O}_j \geq  k)
    =
     \P\big( R^{(k);j}  > 0  \big)
     =
     \P\big( R^{(k);j}  \geq 1  \big)
    \leq 
    c_0 v^k,
    \qquad\forall k \geq 1.
\end{align*}
By picking $c_1 > 0$ large enough such that $c_1\log(v) < -\beta$, we get
\begin{align*}
    \P(K^{\mathcal O}_j \geq c_1 \log n)
    \leq 
    c_0 \cdot \nu^{ {c_1 \log(n)}   }
    =
    c_0 \cdot n^{ {c_1 \log(\nu)}   }
    =
    \lo\big(a(n)\big)
\end{align*}
due to $a(n) \in \RV_{-\beta}(n)$.
This verifies Claim~\eqref{proof: claim 1, lemma: lifetime of a cluster, LD for Hawkes}.

\medskip
\noindent
\textbf{Proof of Claim~\eqref{proof: claim 2, lemma: lifetime of a cluster, LD for Hawkes}}.
Since the $X^{(m)}_{p \leftarrow q}$'s are independent of the $X_{p\leftarrow q}^{(n,k,k^\prime)}$ and $R_{p}^{(n);j}$ (and hence the cluster $\mathcal C^{\mathcal O}_j$ and the cluster size vector $\bm S_j$),
\begin{align*}
    \P\bigg( \max_{ 1 \leq m \leq \norm{\bm S_j}   }X^{(m)}_{p \leftarrow q} > \frac{ n\epsilon }{c_1 \log n} \bigg)
    & = \sum_{k \geq 1}
    \P\bigg( \max_{ m \in [k]  }X^{(m)}_{p \leftarrow q} > \frac{ n\epsilon }{ c_1 \log n} 
    \bigg)
    \P(\norm{\bm S_j} = k)
    \\ 
    & \leq 
    \sum_{k \geq 1}
    k \cdot \P\bigg( X_{p \leftarrow q} > \frac{ n\epsilon }{ c_1 \log n} 
    \bigg) \cdot   \P(\norm{\bm S_j} = k)
    \\ 
    &
    =
    \E\big[\norm{\bm S_j}\big] 
        \cdot \P\bigg(X_{p \leftarrow q} > \frac{ n\epsilon }{ c_1 \log n} \bigg) 
    \\ 
    & = 
    \frac{\E\norm{\bm S_j}}{ \int_0^\infty f^{\bm N}_{p \leftarrow  q}(t)dt } \cdot 
    \int_{ t > { n\epsilon }/({c_1 \log n})  }f^{\bm N}_{p \leftarrow  q}(t)dt
    \qquad
    \text{by \eqref{proof, def law of X p q, lemma: lifetime of a cluster, LD for Hawkes}}
    \\ 
    & = \lo\big(a(n)\big)
    \qquad
    \text{due to \eqref{tail condition for fertility function, lemma: lifetime of a cluster, LD for Hawkes}}.
\end{align*}
This concludes the proof of Claim~\eqref{proof: claim 2, lemma: lifetime of a cluster, LD for Hawkes}.
\end{proof}

We prepare a lemma studying the distance between step functions under the metric $\dmp{\subZeroT{T}}$ in \eqref{def: product M1 metric}.

\begin{lemma}\label{lemma: step functions under M1 metric, LD for Hawkes}
\linksinthm{lemma: step functions under M1 metric, LD for Hawkes}
Let $T \in (0,\infty)$, $k, k^\prime \in \mathbb N$, $\epsilon > 0$, and $\xi,\xi^\prime \in \D[0,T] = \D\big([0,T],\R^d\big)$.
Suppose that 
\begin{itemize}
    \item 
        $\xi$ is a non-decreasing step function with $k$ jumps and vanishes at the origin; i.e., $\xi(t) = \sum_{i = 1}^k \bm w_i\mathbbm{I}_{[t_i,T]}(t)\ \forall t \in [0,T]$, 
        where $\bm w_i \in \R^d_+ \setminus \{\bm 0\}$ for each $i \in [k]$ and $0 < t_1 < \ldots < t_k = T$;

    \item 
        $\xi^\prime$ is a non-decreasing step function with $k^\prime$ jumps and vanishes at the origin;

    \item 
        $\xi\big( (t - \epsilon) \vee 0\big) \leq \xi^\prime(t) \leq \xi(t)$ for each $t \in [0,T]$.
\end{itemize}
Then, for any $t \in (0,T]$ such that $t_i \notin [t-\epsilon,t]\ \forall i \in [k]$ (i.e., $\xi(\cdot)$ remains constant over $[(t-\epsilon)\vee 0,t]$),
\begin{align}
    \dmp{\subZeroT{t}}(\xi,\xi^\prime) \leq \epsilon.
    \nonumber
\end{align}
\end{lemma}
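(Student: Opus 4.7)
The plan is to reduce to the coordinate-wise problem: since the product $M_1$ metric is the maximum of the one-dimensional $M_1$ metrics, it suffices to fix $j \in [d]$ and show $\dm{[0,t]}(\xi_j, \xi'_j) \leq \epsilon$. The first observation is that the hypothesis ``$\xi$ constant on $[(t-\epsilon)\vee 0, t]$'' collapses the sandwich $\xi((t-\epsilon)\vee 0) \leq \xi'(t) \leq \xi(t)$ into $\xi'(t) = \xi(t)$, so that $\xi_j|_{[0,t]}$ and $\xi'_j|_{[0,t]}$ are both non-decreasing step functions sharing endpoint values $\xi_j(0) = \xi'_j(0) = 0$ and $\xi_j(t) = \xi'_j(t) =: Y_j$; their completed graphs $\Gamma_{\xi_j}$ and $\Gamma_{\xi'_j}$ on $[0,t]$ therefore share the same total $L^1$ arc length $L_j := Y_j + t$.

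Next I would construct arc-length parametrizations $\phi_i : [0, L_j] \to [0,\infty) \times [0,t]$ of $\Gamma_{\xi_j}$ and $\Gamma_{\xi'_j}$ that traverse horizontal segments and vertical jumps at unit speed, in the canonical order along the graph; a linear rescaling $r = L_j \tau$ then turns each $\phi_i$ into a valid parametric representation on $[0,1]$ in the sense of the paper. The decisive feature is the identity
\begin{equation*}
u_i(r) + s_i(r) = r \qquad \text{for all } r \in [0, L_j], \ i = 1,2,
\end{equation*}
where $\phi_i(r) = (u_i(r), s_i(r))$, which immediately gives $|u_1(r) - u_2(r)| = |s_1(r) - s_2(r)|$ for every $r$. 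Consequently the bound $\dm{[0,t]}(\xi_j, \xi'_j) \leq \epsilon$ reduces to establishing the pointwise estimate $|u_1(r) - u_2(r)| \leq \epsilon$ uniformly in $r \in [0, L_j]$.

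The heart of the argument, and the step I expect to be the main obstacle, is this pointwise estimate. First I would rule out the case $u_1(r) < u_2(r)$: here $s_1 > s_2$, and the chain
\begin{equation*}
u_2(r) \ \leq \ \xi'_j(s_2) \ \leq \ \xi_j(s_2) \ \leq \ \xi_j(s_1 -) \ \leq \ u_1(r),
\end{equation*}
which uses the upper sandwich $\xi'_j \leq \xi_j$, the monotonicity of $\xi_j$, and the graph membership $(u_1(r), s_1(r)) \in \Gamma_{\xi_j}$, contradicts $u_1 < u_2$. Hence $u_1(r) \geq u_2(r)$, so $s_1(r) \leq s_2(r)$, and the common quantity $\delta := u_1(r) - u_2(r) = s_2(r) - s_1(r) \geq 0$. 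If $s_2(r) \leq \epsilon$ then trivially $\delta \leq \epsilon$; otherwise, assuming $\delta > \epsilon$ for contradiction, I would pass to left limits in the lower sandwich $\xi_j((s-\epsilon)\vee 0) \leq \xi'_j(s)$ at $s = s_2(r)$ to obtain $\xi'_j(s_2-) \geq \xi_j((s_2-\epsilon)-)$, and then exploit $s_2 - \epsilon > s_1$ together with monotonicity and graph membership to conclude $u_2 \geq \xi'_j(s_2-) \geq \xi_j(s_1) \geq u_1$, contradicting $u_1 > u_2$. The delicate part throughout is careful bookkeeping of left versus right limits of $\xi_j$ and $\xi'_j$ at jump times so that all monotonicity comparisons remain valid; once that is in place, $\delta \leq \epsilon$ holds pointwise, and taking the maximum over $j \in [d]$ yields $\dmp{[0,t]}(\xi, \xi') \leq \epsilon$.
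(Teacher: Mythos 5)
Your proposal is correct, and it takes a genuinely different route from the paper's. Both reduce to the one-dimensional $M_1$ estimate and both exploit the observation that the hypothesis forces $\xi_j(t) = \xi'_j(t)$ (so the two graphs on $[0,t]$ share endpoints). From there the paper proceeds by \emph{level-set synchronization}: it collects the shared set of levels $\{z_0,\dots,z_{k^*}\}$ visited by both paths, defines first-crossing times $t^x_l, t^y_l$, derives $t^y_l - \epsilon \leq t^x_l \leq t^y_l$ from the sandwich, and then builds piecewise-linear parametrizations allotting equal parameter time to each horizontal/vertical stage so that the value coordinates agree exactly ($s^x = s^y$) and all the error sits in the time coordinate. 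Your approach instead uses \emph{arc-length synchronization}: unit-speed $L^1$ parametrizations on the common domain $[0,L_j]$, where the identity $u_i(r)+s_i(r)=r$ automatically equalizes the two coordinate errors and converts the $M_1$ bound into a single pointwise estimate. You then dispose of $u_1(r)<u_2(r)$ with the upper sandwich and monotonicity, and bound $u_1(r)-u_2(r)\leq\epsilon$ by contradiction via the lower sandwich, passing to left limits to get $\xi'_j(s_2-)\geq\xi_j((s_2-\epsilon)-)\geq\xi_j(s_1)\geq u_1(r)$. I checked this chain; the left/right-limit bookkeeping is consistent with the graph-membership inequalities $u_i(r)\in[\xi^{(i)}_j(s_i(r)-),\,\xi^{(i)}_j(s_i(r))]$, and the degenerate cases ($s_2(r)\leq\epsilon$, or $Y_j=0$) are handled. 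Your argument is slicker in that it is coordinate-free and avoids explicitly enumerating the levels; the paper's argument is more constructive (it exhibits the parametrizations piece by piece), which may be easier to audit but is more bookkeeping-heavy. Either proof closes the lemma.
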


\begin{proof}\linksinpf{lemma: step functions under M1 metric, LD for Hawkes}
We write $\xi(t) = (\xi_1(t),\ldots,\xi_d(t)\big)^\top$
and
$\xi^\prime(t) = (\xi^\prime_1(t),\ldots,\xi^\prime_d(t)\big)^\top$.
Let $I = \{ t \in (0,T]:\ t_i \notin [t,t+\epsilon]\ \forall i \in [k]  \}$.
By definition of the product $M_1$ metric in \eqref{def: product M1 metric},
it suffices to show that $\dm{\subZeroT{t}}(\xi_i,\xi^\prime_i) \leq \epsilon$ for each $i \in [d]$ and $t \in I$,
where $\dm{\subZeroT{t}}$ is the $M_1$ metric for $\R$-valued càdlàg paths; see \eqref{def: M1 metric 0 T in R 1}.
As a result, it suffices to prove the claim for  $\R$-valued càdlàg paths.
More specifically, 
let $T \in (0,\infty)$, $k,k^\prime \in \mathbb Z_+$, $\epsilon > 0$, and $x,y \in \D\big([0,T],\R\big)$, and suppose that 
\begin{itemize}
    \item
        $x$ is a non-decreasing step function with $k$ jumps and vanishes at the origin,

    \item 
        $y$ is a non-decreasing step function with $k^\prime$ jumps and vanishes at the origin,

    \item 
        and $x\big((t-\epsilon)\vee 0\big) \leq y(t) \leq x(t)$ for each $t \in [0,T]$.
\end{itemize}
It suffices to fix some $t \in (0,T]$ such that $\Delta x(u) = 0$ for each $u \in [ (t - \epsilon) \vee 0 ,t]$, and show that
\begin{align}
    \dm{\subZeroT{t}}(x,y) \leq \epsilon.
    \label{proof, goal, lemma: step functions under M1 metric, LD for Hawkes}
\end{align}
Since both $x$ and $y$ are non-decreasing step functions over $[0,t]$, there exist
some $k^* \in \mathbb Z_+$ and a sequence
$
0 = z_0 < z_1 < z_2 < \ldots < z_{k^*}
$
such that 
\begin{align}
    \big\{ z \in [0,\infty):\ z = x(u)\text{ or }y(u)\text{ for some }u \in [0,t]\big\}
    =
    \{z_0,z_1,\ldots,z_{k^*}\}.
    \nonumber
\end{align}
For each $l \in [k^*]$, let
\begin{align}
    t^x_l \delequal \min\{ u \geq 0:\ x(u) \geq z_l \},
    \quad 
    t^y_l \delequal \min\{ u \geq 0:\ y(u) \geq z_l \}
    \nonumber
\end{align}
be the first time $x(u)$ (resp., $y(u)$) crosses the level of $z_l$.
Due to $x\big((u-\epsilon)\vee 0\big) \leq y(u) \leq x(u)$ for each $u \in [0,t]$,
we have
\begin{align}
     t^y_l - \epsilon \leq t^x_l \leq t^y_l,
     \qquad\forall l \in [k^*].
    \label{proof, property, bound between jump times, lemma: step functions under M1 metric, LD for Hawkes}
\end{align}
Also, since $\Delta x(u) = 0$ for each $u \in [ (t - \epsilon) \vee 0 ,t]$,
we know that the value of the step function $x(\cdot)$ remains constant over $ [ (t - \epsilon) \vee 0 ,t]$.
Then by $x\big((t-\epsilon)\vee 0\big) \leq y(t) \leq x(t)$, at the right endpoint of the time interval $[0,t]$ we have
\begin{align}
    x(t) = y(t) = z_{k^*}.
    \label{proof, property, same value at the end point, lemma: step functions under M1 metric, LD for Hawkes}
\end{align}
Now, we prove Claim~\eqref{proof, goal, lemma: step functions under M1 metric, LD for Hawkes} by constructing suitable parametric representations for paths $x$ and $y$.
First, for the path $x$, we adopt the convention $t^x_0 = 0$, and define $\big(u^x(w), s^x(w)\big)$ such that
(for each $l \in [k^*]$)
over the interval $w \in \big[ \frac{ 2(l - 1)  }{ 2k^* + 1   }, \frac{ 2l }{ 2k^* + 1   }   \big)$,
the parametric representation $(u^x,s^x)$ spends half of the time moving uniformly and horizontally from $(t^x_{l-1},z_{l-1})$ to $(t^x_l,z_{l-1})$ over the connected graph of $x$,
and then spends the other half time moving uniformly and vertically from 
$(t^x_l,z_{l-1})$ to $(t^x_l,z_{l})$.
More precisely, for each $l \in [k^*]$, let
\begin{align*}
    u^x(w) \delequal
    \begin{cases}
        t^x_{l - 1} + (t^x_l - t^x_{l - 1}) \cdot 
        \Big[ (2k^* + 1) w - { 2(l - 1)  }\Big]
        & \text{if }w \in \big[ \frac{ 2(l - 1)  }{ 2k^* + 1   }, \frac{ 2(l - 1) + 1  }{ 2k^* + 1   }   \big)
        \\ 
        t^x_l 
        & \text{if }w \in \big[ \frac{ 2(l - 1) + 1  }{ 2k^* + 1   }, \frac{ 2l  }{ 2k^* + 1   }   \big)
    \end{cases},
\end{align*}
\begin{align*}
    s^x(w)
    \delequal 
    \begin{cases}
        x\big( u^x(w)\big) = z_{l-1}
        & \text{if }w \in \big[ \frac{ 2(l - 1)  }{ 2k^* + 1   }, \frac{ 2(l - 1) + 1  }{ 2k^* + 1   }   \big)
        \\ 
        z_{l - 1} + (z_l - z_{l-1}) \cdot  \Big[ (2k^* + 1 )w - \big( 2(l - 1) + 1\big) \Big]
        & \text{if }w \in \big[ \frac{ 2(l - 1) + 1  }{ 2k^* + 1   }, \frac{ 2l  }{ 2k^* + 1   }   \big)
    \end{cases}.
\end{align*}
By definition, $x(u)$ remains constant over $u \in [t_{k^*}^x,t]$, so we set
\begin{align*}
    u^x(w) \delequal t^x_{k^*} + (t - t^x_{k^*}) \cdot 
    \Big[
        (2k^* + 1)w - {2k^*}
    \Big],\ \ 
    s^x(w) \delequal z_{k^*},
    \qquad
    \forall w \in \bigg[ \frac{2k^*}{2k^* + 1}, 1  \bigg].
\end{align*}
Here, the choice of $s^x(w) = z_{k^*}$ is valid due to the property \eqref{proof, property, same value at the end point, lemma: step functions under M1 metric, LD for Hawkes}.
Analogously, for the path $y$ we define (for each $l \in [k^*]$)
\begin{align*}
    u^y(w) \delequal
    \begin{cases}
        t^y_{l - 1} + (t^y_l - t^y_{l - 1}) \cdot 
        \Big[ (2k^* + 1) w - { 2(l - 1)  }\Big]
        & \text{if }w \in \big[ \frac{ 2(l - 1)  }{ 2k^* + 1   }, \frac{ 2(l - 1) + 1  }{ 2k^* + 1   }   \big)
        \\ 
        t^y_l 
        & \text{if }w \in \big[ \frac{ 2(l - 1) + 1  }{ 2k^* + 1   }, \frac{ 2l  }{ 2k^* + 1   }   \big)
    \end{cases},
\end{align*}
\begin{align*}
    s^y(w)
    \delequal 
    \begin{cases}
        y\big( u^y(w)\big) = z_{l-1}
        & \text{if }w \in \big[ \frac{ 2(l - 1)  }{ 2k^* + 1   }, \frac{ 2(l - 1) + 1  }{ 2k^* + 1   }   \big)
        \\ 
        z_{l - 1} + (z_l - z_{l-1}) \cdot  \Big[ (2k^* + 1 )w - \big( 2(l - 1) + 1\big) \Big]
        & \text{if }w \in \big[ \frac{ 2(l - 1) + 1  }{ 2k^* + 1   }, \frac{ 2l  }{ 2k^* + 1   }   \big)
    \end{cases},
\end{align*}
and due to $y(u) \equiv z_{k^*}$ for any $u \in [t^y_{k^*},t]$ (see
\eqref{proof, property, same value at the end point, lemma: step functions under M1 metric, LD for Hawkes}),
we set
\begin{align*}
    u^y(w) \delequal t^y_{k^*} + (t - t^y_{k^*}) \cdot 
    \Big[
        (2k^* + 1)w - {2k^*}
    \Big],\ \ 
    s^y(w) \delequal z_{k^*},
    \qquad
    \forall w \in \bigg[ \frac{2k^*}{2k^* + 1}, 1  \bigg].
\end{align*}
Lastly, by our construction of the parametric representations $(u^x,s^x)$ and $(u^y,s^y)$, we have $s^y(w) = s^x(w)$ for any $w \in [0,1]$,
and $\sup_{w \in [0,1]}| u^x(w) - u^y(w) | \leq \epsilon$ due to \eqref{proof, property, bound between jump times, lemma: step functions under M1 metric, LD for Hawkes}.
This concludes the proof of Claim~\eqref{proof, goal, lemma: step functions under M1 metric, LD for Hawkes}.
\end{proof}

Equipped with Lemmas~\ref{lemma: lifetime of a cluster, LD for Hawkes} and \ref{lemma: step functions under M1 metric, LD for Hawkes},
we provide high probability bounds over the distance between $\bar{\bm L}^{>\delta;\subInfty}_n$ in \eqref{def: sample path scaled process, Levy, LD for Hawkes} and $\bar{\bm N}^{>\delta;\subInfty}_n$ in \eqref{def: sample path scaled process, N, LD for Hawkes}
under $\dmp{\subInfty}$.

\begin{lemma}\label{lemma: asymptotic equivalence between large jump N and L, LD for Hawkes}
\linksinthm{lemma: asymptotic equivalence between large jump N and L, LD for Hawkes}
Let Assumptions~\ref{assumption: subcriticality}--\ref{assumption: regularity condition 2, cluster size, July 2024} hold.
Let $\beta > 1$
and $a:(0,\infty)\to (0,\infty)$ be a regularly varying function with $a(x) \in \RV_{-\beta}(x)$ as $x \to \infty$.
Suppose that
\begin{align}
   \int_{x/\log x}^\infty f^{\bm N}_{p \leftarrow q}(t)dt = \lo\big( a(x)/x \big)
    \ 
    \text{ as }x \to \infty,
    \qquad \forall p,q\in [d].
    \label{tail condition for fertility function, lemma: asymptotic equivalence between large jump N and L, LD for Hawkes}
\end{align}
Then, for each $\Delta > 0$,
    \begin{align}
        \lim_{n \to \infty} \big(a(n)\big)^{-1} \cdot 
        \P\bigg(
            \dmp{\subInfty}\Big(
                \bar{\bm L}^{>\delta;\subInfty}_n,\ \bar{\bm N}^{>\delta;\subInfty}_n
            \Big) > \Delta
        \bigg) = 0,
        \qquad
        \forall \delta > 0\text{ small enough.}
        \nonumber
    \end{align}
\end{lemma}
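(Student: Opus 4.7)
The approach is to exploit that $\bar{\bm L}^{>\delta}_n$ differs from $\bar{\bm N}^{>\delta}_n$ only in how the descendants of each ``large'' cluster (i.e.\ those with $\|\bm S^{(k)}_j\|>n\delta$) are distributed over time: $\bar{\bm L}^{>\delta}_n$ collapses each large cluster to a single jump at the immigrant's arrival time $T^{\mathcal C}_{j;k}/n$, whereas $\bar{\bm N}^{>\delta}_n$ disperses the same mass across the rescaled interval $[T^{\mathcal C}_{j;k}/n,\, (T^{\mathcal C}_{j;k}+H^{(k)}_j)/n]$. Since both paths are coordinate-wise non-decreasing step functions with identical total increments per cluster, under the $M_1$ topology any such time-localized perturbation is controlled purely by the rescaled cluster lifetime $H^{(k)}_j/n$.

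The first step is to truncate: fix $T$ with $e^{-T}<\Delta/2$, so that the tail contribution to $\dmp{\subInfty}$ is automatically at most $\Delta/2$ by definition of the projection metric. On $[0,T]$, I would introduce the two events $(A)=\{\#\{(j,k):T^{\mathcal C}_{j;k}\leq nT,\ \|\bm S^{(k)}_j\|>n\delta\}\leq K\}$ and $(B)=\{H^{(k)}_j\leq n\epsilon\ \text{for every large cluster with }T^{\mathcal C}_{j;k}\leq nT\}$. On $(A)\cap(B)$, for every $t\in[0,T]$ and every coordinate, any large cluster whose immigrant arrived by rescaled time $t-\epsilon$ has its progeny fully realized by rescaled time $t$, so
\begin{align*}
\bar{\bm L}^{>\delta}_n\bigl((t-\epsilon)\vee 0\bigr)\;\leq\;\bar{\bm N}^{>\delta}_n(t)\;\leq\;\bar{\bm L}^{>\delta}_n(t)
\end{align*}
coordinate-wise. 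Applying Lemma~\ref{lemma: step functions under M1 metric, LD for Hawkes} coordinate-wise (after a mild padding to meet the indexing convention $t_k=T$) yields $\dmp{[0,t]}(\phi_t(\bar{\bm L}^{>\delta;\subInfty}_n),\,\phi_t(\bar{\bm N}^{>\delta;\subInfty}_n))\leq\epsilon$ for all $t\in[0,T]$ outside the union of $\epsilon$-neighborhoods of the at most $K$ rescaled immigration times. Integrating against $e^{-t}$ and bounding the exceptional set crudely by $1$ gives a total contribution of at most $T\epsilon+K\epsilon$ to $\dmp{\subInfty}$, which is less than $\Delta/2$ once $\epsilon$ is chosen small. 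Hence on $(A)\cap(B)$ the distance is $<\Delta$.

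It then remains to show $\P((A)^\complement)=\lo(a(n))$ and $\P((B)^\complement)=\lo(a(n))$. For $(A)$, the count in question is Poisson with mean $nT\sum_j c^{\bm N}_j\P(\|\bm S_j\|>n\delta)$, which by Theorem~\ref{theorem: main result, cluster size} lies in $\RV_{1-\alpha_{\min}}(n)$ with $\alpha_{\min}\delequal \min_i\alpha^*(i)>1$; using $\P(\mathrm{Poisson}(\mu)>K)\leq\mu^{K+1}$, choosing $K$ so that $(K+1)(\alpha_{\min}-1)>\beta$ yields $\P((A)^\complement)=\lo(a(n))$ by Potter's bound. For $(B)$, Markov's inequality on the marked immigration Poisson process gives
\begin{align*}
\P((B)^\complement)\;\leq\;nT\sum_{j\in[d]}c^{\bm N}_j\,\P(H_j>n\epsilon),
\end{align*}
and the hypothesis~\eqref{tail condition for fertility function, lemma: asymptotic equivalence between large jump N and L, LD for Hawkes} lets us apply Lemma~\ref{lemma: lifetime of a cluster, LD for Hawkes} with the regularly varying function $\tilde a(x)\delequal a(x)/x\in\RV_{-(\beta+1)}(x)$, yielding $\P(H_j>n\epsilon)=\lo(a(n)/n)$ and hence $\P((B)^\complement)=\lo(a(n))$. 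The main obstacle I anticipate is the bookkeeping around Lemma~\ref{lemma: step functions under M1 metric, LD for Hawkes}: its hypotheses involve a specific jump-index convention and require the comparison to hold coordinate-wise \emph{at the level of step functions}, so one must argue carefully that truncating $\bar{\bm L}^{>\delta;\subInfty}_n$ and $\bar{\bm N}^{>\delta;\subInfty}_n$ to $\phi_t$ preserves the monotone-sandwich structure even when clusters arriving close to $t$ are still in progress at time $t$ — handled by the $\epsilon$-exclusion around jump times.
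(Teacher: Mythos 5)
Your proposal is correct and follows essentially the same architecture as the paper's proof: truncate the $\dmp{\subInfty}$ integral at a fixed horizon $T$ with $e^{-T}<\Delta/2$, establish the monotone sandwich $\bar{\bm L}^{>\delta}_n((t-\epsilon)\vee 0)\leq\bar{\bm N}^{>\delta}_n(t)\leq\bar{\bm L}^{>\delta}_n(t)$ on the event that cluster lifetimes are small and large jumps are few, invoke Lemma~\ref{lemma: step functions under M1 metric, LD for Hawkes}, and then estimate the two rare events via the $\MRV$ tail of the cluster sizes and Lemma~\ref{lemma: lifetime of a cluster, LD for Hawkes}.

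The one substantive deviation is in bounding $\P((B)^\complement)$: you invoke the Campbell/Markov bound directly on the marked immigration Poisson process, obtaining $\P((B)^\complement)\leq nT\sum_j c^{\bm N}_j\P(H_j>n\epsilon)$ in one stroke. The paper instead first uses Cram\'er's theorem to dominate the number of immigrants by $\lfloor nC\rfloor$ with super-polynomially small error, and then applies a union bound over that many lifetimes. The two routes land in the same place ($nT\cdot\lo(a(n)/n)=\lo(a(n))$ via Lemma~\ref{lemma: lifetime of a cluster, LD for Hawkes} with $\tilde a(x)=a(x)/x$), but your expected-count argument is the cleaner of the two, avoiding the extra concentration step. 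A second, cosmetic difference: you require the lifetime bound only for clusters with $\|\bm S^{(k)}_j\|>n\delta$, whereas the paper imposes it on all clusters arriving by $nT$; since the sandwiched processes depend only on large clusters, your restriction is correct and costs nothing. You also correctly anticipate the bookkeeping point around the jump-index convention $t_k=T$ in Lemma~\ref{lemma: step functions under M1 metric, LD for Hawkes}, which the paper's proof glosses over as well.
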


\begin{proof}\linksinpf{lemma: asymptotic equivalence between large jump N and L, LD for Hawkes}
Analogous to the proof of Lemma~\ref{lemma: asymptotic equivalence between L and large jump, LD for Hawkes},
we fix some $T$ large enough such that $e^{-T} < \Delta/2$,
and note that it suffices to prove
\begin{align}
    \P\Bigg(
        \int_0^T
        e^{-t} \cdot 
        \bigg[
            \dmp{\subZeroT{t}}
            \Big(
                \bar{\bm L}^{>\delta;\subZeroT{t} }_n,\ \bar{\bm N}^{>\delta;\subZeroT{t}}_n
            \Big)
            \wedge 1
        \bigg]dt
        > \frac{\Delta}{2}
    \Bigg)
    =\lo\big(a(n)\big),
    \ \ 
    \text{$\forall \delta > 0$ small enough.}
    \label{proof, goal, lemma: asymptotic equivalence between large jump N and L, LD for Hawkes}
\end{align}
On the one hand, by comparing the definitions in \eqref{def: small jump process for Levy, LD for Hawkes} and \eqref{def: small jump process for N, LD for Hawkes}, we must have
\begin{align}
    \bar{\bm N}^{>\delta}_n(t) \leq \bar{\bm L}^{>\delta}_n(t),
    \qquad\forall \delta > 0,\  n \geq 1,\ t \geq 0.
    \label{proof, upper bound, lemma: asymptotic equivalence between large jump N and L, LD for Hawkes}
\end{align}
On the other hand,
recall that 
for some cluster induced by a type-$j$ immigrant, 
we use $\bm S^{(k)}_j$ to denote the cluster size,
$T^{\mathcal C}_{j;k}$ for the arrival time of the  immigrant inducing the cluster,
and
$H^{(k)}_j$ for the lifetime of the cluster (see \eqref{def: life time of the kth cluster with type j ancestor, LD for Hawkes}).
On the event
$
\big\{ H^{(k)}_{j} \leq n\epsilon\ \forall (k,j)\text{ with }T^{\mathcal C}_{j;k} \leq nT  \big\},
$
we must have
\begin{align}
     \bar{\bm N}^{>\delta}_n(t) \geq \bar{\bm L}^{>\delta}_n\big( (t-\epsilon)\vee 0 \big),
    \quad\forall \delta > 0,\  n \geq 1,\ t \in [0,T].
    \label{proof, lower bound, lemma: asymptotic equivalence between large jump N and L, LD for Hawkes}
\end{align}
Indeed, for any cluster induced by an immigrant arriving at time $T^{\mathcal C}_{j;k} \leq n(t - \epsilon)$ and with cluster lifetime $H^{(k)}_j \leq n\epsilon$,
all  descendants in this cluster must have arrived by the time $nt$.
Furthermore, let $\tau^{>\delta}_n(k)$ be defined as in \eqref{proof, def tau delta n k, large jump time, lemma: asymptotic equivalence between L and large jump, LD for Hawkes},
and note that the sequence of stopping times $\big(\tau^{>\delta}_n(k)\big)_{k \geq 1}$ corresponds to the arrival times of jumps in $\bar{\bm L}^{>\delta}_n(t)$.
Therefore,
for any $K \geq 1$, on the event
$$
A_n(K,T,\epsilon,\delta) \delequal 
\{ \tau^{>\delta}_n(K+1) > T  \}
\cap 
\big\{ H^{(k)}_{j} \leq n\epsilon\ \forall (k,j)\text{ with }T^{\mathcal C}_{j;k} \leq nT  \big\},
$$
we have 
$
\bar{\bm L}^{>\delta}_n\big((t-\epsilon)\vee 0\big)
\leq 
\bar{\bm N}^{>\delta}_n(t) 
\leq 
\bar{\bm L}^{>\delta}_n(t)
$
for any $t \in [0,T]$, and 
$
\bar{\bm L}^{>\delta;\subZeroT{T}}_n
$
is a step function that vanishes at the origin and has at most $K$ jumps (all belonging to $\R^d_+ \setminus \{\bm 0\}$).
By Lemma~\ref{lemma: step functions under M1 metric, LD for Hawkes},
there exists a Borel set $I \subseteq [0,T]$,
whose choice is random and depends on the paths $\bar{\bm L}^{>\delta;\subZeroT{t} }_n,\ \bar{\bm N}^{>\delta;\subZeroT{t}}_n$,
such that 
\begin{itemize}
    \item 
        $\mathcal L_\infty(I) \leq K\epsilon$, where $\mathcal L_\infty$ is the Lebesgue measure on $(0,\infty)$;

    \item 
$
\dmp{\subZeroT{t}}\big(
                \bar{\bm L}^{>\delta;\subZeroT{t} }_n,\ \bar{\bm N}^{>\delta;\subZeroT{t}}_n
            \big) \leq \epsilon
$
for any $t \in [0,T]\setminus I$.
\end{itemize}
Therefore, on the event $A_n(K,T,\epsilon,\delta)$, we have 
\begin{align*}
    & \int_0^T
        e^{-t} \cdot 
        \bigg[
            \dmp{\subZeroT{t}}
            \Big(
                \bar{\bm L}^{>\delta;\subZeroT{t} }_n,\ \bar{\bm N}^{>\delta;\subZeroT{t}}_n
            \Big)
            \wedge 1
        \bigg]dt
    \\ 
    & 
    \leq 
    \int_{ t \in [0,T]\setminus I  }
    \dmp{\subZeroT{t}}
            \Big(
                \bar{\bm L}^{>\delta;\subZeroT{t} }_n,\ \bar{\bm N}^{>\delta;\subZeroT{t}}_n
            \Big)dt 
    +
    \int_{t \in I}dt
    \leq 
    T\epsilon + K\epsilon.
\end{align*}
Furthermore, by picking $\epsilon > 0$ small enough we have $(T+K)\epsilon < \Delta/2$.
In summary, to prove Claim~\eqref{proof, goal, lemma: asymptotic equivalence between large jump N and L, LD for Hawkes},
it suffices to show the existence of some $K \geq 1$ such that 
\begin{align}
    \P\Big(
        \tau^{>\delta}_n(K+1) \leq T
    \Big) = \lo\big(a(n)\big),
    \qquad
    \text{$\forall \delta > 0$ small enough},
    \label{proof, sub goal 1, lemma: asymptotic equivalence between large jump N and L, LD for Hawkes}
\end{align}
and that for any $\epsilon > 0$,
\begin{align}
    \P\Big(
        H^{(k)}_{j} > n\epsilon\text{ for some }(k,j)\text{ with }T^{\mathcal C}_{j;k} \leq nT 
    \Big) 
    = \lo\big(a(n)\big).
    \label{proof, sub goal 2, lemma: asymptotic equivalence between large jump N and L, LD for Hawkes}
\end{align}

\medskip
\noindent
\textbf{Proof of Claim \eqref{proof, sub goal 1, lemma: asymptotic equivalence between large jump N and L, LD for Hawkes}}.
This has been established in the proof of Claim~\eqref{proof, sub goal 1, lemma: asymptotic equivalence between L and large jump, LD for Hawkes} in Lemma~\ref{lemma: asymptotic equivalence between L and large jump, LD for Hawkes}.

\medskip
\noindent
\textbf{Proof of Claim \eqref{proof, sub goal 2, lemma: asymptotic equivalence between large jump N and L, LD for Hawkes}}.
It suffices to fix some $j \in [d]$, $\epsilon > 0$ and show that
$$
     \P\Big(
        H^{(k)}_{j} > n\epsilon\text{ for some }k \geq 1\text{ such that }T^{\mathcal C}_{j;k} \leq nT 
    \Big) 
    = \lo\big(a(n)\big).
$$
Recall that $(T^{\mathcal C}_{j;k})_{k \geq 1}$ are generated by a Poisson process with constant rate $c^{\bm N}_{j}$.
Therefore, by picking $C$ large enough (whose value only depends on $T$ and the constants $c^{\bm N}_{j}$),
it follows from Cramer's Theorem that 
$
\P\big( T^{\mathcal C}_{j;\floor{nC}} \leq nT  \big) = \lo\big(a(n)\big).
$
Fixing such $C$, we only need to show that 
$
\P\big( H^{(k)}_{j} > n\epsilon\text{ for some }k \leq \floor{nC}  \big) = \lo\big(a(n)\big)
$
holds
for any $\delta > 0$ small enough.
However, since $H^{(k)}_j$ are i.i.d.\ copies of $H_j$ (see \eqref{def: life time of the kth cluster with type j ancestor, LD for Hawkes} and \eqref{def: life time of the kth cluster with type j ancestor, indp copy, LD for Hawkes}),
\begin{align*}
    \P\big( H^{(k)}_{j} > n\epsilon\text{ for some }k \leq \floor{nC}  \big)
    & \leq 
    nC \cdot \P(H_j > n\epsilon)
    =
    nC \cdot \lo\big( a(n)/n \big)= \lo\big(a(n)\big).
\end{align*}
The second to last equality follows from the condition~\eqref{tail condition for fertility function, lemma: asymptotic equivalence between large jump N and L, LD for Hawkes} and Lemma~\ref{lemma: lifetime of a cluster, LD for Hawkes}.
This concludes the proof of Claim~\eqref{proof, sub goal 2, lemma: asymptotic equivalence between large jump N and L, LD for Hawkes}.
\end{proof}

Lastly, we  prove Proposition~\ref{proposition: asymptotic equivalence between Hawkes and Levy, LD for Hawkes}.

\begin{proof}[Proof of Proposition~\ref{proposition: asymptotic equivalence between Hawkes and Levy, LD for Hawkes}]
Due to
\begin{align*}
     & \dmp{ \subInfty }\big(\bar{\bm N}^{\subInfty}_n, \bar{\bm L}^{\subInfty}_n\big) 
     \\
     & \leq 
      \dmp{ \subInfty }\big(\bar{\bm N}^{\subInfty}_n, \bar{\bm N}^{>\delta,\subInfty}_n + \mu_{\bm N}\mathbf 1\big)
      +
      \dmp{ \subInfty }\big(
        \bar{\bm N}^{>\delta,\subInfty}_n + \mu_{\bm N}\mathbf 1,
        \bar{\bm L}^{>\delta,\subInfty}_n + \mu_{\bm N}\mathbf 1
        \big)
    \\
    & \qquad
    +
    \dmp{ \subInfty }\big(\bar{\bm L}^{\subInfty}_n, \bar{\bm L}^{>\delta,\subInfty}_n + \mu_{\bm N}\mathbf 1\big)
    \\
    & =
    \dmp{ \subInfty }\big(\bar{\bm N}^{\subInfty}_n, \bar{\bm N}^{>\delta,\subInfty}_n + \mu_{\bm N}\mathbf 1\big)
      +
      \dmp{ \subInfty }\big(
        \bar{\bm N}^{>\delta,\subInfty}_n,
        \bar{\bm L}^{>\delta,\subInfty}_n
        \big)
    +
    \dmp{ \subInfty }\big(\bar{\bm L}^{\subInfty}_n, \bar{\bm L}^{>\delta,\subInfty}_n + \mu_{\bm N}\mathbf 1\big),
\end{align*}
Proposition~\ref{proposition: asymptotic equivalence between Hawkes and Levy, LD for Hawkes} 
follows directly from Lemmas~\ref{lemma: asymptotic equivalence between L and large jump, LD for Hawkes}, \ref{lemma: asymptotic equivalence between N and large jump, LD for Hawkes}, and \ref{lemma: asymptotic equivalence between large jump N and L, LD for Hawkes}.
\end{proof}

\subsection{Univariate Results for $M_1$ Topology of $\D[0,T]$}
\label{subsec: univariate results, LD for Hawkes, M1 of D0T}

We show that in the univariate setting,
Theorem~\ref{theorem: LD for Hawkes} can be uplifted to
the $M_1$ topology of $\D[0,T]$---the càdlàg space with compact domain.
Without loss of generality, throughout this section we fix $T = 1$ and adopt the notations
\begin{align*}
    \D = \D\big([0,1],\R\big),
    \quad 
    \dm{} = \dm{\subZeroT{1}}.
\end{align*}

We start by adapting the notions from the main paper to the univariate setting, as many of them now admit simpler or more explicit expressions.
First, the univariate Hawkes process $N(t)$ has initial value $N(0) = 0$ and conditional intensity 
\begin{align*}
    h^{N}(t) = c_N + \int_0^T \tilde B(s) f^N(t-s) dN(s).
\end{align*}
Here, the constant $c_N > 0$ is the arrival rate of immigrants, 
the decay function $f^N(\cdot)$ is such that $\norm{f^N}_1 < \infty$,
and the excitation rates $\big(\tilde B(s)\big)_{s > 0}$ are independent copies of $\tilde B$
with $\P(\tilde B > x) \in \RV_{-\alpha}(x)$ for some $\alpha > 1$.
Second, we adapt the tail asymptotics characterized in Section~\ref{subsec: tail of cluster S, statement of results} for Hawkes processes clusters to the unvariate setting.
Let $S$ be a generic copy for the size of a cluster.
For any $x > 0$, we have
\begin{align}
    \P(S > nx) \sim \frac{ (\E S)^{1+\alpha} }{x^\alpha} \cdot \P(B > n),
    \quad \text{ as }n \to \infty,
    \label{univariate result, cluster tail asymptotics}
\end{align}
where $B \distequal \text{Poisson}\big(\tilde B \cdot \norm{f^N}_1\big)$.
Also, note that the regularly varying law of $\tilde B$ implies
$$\P(B > x) \sim \P(\tilde B \cdot \norm{f^N}_1 > x) \sim \norm{f^N}^{\alpha}_1\P(\tilde B > x).$$
The asymptotics \eqref{univariate result, cluster tail asymptotics}
are well-documented in existing literature on branching processes (e.g., Theorem~1 of \cite{Asmussen_Foss_2018})
and are an immediate consequence of Theorem~\ref{theorem: main result, cluster size} in univariate settings.
In particular, using notations in Section~\ref{subsec: tail of cluster S, statement of results},
we have
$
 \P(S > nx) \sim \lambda(n) \cdot \mathbf C_S\big((x,\infty)\big)
$
as $n \to \infty$,
where
\begin{align}
    \lambda(n) = \P(B > n);\qquad 
     \mathbf C_S\big((x,\infty)\big) = { (\E S)^{1+\alpha} }\big/{x^\alpha}\ \forall x > 0.
     \label{def, lambda, measure C S, LD for Hawkes univariate case}
\end{align}

Next, we adapt the notions introduced in Section~\ref{sec: sample path large deviation}
 to the univariate setting.
By \eqref{def: scale function for Levy LD MRV}, for each $k \geq 1$ we set
\begin{align}
    \breve \lambda_k(n) = \big(n\P(B > n)\big)^k.
    \label{def breve lambda k, univariate case}
\end{align}
Analogous to \eqref{def: j jump path set with drft c, LD for Levy, MRV},
for each $k \geq 0$ and $x \in \R$, we define
\begin{align*}
    \breve \D_{k;x}
    \delequal 
    \Bigg\{
        x \bm 1 + \sum_{i = 1}^k w_i\mathbbm{I}_{ [t_i,1]  }:\ 
        w_i \geq 0\text{ and }t_i \in (0,1]\ \forall i \in [k]
    \Bigg\},
\end{align*}
where $\bm 1(t) = t$ is the identity mapping of $\R^1$.
That is, $\breve \D_{k;x}$ is the subset of $\D = \D\big([0,1],\R\big)$ containing any piece-wise linear function with slope $x$ that vanishes at the origin and has at most $k$ jumps (all upward).
Adapting \eqref{def: measure C indices j, sample path LD for Hawkes} and \eqref{def: approximation, mean of N(t)}, we define
\begin{align}
    \mathbf C = c_N \cdot \mathbf C_S,
    \qquad
    \mu_N = c_N \cdot \E S.
    \label{def, measure C, LD for Hawkes univariate case}
\end{align}
We also adapt \eqref{def: measure C type j L, LD for Levy, MRV} and define Borel measures on $\breve{\D}_{k;x}$ by (for each $k \geq 1$ and $x \in \R$)
\begin{align*}
    \breve{\mathbf C}_{k;x}(\ \cdot\ )
    & =
    \frac{1}{k!}\E
    \Bigg[
        \int_{ w_i > 0\ \forall i \in [k] } \mathbbm{I}\bigg\{  x \bm 1 + \sum_{i = 1}^k w_i\mathbbm{I}_{ [U_i,1]  } \in \ \cdot\  \bigg\} 
        \bigtimes_{i = 1}^k \mathbf C( d w_i)
    \Bigg]
    \\ 
    & = 
    \frac{
         ( \alpha c_N)^k (\E S)^{  k(1+\alpha)  }
    }{
        k!
    }
    \E
    \Bigg[
        \int_{ w_i > 0\ \forall i \in [k] } \mathbbm{I}\bigg\{  x \bm 1 + \sum_{i = 1}^k w_i\mathbbm{I}_{ [U_i,1]  } \in \ \cdot\  \bigg\} 
        \bigtimes_{i = 1}^k \frac{d w_i}{ (w_i)^{ 1 + \alpha }  }
    \Bigg],
\end{align*}
where the sequence $(U_i)_{i \geq 1}$ are independent copies of Unif$(0,1)$, and the last line of the display above follows from the definition of $\mathbf C_S(\cdot)$ in \eqref{def, lambda, measure C S, LD for Hawkes univariate case}.
Now, let
\begin{align*}
    \bar N_n(t) = N(nt)/n,
    \qquad
    \bar{\bm N}_n = \{ \bar N_n(t):\ t \in [0,1]  \}.
\end{align*}
We are ready to state the strengthened univariate results.

\begin{theorem}
\label{theorem: LD for Hawkes, univariate results wrt M1 D0T}
    Let $k \in \mathbb N$.
    Let $A$ be a Borel set of $\D$ equipped with the Skorokhod $M_1$ topology.
    Suppose that
    \begin{align}
        \int_{x/\log x}^\infty f^{\bm N}(t)dt = \lo\big( \breve \lambda_{k}(x)\big/x \big),
        \label{tail condition on decay function, theorem: LD for Hawkes, univariate results wrt M1 D0T}
    \end{align}
    and the set $A$ is bounded away from $\breve \D_{k-1;\mu_N}$ under $\dm{}$.
    Then,
    \begin{align*}
        \breve{ \mathbf C }_{ k;\mu_N  }(A^\circ)
        \leq 
        \liminf_{n \to \infty}
        \frac{
            \P(\bar{\bm N}_n \in A)
        }{
            \breve \lambda_k(n)
        }
        \leq 
        \limsup_{n \to \infty}
        \frac{
            \P(\bar{\bm N}_n \in A)
        }{
            \breve \lambda_k(n)
        }
        \leq 
        \breve{ \mathbf C }_{ k;\mu_N  }(A^-),
    \end{align*}
    where the interior and exterior sets $A^\circ$ and $A^-$ are determined under $\dm{}$
    (i.e., w.r.t.\ the $M_1$ topology of $\D = \D\big([0,1],\R\big)$).
\end{theorem}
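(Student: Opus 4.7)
The plan is to mirror the three-step strategy of Theorem~\ref{theorem: LD for Hawkes}, replacing the product $M_1$ metric $\dmp{\subInfty}$ on $\D[0,\infty)$ with the strong $M_1$ metric $\dm{}$ on $\D[0,1]$. The univariate setting is essential precisely because cluster sizes are nonnegative scalars, so a partially completed cluster lies on the same ray $[0,\infty)$ as the full cluster; this monotone scalar structure is exactly what Lemma~\ref{lemma: step functions under M1 metric, LD for Hawkes} is designed to exploit, sidestepping the pathology in Remark~\ref{remark: conditions in LD for Hawkes}.

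First, I would establish the univariate counterpart of Theorem~\ref{theorem: LD for levy, MRV} on $(\D[0,1], \dm{})$. In one dimension, $\MRV^*$ reduces to standard regular variation of $\P(S>\cdot)$ with tail index $\alpha$ and limiting measure $\mathbf C_S$; the compound Poisson process $\bar L_n(t) = \frac{1}{n}\sum_{k\ge 0} S^{(k)} \mathbbm{I}_{[T^{\mathcal C}_k,\infty)}(nt)$ is a L\'evy process with $\MRV^*$ jump measure by the 1D analogue of Proposition~\ref{proposition, law of the levy process approximation, LD for Hawkes}. Since all jumps are positive and the limiting measures are supported on non-decreasing step functions where $M_1$ and $J_1$ agree, Theorem~\ref{theorem: LD for levy, MRV} transfers verbatim to $(\D[0,1],\dm{})$, yielding sample path LDPs for $\bar L_n$ at scale $\breve \lambda_k(n)$ with limit $\breve{\mathbf C}_{k;\mu_N}$.

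Second, I apply Lemma~\ref{lemma: asymptotic equivalence when bounded away, equivalence of M convergence} with $X_n = \bar N_n$, $Y^\delta_n = \bar L_n$ (i.e., no $\delta$-dependence), $\epsilon_n = \breve \lambda_k(n)$, and $\mathbb C = \breve{\D}_{k-1;\mu_N}$ under $\dm{}$. Condition~(ii) of that lemma is supplied by Step~1. The core remaining task is Condition~(i): showing
\begin{equation*}
    \lim_{n\to\infty} \breve\lambda_k(n)^{-1}\,\P\bigl(\dm{}(\bar N_n,\bar L_n)\,\mathbbm{I}\{\bar N_n \in B \text{ or } \bar L_n \in B\} > \Delta\bigr) = 0
\end{equation*}
for any Borel $B$ bounded away from $\breve \D_{k-1;\mu_N}$. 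I would split $\bar N_n = \bar N_n^{\leqslant\delta} + \bar N_n^{>\delta}$ and $\bar L_n = \bar L_n^{\leqslant\delta} + \bar L_n^{>\delta}$ using a cutoff $\delta>0$ on cluster size, and handle the small-cluster parts via the uniform LLN bounds of Lemmas~\ref{lemma: asymptotic equivalence between L and large jump, LD for Hawkes}--\ref{lemma: asymptotic equivalence between N and large jump, LD for Hawkes} (noting that uniform closeness to $\mu_N\mathbf 1$ implies $\dm{}$-closeness). For the large-cluster parts, the tail condition \eqref{tail condition on decay function, theorem: LD for Hawkes, univariate results wrt M1 D0T} together with Lemma~\ref{lemma: lifetime of a cluster, LD for Hawkes} applied with $a(x) = \breve\lambda_k(x)/x$ ensures that, with probability $1-\lo(\breve\lambda_k(n))$, every large-cluster lifetime is below $n\epsilon$; on this event the scalar monotone sandwich $\bar L_n^{>\delta}((t-\epsilon)\vee 0) \leq \bar N_n^{>\delta}(t) \leq \bar L_n^{>\delta}(t)$ holds, and Lemma~\ref{lemma: step functions under M1 metric, LD for Hawkes} delivers $\dm{[0,t]}(\bar N_n^{>\delta},\bar L_n^{>\delta}) \leq \epsilon$ at every $t$ where $\bar L_n^{>\delta}$ is locally constant on $[t-\epsilon,t]$.

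The main obstacle is the boundary strip near $t=1$: a large cluster whose immigrant arrives in $[n-n\epsilon,n]$ fails to complete by time $n$ and produces a genuine $M_1$ discrepancy, and the probability of such a boundary cluster is of order $\epsilon\cdot n\lambda(n)$, which is \emph{not} $\lo(\breve\lambda_k(n))$ when $k\ge 2$. This is exactly where I would exploit the indicator $\mathbbm{I}\{\bar N_n \in B\text{ or }\bar L_n\in B\}$ in Condition~(i): since $B$ is bounded away from $\breve\D_{k-1;\mu_N}$ by hypothesis, membership of either process in $B$ forces the occurrence of at least $k$ large clusters (any path with $k-1$ or fewer large jumps is $\dm{}$-close to $\breve\D_{k-1;\mu_N}$ via the same Lemma~\ref{lemma: step functions under M1 metric, LD for Hawkes} argument). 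The joint event ``at least $k$ large clusters and at least one of them lies in the boundary strip'' then has probability $O(k\epsilon \cdot \breve\lambda_k(n))$, and iterating $n\to\infty$ followed by $\epsilon\downarrow 0$ makes this negligible at the required scale. Together with the decomposition into a few scenarios indexed by the count $K^{>\delta}_n$ (using $\P(K^{>\delta}_n \geq k+1) = O((n\lambda(n))^{k+1}) = \lo(\breve\lambda_k(n))$ to discard high-count cases), this closes Condition~(i) and hence the proof.
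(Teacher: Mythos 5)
Your proposal follows the paper's proof exactly: couple $\bar{\bm N}_n$ with the cluster-size compound Poisson process $\bar{\bm L}_n$, reduce via Lemma~\ref{lemma: asymptotic equivalence when bounded away, equivalence of M convergence} to showing the $\dm{}$-distance is $\lo(\breve\lambda_k(n))$-small on the indicator event, and resolve the boundary-strip obstruction near $t=1$ by restricting to the event of exactly $k$ large clusters, none of which arrives in $[1-\epsilon,1]$---the paper's events $F_n(\epsilon)$, $E^{(1)}_n(\delta)$, $E^{(2)}_n(\delta,\epsilon)$ and Claims~\eqref{proof, claim 1, theorem: LD for Hawkes, univariate results wrt M1 D0T}--\eqref{proof, claim 4, theorem: LD for Hawkes, univariate results wrt M1 D0T}---with the $\epsilon\downarrow 0$ limit taken after $n\to\infty$. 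The one imprecision is your justification that ``$M_1$ and $J_1$ agree'' on nonnegative step functions (they do not: two nearby jumps $M_1$-merge without $J_1$-converging); the transfer of Theorem~\ref{theorem: LD for levy, MRV} to $(\D[0,1],\dm{})$ is nonetheless automatic because $\dm{}\leq\dj{}$, so $\dm{}$-bounded-away implies $\dj{}$-bounded-away and the inclusions $A^\circ_{M_1}\subseteq A^\circ_{J_1}$, $A^-_{J_1}\subseteq A^-_{M_1}$ make the $M_1$ bounds consequences of the $J_1$ ones.
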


Since Theorem~\ref{theorem: LD for Hawkes, univariate results wrt M1 D0T} is established w.r.t.\ the $M_1$ topology of $\D[0,T]$ (i.e., the càdlàg space with compact domain) instead of $\D[0,\infty)$,
our results now address a broader class of events or functional of $\bar{\bm N}_n$
compared to only applying Theorem~\ref{theorem: LD for Hawkes} directly to univariate Hawkes processes.
We conclude this subsection with the proof of Theorem~\ref{theorem: LD for Hawkes, univariate results wrt M1 D0T}.

\begin{proof}[Proof of Theorem~\ref{theorem: LD for Hawkes, univariate results wrt M1 D0T}]
The proof is in the same spirit as Theorem~\ref{theorem: LD for Hawkes}.
In particular, like in \eqref{def: Levy process, LD for Hawkes} and \eqref{def: scaled Levy process, LD for Hawkes},
we set $L(t)$ as the compound Poisson process with $c_N$ as arrival rates and $S$ (the cluster size) as increments,
and let $\bar{\bm L}_n = \{ L(nt)/n:\ t \in [0,1]  \}$ be the scaled version of the sample path of $L(t)$ embedded in $\D$.
By Proposition~\ref{proposition, law of the levy process approximation, LD for Hawkes} and Lemma~\ref{lemma: asymptotic equivalence when bounded away, equivalence of M convergence},
it suffices to fix some positive integer $k$ and verify the following:
Let $A$ be a Borel set of $\D$ equipped with the $M_1$ topology such that $A$ is bounded away from $\breve \D_{k-1;\mu_N}$ under $\dm{}$;
for each $\Delta > 0$ we have
\begin{align}
    \P\Big( \dm{}\big( \bar{\bm N}_n, \bar{\bm L}_n  \big)  \mathbbm{I}\big\{\bar{\bm N}_n\text{ or }\bar{\bm L}_n \in A\big\} > \Delta  \Big) = \lo\big(\breve \lambda_k(n)\big),\quad \text{as }n \to \infty.
    \label{proof, goal, theorem: LD for Hawkes, univariate results wrt M1 D0T}
\end{align}

To this end, we consider the definitions in \eqref{def: large jump process for Levy, LD for Hawkes}--\eqref{def: sample path scaled process, Levy, LD for Hawkes} and \eqref{def: large jump process for N, LD for Hawkes}--\eqref{def: sample path scaled process, N, LD for Hawkes} in the unvariate setting.
That is, we use $\bar{\bm N}^{>\delta}_n$ (resp., $\bar{\bm  N}^{\leqslant\delta}_n$) to denote a modified version of $\bar{\bm N}_n$ where we only include clusters in $N(t)$ whose sizes are larger than (resp., bounded by) $n\delta$.
We adopt similar notations $\bar{\bm L}^{>\delta}_n$ and $\bar{\bm L}^{\leqslant\delta}_n$ for $\bar{\bm L}_n$.
First, the proof of Claim~\eqref{proof, goal, lemma: asymptotic equivalence between L and large jump, LD for Hawkes} in Lemma~\ref{lemma: asymptotic equivalence between L and large jump, LD for Hawkes} implies that for any $\Delta > 0$,
\begin{align}
    \P\bigg(
        \sup_{t \in [0,1]}
        \bigg|
            \bar{L}_n(t)
            -
            \Big(  \bar{ L}_n^{>\delta}(t) + \mu_{\bm N}t   \Big)
        \bigg| >\Delta  
    \bigg) = \lo\big(\breve \lambda_k(n)\big),
    \qquad
    \forall \delta > 0\text{ small enough.}
    \label{proof, AE for small jump process, L, theorem: LD for Hawkes, univariate results wrt M1 D0T}
\end{align}
Similarly, the proof of Claim~\eqref{proof, goal, lemma: asymptotic equivalence between N and large jump, LD for Hawkes} in Lemma~\ref{lemma: asymptotic equivalence between N and large jump, LD for Hawkes} implies that for any $\Delta > 0$,
\begin{align}
    \P\bigg(
        \sup_{t \in [0,1]}
        \bigg|
            \bar{N}_n(t)
            -
            \Big(  \bar{N}_n^{>\delta}(t) + \mu_{\bm N}t   \Big)
        \bigg| >\Delta  
    \bigg) = \lo\big(\breve \lambda_k(n)\big),
    \qquad
    \forall \delta > 0\text{ small enough.}
     \label{proof, AE for small jump process, N, theorem: LD for Hawkes, univariate results wrt M1 D0T}
\end{align}
Meanwhile, in \eqref{proof, goal, theorem: LD for Hawkes, univariate results wrt M1 D0T} note that:
(i) $\dm{}$ is upper bounded by the uniform metric;
(ii) 
whenever $\bar{\bm L}_n \in A$ and $\dm{}\big( \bar{\bm L}_n, \bar{\bm L}^{>\delta}_n + \mu_N \mathbf 1  \big) \leq \Delta$,
we must have $\bar{\bm L}^{>\delta}_n + \mu_N \mathbf 1 \in A^\Delta$; the same can be argued for $\bar{\bm N}_n$;
and 
(iii) for any $\Delta > 0$ small enough the set $A^\Delta$ is still bounded away from $\breve \D_{k-1;\mu_N}$.
Therefore, due to \eqref{proof, AE for small jump process, L, theorem: LD for Hawkes, univariate results wrt M1 D0T}--\eqref{proof, AE for small jump process, N, theorem: LD for Hawkes, univariate results wrt M1 D0T} and the arbitrariness of the choice of $A$ and $\Delta> 0$ in \eqref{proof, goal, theorem: LD for Hawkes, univariate results wrt M1 D0T},
it suffices to show the following:
Let $A$ be a Borel set of  $(\D, \dm{} )$ that is bounded away from $\breve \D_{k-1;\mu_N}$ under $\dm{}$ and let $\Delta > 0$;
for any $\delta > 0$ small enough we have
\begin{align}
    \P\Big( \dm{}\big( \bar{\bm N}^{>\delta}_n, \bar{\bm L}^{>\delta}_n  \big)  
    \mathbbm{I}\underbrace{ \big\{\bar{\bm N}^{>\delta}_n+\mu_N \mathbf 1 \in A\text{ or }\bar{\bm L}^{>\delta}_n + \mu_N \mathbf 1\in A\big\}
    }_{  \delequal A_n(\delta) }
    > \Delta  \Big) = \lo\big(\breve \lambda_k(n)\big).
    \label{proof, goal, 2, theorem: LD for Hawkes, univariate results wrt M1 D0T}
\end{align}

Without loss of generality, in \eqref{proof, goal, 2, theorem: LD for Hawkes, univariate results wrt M1 D0T} we fix $\delta > 0$, and take some $\Delta > 0$ small enough such that $A^\Delta$ is still bounded away from $\breve \D_{k-1;\mu_N}$ under $\dm{}$.
To proceed, we define a few events. 
Let $H$ be the lifetime of a generic copy of the cluster $S$, and the sequence $(S_j,H_j)$ be independent copies of $(S,H)$.
Generate the sequence $0 < T_1 < T_2 < \ldots$ by a Poisson process with rate $c_N$, which denote the arrival times of each immigrant in $N(t)$.
By definition, the associated compound Poisson process admits the form $L(t) = \sum_{j \geq 1}S_j\mathbbm{I}\{ T_j \leq t \}$.
For each $\epsilon > 0$, define events
\begin{align}
    F_n(\epsilon)\delequal 
    \bigg\{
        H_j \leq n\epsilon\text{ for any }j \geq 1\text{ with }T_j \leq n
    \bigg\}.
    \nonumber
\end{align}
That is, on the event $F_n(\epsilon)$, any immigrant that arrives by time $n$ induces a cluster with lifetime shorter than $n\epsilon$.
Next, let $K^{>\delta}_n$ be the count of jumps in $\bar{\bm L}^{>\delta}_n$.
Due to the $\bo(n)$ time scaling in $\bar{\bm L}^{>\delta}_n$,
we have $K^{>\delta}_n =\#\{ j \geq 1:\ T_j \leq n,\ S_j > n\delta  \}$.
Now, define events
\begin{align}
    E^{(1)}_n(\delta) \delequal 
    \Big\{
        K^{>\delta}_n = k
    \Big\},
    \qquad
    E^{(2)}_n(\delta,\epsilon) \delequal 
    \Big\{
     \bar{\bm L}^{>\delta}_n(t) - \bar{\bm L}^{>\delta}_n(t-) = 0\ \forall t \in [1-\epsilon,1]
    \Big\}.
    \nonumber
\end{align}
Also, recall the definition of events $A_n(\delta)$ in \eqref{proof, goal, 2, theorem: LD for Hawkes, univariate results wrt M1 D0T}.
Suppose that (i) for any $\epsilon > 0$, we have
\begin{align}
    \P\Big(  \big( F_n(\epsilon) \big)^\complement \Big) & = o\big(\breve \lambda_k(n)\big);
    \label{proof, claim 1, theorem: LD for Hawkes, univariate results wrt M1 D0T}
\end{align}
(ii)  next,
\begin{align}
    \P\bigg(  \Big( A_n(\delta) \cap F_n(\epsilon) \Big) \setminus E^{(1)}_n(\delta)     \bigg) & = \lo\big(\breve \lambda_k(n)\big),
    \quad \forall \epsilon \in (0,\Delta),
    \label{proof, claim 2, theorem: LD for Hawkes, univariate results wrt M1 D0T}
    \\ 
    \lim_{\epsilon \downarrow 0}\lim_{n \to \infty}
        \P\bigg(  \Big( A_n(\delta) \cap F_n(\epsilon) \cap E^{(1)}_n(\delta) \Big) \setminus E^{(2)}_n(\delta,\epsilon)     \bigg)\bigg/& \breve \lambda_k(n) = 0;
        \label{proof, claim 3, theorem: LD for Hawkes, univariate results wrt M1 D0T}
    \end{align}
(iii) furthermore, for any $\epsilon \in (0,\Delta)$,
\begin{align}
    \text{it holds on the event }A_n(\delta) \cap F_n(\epsilon) \cap E^{(1)}_n(\delta) \cap E^{(2)}_n(\delta,\epsilon)&\text{ that } \dm{}\big( \bar{\bm N}^{>\delta}_n, \bar{\bm L}^{>\delta}_n  \big)  \leq \Delta.
    \label{proof, claim 4, theorem: LD for Hawkes, univariate results wrt M1 D0T}
\end{align}
Then, the bound \eqref{proof, goal, 2, theorem: LD for Hawkes, univariate results wrt M1 D0T} follows directly from Claims~\eqref{proof, claim 1, theorem: LD for Hawkes, univariate results wrt M1 D0T}--\eqref{proof, claim 4, theorem: LD for Hawkes, univariate results wrt M1 D0T}.
Now, it only remains to verify these claims.

\medskip
\noindent
\textbf{Proof of Claim \eqref{proof, claim 1, theorem: LD for Hawkes, univariate results wrt M1 D0T}}.
This is an immediate consequence of Lemma~\ref{lemma: lifetime of a cluster, LD for Hawkes} under condition~\eqref{tail condition on decay function, theorem: LD for Hawkes, univariate results wrt M1 D0T}.
See also
the proof of Claim~\eqref{proof, sub goal 2, lemma: asymptotic equivalence between large jump N and L, LD for Hawkes} in Lemma~\ref{lemma: asymptotic equivalence between large jump N and L, LD for Hawkes}.

\medskip
\noindent
\textbf{Proof of Claim \eqref{proof, claim 2, theorem: LD for Hawkes, univariate results wrt M1 D0T}}.
By the law of the compound Poisson process $L(t)$,
\begin{align*}
    \P\big( K^{>\delta}_n \geq k + 1 \big)
    & = 
    \P\Big( \text{Poisson}\big( n\P( S > n\delta )  \big) \geq k + 1  \Big)
    \\ 
    & \leq \big( n\P( S > n\delta )  \big)^{k+1}\qquad\text{due to }\P\big(\text{Poisson}(\lambda) \geq m\big)\leq \lambda^m
    \\ 
    & = \bo\Big( \big( n\P( B > n )  \big)^{k+1} \Big) = \lo\big(\breve \lambda_k(n)\big)
    \qquad
    \text{by \eqref{univariate result, cluster tail asymptotics} and \eqref{def breve lambda k, univariate case}.}
\end{align*}
Therefore, it suffices to show that for any $\epsilon \in (0,\Delta)$,
\begin{align}
     \Big\{\bar{\bm N}^{>\delta}_n+\mu_N \mathbf 1 \in A\text{ or }\bar{\bm L}^{>\delta}_n + \mu_N \mathbf 1\in A\Big\}
     \cap F_n(\epsilon) \cap \Big\{ K^{>\delta}_n \leq k - 1  \Big\}   
     = \emptyset.
    \nonumber
\end{align}
First, on $\big\{ K^{>\delta}_n \leq k - 1  \big\}$, we have 
$
\bar{\bm L}^{>\delta}_n + \mu_N \mathbf 1 \in \breve \D_{k-1;\mu_N}.
$
Since  $A$ is bounded away from $\breve \D_{k-1;\mu_N}$, this confirms that 
$
\big\{ \bar{\bm L}^{>\delta}_n + \mu_N \mathbf 1\in A \big\} \cap \big\{ K^{>\delta}_n \leq k - 1  \big\} = \emptyset.
$
Next, on the event $F_n(\epsilon)$, the lifetime of any cluster arriving by time $n$ is upper bounded by $n\epsilon$.
Note that in $\bar N^{>\delta}_n$ and $\bar L^{>\delta}_n$, due to the $\bo(n)$ time scaling,
this translates to an $\epsilon$ upper bound time-wise.
Also, recall that we only consider $\epsilon \in (0,\Delta)$.
By the same arguments in Lemma~\ref{lemma: step functions under M1 metric, LD for Hawkes},
one can see that, on the event $F_n(\epsilon) \cap \big\{ K^{>\delta}_n \leq k - 1  \big\}$,
by setting $\xi(t) = \bar L^{>\delta}_n(t) \wedge \bar N^{>\delta}_n(1)\ \forall t \in [0,1]$,
the path $\xi\in \D$ admits the form
\begin{align*}
    \xi(t) = \mu_N t + \sum_{ i = 1 }^{k-1} w_i \mathbbm{I}_{ [t_i,1] }(t),
    \qquad\text{where }w_i \geq 0\text{ and }t_i \in (0,1]\ \forall i \in [k-1],
\end{align*}
and satisfies $\dm{}( \xi, \bar{\bm N}^{>\delta}_n ) < \epsilon < \Delta$.
This confirms that, on the event $F_n(\epsilon) \cap \big\{ K^{>\delta}_n \leq k - 1  \big\}$, we must have
$
 \bar{\bm N}^{>\delta}_n + \mu_N \mathbf 1 \in (\breve \D_{k-1;\mu_N})^\Delta.
$
However, this implies that on the event $F_n(\epsilon) \cap \big\{ K^{>\delta}_n \leq k - 1  \big\}$, we must have 
$
\bar{\bm N}^{>\delta}_n + \mu_N \mathbf 1 \notin A.
$
Indeed, note that
\begin{align*}
    & F_n(\epsilon) \cap \big\{ K^{>\delta}_n \leq k - 1  \big\} \cap \big\{ \bar{\bm N}^{>\delta}_n + \mu_N \mathbf 1 \in A \big\}
    \subseteq 
    \big\{  \bar{\bm N}^{>\delta}_n + \mu_N \mathbf 1 \in (\breve \D_{k-1;\mu_N})^\Delta \cap A  \big\}.
\end{align*}
Meanwhile, recall the running assumption that $A^\Delta$ is bounded away from  $\breve \D_{k-1;\mu_N}$ under $\dm{}$,
which implies $(\D_{k-1;\mu_N})^\Delta \cap A  = \emptyset$,
and hence 
$
 F_n(\epsilon) \cap \big\{ K^{>\delta}_n \leq k - 1  \big\} \cap \big\{ \bar{\bm N}^{>\delta}_n + \mu_N \mathbf 1 \in A \big\} = \emptyset.
$
This concludes the proof of Claim~\eqref{proof, claim 2, theorem: LD for Hawkes, univariate results wrt M1 D0T}.

\medskip
\noindent
\textbf{Proof of Claim \eqref{proof, claim 3, theorem: LD for Hawkes, univariate results wrt M1 D0T}}.
It suffices to show that 
\begin{align*}
    \lim_{\epsilon \downarrow 0}
        \P\bigg( 
        \underbrace{ K^{>\delta}_n = k;\ \exists t \in [1-\epsilon,1]\text{ such that }\bar L^{>\delta}_n(t) - \bar L^{>\delta}_n(t-) \neq 0 
        }_{  = E^{(1)}_n(\delta) \setminus E^{(2)}_n(\delta,\epsilon) }
        \bigg)\bigg/& \breve \lambda_k(n) = 0.
\end{align*}
By the law of compound Poisson processes, we have (where the $U_j$'s are i.i.d.\ copies of Unif$(0,1)$)
\begin{align*}
    & \P\big(E^{(1)}_n(\delta) \setminus E^{(2)}_n(\delta,\epsilon)\big)
    \\ 
    & = 
    \P\Big(
        \text{Poisson}\big( n\P(S > n\delta)  \big) = k
    \Big)
    \cdot \P\Big(  U_j \in [1-\epsilon,1]\text{ for some }j \in [k] \Big)
    \\ 
    & \leq 
    \big( n\P(S > n\delta)  \big)^k \cdot k\epsilon
    \\ 
    & \sim 
    k\epsilon \cdot \Bigg( \frac{ (\E S)^{1 + \alpha}  }{  \delta^\alpha }  \Bigg)^k \cdot \breve \lambda_k(n)
    \qquad
    \text{by \eqref{univariate result, cluster tail asymptotics} and \eqref{def breve lambda k, univariate case}.}
\end{align*}
Sending $\epsilon \downarrow 0$, we conclude the proof of Claim \eqref{proof, claim 3, theorem: LD for Hawkes, univariate results wrt M1 D0T}.

\medskip
\noindent
\textbf{Proof of Claim \eqref{proof, claim 4, theorem: LD for Hawkes, univariate results wrt M1 D0T}}.
This follows form Lemma~\ref{lemma: step functions under M1 metric, LD for Hawkes} and our choice of $\epsilon \in (0,\Delta)$.
In particular, note that in the one-dimensional case, the product $M_1$ metric $\dmp{ \subZeroT{1} }$ degenerates to the standard $M_1$ metric $\dm{}$ on $\D = \D\big([0,1],\R\big)$.
\end{proof}

\section{Sample Path Large Deviations for L\'evy Processes with $\MRV$ Increments (General Case)}
\label{sec: appendix, LD for Levy, General Case}

In this section, we provide an analog of Theorem~\ref{theorem: LD for levy, MRV}---the sample path LDPs for L\'evy Processes with $\MRV$ Increments---under relaxed conditions.
Such developments allow us to extend Theorem~\ref{theorem: LD for levy, MRV} beyond the context of Hawkes processes considered in the main paper and to cover a broader class of L\'evy processes whose increments exhibit a more flexible form of multivariate hidden regular variation.

Specifically, we work with the following condition regarding the L\'evy measure $\nu(\cdot)$ of the $d$-dimensional L\'evy process $\bm L(t)$.

\begin{assumption}[$\MRV$ increments in L\'evy Processes (General Case)]
\label{assumption: ARMV in levy process, general case}
The L\'evy measure $\nu(\cdot)$ of $\bm L(t)$ is supported on $\R^d_+$
and satisfies
$
\nu \in
    \MRV
    \Big(
(\bar{\bm s}_j)_{j \in [m]},
\big(\alpha(\bm j)\big)_{ \bm j \subseteq [m] },
(\lambda_{\bm j})_{ \bm j \in \powersetTilde{m} }, (\mathbf C_{\bm j})_{ \bm j \in \powersetTilde{m} }
\Big)
$
under
\begin{itemize}
    \item some positive integer $m \in [d]$,
    \item a collection of vectors $(\bar{\bm s}_j)_{j \in [m]}$ in $\R^d_+$ that are linearly independent,
    \item a collection of real numbers $(\alpha(\bm j))_{\bm j \subseteq [m]}$ that are strictly monotone w.r.t.\ $\bm j$ (see \eqref{def: monoton sequence of tail indices in MHRV}) with $\alpha(\emptyset) = 0$,
    $\alpha(\bm j) >1\ \forall \bm j \in \powersetTilde{m}$, and $\alpha(\bm j) \neq \alpha(\bm j^\prime)\ \forall \bm j,\bm j^\prime \in \powersetTilde{m}$ with $\bm j\neq \bm j^\prime$;
    \item regularly varying functions $\lambda_{\bm j}(n) \in \RV_{ -\alpha(\bm j) }(n)$ (for each $\bm j \in \powersetTilde{m}$),
    \item Borel measures $(\mathbf C_{\bm j})_{\bm j \subseteq [m]}$ where, for each $\bm j \in \powersetTilde{m}$,
    the measure $\mathbf C_{\bm j}$ is
    supported on $\R^d(\bm j)$,
        such that 
        $
        \mathbf C_{\bm j}(A) < \infty
        $
        holds for any $\epsilon > 0$ and Borel set $A \subseteq \R^d_+$ bounded away from $\bar\R^d(\bm j,\epsilon)$ (see \eqref{def: cone R d i basis S index alpha}).
\end{itemize}
        
\end{assumption}

Throughout this section, we denote the basis by $\bar{\textbf S} = \{ \bar{\bm s}_j:\ j \in [m] \}$ and the collection of tail indices by $\bm \alpha = \big(\alpha(\bm j)\big)_{ \bm j \subseteq [m] }$.
Besides, we adopt the simplified notations
$
{\R^d(\bm j)} = {\R^d(\bm j;\bar{\textbf S})},
$
$
{ \bar \R^d(\bm j, \epsilon) } = \bar \R^d(\bm j, \epsilon;\bar{\textbf S}),
$
and
$
{\bar\R^{d}_\leqslant(\bm j,\epsilon)}=
{\bar\R^{d}_\leqslant(\bm j,\epsilon; \bar{\textbf S},\bm \alpha)}
$
in Section~\ref{subsec: MRV}.
Compared to Assumption~\ref{assumption: ARMV in levy process}, the key differences in Assumption~\ref{assumption: ARMV in levy process, general case} are that
\begin{itemize}
    \item 
        we drop the condition~\eqref{cond, additivity of rate functions for MRV, Ld for Levy} on the rate functions $\lambda_{\bm j}(\cdot)$;

    \item 
        we now consider a weaker $\MRV$ condition (instead of the $\MRV^*$ condition in Assumption~\ref{assumption: ARMV in levy process}); also, the number of base vectors $\bar s_j$ considered is not fixed as $d$.
\end{itemize}
When imposing the more flexible Assumption~\ref{assumption: ARMV in levy process, general case}, one potential issue is that we remain agnostic
about
the tail behaviors outside of the cone $\R^d([m])$.
This potential lack of knowledge about the hidden regular variation in certain regions
is reflected in Theorem~\ref{theorem: LD for levy, MRV, general case}---the main result of this section---through the following definition:
\begin{align}
    \notationdef{notation-breve-R-d-type-j-modified-cones}{\hat{\R}^d(\bm j,\epsilon;\bar{\textbf S})}
    \delequal
    \begin{cases}
        \bar{\R}(\bm j,\epsilon;\bar{\textbf S}) & \text{if }\bm j \in \powersetTilde{m},\ \bm j \neq [m]
        \\
        \R^d_+ & \text{if }\bm j = [m]
    \end{cases}.
    \label{def: breve R d type j modified cones}
\end{align}
In other words,
we only modify the definition $\bar{\R}^d(\bm j,\epsilon;\bar{\textbf S})$ to include the entire domain $\R^d_+$.
We also write $\notationdef{notation-breve-R-d-type-j-modified-cones-without-basis}{\hat{\R}^d(\bm j,\epsilon)} \delequal \hat{\R}^d(\bm j,\epsilon;\bar{\textbf S})$ as there will be no ambiguity about $\bar{\textbf S}$ in this section.

We introduce a few notions that are required for the statement of Theorem~\ref{theorem: LD for levy, MRV, general case}.
Recall that 
we use notations like $\bm k = (k_{i})_{ i \in \mathcal J  } \in A^{ \mathcal J }$
to denote vectors of length $|\mathcal J|$ with each coordinate taking values in $A$ and indexed by elements in $\mathcal J$.
Besides, recall that we use $\powersetTilde{m}$ to denote the collection of all \emph{non-empty} subsets of $[m]$.
Analogous to \eqref{def: j jump path set with drft c, LD for Levy, MRV},
given $\bm x \in \R^d$, $\bm{\mathcal K} = (\mathcal K_{\bm j})_{ \bm j \in \powersetTilde{m} } \in \mathbb Z_+^{ \powersetTilde{m} } \setminus \{\bm 0\}$, 
we define
\begin{equation}    \label{def: j jump path set with drft c, LD for Levy, MRV, general case}
    \begin{aligned}
        \hat \D^\epsilon_{ \bm{\mathcal K};\bm x  }[0,T]
    & \delequal 
    \Bigg\{
        \bm x\bm 1 + \sum_{ \bm j \in \powersetTilde{m} }\sum_{k = 1}^{ \mathcal K_{\bm j} }\bm w_{\bm j,k} \mathbbm{I}_{ [t_{\bm j, k}, T] }:
        \\
    &
        t_{\bm j,k} \in (0,T]\text{ and }\bm w_{\bm j,k} \in \hat\R^d(\bm j, \epsilon)\ 
        \forall \bm j \in \powersetTilde{m},\ k \in [\mathcal K_{\bm j}];
        \ t_{\bm j,k} \neq t_{\bm j^\prime,k^\prime}\ \forall (\bm j,k)\neq (\bm j^\prime,k^\prime)
    \Bigg\},
    \end{aligned}
\end{equation}
i.e., the set of all paths $\xi$ of the form
\begin{align}
    \xi(t) = \bm x t + \sum_{ \bm j \in \powersetTilde{m} }\sum_{ k = 1 }^{ \mathcal K_{\bm j} }\bm w_{\bm j, k} \mathbbm{I}_{ [t_{\bm j, k},T] }(t),
    \qquad\forall t \in [0,T],
    \label{expression for xi in breve D set, general case}
\end{align}
where
each $\bm w_{\bm j,k}$ belongs to the cone $\hat{\R}^d(\bm j,\epsilon)$,
and any two numbers in the sequence $(t_{\bm j,k})_{ \bm j \in \powersetTilde{m},\ k \in [\mathcal K_{\bm j}]  }$ are not equal (i.e., the arrival times of jumps would not coincide).
Next, define 
\begin{align}
    {\hat{c}(\bm{\mathcal K})} \delequal 
    \sum_{ \bm j \in \powersetTilde{m} }\mathcal K_{\bm j} \cdot \Big( \alpha(\bm j) - 1 \Big),
    \qquad \forall 
    \bm{\mathcal K} = (\mathcal K_{\bm j})_{ \bm j \in \powersetTilde{m} } \in \mathbb Z_+^{ \powersetTilde{m} }.
    \label{def: cost alpha j, LD for Levy MRV, general case}
\end{align}
Note that this agrees with  $\breve{c}(\cdot)$ in \eqref{def: cost alpha j, LD for Levy MRV},
and we repeat the definition here so that ${\hat{c}(\bm{\mathcal K})}$ is consistent with other notations introduced in this section.
Given $\bm x \in \R^d$, $\epsilon \geq 0$, $\bm{\mathcal K} = (\mathcal K_{\bm j})_{ \bm j \in \powersetTilde{m} } \in \mathbb Z_+^{ \powersetTilde{m} } \setminus \{\bm 0\}$,
let
\begin{align}
    \hat \D^\epsilon_{ \leqslant {\bm{\mathcal K}};\bm x }[0,T] 
    \delequal 
    \bigcup_{
        \substack{
            \bm{\mathcal K}^\prime \in  \mathbb Z_+^{ \powersetTilde{m} }:
            \\
            \bm{\mathcal K}^\prime\neq \bm{\mathcal K},\ \hat c(\bm{\mathcal K}^\prime) \leq \hat c(\bm{\mathcal K})
        }
    }
    \hat \D^\epsilon_{ \bm{\mathcal K}^\prime;\bm x  }[0,T].
    \label{def: path with costs less than j jump set, drift x, LD for Levy MRV, general case}
\end{align}
Meanwhile, given  $\bm{\mathcal K} = (\mathcal K_{\bm j})_{ \bm j \in \powersetTilde{m} } \in \mathbb Z_+^{ \powersetTilde{m} } \setminus \{\bm 0\}$, let
\begin{align}
    {\hat \lambda_{\bm{\mathcal K}}(n)}
    \delequal 
    \prod_{ \bm j \in \powersetTilde{m} } \Big( n\lambda_{ \bm j }(n)\Big)^{ \mathcal K_{\bm j}},
    \label{def: scale function for Levy LD MRVk, general case}
\end{align}
where $\lambda_{\bm j}(n) \in \RV_{ -\alpha(\bm j)  }(n)$ is the rate function of the $\MRV$ condition in Assumption~\ref{assumption: ARMV in levy process, general case}.
Then, by the definition of $\hat c(\cdot)$ in \eqref{def: cost alpha j, LD for Levy MRV, general case}, we have 
$
\hat\lambda_{\bm{\mathcal K}}(n) \in \RV_{ -\hat c(\bm{\mathcal K})  }(n).
$
Lastly, 
analogous to \eqref{def: measure C type j L, LD for Levy, MRV},
given
$\bm x\in \R^d$ and $\bm{\mathcal K} = (\mathcal K_{\bm j})_{ \bm j \in \powersetTilde{m} } \in \mathbb Z_+^{ \powersetTilde{m} } \setminus \{\bm 0\}$,
we define the Borel measure 
\begin{equation}  \label{def: measure C type j L, LD for Levy, MRV, general case}        
    \begin{aligned}
       {\hat{\mathbf C}^{ \subZeroT{T} }_{\bm{\mathcal K};\bm x}(\ \cdot\ )}
    & \delequal
    \frac{1}{\prod_{ \bm j \in \powersetTilde{m} } \mathcal K_{\bm j}!} 
    \\
    & \quad\cdot
    \int 
    \mathbbm{I}\Bigg\{
        \bm x \bm 1  + \sum_{ \bm j \in \powersetTilde{m} } \sum_{ k \in [\mathcal K_{\bm j}] } \bm w_{\bm j,k}\mathbbm{I}_{ [t_{\bm j, k},T] } \in\ \cdot\ 
    \Bigg\}
    \bigtimes_{\bm j \in \powersetTilde{m}}
    \bigtimes_{ k \in [k_{\bm j}] }\bigg( (\mathbf C_{ \bm j } \times  \mathcal L_{(0,T)})\Big(d (\bm w_{\bm j, k}, t_{ \bm j,k })\Big)\bigg),
    \end{aligned}
\end{equation}
where the $\mathbf C_{\bm j}$'s are the limiting measures for the $\MRV$ condition of Assumption~\ref{assumption: ARMV in levy process, general case},
$\mathcal L_{(0,T)}$ is the Lebesgue measure on the interval $(0,T)$,
and we use $\nu_1 \times \nu_2$ to denote the product measure of $\nu_1$, $\nu_2$.

Let $\bm \mu_{\bm L}$ and $\bar{\bm L}^{ \subZeroT{T} }_n$ be defined as in \eqref{def: expectation of levy process}--\eqref{def: scale levy process bar L t}.
We are now ready to state Theorem~\ref{theorem: LD for levy, MRV, general case}.
\begin{theorem}\label{theorem: LD for levy, MRV, general case}
    Let Assumption~\ref{assumption: ARMV in levy process, general case} hold.
    Let $T \in (0,\infty)$, 
    $\bm{\mathcal K} = (\mathcal K_{\bm j})_{ \bm j \in \powersetTilde{m} } \in \mathbb Z_+^{ \powersetTilde{m} } \setminus \{\bm 0\}$,
    and let $B$ be a Borel set of $\D[0,T]$ equipped with the Skorokhod $J_1$ topology.
    Suppose that $B$  is bounded away from 
    $\hat \D^\epsilon_{ \leqslant \bm{\mathcal K};\bm \mu_{\bm L} }[0,T]$
    under $\dj{[0,T]}$ for some (and hence all) $\epsilon > 0$ small enough.
    Then,
    \begin{align}
        \hat{\mathbf C}_{\bm{\mathcal K};\bm \mu_{\bm L}}^{ _{[0,T]} }(B^\circ)
        \leq 
        \liminf_{n \to \infty}
        \frac{
        \P\big(\bar{\bm L}^{ _{[0,T]} }_n \in B\big)
        }{
            \hat \lambda_{\bm{\mathcal K} }(n)
        }
        \leq 
        \limsup_{n \to \infty}
        \frac{
        \P\big(\bar{\bm L}^{ _{[0,T]} }_n \in B\big)
        }{
            \hat \lambda_{\bm{\mathcal K} }(n)
        }
        \leq 
        \hat{\mathbf C}_{\bm{\mathcal K};\bm \mu_{\bm L}}^{ _{[0,T]} }(B^-) < \infty.
        \nonumber
    \end{align}
\end{theorem}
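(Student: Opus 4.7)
The plan is to adapt the three-step proof strategy of Theorem~\ref{theorem: LD for levy, MRV}---reduction to a large-jump approximation, asymptotic analysis of jump counts and sizes, and combination via $\mathbb{M}$-convergence---to the more general setting in which condition~\eqref{cond, additivity of rate functions for MRV, Ld for Levy} is absent and only an $\MRV$ (rather than $\MRV^*$) condition holds. Without loss of generality I would take $T = 1$ and $\bm\mu_{\bm L} = \bm 0$, and introduce, for each $\delta > 0$, the large-jump approximation $\hat{\bm L}_n^{>\delta}$ as in \eqref{def: large jump approximation, LD for Levy MRV}. The key redefinition is the classification of large jumps: for each $\bm j \in \powersetTilde{m}$ with $\bm j \neq [m]$, I keep $\bar{\R}^{>\delta}(\bm j)$ as in \eqref{def: tilde R geq delta, LD for Levy}, but for $\bm j = [m]$ I use the enlarged ``catch-all'' set
\begin{equation*}
    \hat{\R}^{>\delta}([m]) \delequal \Bigl\{\bm x \in \R^d_+ :\ \norm{\bm x} > \delta,\ \bm x \notin \bar\R^d(\bm j^\prime, \delta)\ \forall \bm j^\prime \in \powersetTilde{m},\ \bm j^\prime \neq [m]\Bigr\}.
\end{equation*}
Because $\alpha([m])$ strictly dominates every other $\alpha(\bm j^\prime)$, this family still partitions $\{\bm x \in \R^d_+ :\ \norm{\bm x} > \delta\}$, and every large jump admits a unique type, yielding a count vector $\hat{\bm K}^{>\delta}_n = (\hat K^{>\delta}_n(\bm j))_{\bm j \in \powersetTilde{m}}$ that now carries all mass (no residual ``outside the full cone'' term is needed).

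Next I would establish analogs of Propositions~\ref{proposition: asymptotic equivalence, LD for Levy MRV} and \ref{proposition: weak convergence, LD for Levy MRV}. The asymptotic equivalence of $\bar{\bm L}_n$ and $\hat{\bm L}_n^{>\delta}$ at scale $\hat\lambda_{\bm{\mathcal K}}(n)$ uses Lemma~\ref{lemma: concentration of small jump process, Ld for Levy MRV} unchanged (it relies only on the L\'evy-It\^o decomposition) and an adaptation of \eqref{proof: asymptotic equivalence, goal 2, theorem: LD for levy, MRV}: for each $\bm{\mathcal K}^\prime$ with $\hat c(\bm{\mathcal K}^\prime) > \hat c(\bm{\mathcal K})$, the $\MRV$ condition bounds $\nu_n(\bar\R^{>\delta}(\bm j))$ and $\nu_n(\hat\R^{>\delta}([m]))$ by $\bo(\lambda_{\bm j}(n))$ and $\bo(\lambda_{[m]}(n))$ respectively, so $\P(\hat{\bm K}^{>\delta}_n \geq \bm{\mathcal K}^\prime) = \bo(\hat\lambda_{\bm{\mathcal K}^\prime}(n)) = \lo(\hat\lambda_{\bm{\mathcal K}}(n))$. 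For $\hat c(\bm{\mathcal K}^\prime) \leq \hat c(\bm{\mathcal K})$ with $\bm{\mathcal K}^\prime \neq \bm{\mathcal K}$, Lemma~\ref{lemma: large jump approximation, LD for Levy MRV} places $\hat{\bm L}_n^{>\delta}$ inside $\hat\D^\epsilon_{\leqslant \bm{\mathcal K};\bm 0}[0,1]$ on the bulk event, from which $B$ is bounded away. The conditional-law analysis follows the Poisson-splitting derivation of Lemma~\ref{lemma: asymptotic law for large jumps, LD for Levy MRV}: splitting by type gives $\nu_n(\bar\R^{>\delta}(\bm j))/\lambda_{\bm j}(n) \to \mathbf C_{\bm j}(\bar\R^{>\delta}(\bm j))$ and, for almost all $\delta$, $\nu_n(\hat\R^{>\delta}([m]))/\lambda_{[m]}(n) \to \mathbf C_{[m]}(\hat\R^{>\delta}([m]))$, the latter because $\hat\R^{>\delta}([m])$ is bounded away from $\bar\R^d_\leqslant([m], \delta/2)$ thanks to the maximality of $\alpha([m])$.

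I would then invoke Lemma~\ref{lemma: asymptotic equivalence when bounded away, equivalence of M convergence} with the single marker $v \equiv \bm{\mathcal K}$, $X_n = \bar{\bm L}_n^{\subZeroT{1}}$, $Y_n^\delta = \hat{\bm L}_n^{>\delta}$, $V_n^\delta = \hat{\bm K}_n^{>\delta}$, and rate $\epsilon_n = \hat\lambda_{\bm{\mathcal K}}(n)$. A continuous-mapping argument on the jump-time/jump-size vectors, combined with the weak-convergence step above, produces the limiting measure $\hat{\mathbf C}_{\bm{\mathcal K};\bm\mu_{\bm L}}^{\subZeroT{1}}$. The finiteness $\hat{\mathbf C}_{\bm{\mathcal K};\bm\mu_{\bm L}}^{\subZeroT{1}}(B^-) < \infty$ follows by adapting Lemma~\ref{lemma: finiteness of measure breve C k}: from the bounded-away hypothesis one can extract $\bar\delta > 0$ such that every $\xi \in B^{\bar\delta} \cap \hat\D^{\bar\delta}_{\bm{\mathcal K};\bm\mu_{\bm L}}[0,1]$ has jumps in $\hat\R^d(\bm j,\bar\delta)\setminus\bar\R^d_\leqslant(\bm j,\bar\delta)$, on which $\mathbf C_{\bm j}$ is finite by the $\MRV$ condition. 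Since the rate condition \eqref{cond, additivity of rate functions for MRV, Ld for Levy} is now absent, no summation over allocations is needed: distinct $\bm{\mathcal K}$ yield distinct regularly varying rates $\hat\lambda_{\bm{\mathcal K}}(n)$, and only the one matching our label survives.

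The main obstacle I expect is the delicate treatment of type-$[m]$ jumps, because the enlarged cone $\hat\R^d([m],\epsilon) = \R^d_+$ absorbs every direction on which the $\MRV$ assumption is \emph{silent}: $\mathbf C_{[m]}$ is supported on $\R^d([m])$, whereas $\nu$ may put non-trivial mass outside $\bar\R^d([m],\delta)$. Reconciling this asymmetry requires (i) a boundary-zero-mass lemma in the spirit of Lemma~\ref{lemma: zero mass for boundary sets, LD for Levy} to guarantee that $\mathbf C_{[m]}(\partial \hat\R^{>\delta}([m])) = 0$ for all but countably many $\delta$, so that Portmanteau upgrades the $\MRV$ bounds to equalities; and (ii) a careful verification that the continuous-mapping limit, which formally places jumps in $\R^d([m])$ under $\mathbf C_{[m]}$, still correctly captures the contribution of jumps lying in $\hat\R^d([m],\epsilon)\setminus\bar\R^d([m],\epsilon)$ in the upper and lower bounds---this is precisely what the inclusion of the enlarged cone in the definition of $\hat\D^\epsilon_{\bm{\mathcal K};\bm x}[0,T]$ is engineered to accommodate.
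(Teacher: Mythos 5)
Your proposal follows essentially the same strategy as the paper's own proof sketch: both reduce to $T=1$, $\bm\mu_{\bm L}=\bm 0$, reclassify large jumps using modified cones with $\hat\R^{>\delta}([m])$ serving as a catch-all that absorbs mass outside $\R^d([m])$ (eliminating the separate $\widetilde K^{>\delta}_n$ count that the $\MRV^*$ assumption handled in Theorem~\ref{theorem: LD for levy, MRV}), invoke Lemma~\ref{lemma: asymptotic equivalence when bounded away, equivalence of M convergence} with the single marker set $\mathcal V = \{\bm{\mathcal K}\}$, and adapt Lemmas~\ref{lemma: zero mass for boundary sets, LD for Levy}, \ref{lemma: asymptotic law for large jumps, LD for Levy MRV}, \ref{lemma: choice of bar epsilon and bar delta, LD for Levy MRV}, and \ref{lemma: finiteness of measure breve C k} with minimal changes. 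Your closing discussion of the asymmetry between the support of $\mathbf C_{[m]}$ and the enlarged cone $\hat\R^d([m],\epsilon)=\R^d_+$ in $\hat\D^\epsilon_{\bm{\mathcal K};\bm x}[0,T]$ pinpoints exactly the subtlety the paper flags as ``the only key difference.''
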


In other words, 
Theorem~\ref{theorem: LD for levy, MRV, general case} addresses the general case where a more flexible (and hence less detailed) description of the multivariate hidden regular variation (as in Assumption~\ref{assumption: ARMV in levy process, general case}) for the L\'evy measure $\nu(\cdot)$ is available, at the cost of slightly weaker characterizations of sample path large deviations when compared to Theorem~\ref{theorem: LD for levy, MRV}.
In particular, 
without conditions like \eqref{cond, additivity of rate functions for MRV, Ld for Levy},
we do not know if the rate functions $\hat\lambda_{\bm{\mathcal K}}(n)$ are comparable (as $n \to \infty$) for different $\bm{\mathcal K}$ if their tail indices $\hat c(\bm{\mathcal K})$ are equal;
as a result,
Theorem~\ref{theorem: LD for levy, MRV, general case} 
does not allow the merging of multiple cases corresponding to different $\bm{\mathcal K}$ with equal tail indices $\hat c(\bm{\mathcal K})$.
Besides, 
the modification from $\bar\R^d(\bm j,\epsilon)$ to $\hat\R^d(\bm j,\epsilon)$ in \eqref{def: breve R d type j modified cones}
enlarges
the set $\hat \D^\epsilon_{ \bm{\mathcal K};\bm \mu_{\bm L} }[0,T]$ 
in \eqref{def: j jump path set with drft c, LD for Levy, MRV, general case}
(when compared to $\breve \D^\epsilon_{ \bm{\mathcal K};\bm \mu_{\bm L} }[0,T]$ in \eqref{def: j jump path set with drft c, LD for Levy, MRV}).
This generally leads to fewer sets $B$ that are bounded away from $\hat \D^\epsilon_{ \leqslant\bm{\mathcal K};\bm \mu_{\bm L} }[0,T]$,
and thus fewer combinations of rare events and the rates $\bm{\mathcal K}$ to which Theorem~\ref{theorem: LD for levy, MRV, general case} can be applied.

The proof of Theorem~\ref{theorem: LD for levy, MRV, general case} is almost identical to that of Theorem~\ref{theorem: LD for levy, MRV} stated in Section~\ref{subsubsec: proof of LD with MRV},
and we provide a proof sketch below for the sake of completeness.

\begin{proof}[Proof Sketch for Theorem~\ref{theorem: LD for levy, MRV, general case}]
Analogous to Theorem~\ref{theorem: LD for levy, MRV},
 w.l.o.g.\ we consider the case where $T = 1$ and $\bm \mu_{\bm L} = \bm 0$,
and adopt the notations $\bar{\bm L}_n = \bar{\bm L}_n^{\subZeroT{1}}$, $\D = \D\big([0,1],\R^d\big)$, and $\dj{} = \dj{\subZeroT{1}}$.
Again, the proof relies on the notion of asymptotic equivalence for $\mathbb M$-convergence.
By Lemma~\ref{lemma: asymptotic equivalence when bounded away, equivalence of M convergence},
it suffices to verify the following two conditions:
\begin{enumerate}[(i)]
    \item
        For each $\bm{\mathcal K} = (\mathcal K_{\bm j})_{ \bm j \in \powersetTilde{m} } \in \mathbb Z_+^{ \powersetTilde{m} } \setminus \{\bm 0\}$
        and $\Delta > 0$,
        \begin{align}
            \lim_{n \to \infty}
    \P\Big(
        \dj{}\big( \hat{\bm L}_n^{>\delta}, \bar{\bm L}_n \big)
        > \Delta 
    \Big) 
    \Big/ \hat\lambda_{\bm{\mathcal K}}(n)
    = 0,
    \qquad
    \forall \delta > 0\text{ small enough,}
            \nonumber
        \end{align}
        where $\hat{\bm L}_n^{>\delta}$ is defined in \eqref{def: large jump time tau for Levy process L}--\eqref{def: large jump approximation, LD for Levy MRV}.

    \item
        Let $\bm{\mathcal K} = (\mathcal K_{\bm j})_{ \bm j \in \powersetTilde{m} } \in \mathbb Z_+^{ \powersetTilde{m} } \setminus \{\bm 0\}$ and $\epsilon > 0$.
        Let $f: \D \to [0,\infty)$ be bounded and continuous (w.r.t.\ the $J_1$ topology of $\D$).
        Suppose 
        that
        $B = \text{supp}(f)$
        is bounded away from $\hat\D^\epsilon_{ \leqslant \bm{\mathcal K}; \bm 0  }[0,1]$
            under $\dj{}$.
        Then, there exists $\delta_0 > 0$ such that for all but countably many $\delta \in (0,\delta_0)$,
        \begin{align*}
              \lim_{n \to \infty}
            {
                \E\big[ f(\hat{\bm L}^{>\delta}_n)\big]
         }\big/{
             \hat \lambda_{\bm{\mathcal K} }(n)
            }
            =
            \hat{\mathbf C}^{ \subZeroT{1} }_{\bm{\mathcal K};\bm 0}(f) < \infty.
        \end{align*}
\end{enumerate}
One can see that these two conditions are similar to the statements in Propositions~\ref{proposition: asymptotic equivalence, LD for Levy MRV} and \ref{proposition: weak convergence, LD for Levy MRV}.
Likewise, these two conditions can be verified by repeating the arguments in Section~\ref{subsec: proof, LD of Levy, proof of technical results}.
The only key difference stems from the definition of the modified cones $\hat\R^d(\bm j,\epsilon)$ in \eqref{def: breve R d type j modified cones}.
In particular,
any vector in $\R^d_+ \setminus \R^d([m])$ would now fall into $\hat\R^d([m],\epsilon)$ (under any $\epsilon \geq 0$).
Therefore, when keeping track of the large jumps in $\bm L(t)$, there is no need to specifically count those that lie outside of $\R^d([m])$ (i.e., the notions of $\tilde\tau^{>\delta}_n(k)$ and $\widetilde K^{>\delta}_n$ introduced in \eqref{def: tilde tau n delta k, big jump outside of full cone}--\eqref{def: tilde K n delta k, big jump outside of full cone} are not needed here).
Instead, 
for each $\delta > 0$, $\bm j \in \powersetTilde{m}$, let
\begin{align*}
    \hat\R^{>\delta}(\bm j) \delequal 
    \Bigg\{
        \bm x \in \R^d_+:
        \norm{\bm x} > \delta,\ \bm x \in \hat{\R}^d(\bm j,\delta)
        \setminus 
        \bigg( 
            \bigcup_{ \bm j^\prime \subseteq [m]:\ \bm j^\prime \neq \bm j,\ \alpha(\bm j^\prime) \leq \alpha(\bm j)  } \hat{\R}^d(\bm j^\prime,\delta)
        \bigg)
    \Bigg\},
\end{align*}
which adapts the definition of $\bar\R^{>\delta}(\bm j)$ in \eqref{def: tilde R geq delta, LD for Levy} to our current setting.
Next,
for each $\delta > 0$, $\bm j \in \powersetTilde{m}$, and $k \geq 1$, we define
(under the convention that $ { \hat\tau^{>\delta}_{ n }(0; \bm j)} \equiv 0$)
\begin{align*}
    { \hat\tau^{>\delta}_{ n }(k; \bm j)}
    & \delequal
    \inf\Big\{ 
        t >  \hat\tau^{>\delta}_{ n }(k - 1; \bm j):\ 
         \Delta \bar{\bm L}_n(t) \in \hat\R^{>\delta}(\bm j)
    \Big\},
    \qquad
    {\hat{\bm W}^{>\delta}_{ n }(k; \bm j)}
    \delequal 
    \Delta \bar{\bm L}_n\Big( \hat\tau^{>\delta}_{ n }(k; \bm j) \Big).
\end{align*}
Besides, for each $\delta > 0$, let
\begin{align*}
    {\hat K^{>\delta}_n(\bm j)} \delequal \max\{ k \geq 0:\ \hat\tau^{>\delta}_n(k;\bm j) \leq 1 \},
    \quad
    \forall \bm j \in \powersetTilde{m};
    \qquad
    {\hat{\bm K}^{>\delta}_n}
    \delequal
    \Big( {\hat K^{>\delta}_n(\bm j)} \Big)_{ \bm j \in \powersetTilde{m} }.
\end{align*}
In short, we adapt the definitions in \eqref{def: large jump in cone i, tau > delta n k i, LD for Levy MRV}--\eqref{def: K delta n type j, count vector, LD for Levy}
to count each type of large jumps in $\bm L(t)$ based on their locations w.r.t.\ the (modified) cones $\hat \R^d(\bm j)$ in \eqref{def: breve R d type j modified cones}, which are generated by the basis described in Assumption~\ref{assumption: ARMV in levy process, general case}.
Then, to prove conditions (i) and (ii) listed above,
it only remains to repeat the proofs in Section~\ref{subsubsec: proof of LD with MRV}.
The changes are minimal and, in fact, the arguments are now even simpler.
For instance, when adapting Lemma~\ref{lemma: asymptotic law for large jumps, LD for Levy MRV},
as noted above there is no need to consider $\tilde K^{>\delta}_n$ anymore.
Besides, in Lemmas~\ref{lemma: choice of bar epsilon and bar delta, LD for Levy MRV} and \ref{lemma: finiteness of measure breve C k},
the notion of allocation $\mathbb A(\bm k)$ is not needed 
(as noted right after Theorem~\ref{theorem: LD for levy, MRV, general case}, we do not consider merging the cases corresponding to different $\bm{\mathcal K}$ here),
so it suffices to fix some 
$\bm{\mathcal K} = (\mathcal K_{\bm j})_{ \bm j \in \powersetTilde{m} } \in \mathbb Z_+^{ \powersetTilde{m} } \setminus \{\bm 0\}$ in the proof.
The arguments are almost identical, and we omit the details here to avoid repetition.
\end{proof}

\ifshowtheoremtree
\newpage

\section{Theorem Tree}\label{sec: appendix, theorem tree}
\textbf{Theorem Tree of Theorem~\ref{theorem: LD for levy, MRV}}

\begin{thmdependence}[leftmargin=*]
    \thmtreenode{-}{Theorem}{theorem: LD for levy, MRV}{}
    \begin{thmdependence}
        \thmtreenode{-}{Lemma}{lemma: asymptotic equivalence when bounded away, equivalence of M convergence}{}

        \thmtreenode{-}{Proposition}{proposition: asymptotic equivalence, LD for Levy MRV}{}
        \begin{thmdependence}
            \thmtreenode{-}{Lemma}{lemma: concentration of small jump process, Ld for Levy MRV}{}

            \thmtreenode{-}{Lemma}{lemma: large jump approximation, LD for Levy MRV}{}

            \thmtreenode{-}{Lemma}{lemma: asymptotic law for large jumps, LD for Levy MRV}{}
            
        \end{thmdependence}

        \thmtreenode{-}{Proposition}{proposition: weak convergence, LD for Levy MRV}{}
        \begin{thmdependence}
            \thmtreenode{-}{Lemma}{lemma: zero mass for boundary sets, LD for Levy}{}

            \thmtreenode{-}{Lemma}{lemma: asymptotic law for large jumps, LD for Levy MRV}{}

            \thmtreenode{-}{Lemma}{lemma: choice of bar epsilon and bar delta, LD for Levy MRV}{}

            \thmtreenode{-}{Lemma}{lemma: finiteness of measure breve C k}{}
                \begin{thmdependence}
                    \thmtreenode{-}{Lemma}{lemma: choice of bar epsilon and bar delta, LD for Levy MRV}{}
                \end{thmdependence}
        \end{thmdependence}
        
    \end{thmdependence}
\end{thmdependence}

\bigskip
\noindent
\textbf{Theorem Tree of Theorem~\ref{theorem: LD for Hawkes}}
\begin{thmdependence}[leftmargin=*]
    \thmtreenode{-}{Theorem}{theorem: LD for Hawkes}{}
    \begin{thmdependence}
        \thmtreenode{-}{Lemma}{lemma: asymptotic equivalence when bounded away, equivalence of M convergence}{}

        \thmtreenode{-}{Theorem}{corollary: LD for Levy with MRV}{}
        \begin{thmdependence}
            \thmtreenode{-}{Theorem}{theorem: LD for levy, MRV}{}

            \thmtreenode{-}{Theorem}{portmanteau theorem M convergence}{}

            \thmtreenode{-}{Lemma}{lemma: B bounded away under J1 infinity}{}
            \begin{thmdependence}
                \thmtreenode{-}{Lemma}{lemma: finiteness of measure breve C k}{}
            \end{thmdependence}

            \thmtreenode{-}{Lemma}{lemma: bounded away condition under J1, from infty to T}{}
        \end{thmdependence}

        \thmtreenode{-}{Proposition}{proposition, law of the levy process approximation, LD for Hawkes}{}
        \begin{thmdependence}
            \thmtreenode{-}{Theorem}{theorem: main result, cluster size}{}
        \end{thmdependence}

        \thmtreenode{-}{Proposition}{proposition: asymptotic equivalence between Hawkes and Levy, LD for Hawkes}{}
        \begin{thmdependence}
            \thmtreenode{-}{Lemma}{lemma: asymptotic equivalence between L and large jump, LD for Hawkes}{}
            \begin{thmdependence}
                \thmtreenode{-}{Lemma}{lemma: concentration of small jump process, Ld for Levy MRV}{}

                \thmtreenode{-}{Proposition}{proposition, law of the levy process approximation, LD for Hawkes}{}
            \end{thmdependence}

        \thmtreenode{-}{Lemma}{lemma: asymptotic equivalence between N and large jump, LD for Hawkes}{}

        \thmtreenode{-}{Lemma}{lemma: asymptotic equivalence between large jump N and L, LD for Hawkes}{}
        \begin{thmdependence}
            \thmtreenode{-}{Lemma}{lemma: lifetime of a cluster, LD for Hawkes}{}

            \thmtreenode{-}{Lemma}{lemma: step functions under M1 metric, LD for Hawkes}{}
        \end{thmdependence}

        \end{thmdependence}
        
    \end{thmdependence}
\end{thmdependence}

\ifshowtheoremlist
\newpage
\footnotesize
\newgeometry{left=1cm,right=1cm,top=0.5cm,bottom=1.5cm}
\linkdest{location of theorem list}
\listoftheorems
\fi

\ifshowequationlist
\newpage
\linkdest{location of equation number list}
\section*{Numbered Equations}

\fi

\ifshownavigationpage
\newpage
\normalsize
\tableofcontents

\section*{Navigation Links}

\ifshowtheoremlist
    \noindent
    \hyperlink{location of theorem list}{List of Theorems}
    \bigskip
\fi

\ifshowtheoremtree
    \noindent
    \hyperlink{location of theorem tree}{Theorem Tree}
    \bigskip
\fi

\ifshowtheoremlist
    \noindent
    \hyperlink{location of equation number list}{Numbered Equations}
    \bigskip
\fi

\ifshownotationindex
    \noindent
    \hyperlink{location of notation index}{Notation Index}
    \begin{itemize}
    \item[] 
        \hyperlink{location, notation index A}{A},
        \hyperlink{location, notation index B}{B},
        \hyperlink{location, notation index C}{C},
        \hyperlink{location, notation index D}{D},
        \hyperlink{location, notation index E}{E},
        \hyperlink{location, notation index F}{F},
        \hyperlink{location, notation index G}{G},
        \hyperlink{location, notation index H}{H},
        \hyperlink{location, notation index I}{I},
        \hyperlink{location, notation index J}{J},
        \hyperlink{location, notation index K}{K},
        \hyperlink{location, notation index L}{L},
        \hyperlink{location, notation index M}{M},
        \hyperlink{location, notation index N}{N},
        \hyperlink{location, notation index O}{O},
        \hyperlink{location, notation index P}{P},
        \hyperlink{location, notation index Q}{Q},
        \hyperlink{location, notation index R}{R},
        \hyperlink{location, notation index S}{S},
        \hyperlink{location, notation index T}{T},
        \hyperlink{location, notation index U}{U},
        \hyperlink{location, notation index V}{V},
        \hyperlink{location, notation index W}{W},
        \hyperlink{location, notation index X}{X},
        \hyperlink{location, notation index Y}{Y},
        \hyperlink{location, notation index Z}{Z}
    \end{itemize}
\fi

\fi

\end{document}